\PassOptionsToPackage{unicode}{hyperref}

\PassOptionsToPackage{hyphens}{url}
\PassOptionsToPackage{dvipsnames,svgnames,x11names}{xcolor}
\documentclass[12pt]{article}
\usepackage[hypertexnames=false]{hyperref}
\usepackage{amsmath,amssymb}
\usepackage{iftex}
\ifPDFTeX

  \usepackage[T1]{fontenc}
  \usepackage[utf8]{inputenc}
  \usepackage{textcomp} % provide euro and other symbols
\else % if luatex or xetex
  \usepackage{unicode-math}
  \defaultfontfeatures{Scale=MatchLowercase}
  \defaultfontfeatures[\rmfamily]{Ligatures=TeX,Scale=1}
\fi
\usepackage{lmodern}
\ifPDFTeX\else  
\fi
\IfFileExists{upquote.sty}{\usepackage{upquote}}{}
\IfFileExists{microtype.sty}{% use microtype if available
  \usepackage[]{microtype}
  \UseMicrotypeSet[protrusion]{basicmath} % disable protrusion for tt fonts
}{}
\makeatletter
\@ifundefined{KOMAClassName}{% if non-KOMA class
  \IfFileExists{parskip.sty}{%
    \usepackage{parskip}
  }{% else
    \setlength{\parindent}{0pt}
    \setlength{\parskip}{6pt plus 2pt minus 1pt}}
}{% if KOMA class
  \KOMAoptions{parskip=half}}
\makeatother
\usepackage{xcolor}
\makeatletter
\ifx\paragraph\undefined\else
  \let\oldparagraph\paragraph
  \renewcommand{\paragraph}{
    \@ifstar
      \xxxParagraphStar
      \xxxParagraphNoStar
  }
  \newcommand{\xxxParagraphStar}[1]{\oldparagraph*{#1}\mbox{}}
  \newcommand{\xxxParagraphNoStar}[1]{\oldparagraph{#1}\mbox{}}
\fi
\ifx\subparagraph\undefined\else
  \let\oldsubparagraph\subparagraph
  \renewcommand{\subparagraph}{
    \@ifstar
      \xxxSubParagraphStar
      \xxxSubParagraphNoStar
  }
  \newcommand{\xxxSubParagraphStar}[1]{\oldsubparagraph*{#1}\mbox{}}
  \newcommand{\xxxSubParagraphNoStar}[1]{\oldsubparagraph{#1}\mbox{}}
\fi
\makeatother

\usepackage{longtable,booktabs,array}
\usepackage{calc} % for calculating minipage widths
\usepackage{etoolbox}
\makeatletter
\patchcmd\longtable{\par}{\if@noskipsec\mbox{}\fi\par}{}{}
\makeatother
\IfFileExists{footnotehyper.sty}{\usepackage{footnotehyper}}{\usepackage{footnote}}
\makesavenoteenv{longtable}
\usepackage{graphicx}
\makeatletter
\def\maxwidth{\ifdim\Gin@nat@width>\linewidth\linewidth\else\Gin@nat@width\fi}
\def\maxheight{\ifdim\Gin@nat@height>\textheight\textheight\else\Gin@nat@height\fi}
\makeatother
\setkeys{Gin}{width=\maxwidth,height=\maxheight,keepaspectratio}
\makeatletter
\def\fps@figure{htbp}
\makeatother

\usepackage[left=1in,top=0.8in,right=1in,bottom=0.7in,head=.2in,nofoot]{geometry}

\usepackage[font=small,labelfont=bf]{caption}

\makeatletter
\@ifpackageloaded{caption}{}{\usepackage{caption}}
\AtBeginDocument{%
\ifdefined\contentsname
  \renewcommand*\contentsname{Table of contents}
\else
  \newcommand\contentsname{Table of contents}
\fi
\ifdefined\listfigurename
  \renewcommand*\listfigurename{List of Figures}
\else
  \newcommand\listfigurename{List of Figures}
\fi
\ifdefined\listtablename
  \renewcommand*\listtablename{List of Tables}
\else
  \newcommand\listtablename{List of Tables}
\fi
\ifdefined\figurename
  \renewcommand*\figurename{Figure}
\else
  \newcommand\figurename{Figure}
\fi
\ifdefined\tablename
  \renewcommand*\tablename{Table}
\else
  \newcommand\tablename{Table}
\fi
}
\@ifpackageloaded{float}{}{\usepackage{float}}
\floatstyle{ruled}
\@ifundefined{c@chapter}{\newfloat{codelisting}{h}{lop}}{\newfloat{codelisting}{h}{lop}[chapter]}
\floatname{codelisting}{Listing}

\makeatother
\makeatletter
\@ifpackageloaded{caption}{}{\usepackage{caption}}
\@ifpackageloaded{subcaption}{}{\usepackage{subcaption}}
\makeatother

\ifLuaTeX
  \usepackage{selnolig}  % disable illegal ligatures
\fi
\usepackage[]{natbib}
\usepackage{bookmark}

\IfFileExists{xurl.sty}{\usepackage{xurl}}{} % add URL line breaks if available
\hypersetup{
  pdftitle={Characterizing Identifiability in Boolean Factor Models},
  pdfauthor={Mengqi Lin and Gongjun Xu},
  pdfkeywords={Graphical structures, Cognitive diagnosis models, Boolean matrix decompositions, Hasse diagrams, Graph isomorphism, Boolean satisfiability (SAT)},
  colorlinks=true,
  allcolors=black,
  pdfcreator={LaTeX}}

\usepackage{xcolor}
\newcommand{\anon}{1}

\usepackage{mycommands}

\theoremstyle{plain}

\newtheorem{theorem}{Theorem}
\newtheorem{lemma}{Lemma}
\newtheorem{proposition}{Proposition}
\newtheorem{corollary}{Corollary}

\newtheorem{definition}{Definition}

\newtheorem{example}{Example}

\newtheorem{remark}{Remark}

\newtheorem{fact}{Fact}

\usepackage[nodisplayskipstretch]{setspace}

\hypersetup{colorlinks=true,allcolors=black}

\theoremstyle{remark}
\newtheorem{supplementfact}[fact]{Fact}
\theoremstyle{plain}

\begin{document}

\def\spacingset#1{\renewcommand{\baselinestretch}%
{#1}\small\normalsize} \spacingset{1}

%%%%%%%%%%%%%%%%%%%%%%%%%%%%%%%%%%%%%%%%%%%%%%%%%%%%%%%%%%%%%%%%%%%%%%%%%%%%%%

\if1\anon
{
  \title{\bf Characterizing Identifiability in Boolean Factor Models}
  \date{ }
   \author{Mengqi Lin and  Gongjun Xu\\
     \hspace{.2cm} \\
     Department of Statistics, University of Michigan}
  \maketitle
} \fi

\if0\anon
{
  \bigskip
  \bigskip
  \bigskip
  \begin{center}
    {\LARGE\bf Characterizing Identifiability in Boolean Factor Models}
\end{center}
  \medskip
} \fi

\bigskip
\begin{abstract}
Boolean factor models, including prominent subfamilies such as Boolean matrix decompositions and cognitive diagnosis models, find broad applications ranging from social sciences to engineering. Despite their flexibility, a key challenge lies in establishing the identifiability of their graphical structures, which specify how latent variables influence observed variables. Existing identifiability conditions typically rely on the strong assumption of pure nodes, which may be unrealistic in many applications. We develop a novel approach leveraging the Hasse diagram to represent the distribution of observed variables and transform identifiability into a graph isomorphism challenge. Based on this, we establish {\it sufficient and necessary} graphical identifiability conditions that do not require pure nodes. We further derive equivalent algebraic conditions and develop an efficient  Boolean satisfiability (SAT)-based verification algorithm. {\color{black} We extend the analysis to probabilistic Boolean factor models, establishing conditions for jointly identifying the graphical structure and additional model parameters without requiring pure nodes.} Our results substantially broaden the class of identifiable and interpretable Boolean factor models by removing the pure-node requirement, yielding new theoretical insights, while also providing practitioners with a concrete and easily implementable tool to assess model identifiability.
\end{abstract}

% Our results extend the applicability of Boolean factor models, offering new theoretical insights and practical tools. 

% This perspective leads to a sharp characterization: the graphical structure is identifiable precisely when its representation is unique, demonstrating that the traditional pure-node assumption is not necessary. From this, we derive both graphical and algebraic  sufficient and necessary conditions for identifiability. Exploiting the Boolean nature of the models, these conditions can be encoded entirely using logical operations, enabling an efficient Boolean satisfiability (SAT)–based algorithm for verification. 

\noindent%
{\it Keywords:} Graphical structures, Cognitive diagnosis models, Boolean matrix decompositions, Hasse diagrams, Graph isomorphism, Boolean satisfiability (SAT)
\vfill

% Data Availability Statement: the code and data that support the findings of this study are available at \url{https://github.com/Mengqi-Lin/idQ}.	

%Competing Interests: No potential competing interest to report. 

%This research is partially supported by the National Science Foundation. 
%%(SES 1846747).
\newpage
\spacingset{1.8} % DON'T change the spacing!

\section{Introduction}
Boolean factor models are a powerful tool in statistics with various applications in engineering, machine learning, education, psychology, and biology, where they are used to uncover hidden latent structures from observed data. 
% ultimately enabling interpretable analysis of complex real-world phenomena.
On one hand, the use of Boolean factor models is particularly advantageous due to their natural alignment with many forms of data that are inherently \emph{discrete}.
On the other hand, 
the graphical structures of the models, which characterize the dependencies between different variables,  enhance both interpretability and flexibility of the models. 
One popularly used family of submodels is \emph{Boolean matrix decomposition} (or Boolean matrix factorization), which has broad applications including but not limited to role engineering \citep{vaidya2007role, zhang2007role}, text mining \citep{geerts2004tiling}, and machine learning \citep{wicker2012multi, rava2016boolean}.  
Another widely used family of submodels, the restricted latent attribute models, also known as \emph{cognitive diagnosis models}, has also attracted increasing interest in cognitive diagnosis in educational assessments \citep{junker2001cognitive,Templin,rupp2010diagnostic,de2011generalized}, psychiatric evaluation \citep{jaeger2006distinguishing, dela2018}, and
disease etiology diagnosis \citep{wu2017nested, wu2018}.
Beyond these, other notable submodels include Noisy-Or Bayesian networks \citep{halpern2013noisyor, shwe1991qmrdt} and restricted Boltzmann machines \citep{hinton2006fast, goodfellow2016deep}.
%in machine learning, both of which continue to attract considerable attention today.

{
\color{black}
Although these models are widely used, their identifiability, especially that of their graphical structures, remains an important and challenging problem} \citep{JLGXZY2011,chen2015statistical,xu2018, gu19, Culpepper2019,culpepper2023note,kong2023identification}.
In many applications, the graphical structures carry domain-specific scientific interpretations of the models, and uniquely identifying them is the prerequisite to ensure valid interpretations and guarantee that the inference made about the latent structures and relationships within the data is reliable.
For instance, in Boolean matrix decomposition, specifically its application in 
role engineering \citep{vaidya2007role}, where the observed variables and the latent variables represent a user's permissions and the roles of the user, the graphical structure encodes which permissions each role grants and is essential for accurately identifying the user’s roles.
% text mining \citep{geerts2004tiling}, where the observed variables and the latent variables represent words in a document and the topics the document is categorized into, the graphical structure captures the relationship between topics and the presence of words, which is crucial for accurately identifying which topics are associated with each document. 
In cognitive diagnosis, where the observed variables and the latent variables are commonly a subject's responses to test items (e.g., exam questions) and the attributes (e.g., the mastery or deficiency of some skills) to be inferred \citep{junker2001cognitive,davier2008general,henson2009defining}, the graphical structure in a model characterizes which attributes are required for each test item, and is vital for inferring individuals' fine-grained attributes.

% Notably, identifiability of the graphical structure, in the context of Boolean matrix decomposition, corresponds to the uniqueness of the decomposition \citep{desantis2022factorization}. \todo{remove this sentence?}
%\citep{mchugh1956efficient, teicher1967identifiability, goodman1974exploratory,DiBello,MarisBechger,TatsuokaC09, xu2017,gu19}.  
% When a model is identifiable, it ensures that there is a one-to-one correspondence between the parameter values and the probability distribution defined by the model. 

%The challenge of identifiability of the graphical structure arises from its nature as an graph isomorphism problem in combinatorics, a notoriously difficult issue in the literature.
%The challenge of identifiability of the graphical structure lies in the fact that the problem is essentially an isomorphism problem in combinactorics, which is known to be challenging in the literature.

The challenge of identifiability of the graphical structures mainly lies in the stringent assumptions often imposed by existing literature to achieve it, 
especially the requirement of \emph{pure nodes}.
Concretely, a pure node is an observed variable whose distribution depends directly on exactly one latent variable. In cognitive diagnosis models, a pure node is generally referred to as a diagnostic test item that measures exactly one latent attribute. 
The existence of pure nodes, or the stronger condition that requires at least one pure node for each latent variable, is commonly assumed in the literature to ensure identifiability
\citep{chen2015statistical, xu2016,xu2017, gu19, gu21, fang2019, Culpepper2019,lin2024polytomousDINA}. These assumptions, while mathematically convenient, may be unrealistic in practical applications, limiting the models' applicability and robustness. 
In practice, correlated latent variables frequently exist and are often jointly required by the same observed variable, making the pure nodes assumption difficult to satisfy. 
This challenge has been recognized by many practitioners \citep{de2004higher,davier2008general, TatsuokaC09, decarlo2011analysis, MarisBechger,davier2014dina}. Furthermore, beyond the scope of the Boolean factor models, similar pure node assumptions also appear commonly in the identifiability theories in many other latent variable models to ensure model identifiability. For instance, the \emph{pure node} assumption in mixed membership models \citep{jiang2024optimal,gu2023dimension}, the \emph{anchor word}  assumption in topic models \citep{arora2012learning, arora2013practical,Ke02012024}, and the  \emph{separability condition} in nonnegative matrix factorization literature \citep{papadimitriou2000latent,mcsherry2001spectral,donoho2003does, azar2001spectral}.
% removed citations: ,kleinberg2008using,kleinberg2003convergent

%As \citet{heinen1996latent} Since continuous distribution can be approximated by discrete distribution, the  we focus on the discrete distributions

% as powerful statistical and machine learning tools, are popularly used in many disciplines, such as education, psychology, sociology, and other fields like economics, medicine, and engineering. Generally, these models aim to infer individuals' latent attributes based on their observed responses to a set of items. 

% Popular SLAMs are, the identifiability of SLAMs has long been an issue in the literature. \citep[e.g.,][]{davier2008general, decarlo2011analysis, MarisBechger, davier2014dina}. 
%Recently, Boolean matrix decomposition also finds its application to address multi-label classification. The decomposition can serve as a useful mean to extract latent labels representing useful boolean combinations that capture the most important information of the original labels along with their dependencies of the original labels from the original label matrix \citep{wicker2012multi}.
{\color{black}
\vspace{-0.4cm}
}

{\color{black}
\subsection{Our Contributions}}
Our goal is to fill this gap by establishing practical and valid conditions for identifiability of the graphical structures {without} pure nodes.
Our main contributions are threefold.

\begin{enumerate}

\item \textbf{A graphical characterization under general Boolean
operators.}
We consider a broad class of Boolean factor models that allow highly
flexible interactions between observed and latent variables,
represented by general Boolean operators. We show that each such
operator induces a partial order on the latent space and use this
order to construct a Hasse-based distribution graph that represents
the distribution of the observed variables. This construction
transforms the identifiability of the graphical structure into the
\emph{uniqueness of the graphical representation}, allowing us to
formulate identifiability as a graph isomorphism problem. With this
tool, we establish a sufficient and necessary graphical condition
for identifying the graphical structure without requiring pure nodes.

\item \textbf{Algebraic characterization and SAT-based verification.}
We derive an algebraic condition equivalent to the graphical
characterization and establish structural reductions under commonly
used Boolean operators. Leveraging the Boolean structure of the
problem, we express the condition using logical operations and
develop a Boolean satisfiability (SAT)-based algorithm for its
verification, providing practitioners with a practical tool to
assess model identifiability. Simulation studies show that many
graphical structures without pure nodes are identifiable and
demonstrate the computational efficiency of our algorithm.

% Structural reductions
{\color{black}
\item \textbf{Joint identifiability for probabilistic models.}
We extend our results to probabilistic Boolean factor models with
additional parameters that account for uncertainty in the observed variables. We establish conditions for
identifiability of the graphical structure and for joint
identifiability of the graphical structure and other model
parameters. These conditions do not require pure nodes, substantially relaxing the requirements imposed by existing identifiability results.}
\end{enumerate}

The rest of the paper is organized as follows. In
Section~\ref{sec:setup}, we present the model setup and formulate
the identifiability problem. In Section~\ref{sec:graph}, we develop
a graphical representation of the distribution of the observed
variables and establish a sufficient and necessary graphical
condition for identifying the graphical structure. We further
derive an equivalent algebraic condition and show how it can be
verified efficiently through an SAT formulation. In
Section~\ref{sec:prob}, we extend the results to
more general models. 
%All the results do not require pure nodes, substantially relaxing the requirements commonly imposed in existing work. 
Future research directions are discussed in Section~\ref{sec: discussion}.
All proofs are provided in the Supplementary Material.
\\
{\color{black}
\textbf{Notation.}
}
Given a positive integer $K $, let $[K]$ denote the set of positive integers from 1 to $K$, i.e., $[K]=\set{1,\ldots,K}$. Let $\{0,1\}^K$ denote the binary vector space of $K$ dimensions. 
Denote the vector where only the $j$-th entry is 1 as $\ee_j = (0, \ldots, 1, 0, \ldots, 0)^\top$. The vector of all ones and the vector of all zeros are respectively denoted by $\one = (1,\ldots,1)^\top$ and $\zero = (0, \ldots, 0)^\top$. 
Given another positive integer $J$, let $\{0,1\}^{J \times K}$ denote the $J \times K$-dimensional binary matrix space, and we use $\cI_K$ to represent the $K$-dimensional identity matrix. Use \(\1\{\cdot\}\) to denote the indicator function.
\vspace{-0.4cm}
\section{Model Setup and Identifiability}\label{sec:setup}
\vspace{-0.2cm}
\subsection{Boolean Factor Models}\label{sec:BOM}

Consider a general Boolean factor model with \(J\) binary observed variables \(\RR = (R_1,\ldots,R_J)^\top\), each \(R_j\in\{0,1\}\) for \(j \in [J]\), and suppose that there are $K$ binary latent variables that are direct parents to $\RR$, which we denote by \(\aaa = (\alpha_1, \ldots, \alpha_K)^\top\),  each \(\alpha_k \in \{0,1\}\) for \(k \in [K]\). 
We allow a flexible joint distribution for these latent variables \(\aaa\). They may exhibit direct or indirect dependencies among themselves (e.g., $\aaa$ in Figure~\ref{fig:models}(a)) or be influenced by higher-level latent variables  (e.g., $\aaa$ in Figure~\ref{fig:models}(b)), that is, beyond \(\aaa\), additional latent variables (could be continuous or discrete) may generate or correlate these \(\alpha\)-variables, for instance, through a hierarchical structure 
(e.g., $\beta$-variables in Figure~\ref{fig:models}(b)).

\vspace{-0.1cm}
\begin{figure}[htbp]
\centering
\begin{minipage}{0.45\textwidth}
    \centering
    \begin{tikzpicture}[scale=.8]

  % Node Styles
  \tikzstyle{observed}=[circle, draw, fill=white, minimum size=20pt, inner sep=0pt]
  \tikzstyle{latent}  =[circle, draw, fill=white, minimum size=20pt, inner sep=0pt]
  \tikzstyle{qedge}   =[draw, thick, -latex]

  %--------------------------------------------------
  % 1) Observed variables R_1,..., R_6 (Bottom)
  %--------------------------------------------------
  \node[observed] (R1) at (0,   0) {$R_1$};
  \node[observed] (R2) at (1.2, 0) {$R_2$};
  \node[observed] (R3) at (2.4, 0) {$R_3$};
  \node[observed] (R4) at (3.6, 0) {$R_4$};
  \node[observed] (R5) at (4.8, 0) {$R_5$};
  \node[observed] (R6) at (6.0, 0) {$R_6$};

  %--------------------------------------------------
  % 2) Alpha layer (4 nodes) directly influencing R
  %--------------------------------------------------
  \node[latent] (a1) at (1.2, 1.5) {$\alpha_1$};
  \node[latent] (a2) at (2.4, 1.5) {$\alpha_2$};
  \node[latent] (a3) at (3.6, 1.5) {$\alpha_3$};
  \node[latent] (a4) at (4.8, 1.5) {$\alpha_4$};

  % Edges from alpha to R according to the QQ matrix
  \draw[qedge] (a1) -- (R1);
  \draw[qedge] (a2) -- (R1);
  \draw[qedge] (a1) -- (R2);
  \draw[qedge] (a3) -- (R2);
  \draw[qedge] (a1) -- (R3);
  \draw[qedge] (a4) -- (R3);
  \draw[qedge] (a2) -- (R4);
  \draw[qedge] (a3) -- (R4);
  \draw[qedge] (a2) -- (R5);
  \draw[qedge] (a4) -- (R5);
  \draw[qedge] (a3) -- (R6);
  \draw[qedge] (a4) -- (R6);

  \draw (a1) -- (a2);
  \draw (a2) -- (a3);
  \draw (a3) -- (a4);

  %--------------------------------------------------
  % Dashed box around only the alpha→R part
  %--------------------------------------------------
  \node[draw, dashed, thick, fit=(a1)(a2)(a3)(a4)(R1)(R2)(R3)(R4)(R5)(R6), inner sep=6pt] {};

  %--------------------------------------------------
  % High arcs connecting latent variables (drawn after the box)
  %--------------------------------------------------
  \draw[bend left=60, looseness=1.5] (a1) to (a3);
  \draw[bend left=60, looseness=1.5] (a2) to (a4);
  \draw[bend left=90, looseness=1.5] (a1) to (a4);

\end{tikzpicture}

\caption*{(a) {Model with internal dependencies among \\latent $\alpha$-variables, indicated by the edges connecting them.}}
\end{minipage}
\hfill
\begin{minipage}{0.45\textwidth}
    \begin{tikzpicture}[scale=.8]

% Node Styles
\tikzstyle{observed}=[circle, draw, fill=white, minimum size=20pt, inner sep=0pt]
\tikzstyle{latent}=[circle, draw, fill=white, minimum size=20pt, inner sep=0pt]
\tikzstyle{qedge}=[draw, thick, -latex]

%--------------------------------------------------
% 1) Observed variables R_1,..., R_6 (Bottom)
%--------------------------------------------------
\node[observed] (R1) at (0,   0) {$R_1$};
\node[observed] (R2) at (1.2, 0) {$R_2$};
\node[observed] (R3) at (2.4, 0) {$R_3$};
\node[observed] (R4) at (3.6, 0) {$R_4$};
\node[observed] (R5) at (4.8, 0) {$R_5$};
\node[observed] (R6) at (6.0, 0) {$R_6$};

%--------------------------------------------------
% 2) Alpha layer (4 nodes) directly influencing R
%--------------------------------------------------
\node[latent] (a1) at (1.2, 1.5) {$\alpha_1$};
\node[latent] (a2) at (2.4, 1.5) {$\alpha_2$};
\node[latent] (a3) at (3.6, 1.5) {$\alpha_3$};
\node[latent] (a4) at (4.8, 1.5) {$\alpha_4$};

% Edges from alpha to R according to the QQ matrix
% Row 1: R1 from alpha1, alpha2
\draw[qedge] (a1) -- (R1);
\draw[qedge] (a2) -- (R1);

% Row 2: R2 from alpha1, alpha3
\draw[qedge] (a1) -- (R2);
\draw[qedge] (a3) -- (R2);

% Row 3: R3 from alpha1, alpha4
\draw[qedge] (a1) -- (R3);
\draw[qedge] (a4) -- (R3);

% Row 4: R4 from alpha2, alpha3
\draw[qedge] (a2) -- (R4);
\draw[qedge] (a3) -- (R4);

% Row 5: R5 from alpha2, alpha4
\draw[qedge] (a2) -- (R5);
\draw[qedge] (a4) -- (R5);

% Row 6: R6 from alpha3, alpha4
\draw[qedge] (a3) -- (R6);
\draw[qedge] (a4) -- (R6);

%--------------------------------------------------

\draw (a1) -- (a2);
\draw (a2) -- (a3);
\draw (a3) -- (a4);

% 3) Beta^(1) layer with 3 nodes
%--------------------------------------------------
\node[latent] (b11) at (1.8, 3.0) {$\beta^{(1)}_1$};
\node[latent] (b12) at (3.0, 3.0) {$\beta^{(1)}_2$};
\node[latent] (b13) at (4.2, 3.0) {$\beta^{(1)}_3$};

% (Optionally, illustrate edges from \beta^(1) to alpha)
\draw[qedge] (b11) -- (a1);
\draw[qedge] (b12) -- (a2);
\draw[qedge] (b13) -- (a3);
\draw[qedge] (b13) -- (a4);

\node[draw, dashed, thick, fit=(a1)(a2)(a3)(a4)(R1)(R2)(R3)(R4)(R5)(R6), inner sep=6pt] {};

%--------------------------------------------------

% THIRD LAYER: A SINGLE NODE \beta^{(2)}_1
%------------------------------------------------------------------
\node[latent] (b21) at (3, 4.5) {$\beta^{(2)}_1$};

% Edges from \beta^{(2)}_1 to each \beta^{(1)} node
\draw[qedge] (b21) -- (b11);
\draw[qedge] (b21) -- (b12);
\draw[qedge] (b21) -- (b13);

\end{tikzpicture}\caption*{(b) \small{Model with additional latent $\beta$-variables, \\ which generate $\alpha$-variables.}}\label{model:hierarchical}
\end{minipage}\caption{{Boolean factor models.} The dashed boxes outline the  graphical structures $(\aaa \to \RR)$ of the two models, which are the focus of this study.} 
\label{fig:models}
\end{figure}
In this work, we focus on studying the \emph{graphical structure} of the Boolean factor model—the directed edges from \(\aaa\) to \(\RR\) (e.g., the dashed boxes in Figure~\ref{fig:models}), which characterizes the conditional distribution of $\RR$ given \(\aaa\). 
% Since edges exist only from $\aaa$ to $\RR$, $R_j$ for $j \in [J]$ are conditionally independent given \(\aaa\), i.e., \ \(\P(\RR \mid \aaa) = \prod_{j=1}^J \P(R_j \mid \QQ, \aaa)\). 
Such a bipartite graphical structure can be algebraically represented by a binary matrix, which we denote by $\QQ \in \{0,1\}^{J \times K}$. Specifically, let $\qq_j = (q_{j1}, \ldots, q_{jK}) \in \{0,1\}^K$ be the $j$-th row of $\QQ$. The presence or absence of an edge from $\alpha_k$ to $R_j$ is encoded by the $k$-th entry of $\qq_j$, for $k \in [K]$. Throughout, we impose the standard \emph{local independence} assumption:
\(\P(\RR \mid \aaa) = \prod_{j=1}^J \P(R_j \mid \QQ, \aaa)\).

Motivated by many popularly used Boolean factor models \citep{junker2001cognitive,Templin,geerts2004tiling}, we consider a generic Boolean operator $\ast: \{0,1\}^K \times \{0,1\}^K \;\to\; \{0,1\}$
to represent the dependency between $\aaa$ and $\RR$. 
%For each $j$, the output $\qq_j \ast \aaa \in \{0,1\}$ determines $\P(R_j \mid \QQ, \aaa)$. 
For instance, the basic and critical Boolean factor models, which have widespread applications in social sciences and engineering \citep{junker2001cognitive,vaidya2007role}, assume that the probability $\P(R_j  \mid \QQ, \aaa)$ is determined by the Boolean output $\qq_j \ast \aaa$, i.e., 
\begin{equation}\label{eq:Boolean}
    \P(R_j = 1 \mid \QQ, \aaa) = \qq_j \ast \aaa.
\end{equation}
Among the many possible Boolean operators, two are especially common in practice: the \emph{conjunctive operator} and the \emph{disjunctive operator} \citep{Templin,Junker07,rava2016boolean}. Specifically, the conjunctive operator is defined as
\vspace{-0.3cm}
\begin{equation}\label{eq:ope-conj}
   \qq_j \ast \aaa \;=\; \qq_j \preceq \aaa,
\end{equation}
where $\preceq$ is the \emph{product order}  on \(\{0,1\}^K\), which defines \(\vv \preceq \uu\) if \(v_{k} \leq u_{k}\) for all \(k \in [K]\), for any $\vv, \uu \in \{0,1\}^K$. The disjunctive operator is defined as
\begin{equation}\label{eq:ope-disj2}
 \qq_j \ast \aaa = \qq_j \odot \aaa =  \bigvee_{k=1}^K q_{jk} \wedge \alpha_k,  
\end{equation}
where $\vee$ denotes the logical OR (maximum), and $\wedge$ denotes the logical AND (minimum).
{
\color{black}
When applied to binary vectors of the same dimension, these operations are
defined componentwise. 
}
% It also appears equivalently as \(\qq_j \ast \aaa = \1\{\exists\,k: q_{jk} = \alpha_k = 1\}\)).
We refer to the Boolean factor models employing these operators as \emph{conjunctive models} 
and \emph{disjunctive models} respectively.
In the following, we present two applications of the Boolean factor models to further illustrate their practical usefulness.

% As mentioned, this modeling framework naturally extends to multi-layer settings by introducing additional layers of latent variables and specifying inter-layer graphical structures \citep[cf.][]{gu2023bayesian}. In such multi-layer Bayesian networks, each observed or latent variable depends directly on a subset of latent variables in the layer directly above it, and the overall dependence structure is encoded by these inter-layer graphical structures. Our identifiability results, which focus on the matrix representing the graphical structure linking one latent layer to the observed layer, naturally extend to these more complex models by treating each pair of consecutive layers in a similar manner.

%For instance, the widely used conjunctive operator \eqref{eq:ope-conj} and disjunctive operator \eqref{eq:ope-disj} we presented in Section~\ref{sec:BOM} both satisfy this assumption. 

% \begin{remark}
%     We point out that relaxation of the assumption $p_{\aaa}>0$ for all $\aaa \in \{0,1\}^K$ is possible with careful examination; see Section~\ref{sec: discussion} for further discussion. \todo{equivalence class positive}
% \end{remark}

\begin{example}[Boolean Matrix Decomposition]\label{ex:app1}
{\it    In data analysis and machine learning, Boolean factor models play a significant role in \emph{Boolean matrix decomposition} \citep{geerts2004tiling, zhang2007role, miettinen2008discrete, Vaidya2012}.
Consider $N$ individuals, whose response vectors $\RR_i$ for $i \in [N]$ are generated from model \eqref{eq:Boolean}, and we use $\Re = (\RR_1, \ldots, \RR_N) \in \{0,1\}^{J \times N}$ to denote the observed response matrix and $\AA = (\aaa_1, \ldots, \aaa_N) \in \{0,1\}^{K \times N}$ to denote the unobserved latent matrix in which  $\aaa_i$ is the latent vector/latent configuration of individual $i$.
Boolean matrix decomposition corresponds to the decomposition of 
\(\Re\) into some product of $\QQ$ and $\AA$: $\Re = \QQ \ast \AA$,
where \(\Re_{j,i} = \bigl(\QQ\ast \AA\bigr)_{j,i} := \qq_j \ast \aaa_i\).
In the literature, the disjunctive operator \eqref{eq:ope-disj2} is commonly used for its high interpretability and computational efficiency \citep{rava2016boolean}.
Such decompositions appear in a range of applications:
\begin{itemize}
    \item {\emph{Role Engineering.} $\RR$ represents a user's permissions, $\aaa$ represents  the roles assigned to the user \citep{vaidya2007role, zhang2007role}, and the $\QQ$-matrix specifies which permissions are included in each role. Under the operator \eqref{eq:ope-disj2}, a user has permission $j$ if the user is assigned \emph{any} role that has permission $j$ indicated by $\qq_j$.} 
    \item {\emph{Text Mining.} $\RR$ represents the presence or absence of specific words in a document, while $\aaa$ represents the topics the document is categorized into \citep{geerts2004tiling}, and the $\QQ$-matrix indicates which words are characteristic of each topic. Under \eqref{eq:ope-disj2}, a document contains word $j$ if the document belongs to \emph{any} topics that are relevant to word $j$ specified by $\qq_j$.}
\end{itemize}
Other applications include multi-label classification \citep{wicker2012multi, siblini2019review},  identifying functional interactions in brain
networks~\citep{haddad2018identifying}, and discovery of patterns in network
data~\citep{kocayusufoglu2018summarizing}, etc.
}
\end{example}

% ; in multi-label classification problems \citep{wicker2012multi, rava2016boolean}, the observed data $\RR$ represents the set of labels assigned to each instance, and the latent factors $\aaa$ capture underlying label patterns or dependencies. The $\QQ$-matrix specifies how these latent patterns contribute to the observed labels.

\begin{example}[Personalized Diagnosis and Clustering]\label{ex:app2}
{\it    In social and biological sciences, Boolean factor models could be used to make classification-based decisions on individuals' latent attributes for diagnostic or clustering purposes, based on their responses to a set of items, and are known as cognitive diagnosis models. Specific applications include:
\begin{itemize}
    \item {\emph{Cognitive Diagnosis in Educational Assessments.}
    \(\RR\) represents a student's correct/incorrect answers to test items, while 
    \(\aaa\) denotes the presence/absence of the underlying skills \citep{davier2008general, henson2009defining,rupp2010diagnostic}. The \(\QQ\)-matrix 
    indicates which skills each item measures \citep{Tatsuoka1990}. Often, a 
    conjunctive operator \eqref{eq:ope-conj} is used, implying that a student must possess \emph{all} required skills in $\qq_j$ to answer item~\(j\) correctly.}

    \item {\emph{Psychiatric Evaluation of Mental Disorders.}
    \(\RR\) represents a person's manifested symptoms, and \(\aaa\) denotes the presence/absence 
    of various mental disorders \citep{Templin,dela2018}. The \(\QQ\)-matrix indicates which disorders are 
    associated with which symptoms. 
    The often used disjunctive operator \eqref{eq:ope-disj2} means that having 
    \emph{any} relevant disorder in $\qq_j$ leads to symptom~\(j\).}
\end{itemize}
Other applications, such as medical measurement studies \citep{wu2017nested,o2019causes}, are also receiving increasing attention. 
}
\end{example}

Beyond the Boolean factor models \eqref{eq:Boolean}, researchers also introduce additional parameters to further accommodate the uncertainty and noise in the observed data. 
{\color{black}
We refer to such models as \emph{probabilistic Boolean factor models} and consider three popular models below.
}
%uncertainty in various applications.
%there exist many applications in practice, requiring to account for uncertainty, utilizes models with additional parameters to account for such uncertainty.
%We consider \emph{probabilistic Boolean factor models} which introduce additional parameters into \(\P(R_j \mid \QQ, \aaa)\) to quantify the noise levels that deviate from the scenario in \eqref{eq:Boolean}.  

%Specifically, they allow for the possibility that a subject in the capable class may incorrectly answer an item, and a subject in the incapable class may correctly answer an item, due to factors such as guessing or slipping. 

\textbf{One-parameter Boolean factor models.}
By introducing an additional parameter \(g\), the models assume that for each $j$,
\vspace{-0.2cm}
\begin{equation}\label{eq:one-param-model}
    \P(R_j = 1 \mid \QQ, \aaa) = {\renewcommand{\arraystretch}{0.5}
    \begin{cases}
1-g,\;&\text{if}\;\qq_j \ast \aaa = 1,\\
g,\;&\text{if}\;\qq_j \ast \aaa = 0.
    \end{cases}
    }
    \end{equation}
In Boolean matrix decomposition, \eqref{eq:one-param-model} is viewed as a probabilistic generative modeling approach to Boolean matrix decomposition \citep{rava2016boolean}; with the disjunctive operator it appears as the \emph{OrMachine} \citep{ormachine2017}, which has been applied to single-cell gene-expression analysis and to controlling false positives in collaborative filtering \citep{ormachine2017}.
In cognitive diagnosis, the same specification underlies the so-called \emph{non-parametric classification (NPC) method} \citep{Chiu2013}.  
% Given a $\QQ$-matrix and a response vector $\RR$ from an individual, NPC estimates the individual's latent attributes $ \hat{\aaa} = \argmin_{\aaa \in \{0,1\}^K} \sum_{j=1}^J |R_j - \qq_j \ast \aaa|,$ i.e., the latent vector whose ideal responses is closest (in Hamming distance) to the observed responses, which is exactly the maximum-likelihood estimator for model~\eqref{eq:one-param-model} \citep{restrictedDINA}. 
Due to the method's advantages such as parsimony, computational efficiency and
robustness, it has attracted growing attention in recent work \citep{Chiu2013,chiu2018cognitive,ma2023bridging}.

%referring to the one-parameter conjunctive model as the \emph{restricted DINA model}. 

%When the conjunctive operator in Equation~\eqref{eq:ope-conj} is used in the one-parameter Boolean factor model, the resulting model yields the same estimations of individuals' latent vectors as the nonparametric method \citep{restrictedDINA}. Therefore, the one-parameter Boolean factor models, which admit a generic Boolean operator $\ast$, can be considered a generalization of the classic nonparametric method, allowing for more flexibility in the relationship between observed variables and latent variables.
\noindent
\textbf{Two-parameter Boolean factor models.}
By introducing an additional parameter \(s\), the models assume that
for any $j$,
\vspace{-0.4cm}
\begin{equation}\label{eq:two-param-model}
    \P(R_j = 1 \mid \QQ, \aaa) = 
{\renewcommand{\arraystretch}{0.5}
    \begin{cases}
1 - s, & \text{if } \qq_j \ast \aaa = 1, \\
g,     & \text{if } \qq_j \ast \aaa = 0.
    \end{cases}}
\end{equation}
In educational assessments, the parameter $s$ is known as the \emph{slipping parameter}, representing the probability that an individual who could respond correctly in an ideal case provides an incorrect response due to carelessness.  
Conversely, the parameter $g$, referred to as the \emph{guessing parameter}, captures the probability that an individual who lacks the necessary skills  guesses the correct answer.
In cognitive diagnosis models, these models correspond to the \emph{weighted nonparametric classification method} \citep{Chiu2013}, a variation of the aforementioned NPC that assigns different weights to slips and guesses. 
% Specifically, this method defines the objective as the weighted Hamming distance, assigning a weight \( w_s \) to slips and a weight \( w_g \) to guesses. Consequently, the estimator of the latent attribute \( \aaa \) for an individual with observed responses \( \RR \) is given by: $\hat{\aaa} = \argmin_{\aaa \in \{0,1\}^K} \sum_{j=1}^J w_g \1\,(R_j = 1)\,|R_j - \qq_j \ast \aaa| + \sum_{j=1}^J w_s \1\,(R_j = 0)\,|R_j - \qq_j \ast \aaa|,$
% which is also the maximum likelihood estimator of the model in Equation~\eqref{eq:two-param-model}. 

{\color{black}
\noindent
\textbf{Multi-parameter Boolean factor models.}
Allowing the slipping and guessing parameters to vary across observed variables, the model assumes that, for each \(j\in[J]\),
\begin{equation}\label{eq:multi-model}
    \P(R_j = 1 \mid \QQ, \aaa) = {\renewcommand{\arraystretch}{0.5}
    \begin{cases}
1-s_j,\;&\text{if}\;\qq_j \ast \aaa = 1,\\
g_j,\;&\text{if}\;\qq_j \ast \aaa = 0.
    \end{cases}
    }
    \end{equation}
    When \(\ast\) is the conjunctive or disjunctive operator, \eqref{eq:multi-model} becomes the well-known DINA model \citep{junker2001cognitive} or DINO model \citep{Templin}, respectively. 
}
\begin{remark}\label{rm:generalmodels}
The probabilistic Boolean factor models above also serve as foundations for more complex latent variable models. In cognitive diagnosis, based on the DINA model, more general models can be obtained, such as the generalized DINA model \citep[GDINA;][]{de2011generalized}, the general diagnostic model \citep[GDM;][]{davier2008general}, the log-linear cognitive diagnosis model \citep[LCDM;][]{henson2009defining}.
\end{remark}

\subsection{Identifiability Problem}\label{sec:identifiability}
% In many applications, the \(\QQ\)-matrix must be estimated rather than specified in advance.
As with most latent variable models, achieving reliable and valid estimation
and inference requires addressing the identifiability of the model parameters.
In particular, the graphical structure/\(\QQ\)-matrix of the models we consider plays a pivotal role: it encodes the relationships between
the latent and observed variables and is crucial for model interpretation, as
illustrated in Examples~\ref{ex:app1} and~\ref{ex:app2}. 
Consequently, its identifiability is the main focus
of this work.

The primary challenge in identifying the $\QQ$-matrix arises from its discrete nature and the non-linear relationships it encodes between latent and observed variables. While previous works \citep{JLGXZY2011,chen2015statistical,xu2018,gu21,MIRON2021107809,desantis2022factorization} have studied conditions for
identifying the \(\QQ\)-matrix, the conditions proposed in these works are often too stringent for practical applications. 
In particular, they typically require the existence of at least one \emph{pure node} for \emph{each} latent variable. Formally, an observed variable $R_j$  is called a pure node if it has exactly one parent in the graphical structure, equivalently, $\norm{\qq_j} = 1$, where $\norm{\Cdot}$ denotes the $\ell_0$ norm. 
For example, consider the graphical structure and its corresponding $\QQ$-matrix in Figure~\ref{model:pure}. Here, observed variables $R_j$ for $j \in \{1,2,3,4\}$ are pure nodes, and each latent variable has one associated pure node. 
When pure nodes exist for every latent variable, it implies that, after a suitable column permutation, the corresponding $\QQ$-matrix contains an identity submatrix. Such a $\QQ$-matrix is also said to be \emph{complete} \citep{chiu2009,xu2017}, a property that is commonly assumed in the literature to ensure identifiability. However, such a requirement is often too stringent and hard to meet in practice.
%, especially when latent variables are correlated, especially in cognitive diagnosis applications. 
For instance, in educational and psychological assessments, where diagnostic test items are designed to measure latent cognitive skills, a practical $\QQ$-matrix rarely satisfies such a requirement \citep[e.g.,][]{jaeger2006distinguishing, de2011generalized, lee2011cognitive}.
\vspace{-0.5cm}
\begin{figure}[H]
\centering
\begin{minipage}{0.57\textwidth}
\centering
\begin{tikzpicture}[scale=0.9]

  \tikzstyle{obs}=[circle, draw, minimum size=20pt, inner sep=0pt, fill=white]
  \tikzstyle{lat}=[circle, draw, minimum size=20pt, inner sep=0pt, fill=white]
  \tikzstyle{qedge}=[-latex, thick]

  \node (R1)[obs] at (1.2, 0) {$R_1$};
  \node (R2)[obs] at (2.4, 0) {$R_2$};
  \node (R3)[obs] at (3.6, 0) {$R_3$};
  \node (R4)[obs] at (4.8, 0) {$R_4$};
  \node (R5)[obs] at (6.0, 0) {$R_5$};
  \node (R6)[obs] at (7.2, 0) {$R_6$};

  \node (a1)[lat] at (2.4, 1.5) {$\alpha_1$};
  \node (a2)[lat] at (3.6, 1.5) {$\alpha_2$};
  \node (a3)[lat] at (4.8, 1.5) {$\alpha_3$};
  \node (a4)[lat] at (6.0, 1.5) {$\alpha_4$};

  \draw[qedge] (a1) -- (R1);
  \draw[qedge] (a2) -- (R2);
  \draw[qedge] (a3) -- (R3);
  \draw[qedge] (a4) -- (R4);

  \draw[qedge] (a1) -- (R5);
  \draw[qedge] (a2) -- (R5);
  \draw[qedge] (a3) -- (R6);
  \draw[qedge] (a4) -- (R6);

  \draw (a1) -- (a2);
  \draw (a2) -- (a3);
  \draw (a3) -- (a4);
  \draw[bend left=60] (a1) to (a3);
  \draw[bend left=60] (a2) to (a4);
  \draw[bend left=90] (a1) to (a4);

  \node[draw, dashed, thick,
        fit=(a1)(a4)(R1)(R6), inner sep=6pt] {};

\end{tikzpicture}
\caption*{(a) Graphical structure}
\end{minipage}\hfill
\begin{minipage}{0.36\textwidth}
\centering
\begin{equation*}
\QQ=
\begin{pmatrix}
1 & 0 & 0 & 0 \\
0 & 1 & 0 & 0 \\
0 & 0 & 1 & 0 \\
0 & 0 & 0 & 1 \\
1 & 1 & 0 & 0 \\
0 & 0 & 1 & 1
\end{pmatrix}
\end{equation*}
\caption*{(b) \(\QQ\)-matrix}
\end{minipage}
\caption{A graphical structure and its $\QQ$-matrix.
(a) Each \(R_j\), for \(j=1,\ldots,4\), is a pure node with exactly one
parent. (b) The \(\QQ\)-matrix contains an identity submatrix.}
\label{model:pure}
\end{figure}
\vspace{-0.5cm}

Our goal is to address this limitation by establishing sufficient and
necessary conditions for identifying the \(\QQ\)-matrix without requiring
pure nodes. For instance, the graphical structures in
Figure~\ref{fig:models} share the same \(\QQ\)-matrix,
which contains no pure nodes yet will be shown to be identifiable under
commonly used Boolean factor models in Example~\ref{ex2} of
Section~\ref{sec:graphcond}.

Before formally defining the identifiability of the \(\QQ\)-matrix, we note that many commonly used Boolean operators, including the conjunctive and disjunctive operators, are invariant to permutations of the latent coordinates. Throughout this work, we restrict attention to operators satisfying this property. Specifically, for any permutation \(\pi\) of \([K]\), $\qq \ast \aaa = (\pi \qq)\ast(\pi \aaa),$ where \(\pi\qq\) and \(\pi\aaa\) denote the vectors obtained by applying the same coordinate permutation \(\pi\). Thus, simultaneously permuting the columns of \(\QQ\) and relabeling the corresponding latent variables does not change the model for the observed variables. Since the latent variables have no intrinsic ordering, we can therefore only expect to identify the \(\QQ\)-matrix up to column permutation. Specifically, we write \(\bar{\QQ}\sim\QQ\) if \(\bar{\QQ}\) can be obtained by permuting the columns of \(\QQ\); otherwise, we write $\bar{\QQ} \nsim \QQ$.

{\color{black}
In addition to identifying $\QQ$, we also study identifiability
of the remaining model parameters. We note that, without pure nodes, the full latent proportion vector
$\pp=(p_{\vv}:\vv\in\{0,1\}^K)$, where
$p_{\vv}=\P(\aaa=\vv)$, is generally not identifiable.
For instance, under the conjunctive and disjunctive operators,
completeness is necessary for identifying $\pp$
\citep{xu2017,gu19}.
When completeness is not imposed, distinct latent vectors may
induce the same conditional distribution of $\RR$ and therefore
cannot be distinguished from the observed data.
For a fixed $\QQ$, let $[\aaa]$ denote the equivalence class of
latent vectors that induce the same conditional distribution as
$\aaa$, and let
$p_{[\aaa]}=\sum_{\vv\in[\aaa]}p_{\vv}$ denote its grouped
proportion. We write $\mathcal E(\QQ)$ for the collection of
these equivalence classes and
$p_{[\Cdot]}=(p_{[\aaa]})_{[\aaa]\in\mathcal E(\QQ)}$
for the vector of grouped proportions; see
Section~\ref{sec:G} for the formal definitions.
The observed distribution depends on the proportions within
each class only through their sum. Thus, proportions of
indistinguishable latent vectors cannot be identified separately,
and $p_{[\Cdot]}$ represents the finest possible resolution
for identifying $\pp$ \citep{gu2020partial}. 
% Importantly, this partial identification of \(\pp\) does not prevent the identification of the \(\QQ\)-matrix, which is our primary identifiability target. 
We therefore formulate joint identifiability in terms of these
grouped proportions. Let $\TTT$ denote the model parameters
other than $\QQ$. Specifically, for Boolean
factor models in \eqref{eq:Boolean}, \(\TTT=p_{[\Cdot]}\); for probabilistic Boolean factor models in \eqref{eq:multi-model},
\(\TTT=(\mathbf s,\mathbf g,p_{[\Cdot]})\), where
\(\mathbf s=(s_1,\ldots,s_J)^\top\) and
\(\mathbf g=(g_1,\ldots,g_J)^\top\). 
Correspondingly, we write \((\bar{\QQ},\bar{\TTT})\sim(\QQ,\TTT)\) if
\(\bar{\QQ}\sim\QQ\) and \(\bar{\TTT}=\TTT\) after a common
relabeling of the latent variables that aligns
\(\bar{\QQ}\) with \(\QQ\), where grouped proportions are compared
using the corresponding equivalence classes. Otherwise, we write
\((\bar{\QQ},\bar{\TTT})\nsim(\QQ,\TTT)\).

For the $\QQ$-matrix under consideration, we assume that
$p_{[\aaa]}>0$ for every $[\aaa]\in\mathcal E(\QQ)$.
This positivity condition is imposed at the level of equivalence
classes and does not require $p_{\aaa}>0$ for every individual
latent vector. This condition excludes degenerate cases in which
the support of $\pp$ precludes identification of $\QQ$. For
instance, under the conjunctive model, if all subjects have the
all-one latent vector ($p_{\one}=1$), then $\QQ$ is not
identifiable, since the resulting response distribution cannot
distinguish different $\QQ$-matrices. Note that the positivity
condition is imposed only on the data-generating representation
$(\QQ,\TTT)$, while a competing representation
$(\bar\QQ,\bar\TTT)$ may assign zero probability to some of its
equivalence classes. For probabilistic Boolean factor models, we
also assume $0\leq g_j<1-s_j\leq1$ for $j\in[J]$, as in the
literature \citep{xu2016,xu2017}.

% With these preliminaries in place, we now give the formal definition of
% identifiability for the \(\QQ\)-matrix and the model parameter.}

\begin{definition}\label{def:identifiability}
For given \(J\) and \(K\), let
\(\QQ\in\{0,1\}^{J\times K}\) with \(p_{[\aaa]}>0\) for every
\([\aaa]\in\mathcal E(\QQ)\).
We say that \(\QQ\) is identifiable if for every
\(\TTT\)  there does not exist \((\bar{\QQ},\bar{\TTT})\), with
\(\bar{\QQ}\nsim\QQ\), such that
\begin{equation}
\label{eq:equal-response-distributions}
\P(\RR=\rr\mid\QQ,\TTT)
=
\P(\RR=\rr\mid\bar{\QQ},\bar{\TTT}),
\qquad \rr\in\{0,1\}^J.
\end{equation}
We say that \((\QQ,\TTT)\)
is jointly identifiable if there does not exist \((\bar{\QQ},\bar{\TTT})\), with
\((\bar{\QQ},\bar{\TTT})\nsim(\QQ,\TTT)\), such that
\eqref{eq:equal-response-distributions} holds. 
% In particular, joint identifiability of \((\QQ,\TTT)\) for every \(\TTT\) implies identifiability of \(\QQ\).
\end{definition}

% \begin{definition}
% \label{def:uniform-q-identifiability}
% For given \(J\) and \(K\), we say that a given $\QQ\in\{0,1\}^{J\times K}$ is identifiable if for every $\TTT$ with  \(p_{[\aaa]}>0\) for every 
% \([\aaa] \in \mathcal{E}(\QQ)\) there does not exist
% $(\bar\QQ,\bar\TTT)$ where $\bQQ \nsim \QQ$, such that
% \[
% \P(\RR=\rr\mid\QQ,\TTT)
% =
% \P(\RR=\rr\mid\bar\QQ,\bar\TTT),
% \qquad
% \rr\in\{0,1\}^J.
% \]
% \end{definition}

% \begin{definition}\label{def:joint-identifiability}
% For given \(J\) and \(K\), and a given $\QQ \in \{0,1\}^{J \times K}$, if for every
% $(\bar\QQ,\bar\TTT)$,
% \[
% \P(\RR=\rr\mid\QQ,\TTT)
% =
% \P(\RR=\rr\mid\bar\QQ,\bar\TTT),
% \qquad
% \rr\in\{0,1\}^J,
% \]
% implies that $\QQ \sim \bQQ$ for any $\TTT$  with \(p_{[\aaa]}>0\) for every 
% \([\aaa] \in \mathcal{E}(\QQ)\), we say that $\QQ$ is identifiable. If it implies that $\QQ \sim \bQQ$ and $\TTT = \bar \TTT$, then we say that $(\QQ, \TTT)$ are jointly identifiable. 
% \end{definition}
}
Finally, to ensure that our study of the identifiability of the
\(\QQ\)-matrix is practically meaningful, we restrict attention to operators
\(\ast\) that are injective in both \(\qq\) and \(\aaa\). Formally, \(\ast\)
is injective in \(\qq\) if, for any
\(\qq_1,\qq_2\in\{0,1\}^K\), the condition
\(\qq_1\ast\aaa=\qq_2\ast\aaa\) for all
\(\aaa\in\{0,1\}^K\) implies that \(\qq_1=\qq_2\). Similarly, \(\ast\) is
injective in \(\aaa\) if, for any
\(\aaa_1,\aaa_2\in\{0,1\}^K\), the condition
\(\qq\ast\aaa_1=\qq\ast\aaa_2\) for all
\(\qq\in\{0,1\}^K\) implies that \(\aaa_1=\aaa_2\). 
Injectivity in both $\qq$ and $\aaa$ is crucial for identifiability.
For instance, if $\ast$ is \emph{not} injective in $\qq$, i.e., there exist distinct $\qq_1 \neq \qq_2$ such that $\qq_1 \ast \aaa = \qq_2 \ast \aaa$ for all $\aaa \in \{0,1\}^K$, then the observed responses generated by $\qq_1$ and $\qq_2$ are indistinguishable regardless of the latent vectors $\aaa$. This means that replacing $\qq_1$ with $\qq_2$ in a $\QQ$-matrix does not change the distribution of $\RR$, making it impossible to identify $\qq_1$ and $\qq_2$ based on the data.
We therefore focus on operators that are injective in both
arguments. Many commonly used operators satisfy these injectivity properties;
see Fact~\ref{fact:injective} in the Supplementary Material. Furthermore, we
assume that \(\qq\ast\zero=0\) for every \(\qq\neq\zero\). 
% which aligns with the practical interpretation in cognitive diagnosis that a zero latent skill vector $\aaa = \zero$ cannot produce a positive response to any item with nonzero $\qq$.
\vspace{-0.3cm}
\section{Graphical Representation and  Identifiability Results}\label{sec:graph}
This section develops graphical and algebraic characterizations of the
identifiability of the $\QQ$-matrix under Boolean factor models. Our key observation is that an operator $\ast$ induces a natural partial order on the latent space, which can be visualized through a Hasse diagram. For a given $\QQ$, embedding its $\qq$-vectors into this diagram yields a graphical representation of the distribution of $\RR$.
Based on this, we characterize identifiability through the uniqueness of this graphical representation and demonstrate that pure nodes are not necessary. 
To verify this condition, we leverage the Boolean structure of the problem to express it through logical operations and subsequently develop an efficient SAT-based algorithm.
\subsection{Graphical Representation}\label{sec:graph-representative}

\subsubsection{\texorpdfstring{$``\boldsymbol{\ast}"$}{``*''}-Induced Partial Order and Hasse Diagram}
%The graphical representation is called the \emph{Hasse-based distribution graph} of $\QQ$, denoted $G(\QQ)$. 
Our results leverage a novel graphical tool. We call this graph the \emph{Hasse-based distribution graph of $\QQ$}, and denote it by $G(\QQ)$, to be introduced in the following.
  The construction of $G(\QQ)$ starts from our key observation in the Boolean factor models that, under any Boolean operator \(\ast\), a natural hierarchy arises among the latent vectors in \(\{0,1\}^K\), based on their interactions with all \(\qq \in \{0,1\}^K\). 
Specifically, we define a relation \(\preceq^\ast\) on \(\{0,1\}^K\) as follows: for any \(\aaa_1, \aaa_2 \in \{0,1\}^K\),
\begin{equation}\label{eq:partial_order_def}
\aaa_1 \preceq^\ast \aaa_2 \quad \text{if} \quad \qq \ast \aaa_1 \leq \qq \ast \aaa_2 \quad \text{for all} \; \qq \in \{0,1\}^K.
\end{equation}
The next result shows that this relation indeed forms a \emph{non-strict partial order}, as it satisfies the three defining properties of reflexivity, antisymmetry, and transitivity \citep{davey2002introduction}. 
\begin{proposition}\label{prop:partial}
    The relation \(\preceq^\ast\) defined in ~\eqref{eq:partial_order_def} is a non-strict partial order on \(\{0,1\}^K\). 
\end{proposition}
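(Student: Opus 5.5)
The plan is to check the three defining properties of a non-strict partial order directly from definition~\eqref{eq:partial_order_def}. Reflexivity and transitivity come from the corresponding properties of $\leq$ on $\{0,1\}$, applied pointwise in $\qq$. Antisymmetry is the only place where a standing assumption on $\ast$ is needed, namely injectivity in $\aaa$.

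\emph{Reflexivity.} For any $\aaa\in\{0,1\}^K$ and any $\qq\in\{0,1\}^K$, we have $\qq\ast\aaa\le\qq\ast\aaa$. Hence $\aaa\preceq^\ast\aaa$.

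\emph{Transitivity.} Suppose $\aaa_1\preceq^\ast\aaa_2$ and $\aaa_2\preceq^\ast\aaa_3$. Then for every $\qq$,
\[
\qq\ast\aaa_1\le\qq\ast\aaa_2\le\qq\ast\aaa_3,
\]
by transitivity of the usual order on $\{0,1\}$. This gives $\aaa_1\preceq^\ast\aaa_3$.

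\emph{Antisymmetry.} Suppose $\aaa_1\preceq^\ast\aaa_2$ and $\aaa_2\preceq^\ast\aaa_1$. Then $\qq\ast\aaa_1\le\qq\ast\aaa_2$ and $\qq\ast\aaa_2\le\qq\ast\aaa_1$ for every $\qq$, so $\qq\ast\aaa_1=\qq\ast\aaa_2$ for all $\qq\in\{0,1\}^K$. By the standing assumption in Section~\ref{sec:identifiability} that $\ast$ is injective in $\aaa$, this forces $\aaa_1=\aaa_2$.

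The only step needing care is antisymmetry. Without injectivity in $\aaa$, $\preceq^\ast$ would only be a preorder, and one would have to pass to a quotient. Injectivity in $\aaa$ is exactly the condition that removes this issue, so no step here is a genuine obstacle.
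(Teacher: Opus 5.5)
Your proof is correct and matches the paper's argument. Reflexivity and transitivity follow pointwise from the order $\leq$ on $\{0,1\}$, and antisymmetry is obtained exactly as in the paper, from the standing assumption that $\ast$ is injective in $\aaa$.
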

\noindent
Following the literature \citep{davey2002introduction}, we define
the strict partial order $\prec^\ast$ corresponding to
$\preceq^\ast$ by $\aaa_1\prec^\ast\aaa_2$ if and only if
$\aaa_1\preceq^\ast\aaa_2$ and $\aaa_1\neq\aaa_2$.
The reverse relations $\succeq^\ast$ and $\succ^\ast$
are defined analogously.
Both the conjunctive operator~\eqref{eq:ope-conj} and the
disjunctive operator~\eqref{eq:ope-disj2} induce the product
order~$\preceq$; see Supplementary
Section~\ref{app:secgraph} for a proof. 

%\medskip
%\noindent
%\textbf{Hasse diagram for the induced partial order.}
Given the induced partial order \(\preceq^\ast\) on \(\{0,1\}^K\), we proceed to represent it using a Hasse diagram \citep{davey2002introduction}, which we denote by \(H(\{0,1\}^K, \, \preceq^\ast)\). Let \(v(\aaa)\) represent the node corresponding to the latent vector \(\aaa \in \{0,1\}^K\). The diagram is constructed as follows:
draw a directed edge from node \(v(\aaa_1)\) to node \(v(\aaa_2)\) if and only if \(\aaa_1 \prec^\ast \aaa_2\), and there does not exist any other \(\aaa_3\), such that \(\aaa_1 \prec^\ast \aaa_3 \prec^\ast \aaa_2\). 
The structure of the Hasse diagram implies that a directed path from \(v(\aaa_1)\) to \(v(\aaa_2)\) exists if and only if \(\aaa_1 \prec^\ast \aaa_2\) holds.
Therefore, the Hasse diagram fully characterizes the partial orders between all the latent vectors.

Different Boolean operators may induce different partial orders, resulting in varying Hasse diagrams on the latent vector space $\{0,1\}^K$. Among these, the product order, which is the induced partial order of both of the conjunctive operator and the disjunctive operator, is commonly considered in practice. To illustrate, 
%consider the following example with \(K = 3\).
Figure~\ref{fig:exhasse} gives an example of the Hasse diagram for the product order \(\preceq\) on \(\{0,1\}^3\) with \(K = 3\).
%Therefore, unless otherwise specified, the partial order referred to throughout the rest of this paper will be the product order $\preceq$. 
 
% \begin{example}
% The Hasse diagram for the product order \(\preceq\) on \(\{0,1\}^3\) is shown in Figure~\ref{fig:exhasse}. Edges from \(v(110), v(101), v(011)\) to \(v(111)\) indicate that latent vector \((111)\) covers each of these corresponding latent vectors. Although there are no direct edges from \(v(100), v(010), v(001)\) to \(v(111)\), the existence of directed paths between them indicates that \((100) \prec (111)\), \((010) \prec (111)\), and \((001) \prec (111)\).
% \end{example}

\begin{figure}
    \centering
    \begin{tikzpicture}[baseline=(current bounding box.center)]
    % Adjusted node positions for more compact layout
    \node[circle, draw, inner sep=2pt, font=\normalfont, fill=red!20] (000) at (0, 0) {000};
    \node[circle, draw, inner sep=2pt, font=\normalfont, fill=red!20] (100) at (-1.5, 1.5) {100};
    \node[circle, draw, inner sep=2pt, font=\normalfont, fill=red!20] (010) at (0, 1.5) {010};
    \node[circle, draw, inner sep=2pt, font=\normalfont, fill=red!20] (001) at (1.5, 1.5) {001};
    \node[circle, draw, inner sep=2pt, font=\normalfont, fill=red!20] (110) at (-1.5, 3) {110};
    \node[circle, draw, inner sep=2pt, font=\normalfont, fill=red!20] (101) at (0, 3) {101};
    \node[circle, draw, inner sep=2pt, font=\normalfont, fill=red!20] (011) at (1.5, 3) {011};
    \node[circle, draw, inner sep=2pt, font=\normalfont, fill=red!20] (111) at (0, 4.5) {111};
    % Edges with arrows
    \draw[-{Latex[length=2.5mm]}] (000) -- (100);
    \draw[-{Latex[length=2.5mm]}] (000) -- (010);
    \draw[-{Latex[length=2.5mm]}] (000) -- (001);
    \draw[-{Latex[length=2.5mm]}] (100) -- (110);
    \draw[-{Latex[length=2.5mm]}] (100) -- (101);
    \draw[-{Latex[length=2.5mm]}] (010) -- (110);
    \draw[-{Latex[length=2.5mm]}] (010) -- (011);
    \draw[-{Latex[length=2.5mm]}] (001) -- (101);
    \draw[-{Latex[length=2.5mm]}] (001) -- (011);
    \draw[-{Latex[length=2.5mm]}] (110) -- (111);
    \draw[-{Latex[length=2.5mm]}] (101) -- (111);
    \draw[-{Latex[length=2.5mm]}] (011) -- (111);
\end{tikzpicture}\caption{Hasse diagram $H(\{0,1\}^3, \, \preceq)$ for the product order $\preceq$ on $\{0,1\}^3$}
     \label{fig:exhasse}
\end{figure}

%The Hasse diagram $H(\{0,1\}^K, \, \preceq^\ast)$, which visually characterizes the induced partial orders among all latent vectors, serves as the foundation for our core tool to study the identifiability of the $\QQ$-matrix—the graphical representation of the distribution of $\RR$ to be introduced in the following subsection.
 
\vspace{-0.3cm}
{
\color{black}
\subsubsection{Extended Hasse Diagram of \texorpdfstring{$\QQ$}{Q}}\label{subsec:extended_hasse}
Based on the induced Hasse diagram, to graphically represent the distribution of $\RR$ under $\QQ$, 
% which is determined by the interactions between the \(\qq\)-vectors and the latent $\aaa$-vectors, we need to incorporate nodes representing the \(\qq\)-vectors into the Hasse diagram.
we incorporate nodes representing the \(\qq\)-vectors
into the Hasse diagram.

\begin{definition}[Extended Hasse diagram of \(\QQ\)]\label{def:extended_hasse}
{\it
The \emph{extended Hasse diagram of \(\QQ\)}, denoted by \(\mch(\QQ)\), is
obtained from the Hasse diagram \(H(\{0,1\}^K,\preceq^\ast)\) by adding one
node \(v_j^*(\qq_j)\) for each \(\qq_j\)-vector, and drawing a directed
edge from \(v_j^*(\qq_j)\) to \(v(\aaa)\) whenever
\begin{equation}\label{eq:minimal_capable}
        \aaa \in \mca_j^+ 
        \quad\text{and}\quad
        \nexists\ \aaa' \in \mca_j^+ \text{ such that } \aaa' \prec^\ast \aaa,
\end{equation}
where $\mca_j^+ = \{\aaa \in \{0,1\}^K : \qq_j \ast \aaa = 1\}.$
}
\end{definition}

That is, \(\aaa\) is a minimal element of \(\mca_j^+\) under \(\preceq^\ast\). 
% That is, $\aaa$ is the ``minimal'' element in $\mca_j^+$, meaning that it is not greater than any other latent vectors  in $\mca_j^+$ under the partial order relation $\prec^\ast$. 
In the applications of educational assessments,
\(\mca_j^+\) is often called the \emph{capable class} for item \(j\)
\citep{xu2017,slam}. Adopting this terminology, we call the latent vectors
satisfying~\eqref{eq:minimal_capable} the \emph{minimal capable vectors for \(\qq_j\)}. 
We use \(\mcm_j^+\) to denote the set of these vectors and refer
to \(\mcm_j^+\) as the \emph{minimal capable class}.
}
% In the applications of educational assessments, $\mca_j^+$ represents the set of the latent vectors $\aaa$'s that are capable of correctly answering item $j$, and is often called as the \emph{capable class} for item $j$ \citep{xu2017,slam}. Latent vectors  $\aaa$'s satisfying Equation~\eqref{eq:minimal_capable} thus represent the minimal $\aaa$-vectors that are capable of correctly answering item $j$. Adopting this terminology, we call the latent vectors $\aaa$'s satisfying~\eqref{eq:minimal_capable} the \emph{minimal capable vectors for $\qq_j$}. Note that, for each $j$,  {\color{black} there} might exist multiple minimal capable vectors for $\qq_j$ (see Figure~\ref{fig:hassewithitem}(b)), we use $\mcm_j^+$ to denote the set of these vectors, and refer to $\mcm_j^+$ as the \emph{minimal capable class}.

\begin{remark}
    For the conjunctive model \eqref{eq:ope-conj}, where $\mca_j^+ = \{ \aaa \in \{0,1\}^K \;|\; \qq_j \preceq \aaa \}$, we have $\aaa = \qq_j$ being the unique minimal element in $\mca_j^+$; therefore, $\mcm_j^+ = \left\{ \qq_j \right\}$. For the disjunctive model \eqref{eq:ope-disj2}, where $\mca_j^+ = \{ \aaa \in \{0,1\}^K \;|\; \exists\, k: q_{jk} = \alpha_k = 1 \}$,   \(\mcm_j^+\) consists of those \(\aaa\) with exactly one coordinate \(k\) set to 1 where \(q_{jk}=1\).
That is, \(\mcm_j^+\) takes the form: $\mcm_j^+ = \set{\ee_k \mid k \in \mcs(\qq_j)}$,
where $\mcs(\qq_j) = \{k \mid q_{jk} = 1\}$ denotes the support of $\qq_j$, i.e., the set of indices where $\qq_j$ has non-zero entries.

\end{remark}

Intuitively, given the minimality of $\mcm_j^+$,  we can determine whether a latent vector $\aaa$ satisfies $\qq_j \ast \aaa = 1$ by checking if there exists at least one element  $\aaa_j^{\min}\in \mcm_j^+$, such that, \(\aaa \succeq^* \aaa_j^{\min}\). Graphically, this means that either there exists a directed \emph{path} from $v(\aaa_j^{\min})$ to $v(\aaa)$ or $v(\aaa_j^{\min}) = v(\aaa)$.
Since \(v_j^*(\qq_j)\) is connected to all elements in \(\mcm_j^+\), whether
\(\qq_j\ast\aaa=1\) holds can be checked by whether there exists a directed
path from \(v_j^*(\qq_j)\) to \(v(\aaa)\).
In this way, $\qq_j \ast \aaa$ for any $\aaa$ is characterized by $\mch(\QQ)$. 
We state this result in the following.
\begin{proposition}\label{prop:path-response}
    For any Boolean operator $\ast$ and for all $\aaa \in \{0,1\}^K$, $\qq_j \ast \aaa = 1$ if and only if there exists at least one $\aaa_j^{\min} \in \mcm_j^+$ such that $\aaa_j^{\min} \preceq^\ast \aaa$, or graphically, if and only if there exists a directed path from $v_j^*(\qq_j)$ to $v(\aaa)$ in $\mch(\QQ)$.
\end{proposition}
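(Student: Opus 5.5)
The plan is to establish the algebraic equivalence first and then translate it into the graphical statement. For the forward direction of the algebraic part, suppose $\qq_j \ast \aaa = 1$, so $\aaa \in \mca_j^+$. Since $\{0,1\}^K$ is finite and $\preceq^\ast$ is a partial order by Proposition~\ref{prop:partial}, every element of the finite poset $(\mca_j^+,\preceq^\ast)$ lies above some minimal element. Concretely, start from $\aaa$; if it is not minimal in $\mca_j^+$, pick $\aaa'\in\mca_j^+$ with $\aaa'\prec^\ast\aaa$ and repeat. Antisymmetry and transitivity rule out cycles, so this strictly descending chain must terminate after finitely many steps at some $\aaa_j^{\min}\in\mcm_j^+$. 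Transitivity then gives $\aaa_j^{\min}\preceq^\ast\aaa$.

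For the converse, suppose $\aaa_j^{\min}\in\mcm_j^+$ and $\aaa_j^{\min}\preceq^\ast\aaa$. By the definition~\eqref{eq:partial_order_def} applied with $\qq=\qq_j$, we get $1=\qq_j\ast\aaa_j^{\min}\le \qq_j\ast\aaa$. Hence $\qq_j\ast\aaa=1$. This direction uses only the fact that $\preceq^\ast$ is defined by quantifying over all $\qq$, including the rows of $\QQ$.

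For the graphical equivalence, I will rely on the stated property of the Hasse diagram: a directed path from $v(\aaa_1)$ to $v(\aaa_2)$ exists if and only if $\aaa_1\prec^\ast\aaa_2$. I will justify this briefly by finiteness. Any strict relation can be refined into a maximal chain, and consecutive elements of such a chain are covering pairs, which are exactly the Hasse edges. Conversely, composing edges along a path gives a strict relation by transitivity. In $\mch(\QQ)$, the node $v_j^*(\qq_j)$ has only outgoing edges, and these go to $v(\aaa')$ for $\aaa'\in\mcm_j^+$. No edges enter $v^*$-nodes, and the $v^*$-nodes have no edges to each other. Therefore any directed path from $v_j^*(\qq_j)$ to $v(\aaa)$ consists of a first edge to some $v(\aaa_j^{\min})$ followed by a (possibly empty) Hasse path to $v(\aaa)$. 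This structure is equivalent to $\aaa_j^{\min}\preceq^\ast\aaa$, where the empty path corresponds to equality. Combining this with the algebraic equivalence finishes the proof.

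The main obstacle is mostly bookkeeping. The key care point is the existence of a minimal element below $\aaa$ within $\mca_j^+$, which needs finiteness together with the partial order axioms. A second care point is handling the length-zero case $\aaa=\aaa_j^{\min}$ in the path interpretation.
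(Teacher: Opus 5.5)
Your proposal is correct and follows essentially the same route as the paper: a finite descending-chain argument gives a minimal capable vector below $\aaa$, the definition of $\preceq^\ast$ with $\qq=\qq_j$ gives the converse, and any path from $v_j^*(\qq_j)$ is a first edge into $\mcm_j^+$ followed by a (possibly empty) Hasse path. The only differences are cosmetic: you run the descent constructively rather than by contradiction, and you justify the path--order correspondence of the Hasse diagram via maximal chains, whereas the paper takes that correspondence as given.
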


{
\color{black}
Proposition~\ref{prop:path-response} shows that \(\qq_j\ast\aaa\) can be read from \(\mch(\QQ)\), motivating our following definition.
\begin{definition}[Path status vector]\label{def:path_status}
{\it
For each latent vector node \(v(\aaa)\) in \(\mch(\QQ)\), its
\emph{path status vector}, denoted by \(\Psi(v(\aaa))\in\{0,1\}^J\), is the
binary vector whose \(j\)-th entry equals one if and only if there exists a
directed path from \(v_j^*(\qq_j)\) to \(v(\aaa)\) in \(\mch(\QQ)\).
}
\end{definition}
By Proposition~\ref{prop:path-response},
\(\Psi(v(\aaa))=(\qq_1\ast\aaa,\ldots,\qq_J\ast\aaa)^\top\). Thus,
\(\Psi(v(\aaa))\) corresponds exactly to the conditional distribution of \(\RR\) given
\(\aaa\).
}
As a bridge between the graphical representation and the conditional distribution of $\RR$, the path status vectors serve as a useful tool in the graphical approach to study the identifiability of the $\QQ$-matrix. 

To further illustrate the construction of the extended Hasse diagram and the concept of path status vectors, consider the following example.
\begin{example}\label{ex1}
 {\it   Consider conjunctive~\eqref{eq:ope-conj} and disjunctive~\eqref{eq:ope-disj2} models with the same $\QQ$:
    \begin{equation}\label{ex1-Q}
    {\renewcommand{\arraystretch}{0.6}
            \QQ = 
    \begin{pmatrix}
        1 & 1 & 0\\
        1 & 0 & 1\\
        0 & 1 & 1
    \end{pmatrix}.}
    \end{equation}
Under both models, the induced partial order is $\preceq$, and the corresponding Hasse diagram is shown in Figure~\ref{fig:exhasse}. 
The $\qq$-vector nodes $v_1^*(110)$, $v_2^*(101)$, and $v_3^*(011)$ are represented by red rectangles in Figure~\ref{fig:hassewithitem}, and are connected to the latent vector nodes via their minimal capable classes $\mcm_j^+$. 
For visual simplicity, we suppress the item indices. 
% as all item nodes can be distinguished from their positions.
For the conjunctive model each $\mcm_j^+$ contains a single element, whereas for the disjunctive model each contains two.
The same $\QQ$ produces distinct extended Hasse diagrams under the two models, reflecting the different interactions between $\qq$ and $\aaa$.  
In both cases, the existence of a directed path from $v_j^*(\qq_j)$ to $v(\aaa)$ matches the condition $\qq_j \ast \aaa = 1$ in Proposition~\ref{prop:path-response}.
Additionally, note that multiple latent $\aaa$-vector nodes may share identical path status vectors; examples of such nodes are grouped with dashed boxes.
}
\begin{figure}[htbp]
    \centering
    \begin{minipage}{0.45\textwidth}
    \centering
    \begin{tikzpicture}[baseline=(current bounding box.center)]
        % Adjusted node positions for more compact layout
        \node[circle, draw, inner sep=2pt, font=\normalfont, fill=red!20] (000) at (1.5, 0) {000};
        \node[circle, draw, inner sep=2pt, font=\normalfont, fill=red!20] (100) at (0, 1.5) {100};
        \node[circle, draw, inner sep=2pt, font=\normalfont, fill=red!20] (010) at (1.5, 1.5) {010};
        \node[circle, draw, inner sep=2pt, font=\normalfont, fill=red!20] (001) at (3, 1.5) {001};
        \node[circle, draw, inner sep=2pt, font=\normalfont, fill=red!20] (110) at (-1.5, 3) {110};
        \node[circle, draw, inner sep=2pt, font=\normalfont, fill=red!20] (101) at (0, 3) {101};
        \node[circle, draw, inner sep=2pt, font=\normalfont, fill=red!20] (011) at (1.5, 3) {011};
        \node[circle, draw, inner sep=2pt, font=\normalfont, fill=red!20] (111) at (0, 4.5) {111};
    
        % $\qq$-vector nodes
        \node[rectangle, draw, inner sep=5pt, fill=red!50, font=\normalfont] (110d) at (-3.6, 1.5) {110};
        \node[rectangle, draw, inner sep=5pt, fill=red!50, font=\normalfont] (101d) at (-2.4, 1.5) {101};
        \node[rectangle, draw, inner sep=5pt, fill=red!50, font=\normalfont] (011d) at (-1.2, 1.5) {011};
        % Edges with arrows
        \draw[-{Latex[length=2.5mm]}] (000) -- (100);
        \draw[-{Latex[length=2.5mm]}] (000) -- (010);
        \draw[-{Latex[length=2.5mm]}] (000) -- (001);
        \draw[-{Latex[length=2.5mm]}] (100) -- (110);
        \draw[-{Latex[length=2.5mm]}] (100) -- (101);
        \draw[-{Latex[length=2.5mm]}] (010) -- (110);
        \draw[-{Latex[length=2.5mm]}] (010) -- (011);
        \draw[-{Latex[length=2.5mm]}] (001) -- (101);
        \draw[-{Latex[length=2.5mm]}] (001) -- (011);
        \draw[-{Latex[length=2.5mm]}] (110) -- (111);
        \draw[-{Latex[length=2.5mm]}] (101) -- (111);
        \draw[-{Latex[length=2.5mm]}] (011) -- (111);
    
        \draw[-{Latex[length=2.5mm]}] (110d) -- (110);
        \draw[-{Latex[length=2.5mm]}] (101d) -- (101);
        \draw[-{Latex[length=2.5mm]}] (011d) -- (011);
    
        % Dashed rectangle
        \draw[dashed, thick] (-0.5,-0.5) rectangle (3.5,2);
    \end{tikzpicture}\caption*{(a) Conjunctive model}
    \end{minipage}%
    \hfill
    \begin{minipage}{0.45\textwidth}
    \centering
    \begin{tikzpicture}[baseline=(current bounding box.center)]
        % Adjusted node positions for more compact layout
        \node[circle, draw, inner sep=2pt, font=\normalfont, fill=red!20] (000) at (3.5, 0) {000};
        \node[circle, draw, inner sep=2pt, font=\normalfont, fill=red!20] (100) at (-1.5, 1.5) {100};
        \node[circle, draw, inner sep=2pt, font=\normalfont, fill=red!20] (010) at (0, 1.5) {010};
        \node[circle, draw, inner sep=2pt, font=\normalfont, fill=red!20] (001) at (1.5, 1.5) {001};
        \node[circle, draw, inner sep=2pt, font=\normalfont, fill=red!20] (110) at (-1.5, 3) {110};
        \node[circle, draw, inner sep=2pt, font=\normalfont, fill=red!20] (101) at (0, 3) {101};
        \node[circle, draw, inner sep=2pt, font=\normalfont, fill=red!20] (011) at (1.5, 3) {011};
        \node[circle, draw, inner sep=2pt, font=\normalfont, fill=red!20] (111) at (0, 4.5) {111};

        % $\qq$-vector nodes
        \node[rectangle, draw, inner sep=5pt, fill=red!50, font=\normalfont] (110d) at (-1.5, 0) {110};
        \node[rectangle, draw, inner sep=5pt, fill=red!50, font=\normalfont] (101d) at (0, 0) {101};
        \node[rectangle, draw, inner sep=5pt, fill=red!50, font=\normalfont] (011d) at (1.5, 0) {011};

        % Edges with arrows
        \draw[-{Latex[length=2.5mm]}] (000) -- (100);
        \draw[-{Latex[length=2.5mm]}] (000) -- (010);
        \draw[-{Latex[length=2.5mm]}] (000) -- (001);
        \draw[-{Latex[length=2.5mm]}] (100) -- (110);
        \draw[-{Latex[length=2.5mm]}] (100) -- (101);
        \draw[-{Latex[length=2.5mm]}] (010) -- (110);
        \draw[-{Latex[length=2.5mm]}] (010) -- (011);
        \draw[-{Latex[length=2.5mm]}] (001) -- (101);
        \draw[-{Latex[length=2.5mm]}] (001) -- (011);
        \draw[-{Latex[length=2.5mm]}] (110) -- (111);
        \draw[-{Latex[length=2.5mm]}] (101) -- (111);
        \draw[-{Latex[length=2.5mm]}] (011) -- (111);

        %edges between items nodes and latent vector nodes.
        \draw[-{Latex[length=2.5mm]}] (110d) -- (100);
        \draw[-{Latex[length=2.5mm]}] (110d) -- (010);
        \draw[-{Latex[length=2.5mm]}] (101d) -- (100);
        \draw[-{Latex[length=2.5mm]}] (101d) -- (001);
        \draw[-{Latex[length=2.5mm]}] (011d) -- (001);
        \draw[-{Latex[length=2.5mm]}] (011d) -- (010);

        % Dashed rectangle
        \draw[dashed, rounded corners] (-2.5, 2.5) rectangle (2.5, 5);
    \end{tikzpicture}
    \caption*{(b) Disjunctive model}\end{minipage}\caption{Extended Hasse diagrams of $\QQ$ in Equation~\eqref{ex1-Q}.}\label{fig:hassewithitem}
\end{figure}
\end{example}

\subsubsection{Hasse-Based Distribution Graph of \texorpdfstring{$\QQ$}{Q}}\label{sec:G}
{
As discussed in Example~\ref{ex1}, multiple latent vector nodes in
\(\mch(\QQ)\) may have the same path status vector. Since each latent vector
\(\aaa\) is represented by a unique node \(v(\aaa)\), the path status vectors
induce an equivalence relation on the latent vectors. Specifically, we say that
two latent vectors \(\aaa\) and \(\aaa'\) are equivalent, denoted by
\(\aaa\sim\aaa'\), if \(\Psi(v(\aaa))=\Psi(v(\aaa'))\). Let
\([\aaa]=\{\aaa'\in\{0,1\}^K:\aaa'\sim\aaa\}\) denote the equivalence class of
\(\aaa\), and let \(\mathcal E(\QQ)=\{[\aaa]:\aaa\in\{0,1\}^K\}\) be the
collection of all equivalence classes. Since \(\Psi(v(\aaa))\) determines the
conditional distribution of \(\RR\) given \(\aaa\), latent vectors in the same
equivalence class have the same conditional distribution of \(\RR\), and therefore
are indistinguishable in their contributions to the marginal distribution of
\(\RR\).
For each equivalence class \(\mcc\in\mathcal E(\QQ)\), choose one representative
latent vector \(\aaa_{\mcc}\in\mcc\). The corresponding node
\(v(\aaa_{\mcc})\) is called the representative node of \(\mcc\). 
Denote the collection of all such nodes by $R(\QQ)=\{v(\aaa_{\mcc}):\mcc\in\mathcal E(\QQ)\}.$ Together with the $\qq$-vector nodes, they form the node set of \(G(\QQ)\): $V(\QQ)=$ $R(\QQ)\cup\{v_j^*(\qq_j):j\in[J]\}$. Using this node set, we now formally define $G(\QQ)$ analogously to the Hasse-diagram construction.

\color{black}
\begin{definition}[Hasse-based distribution graph of \(\QQ\)]
\label{def:GQ}
{\it
For any distinct
\(\mcc_1,\mcc_2\in\mathcal E(\QQ)\), we draw a directed edge from
\(v(\aaa_{\mcc_1})\) to \(v(\aaa_{\mcc_2})\) if and only if $
\Psi(v(\aaa_{\mcc_1}))
\prec
\Psi(v(\aaa_{\mcc_2}))$
and there does not exist any
\(\mcc_3\in\mathcal E(\QQ)\) such that $\Psi(v(\aaa_{\mcc_1}))
\prec
\Psi(v(\aaa_{\mcc_3}))
\prec
\Psi(v(\aaa_{\mcc_2}))$.
For every \(j\in[J]\) and \(\mcc\in\mathcal E(\QQ)\), we draw a
directed edge from \(v_j^*(\qq_j)\) to
\(v(\aaa_{\mcc})\) if and only if $\Psi_j(v(\aaa_{\mcc}))=1$
and there does not exist any
\(\mcc'\in\mathcal E(\QQ)\) such that $\Psi_j(v(\aaa_{\mcc'}))=1$ and $\Psi(v(\aaa_{\mcc'}))
\prec
\Psi(v(\aaa_{\mcc})).$
}
\end{definition}
% Since the construction is based on $\Psi$, \(G(\QQ)\) does not depend on the
% choice of representatives. 
}

\begin{remark}
{\it
    In each equivalence class, any of the elements could serve as a representative for the class. For the conjunctive and disjunctive models, $R(\QQ)$ can be chosen separately as:
\begin{equation}\label{eq:RQ_conj}
R(\QQ) = \left\{ v\paren{\underset{j \in \mcs }{\vee} \qq_j} \;\Big|\; \mcs \subseteq [J] \right\}\quad\text{and}\quad
R(\QQ) = \left\{ v\paren{\underset{j \in \mcs }{\wedge} \tilde{\qq}_j} \;\Big|\; \mcs \subseteq [J] \right\},
\end{equation}
where $\tilde{\qq}_j$ is obtained by flipping the zeros and ones in $\qq_j$. 
% The empty disjunction is \(\zero\), and the empty conjunction is \(\one\).
}
\end{remark}

% Our final $G(\QQ)$ is exactly this reduced version of the extended Hasse diagram of $\QQ$. Specifically, let $V(\QQ)$ and $E(\QQ)$ denote the node set and the edge set of $G(\QQ)$.  Then we have $V(\QQ) = R(\QQ) \bigcup I(\QQ)$, where   $R(\QQ)$ is as in \eqref{eq:R(QQ)} and $I(\QQ) = \left\{ v_j^*(\qq_j) \;\middle|\; j \in [J] \right\}$ denotes the set of $\qq$-vector nodes. The edge set $E(\QQ) = E_I(\QQ) \bigcup E_R(\QQ)$ comprises two subsets: the edge set which connects the $\qq$-vector nodes and the latent vector nodes:
% \begin{equation}\label{eq:E_I}
%     E_I(\QQ) = \bigcup_{j=1}^J \set{\paren{v_j^*(\qq_j), v(\aaa)} \mid \aaa \in \mcm_j^+},
% \end{equation}
% and the edge set which connects the latent vector nodes in $R(\QQ)$:
% \begin{equation}\label{eq:E_R}
%     E_R(\QQ) = \set{\paren{v(\aaa_1), v(\aaa_2)} \;\big| \;\aaa_2 \;\text{covers} \;\aaa_1, \;\text{for}\;v(\aaa_1), v(\aaa_2) \in R(\QQ)}.
% \end{equation}

The graphical construction involves several  intermediate objects. To help readers keep track of them, we provide a table summarizing their notation and definitions, together with a schematic figure illustrating the construction of
\(G(\QQ)\) in Section~\ref{app:graphical-guide} of the Supplementary Material.
 We next return to Example~\ref{ex1} to provide a concrete
illustration. 
\addtocounter{example}{-1}
\begin{example}[Continued]
{\it Using Equation~\eqref{eq:RQ_conj} to choose representatives among the equivalent nodes  grouped with dashed lines in Figure~\ref{fig:hassewithitem}, and, for visualization, suppressing the $\zero$-vector node (which has no edges from any nonzero $\qq$-vector), we obtain the simplified Hasse-based distribution graph $G(\QQ)$ shown in Figure~\ref{fig:G(Q)ex1}.}
\begin{figure}[htbp]
    \centering
    \begin{minipage}{0.5\textwidth}
    \centering
\begin{tikzpicture}[baseline=(current bounding box.center)]
    % Conjunctive model (simplified)
    \node[rectangle, draw, inner sep=5pt, fill=red!50, font=\normalfont] (110d) at (-1.5, 1) {110};
    \node[rectangle, draw, inner sep=5pt, fill=red!50, font=\normalfont] (101d) at (0, 1) {101};
    \node[rectangle, draw, inner sep=5pt, fill=red!50, font=\normalfont] (011d) at (1.5, 1) {011};
    \node[circle, draw, inner sep=2pt, fill=red!20, font=\normalfont] (110) at (-1.5, 2.5) {110};
    \node[circle, draw, inner sep=2pt, fill=red!20, font=\normalfont] (101) at (0, 2.5) {101};
    \node[circle, draw, inner sep=2pt, fill=red!20, font=\normalfont] (011) at (1.5, 2.5) {011};
    \node[circle, draw, inner sep=2pt, fill=red!20, font=\normalfont] (111) at (0, 4) {111};
    \draw[-{Latex[length=2.5mm]}] (110d) -- (110);
    \draw[-{Latex[length=2.5mm]}] (101d) -- (101);
    \draw[-{Latex[length=2.5mm]}] (011d) -- (011);
    \draw[-{Latex[length=2.5mm]}] (110) -- (111);
    \draw[-{Latex[length=2.5mm]}] (101) -- (111);
    \draw[-{Latex[length=2.5mm]}] (011) -- (111);
\end{tikzpicture}\caption*{(a) Conjunctive model}
    \end{minipage}%
    \hfill
    \begin{minipage}{0.5\textwidth}
    \centering
\begin{tikzpicture}[baseline=(current bounding box.center)]
    % Disjunctive model (simplified)
    \node[circle, draw, inner sep=2pt, font=\normalfont, fill=red!20] (100) at (-1.5, 1.5) {100};
    \node[circle, draw, inner sep=2pt, font=\normalfont, fill=red!20] (010) at (0, 1.5) {010};
    \node[circle, draw, inner sep=2pt, font=\normalfont, fill=red!20] (001) at (1.5, 1.5) {001};
    \node[circle, draw, inner sep=2pt, font=\normalfont, fill=red!20] (111) at (0, 3) {111};

    % $\qq$-vector nodes
    \node[rectangle, draw, inner sep=5pt, fill=red!50, font=\normalfont] (110d) at (-1.5, 0) {110};
    \node[rectangle, draw, inner sep=5pt, fill=red!50, font=\normalfont] (101d) at (0, 0) {101};
    \node[rectangle, draw, inner sep=5pt, fill=red!50, font=\normalfont] (011d) at (1.5, 0) {011};

    % Edges
    \draw[-{Latex[length=2.5mm]}] (100) -- (111);
    \draw[-{Latex[length=2.5mm]}] (010) -- (111);
    \draw[-{Latex[length=2.5mm]}] (001) -- (111);
    \draw[-{Latex[length=2.5mm]}] (110d) -- (100);
    \draw[-{Latex[length=2.5mm]}] (110d) -- (010);
    \draw[-{Latex[length=2.5mm]}] (101d) -- (100);
    \draw[-{Latex[length=2.5mm]}] (101d) -- (001);
    \draw[-{Latex[length=2.5mm]}] (011d) -- (001);
    \draw[-{Latex[length=2.5mm]}] (011d) -- (010);

    % Dashed rectangle around node 111
    \draw[dashed, thick] (-0.5, 2.5) rectangle (0.5, 3.5);
\end{tikzpicture}\caption*{(b) Disjunctive model}
\end{minipage}
\caption{Hasse-based distribution graph $G(\QQ)$ for the $\QQ$ in Equation~\eqref{ex1-Q}.}
\label{fig:G(Q)ex1}
\end{figure}
\end{example}

%\begin{remark}
    %For all the examples in this work, we only consider $\qq \neq \zero$. 
%    Given that we also assume that the latent $\zero$-vector can not produce a positive response to any nonzero $\qq$-vector, there are no edges from all the non-zero $\qq$-vector nodes to the latent $\zero$-vector node in the graph.

% For simplicity of visualization, we can further remove the $\zero$-vector node in the graph, since there are no edges from all the non-zero $\qq$-vector nodes to the latent $\zero$-vector node. After removing the $\zero$-vector node in Figures~\ref{fig:hasse_merged}(a) and~\ref{fig:hasse_merged}(b), the resulting graphs are presented in Figures~\ref{fig:G(Q)ex1}(a) and~\ref{fig:G(Q)ex1}(b), respectively.
%\end{remark}

\vspace{-0.4cm}
\subsection{ Sufficient and Necessary Graphical Condition}\label{sec:graphcond}
A $\QQ$-matrix is \emph{not} identifiable when there exists another matrix $\bQQ \nsim \QQ$ that could yield the same distribution of $\RR$. Since each \(\QQ\) defines a graph \(G(\QQ)\) that encodes the distribution of \(\RR\), 
as we will show, a matrix $\QQ$ is identifiable when $G(\QQ)$ is unique in the sense that there exists no other \(\bQQ\nsim\QQ\) such that \(G(\QQ)\) can be obtained from \(G(\bQQ)\).
Before stating this result,  we first introduce the following node deletion operation.
For \(A\subseteq R(\QQ)\), define \(G(\QQ)-A\) by removing the nodes
in \(A\) and their incident edges, and then redrawing the edges
according to Definition~\ref{def:GQ}, restricted to the remaining
representative nodes.
We now formalize when \(G(\QQ)\) can be obtained from \(G(\bQQ)\) using the notion of \emph{graph isomorphism}. Intuitively, two
graphs are isomorphic if one can be obtained from the other by
relabeling the nodes while preserving the edge structure. Since
the $\qq$-vector nodes $v_1^*(\qq_1), \ldots, v_J^*(\qq_J)$ correspond to the observed variables $R_1,\ldots,R_J$, we regard them as labeled by their indices and require these labels to be preserved under graph isomorphism.
% Intuitively, after node deletion, two graphs are isomorphic if their latent  nodes can be relabeled, while the item labels are preserved,  so that every directed edge in one graph corresponds to a directed edge in the other, and vice versa.

{\color{black}
% \begin{definition}\label{def:iso}
% {\it
% Let \(G(\QQ)\) and \(G(\bQQ)\) be two Hasse-based distribution graphs.
% We say that \(G(\QQ)\) can be obtained from \(G(\bQQ)\), when $G(\QQ)$ is isomorphic to $G(\bQQ)-A$ for some $A \subseteq R(\bQQ)$, denoted by $G(\QQ)\sim G(\bQQ)-A$. That is, when there exist \(A\subseteq R(\bQQ)\) and a bijection
% \(f:V(\QQ)\to V(\bQQ)\setminus A\) such that $f\bigl(v_j^*(\qq_j)\bigr)=v_j^*(\bqq_j)$ for every $j\in[J],$
% and, for every \(u,v\in V(\QQ)\), $(u,v)$ is an edge in $G(\QQ)$ if and only if $\bigl(f(u),f(v)\bigr)$ is an edge in $G(\bQQ)-A.$
% % \[
% % (u,v)\text{ is an edge in }G(\QQ)
% % \quad\Longleftrightarrow\quad
% % \bigl(f(u),f(v)\bigr)\text{ is an edge in }G(\bQQ)-A.
% % \]
% }
% \end{definition}
\begin{definition}\label{def:iso}
{\it
Let \(G(\QQ)\) and \(G(\bQQ)\) be two Hasse-based distribution graphs.
We say that \(G(\QQ)\) can be obtained from \(G(\bQQ)\) if \(G(\QQ)\) is isomorphic to \(G(\bQQ)-A\) for some \(A\subseteq R(\bQQ)\). We denote this relation by \(G(\QQ)\sim G(\bQQ)-A\). 
}
\end{definition}
}
% \begin{definition}[Graph isomorphism]
% {\it
% Let \(G(\QQ)\) and \(G(\bQQ)\) be two Hasse-based distribution graphs.
% We say that \(G(\QQ)\) can be obtained from \(G(\bQQ)\), up to graph
% isomorphism, if there exist \(A\subseteq R(\bQQ)\) and a bijection
% \(f:V(\QQ)\to V(\bQQ)\setminus A\) such that $f\bigl(v_j^*(\qq_j)\bigr)=v_j^*(\bqq_j)$ for every $j\in[J],$
% and, for every \(u,v\in V(\QQ)\),
% \[
% (u,v)\text{ is an edge in }G(\QQ)
% \quad\Longleftrightarrow\quad
% \bigl(f(u),f(v)\bigr)\text{ is an edge in }G(\bQQ)-A.
% \]
% Here \(G(\bQQ)-A\) is obtained by successively deleting the nodes in
% \(A\). Specifically, when deleting \(v(\aaa)\in A\), remove the node and all its connected edges, and then draw an edge from each parent \(u\)
% of \(v(\aaa)\) to each child \(w\) of \(v(\aaa)\) if no directed path from \(u\) to \(w\) remains. 
% % This operation leaves the path status vectors of all remaining representative nodes unchanged.

% For such an \(A\), we write \(G(\QQ)\sim G(\bQQ)-A\) and say that
% \(G(\QQ)\) is isomorphic to \(G(\bQQ)-A\).
% }
% \end{definition}
 
We can now state our sufficient and necessary graphical condition for identifying~\(\QQ\).
Recall that $p_{[\aaa]} \;=\; \sum_{\vv \in [\aaa]} p_{\vv}$ denotes the grouped proportion for the equivalence class \([\aaa]\in \mathcal{E}(\QQ)\) under $\QQ$, 
{
\color{black}
and $\TTT = p_{[\Cdot]} = (p_{[\aaa]})_{[\aaa]\in \mathcal{E}(\QQ)}$ under the Boolean factor model in \eqref{eq:Boolean}.
}
\begin{theorem}\label{thm:graph}
Consider a Boolean factor model in \eqref{eq:Boolean} with \(p_{[\aaa]} > 0\) for 
\([\aaa] \in \mathcal{E}(\QQ)\). 
Then \(\QQ\) is identifiable if and only if
{\color{black}\((\QQ,p_{[\Cdot]})\) is jointly identifiable,
which holds if and only if}
the following graphical condition holds:
\begin{equation}\label{eq:g_condition}
\text{There does not exist } \bQQ \nsim \QQ \text{ and } A \subseteq R(\bQQ) \text{ such that } G(\bQQ) - A \sim G(\QQ).
\end{equation}
\end{theorem}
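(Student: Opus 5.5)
The plan is to reduce everything to one combinatorial object: the set of path status vectors
\[
\mathcal S(\QQ)=\{\Psi(v(\aaa)) : \aaa\in\{0,1\}^K\}\subseteq\{0,1\}^J .
\]
All three statements will then become statements about how $\mathcal S(\QQ)$ sits inside $\mathcal S(\bQQ)$.

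\textbf{Step 1 (distribution of $\RR$).} Under \eqref{eq:Boolean}, given $\aaa$ the vector $\RR$ equals $\Psi(v(\aaa))$ almost surely. This uses Proposition~\ref{prop:path-response} and local independence. Hence
\[
\P(\RR=\rr\mid\QQ,p_{[\Cdot]})=\sum_{[\aaa]\in\mathcal E(\QQ)} p_{[\aaa]}\,\1\{\rr=\Psi(v(\aaa))\}.
\]
Distinct classes have distinct $\Psi$ by definition. So the law of $\RR$ is a point mass $p_{[\aaa]}$ at each $\Psi(v(\aaa))$. By positivity, its support is exactly $\mathcal S(\QQ)$.

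\textbf{Step 2 (matching distributions).} Consequently, $(\QQ,p_{[\Cdot]})$ and $(\bQQ,\bar p_{[\Cdot]})$ give the same law if and only if three things hold:
\begin{itemize}
\item $\mathcal S(\QQ)\subseteq\mathcal S(\bQQ)$;
\item $\bar p$ puts mass $p_{[\aaa]}$ on the $\bQQ$-class whose status vector is $\Psi(v(\aaa))$;
\item $\bar p$ puts zero mass on the remaining $\bQQ$-classes.
\end{itemize}
Zero mass is allowed, since positivity is imposed only on the true $\QQ$.

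\textbf{Step 3 (the graph encodes exactly the status-vector set).} This is the key lemma and the main obstacle. I will show
\[
G(\bQQ)-A\sim G(\QQ)\iff \mathcal S(\QQ)=\mathcal S(\bQQ)\setminus\{\Psi(a):a\in A\}.
\]

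First, I show that in $G(\QQ)$ there is a directed path from $v_j^*(\qq_j)$ to a representative node $v$ if and only if $\Psi_j(v)=1$.
\begin{itemize}
\item For ``$\Leftarrow$'': among representatives with $j$-th coordinate $1$ lying $\preceq$ below $\Psi(v)$, take a minimal one. It receives an edge from $v_j^*(\qq_j)$ by Definition~\ref{def:GQ}. Then climb to $v$ along a maximal chain of covers within the finite set $\mathcal S(\QQ)$.
\item For ``$\Rightarrow$'': every $\qq$-node edge lands on a node with $\Psi_j=1$, and cover edges only increase $\Psi$ in $\preceq$, so the $j$-th coordinate stays $1$.
\end{itemize}
The same argument applies verbatim to $G(\bQQ)-A$. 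Its edges are redrawn by Definition~\ref{def:GQ} on the remaining representatives, so it is simply the Hasse-type graph of the set $\mathcal S(\bQQ)\setminus\Psi(A)$.

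Thus each such graph is the graph canonically built from a finite subset of $\{0,1\}^J$. Each latent node's label is recovered as the set of item nodes with a path to it.

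\begin{itemize}
\item For ``$\Rightarrow$'': an isomorphism that fixes the item labels preserves these reachability sets, hence preserves status vectors. It is therefore a bijection between the two status-vector sets.
\item For ``$\Leftarrow$'': if the sets coincide, mapping each node to the node with the same status vector (and item $j$ to item $j$) preserves all edges. This holds because edges are defined purely from the vectors.
\end{itemize}
The care needed here is in checking that the deletion-and-redraw operation really yields the canonical graph of the reduced set, and that item-label preservation is what pins down $\Psi$.

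\textbf{Step 4 (assembling the equivalences).}

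\emph{Graph condition $\Rightarrow$ $\QQ$ identifiable.} Suppose $\QQ$ is not identifiable. Then for some $p_{[\Cdot]}$ there is $(\bQQ,\bar p)$ with $\bQQ\nsim\QQ$ and the same law. By Step 2, $\mathcal S(\QQ)\subseteq\mathcal S(\bQQ)$. Taking $A$ to be the representatives of $\bQQ$ with status vectors outside $\mathcal S(\QQ)$, Step 3 gives $G(\bQQ)-A\sim G(\QQ)$, contradicting \eqref{eq:g_condition}.

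\emph{Failure of the graph condition $\Rightarrow$ failure of joint identifiability, for every $p_{[\Cdot]}$.} If \eqref{eq:g_condition} fails, Step 3 gives $\mathcal S(\QQ)\subseteq\mathcal S(\bQQ)$ for some $\bQQ\nsim\QQ$. Define $\bar p$ as in Step 2. This yields an equal law with $\bQQ\nsim\QQ$, so both $\QQ$-identifiability and joint identifiability fail.

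\emph{$\QQ$ identifiable $\Rightarrow$ $(\QQ,p_{[\Cdot]})$ jointly identifiable.} Any equal-law $(\bQQ,\bar p)$ has $\bQQ\sim\QQ$. A column permutation $\pi$ of $\QQ$ maps latent vectors by $\pi$ and preserves every $\Psi$, since $\ast$ is permutation invariant. So the classes of $\bQQ$ correspond to those of $\QQ$ with the same status vectors. Step 2 then forces $\bar p_{[\Cdot]}=p_{[\Cdot]}$ under this alignment.

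Together these close the cycle of equivalences in Theorem~\ref{thm:graph}.
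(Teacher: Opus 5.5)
Your proposal is correct and follows essentially the same route as the paper. Steps 1--2 are the paper's column-set lemma behind Theorem~\ref{thm:algebraic}, Step 3 is its lemma that $G(\QQ)\sim G(\bQQ)-A$ exactly when the surviving path-status-vector sets coincide, proved by the same item-node reachability argument, and Step 4 inlines the assembly the paper does via Theorem~\ref{thm:algebraic}, with a somewhat more explicit treatment of aligning the grouped proportions under column permutation.
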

\noindent
{\color{black}
In other words, \(\QQ\) is identifiable when there is no \(\bQQ \nsim \QQ\) whose $G(\bQQ)$, after node deletion, is isomorphic to~\(G(\QQ)\). That is, the graphical structure of $G(\QQ)$ is unique on its own. In Section~\ref{app:theory_intuition} of the Supplementary Material, we provide further intuition for Theorem~\ref{thm:graph}.}

To illustrate the application of Theorem~\ref{thm:graph}, we return to Example~\ref{ex1}.

\addtocounter{example}{-1}
\begin{example}[Continued]
{\it Consider the conjunctive and disjunctive models in Example~\ref{ex1}, where their $G(\QQ)$ are illustrated in Figure~\ref{fig:G(Q)ex1}. 
Consider \(\bQQ = \cI_3\). Under both of the conjunctive and disjunctive models, \(G(\cI_3)\) is identical, as shown in Figure~\ref{fig:thmex}\textnormal{(a)}.
Define subsets \(A_1 = \{v(110), v(101), v(011)\}\) and \(A_2 = \{v(100), v(010), v(001)\}\). Deleting nodes in these subsets yields \(G(\cI_3) - A_1\) and \(G(\cI_3) - A_2\), respectively, as shown in Figure~\ref{fig:thmex}\textnormal{(b)} and Figure~\ref{fig:thmex}\textnormal{(c)}.
Under the conjunctive model, \(G(\cI_3) - A_1 \sim G(\QQ)\), whereas under the disjunctive model, \(G(\cI_3) - A_2 \sim G(\QQ)\). By Theorem~\ref{thm:graph}, these findings imply that the \(\QQ\)-matrix \eqref{ex1-Q} in Example~\ref{ex1} is not identifiable under either the conjunctive or the disjunctive model.}

\begin{figure}[htbp]
    \centering
    \begin{minipage}{0.3\textwidth}
    \centering
\begin{tikzpicture}[baseline=(current bounding box.center), scale=0.8]
    % Circle Nodes
    \node[circle, draw, inner sep=2pt, fill=red!20, font=\normalfont] (100) at (-1.5, 2.5) {100};
    \node[circle, draw, inner sep=2pt, fill=red!20, font=\normalfont] (010) at (0, 2.5) {010};
    \node[circle, draw, inner sep=2pt, fill=red!20, font=\normalfont] (001) at (1.5, 2.5) {001};
    \node[circle, draw, inner sep=2pt, fill=red!20, font=\normalfont] (110) at (-1.5, 4) {110};
    \node[circle, draw, inner sep=2pt, fill=red!20, font=\normalfont] (101) at (0, 4) {101};
    \node[circle, draw, inner sep=2pt, fill=red!20, font=\normalfont] (011) at (1.5, 4) {011};
    \node[circle, draw, inner sep=2pt, fill=red!20, font=\normalfont] (111) at (0, 5.5) {111};
    % Lower Nodes (replicated)
    \node[rectangle, draw, inner sep=5pt, fill=red!50, font=\normalfont] (100d) at (-1.5, 1) {100};
    \node[rectangle, draw, inner sep=5pt, fill=red!50, font=\normalfont] (010d) at (0, 1) {010};
    \node[rectangle, draw, inner sep=5pt, fill=red!50, font=\normalfont] (001d) at (1.5, 1) {001};
    % Edges with arrows
    \draw[-{Latex[length=2.5mm]}] (100) -- (110);
    \draw[-{Latex[length=2.5mm]}] (100) -- (101);
    \draw[-{Latex[length=2.5mm]}] (010) -- (110);
    \draw[-{Latex[length=2.5mm]}] (010) -- (011);
    \draw[-{Latex[length=2.5mm]}] (001) -- (101);
    \draw[-{Latex[length=2.5mm]}] (001) -- (011);
    \draw[-{Latex[length=2.5mm]}] (110) -- (111);
    \draw[-{Latex[length=2.5mm]}] (101) -- (111);
    \draw[-{Latex[length=2.5mm]}] (011) -- (111);
    \draw[-{Latex[length=2.5mm]}] (100d) -- (100);
    \draw[-{Latex[length=2.5mm]}] (010d) -- (010);
    \draw[-{Latex[length=2.5mm]}] (001d) -- (001);    \end{tikzpicture}\captionsetup{labelformat=empty}\caption*{(a) $G(\cI_3)$}\label{fig:G(I_3)}
    \end{minipage}%
    \hfill
    \begin{minipage}{0.3\textwidth}
    \centering
\begin{tikzpicture}[baseline=(current bounding box.center), scale=0.8]
    % Circle Nodes
    \node[circle, draw, inner sep=2pt, fill=red!20, font=\normalfont] (100) at (-1.5, 2.5) {100};
    \node[circle, draw, inner sep=2pt, fill=red!20, font=\normalfont] (010) at (0, 2.5) {010};
    \node[circle, draw, inner sep=2pt, fill=red!20, font=\normalfont] (001) at (1.5, 2.5) {001};
    \node[circle, draw, inner sep=2pt, fill=red!20, font=\normalfont] (111) at (0, 5.5) {111};
    % Lower Nodes (replicated)
    \node[rectangle, draw, inner sep=5pt, fill=red!50, font=\normalfont] (100d) at (-1.5, 1) {100};
    \node[rectangle, draw, inner sep=5pt, fill=red!50, font=\normalfont] (010d) at (0, 1) {010};
    \node[rectangle, draw, inner sep=5pt, fill=red!50, font=\normalfont] (001d) at (1.5, 1) {001};
    % Edges
    \draw[-{Latex[length=2.5mm]}] (100) -- (111);
    \draw[-{Latex[length=2.5mm]}] (010) -- (111);
    \draw[-{Latex[length=2.5mm]}] (001) -- (111);
    \draw[-{Latex[length=2.5mm]}] (100d) -- (100);
    \draw[-{Latex[length=2.5mm]}] (010d) -- (010);
    \draw[-{Latex[length=2.5mm]}] (001d) -- (001);
\end{tikzpicture}\captionsetup{labelformat=empty}\caption*{(b) $G(\cI_3) - A_1$}
\end{minipage}
\hfill
    \begin{minipage}{0.3\textwidth}
    \centering
\begin{tikzpicture}[baseline=(current bounding box.center), scale=0.8]
    % Circle Nodes
    \node[circle, draw, inner sep=2pt, fill=red!20, font=\normalfont] (110) at (-1.5, 4) {110};
    \node[circle, draw, inner sep=2pt, fill=red!20, font=\normalfont] (101) at (0, 4) {101};
    \node[circle, draw, inner sep=2pt, fill=red!20, font=\normalfont] (011) at (1.5, 4) {011};
    \node[circle, draw, inner sep=2pt, fill=red!20, font=\normalfont] (111) at (0, 5.5) {111};
    % Lower Nodes (replicated)
    \node[rectangle, draw, inner sep=5pt, fill=red!50, font=\normalfont] (100d) at (-1.5, 1) {100};
    \node[rectangle, draw, inner sep=5pt, fill=red!50, font=\normalfont] (010d) at (0, 1) {010};
    \node[rectangle, draw, inner sep=5pt, fill=red!50, font=\normalfont] (001d) at (1.5, 1) {001};
    % Edges with arrows
    \draw[-{Latex[length=2.5mm]}] (110) -- (111);
    \draw[-{Latex[length=2.5mm]}] (101) -- (111);
    \draw[-{Latex[length=2.5mm]}] (011) -- (111);
    \draw[-{Latex[length=2.5mm]}] (100d) -- (110);
    \draw[-{Latex[length=2.5mm]}] (100d) -- (101);
    \draw[-{Latex[length=2.5mm]}] (010d) -- (110);
    \draw[-{Latex[length=2.5mm]}] (010d) -- (011);
    \draw[-{Latex[length=2.5mm]}] (001d) -- (101);
    \draw[-{Latex[length=2.5mm]}] (001d) -- (011);
    \end{tikzpicture}\captionsetup{labelformat=empty}\caption*{(c) $G(\cI_3) - A_2$}
\end{minipage}\caption{$G(\cI_3)$ and two graphs after deleting different nodes from $G(\cI_3)$.}\label{fig:thmex}
\end{figure}
\end{example}

%The equivalence of the identifiability of the $\QQ$-matrix under both the conjunctive and disjunctive models is not limited to this specific example but is, in fact, a general result. This equivalence follows from the duality between the two models, as established in prior work \citep{chen2015statistical}. In other words, a $\QQ$-matrix is identifiable in one of the model if and only if it is identifiable in the other.
The following example provides two identifiable $\QQ$-matrices under both the conjunctive and disjunctive models. For simplicity, here we focus on the conjunctive model.

\begin{example}\label{ex2}
{\it Consider two conjunctive models with $\QQ$-matrices given in Equation~\eqref{eq:ex_two_Q}, along with their corresponding $G(\QQ)$ in Figure~\ref{eq:ex2-G}. For each of these matrices $\QQ^{(i)}$, for $i = 1,2$, it can be verified that for any \(\bQQ \nsim \QQ^{(i)}\), no matter which subset \(A \subseteq R(\bQQ)\) is chosen, removing the nodes in \(A\) from \(G(\bQQ)\) does not produce a graph isomorphic to \(G(\QQ^{(i)})\). By Theorem~\ref{thm:graph}, we conclude that both $\QQ$-matrices are identifiable. Notably, neither of these identifiable $\QQ$-matrices is complete, and the second $\QQ$-matrix does not even contain any pure nodes. The identifiability of these matrices comes solely from the uniqueness of their graphs $G(\QQ)$.}

\begin{equation}\label{eq:ex_two_Q}
{\renewcommand{\arraystretch}{0.5}
\QQ^{(1)} = 
        \begin{pmatrix}
        1 & 1 & 0 \\
        1 & 0 & 1\\
        0 & 1 & 0 \\
        0 & 0 & 1 
        \end{pmatrix}
        \qquad
    \QQ^{(2)} = \begin{pmatrix}
    1 & 1 & 0 & 0 \\
    1 & 0 & 1 & 0 \\
    1 & 0 & 0 & 1 \\
    0 & 1 & 1 & 0 \\
    0 & 1 & 0 & 1 \\
    0 & 0 & 1 & 1 \\
\end{pmatrix}
}
\end{equation}
\begin{figure}[h!]
    \centering
    \begin{minipage}[t]{0.3\textwidth}
        \centering
        \begin{tikzpicture}[baseline=(current bounding box.center), scale=0.8]
            % Circle Nodes
            \node[circle, draw, inner sep=2pt, fill=red!20, font=\normalfont] (010) at (0, 2.5) {010};
            \node[circle, draw, inner sep=2pt, fill=red!20, font=\normalfont] (001) at (1.5, 2.5) {001};
            \node[circle, draw, inner sep=2pt, fill=red!20, font=\normalfont] (110) at (-1.5, 4) {110};
            \node[circle, draw, inner sep=2pt, fill=red!20, font=\normalfont] (101) at (0, 4) {101};
            \node[circle, draw, inner sep=2pt, fill=red!20, font=\normalfont] (011) at (1.5, 4) {011};
            \node[circle, draw, inner sep=2pt, fill=red!20, font=\normalfont] (111) at (0, 5.5) {111};
            % Lower Nodes (replicated)
            \node[rectangle, draw, inner sep=5pt, fill=red!50, font=\normalfont] (110d) at (-3, 2.5) {110};
            \node[rectangle, draw, inner sep=5pt, fill=red!50, font=\normalfont] (101d) at (-1.5, 2.5) {101};
            \node[rectangle, draw, inner sep=5pt, fill=red!50, font=\normalfont] (010d) at (0, 1) {010};
            \node[rectangle, draw, inner sep=5pt, fill=red!50, font=\normalfont] (001d) at (1.5, 1) {001};
            
            % Edges with arrows
            \draw[-{Latex[length=2.5mm]}] (010) -- (110);
            \draw[-{Latex[length=2.5mm]}] (010) -- (011);
            \draw[-{Latex[length=2.5mm]}] (001) -- (101);
            \draw[-{Latex[length=2.5mm]}] (001) -- (011);
            \draw[-{Latex[length=2.5mm]}] (110) -- (111);
            \draw[-{Latex[length=2.5mm]}] (101) -- (111);
            \draw[-{Latex[length=2.5mm]}] (011) -- (111);
            \draw[-{Latex[length=2.5mm]}] (101d) -- (101);
            \draw[-{Latex[length=2.5mm]}] (110d) -- (110);
            \draw[-{Latex[length=2.5mm]}] (010d) -- (010);
            \draw[-{Latex[length=2.5mm]}] (001d) -- (001);

\end{tikzpicture}\caption*{(a) $G(\QQ^{(1)})$}
    \end{minipage}
    \hspace{0.3cm} % Adjust the spacing between the matrix and the graph
    \begin{minipage}[t]{0.65\textwidth}
    \centering

    \begin{tikzpicture}[baseline=(current bounding box.center), scale=0.7, auto, swap]
        % Latent vector nodes (reversed order)
        \node[circle, draw, inner sep=1pt, fill=red!20, font=\normalfont] (1100) at (0,1.5) {1100};
        \node[circle, draw, inner sep=1pt, fill=red!20, font=\normalfont] (1010) at (2,1.5) {1010};
        \node[circle, draw, inner sep=1pt, fill=red!20, font=\normalfont] (1001) at (4,1.5) {1001};
        \node[circle, draw, inner sep=1pt, fill=red!20, font=\normalfont] (0110) at (6,1.5) {0110};
        \node[circle, draw, inner sep=1pt, fill=red!20, font=\normalfont] (0101) at (8,1.5) {0101};
        \node[circle, draw, inner sep=1pt, fill=red!20, font=\normalfont] (0011) at (10,1.5) {0011};

        \node[circle, draw, inner sep=1pt, fill=red!20, font=\normalfont] (1110) at (2,3.5) {1110};
        \node[circle, draw, inner sep=1pt, fill=red!20, font=\normalfont] (1101) at (4,3.5) {1101};
        \node[circle, draw, inner sep=1pt, fill=red!20, font=\normalfont] (1011) at (6,3.5) {1011};
        \node[circle, draw, inner sep=1pt, fill=red!20, font=\normalfont] (0111) at (8,3.5) {0111};

        \node[circle, draw, inner sep=1pt, fill=red!20, font=\normalfont] (1111) at (5,5.5) {1111};

        % $\qq$-vector nodes (reversed order)
        \node[rectangle, draw, inner sep=5pt, fill=red!50, font=\normalfont] (1100d) at (0,-0.5) {1100};
        \node[rectangle, draw, inner sep=5pt, fill=red!50, font=\normalfont] (1010d) at (2,-0.5) {1010};
        \node[rectangle, draw, inner sep=5pt, fill=red!50, font=\normalfont] (1001d) at (4,-0.5) {1001};
        \node[rectangle, draw, inner sep=5pt, fill=red!50, font=\normalfont] (0110d) at (6,-0.5) {0110};
        \node[rectangle, draw, inner sep=5pt, fill=red!50, font=\normalfont] (0101d) at (8,-0.5) {0101};
        \node[rectangle, draw, inner sep=5pt, fill=red!50, font=\normalfont] (0011d) at (10,-0.5) {0011};
        % Edges (connections remain unchanged)
        \draw[-{Latex[length=2.5mm]}] (0011) -- (0111);
        \draw[-{Latex[length=2.5mm]}] (0111) -- (1111);
        \draw[-{Latex[length=2.5mm]}] (1001) -- (1101);
        \draw[-{Latex[length=2.5mm]}] (1101) -- (1111);
        \draw[-{Latex[length=2.5mm]}] (0101) -- (0111);
        \draw[-{Latex[length=2.5mm]}] (1100) -- (1110);
        \draw[-{Latex[length=2.5mm]}] (0011) -- (1011);
        \draw[-{Latex[length=2.5mm]}] (1011) -- (1111);
        \draw[-{Latex[length=2.5mm]}] (0101) -- (1101);
        \draw[-{Latex[length=2.5mm]}] (1001) -- (1011);
        \draw[-{Latex[length=2.5mm]}] (1010) -- (1011);
        \draw[-{Latex[length=2.5mm]}] (1100) -- (1101);
        \draw[-{Latex[length=2.5mm]}] (0110) -- (0111);
        \draw[-{Latex[length=2.5mm]}] (1010) -- (1110);
        \draw[-{Latex[length=2.5mm]}] (0110) -- (1110);
        \draw[-{Latex[length=2.5mm]}] (1110) -- (1111);

        \draw[-{Latex[length=2.5mm]}] (1100d) -- (1100);
        \draw[-{Latex[length=2.5mm]}] (1010d) -- (1010);
        \draw[-{Latex[length=2.5mm]}] (0101d) -- (0101);
        \draw[-{Latex[length=2.5mm]}] (0011d) -- (0011);
        \draw[-{Latex[length=2.5mm]}] (1001d) -- (1001);
        \draw[-{Latex[length=2.5mm]}] (0110d) -- (0110);
        \end{tikzpicture}\caption*{(b) $G(\QQ^{(2)})$}
\end{minipage}
\caption{Hasse-based distribution graphs of \(\QQ^{(1)}\) and \(\QQ^{(2)}\) in Equation~\eqref{eq:ex_two_Q}. }\label{eq:ex2-G}
\end{figure}
\end{example}
The above example demonstrates that the identifiability of the $\QQ$-matrix is determined by the uniqueness of the graphical representation $G(\QQ)$.
Such uniqueness pertains purely to the graphical structure, which demonstrates that the existence of pure nodes is not required for the identifiability of the $\QQ$-matrix.

\subsection{Algebraic Condition and Computation Algorithm}\label{sec:algebraic}
Coupled with the graphical condition, we next provide the algebraic sufficient and necessary condition for the identifiability of the $\QQ$-matrix. 
To facilitate the presentation, 
for any 
\(\QQ\in\{0,1\}^{J\times K}\), we define \(\PPhi(\QQ)\in\{0,1\}^{J\times 2^K}\) via the extended Hasse graph of \(\QQ\) as follows:
%via the extended Hasse graph of \(\QQ\). Concretely,
each column of \(\PPhi(\QQ)\) is indexed by a latent vector 
\(\aaa\in\{0,1\}^K\), and the corresponding \(\aaa\)-column, denoted by $\PPhi_{\Cdot,\aaa}(\QQ)$, is set as the path status vector of $v(\aaa)$, i.e., \(\PPhi_{\Cdot,\aaa}(\QQ) = \Psi\bigl(v(\aaa)\bigr)\). We have the following algebraic theorem.
\begin{theorem}\label{thm:algebraic}
Consider a Boolean factor model in \eqref{eq:Boolean} with \(p_{[\aaa]} > 0\) for \([\aaa] \in \mathcal{E}(\QQ)\). 
%In a Boolean factor model with $\QQ$-matrix, suppose that \(p_{[\aaa]} > 0\) holds for every \([\aaa] \in \mathcal{E}(\QQ)\), 
Then \(\QQ\) is identifiable if and only if
{\color{black}\((\QQ,p_{[\Cdot]})\) is jointly identifiable,
which holds if and only if}
the following algebraic condition holds:
\begin{equation}\label{eq:a_condition}
\begin{split}
\text{There does not exist } \bQQ \nsim \QQ \text{ such that } \operatorname{Cols}\bigl(\PPhi(\QQ)\bigr) \subseteq \operatorname{Cols}\bigl( \PPhi(\bQQ) \bigr),
\end{split}
\end{equation}
where  \(\text{Cols}(\cdot)\) denotes the set of all column vectors of a matrix. 
\end{theorem}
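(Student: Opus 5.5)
The plan is to reduce Theorem~\ref{thm:algebraic} to Theorem~\ref{thm:graph}. Since that theorem already shows that identifiability of \(\QQ\) and joint identifiability of \((\QQ,p_{[\Cdot]})\) are both equivalent to the graphical condition~\eqref{eq:g_condition}, it suffices to prove that, for any fixed \(\bQQ\), there exists \(A\subseteq R(\bQQ)\) with \(G(\bQQ)-A\sim G(\QQ)\) if and only if \(\operatorname{Cols}(\PPhi(\QQ))\subseteq\operatorname{Cols}(\PPhi(\bQQ))\). (Alternatively one could argue directly from the distributions, but routing through Theorem~\ref{thm:graph} avoids redoing the probabilistic part.)

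The key lemma is that \(G(\QQ)\), up to isomorphism fixing the labels of the \(\qq\)-vector nodes, is determined by the set \(S(\QQ):=\operatorname{Cols}(\PPhi(\QQ))\subseteq\{0,1\}^J\), and conversely determines it. By Definition~\ref{def:GQ}, the representative nodes are in bijection with \(S(\QQ)\): distinct classes have distinct path status vectors, and by Proposition~\ref{prop:path-response} these are exactly the columns \((\qq_1\ast\aaa,\dots,\qq_J\ast\aaa)^\top\). All edges are defined purely through these vectors. Latent–latent edges are the cover relations of \(S(\QQ)\) under the product order. The edge \(v_j^*\to v\) exists exactly when \(\Psi_j(v)=1\) and \(\Psi(v)\) is minimal among the elements of \(S(\QQ)\) with \(j\)-th coordinate one.

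Conversely, each \(\Psi(v)\) can be recovered from the graph: \(\Psi_j(v)=1\) if and only if there is a directed path from \(v_j^*\) to \(v\). This holds because the minimal elements of \(\{s\in S:s_j=1\}\) lie below every such \(s\) through chains of covers, and because the product order is compatible with \(\Psi_j\) being monotone. Hence a label-preserving isomorphism between two such graphs preserves path status vectors. It therefore induces an equality of the underlying vector sets, and equal vector sets yield isomorphic graphs.

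Next, observe that node deletion respects this correspondence. \(G(\bQQ)-A\) is, by its definition, the graph built by the rules of Definition~\ref{def:GQ} from the subset \(S(\bQQ)\setminus\Psi(A)\) together with the same \(\qq\)-vector nodes. The edges are redrawn from the remaining representatives, and the path status vectors computed in the reduced graph coincide with the original ones by the same path/minimality argument applied to the subset. The result then follows in both directions. If \(G(\bQQ)-A\sim G(\QQ)\), the lemma gives \(S(\QQ)=S(\bQQ)\setminus\Psi(A)\subseteq S(\bQQ)\). If \(S(\QQ)\subseteq S(\bQQ)\), take \(A\) to be the representatives of \(\bQQ\) whose vectors lie outside \(S(\QQ)\); then \(G(\bQQ)-A\) is built from exactly \(S(\QQ)\), so it is isomorphic to \(G(\QQ)\).

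The main obstacle is the recovery claim, namely that path existence in the reduced graph \(G(\bQQ)-A\) reproduces the original \(\Psi\) entries. A subtle point is the \(\qq\)-vector edges: in Definition~\ref{def:GQ} they are defined via minimality among classes having \(\Psi_j=1\). So after deletion one must check that every remaining \(s\) with \(s_j=1\) is still reachable from \(v_j^*\), and that no \(s\) with \(s_j=0\) becomes reachable. Reachability holds because a minimal element below \(s\) exists within the remaining finite set. Unreachability holds because every cover edge goes upward in the product order, so \(s_j=1\) propagates upward along any path. I would write this as a short general lemma about any finite subset \(S\subseteq\{0,1\}^J\) and apply it to both \(S(\QQ)\) and \(S(\bQQ)\setminus\Psi(A)\).
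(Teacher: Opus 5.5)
Your combinatorial lemma is correct, but as a proof of this statement the reduction is circular relative to the paper. The equivalence you establish, namely that $G(\bQQ)-A\sim G(\QQ)$ for some $A\subseteq R(\bQQ)$ iff $\operatorname{Cols}(\PPhi(\QQ))\subseteq\operatorname{Cols}(\PPhi(\bQQ))$, is exactly the paper's Lemma~\ref{lem:graph-column}. It also uses the same choice of $A$, namely the representatives whose path status vectors fall outside $\operatorname{Cols}(\PPhi(\QQ))$.

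The paper, however, runs this in the opposite direction. Its proof of Theorem~\ref{thm:graph} begins ``By Theorem~\ref{thm:algebraic}, $\QQ$ is not identifiable if and only if\dots'' and only then applies that lemma. Theorem~\ref{thm:graph} has no independent proof, so citing it to obtain Theorem~\ref{thm:algebraic} leaves the statistical content unproved. The ``probabilistic part'' you chose to skip is the whole substance of the statement. The graph bookkeeping you flagged as the main obstacle is the routine part.

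What is missing is the direct distributional argument, which is the paper's Lemma~\ref{lem:boolean-column-set-equivalence}.
\begin{itemize}
\item Under~\eqref{eq:Boolean}, $\P(\RR=\rr\mid\QQ,\pp)=\sum_{\aaa\in\mcr(\QQ)}p_{[\aaa]}\,\mathbf 1\{\rr=\PPhi(\QQ)_{\Cdot,\aaa}\}$. This is a mixture of point masses at distinct columns.
\item Positivity of the grouped proportions makes the support of the response distribution exactly $\operatorname{Cols}(\PPhi(\QQ))$, with mass $p_{[\aaa]}$ at each column. Hence equality with some $(\bQQ,\bpp)$ forces the column inclusion.
\item Conversely, given the inclusion, set $\bar p_{[\bar\aaa]}=p_{[\aaa]}$ on the distinct classes of $\bQQ$ whose columns match, and zero on the remaining classes. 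This is legitimate because positivity is imposed only on the true representation.
\item This shows, for every admissible $p_{[\Cdot]}$, that a non-equivalent alternative exists iff the inclusion holds. So identifiability of $\QQ$ is equivalent to the algebraic condition.
\item Once $\bQQ\sim\QQ$, reading off the point masses gives $\bar p_{[\aaa]}=p_{[\aaa]}$, which yields joint identifiability.
\end{itemize}
With this step supplied, your lemma delivers Theorem~\ref{thm:graph} as a corollary, which is precisely the paper's logical order.
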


% \begin{remark}
%     In this work, we assume that $p_{\aaa}>0$ for all $\aaa \in \{0,1\}^K$. However, from the proofs of the main theorems, we can see that such assumption is not necessary. As the population proportion parameters only matter at the equivalence class level. That is, we only require the grouped population proportion $p_{[\aaa]}$ to be non-zero, and this holds when there is at least one latent vector $\aaa \in [\aaa]$  such that $p_{\aaa} > 0$. 
%     % Such partial identifiability notion has been considered in the literature \citep{partial}, we defer this exploration in the future.
% \end{remark}
\noindent
% This algebraic condition \eqref{eq:a_condition} is equivalent to the graphical condition  \eqref{eq:g_condition},  and it reaffirms that the identifiability of the $\QQ$-matrix does not depend on the presence of pure nodes. 
{\color{black}
Theorems~\ref{thm:graph} and \ref{thm:algebraic} are equivalent, and they both show that pure nodes are not required for identifiability. Nevertheless, the classical pure node condition is recovered as a sufficient condition under the conjunctive and disjunctive models, as we formalize in the following.
\begin{corollary}\label{cor:pure-node}
Under the conjunctive or disjunctive models, if \(\QQ\) is complete, then \(\QQ\) is identifiable.
\end{corollary}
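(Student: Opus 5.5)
We will verify the algebraic condition~\eqref{eq:a_condition} of Theorem~\ref{thm:algebraic}. Suppose \(\QQ\) is complete; after a column permutation (allowed, since identifiability is up to \(\sim\)) the first \(K\) rows of \(\QQ\) form \(\cI_K\). Suppose \(\bQQ\) satisfies \(\operatorname{Cols}(\PPhi(\QQ))\subseteq\operatorname{Cols}(\PPhi(\bQQ))\). We will show \(\bQQ\sim\QQ\), treating the conjunctive model first.

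\textbf{Conjunctive case.} Here \(\PPhi_{j,\aaa}(\QQ)=\1\{\qq_j\preceq\aaa\}\). Since the first \(K\) rows are pure nodes, the columns of \(\PPhi(\QQ)\) restricted to rows \(1,\ldots,K\) recover \(\aaa\) itself, so all \(2^K\) columns of \(\PPhi(\QQ)\) are distinct. Each must equal some column of \(\PPhi(\bQQ)\), i.e.\ there is a map \(\phi:\{0,1\}^K\to\{0,1\}^K\) with \(\PPhi_{\Cdot,\aaa}(\QQ)=\PPhi_{\Cdot,\phi(\aaa)}(\bQQ)\). Distinct columns force \(\phi\) injective, hence a bijection. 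Moreover, \(\PPhi(\bQQ)\) has at most \(2^K\) distinct columns, so all columns of \(\PPhi(\bQQ)\) are distinct and the two column sets coincide. The key step is to show that \(\phi\) is an order isomorphism of \((\{0,1\}^K,\preceq)\). Since \(\bqq_j\preceq\phi(\aaa)\) iff \(\qq_j\preceq\aaa\), we get \(\aaa\preceq\aaa'\Rightarrow\Psi\) vectors ordered componentwise; with columns distinct in both matrices, I plan to argue the componentwise order on columns coincides with \(\preceq\) on latent vectors for both \(\QQ\) and \(\bQQ\). For \(\QQ\) this holds because of the identity rows. For \(\bQQ\): the up-sets \(\{\aaa:\bqq_j\preceq\aaa\}\) must separate all points, which forces every \(\ee_k\)-up-set to appear among rows, so again the order is recovered. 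Hence \(\phi\) is an automorphism of the Boolean lattice, i.e.\ a coordinate permutation \(\pi\). Then row \(j\) gives \(\{\aaa:\bqq_j\preceq\pi\aaa\}=\{\aaa:\qq_j\preceq\aaa\}\), and by injectivity of \(\ast\) in \(\qq\) (with permutation invariance) \(\bqq_j=\pi\qq_j\) for all \(j\), so \(\bQQ\sim\QQ\).

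\textbf{Disjunctive case.} I would use duality: \(\qq\odot\aaa=1-\1\{\qq\preceq\tilde{\aaa}\}\), where \(\tilde\aaa\) flips bits. Thus \(\PPhi(\QQ)\) under the disjunctive model equals \(\mathbf 1-\PPhi(\QQ)\) under the conjunctive model with columns reindexed by \(\aaa\mapsto\tilde\aaa\). Column containment is preserved by complementing entries and reindexing, so condition~\eqref{eq:a_condition} for the disjunctive model reduces exactly to the conjunctive case.

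The main obstacle I expect is the lattice-automorphism step: rigorously showing that \(\phi\) preserves order in both directions when \(\bQQ\) is arbitrary (possibly with zero rows or repeated rows). I would handle it by noting that \(\phi\) and its inverse both preserve the componentwise order of path status vectors, and that for \(\QQ\) complete that order equals \(\preceq\); for \(\bQQ\), since its columns are exactly those of \(\PPhi(\QQ)\), the induced order on its column set is isomorphic to \(\preceq\), and comparing with \(\bqq_j\preceq\phi(\aaa)\) monotonicity gives \(\phi^{-1}\) monotone. An alternative fallback is to count atoms: \(\phi(\ee_k)\) must be a minimal nonzero-status column, forcing \(\phi(\ee_k)=\ee_{\pi(k)}\), and then \(\phi\) is determined via joins.
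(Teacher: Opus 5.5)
Your proposal is correct, but it takes a different route from the paper. The paper's proof is a short chain through its structural results. After reducing to the conjunctive case by duality, Proposition~\ref{prop:conjunctive-JK} shows that $\cI_K$ satisfies the algebraic condition, using the factorization characterization of Lemma~\ref{lem:boolean-factorization} ($\cI_K=\bQQ\odot\HH$ forces $\bQQ\sim\cI_K$). Theorem~\ref{thm:sub}\textup{(ii)} then transfers the condition from the row submatrix $\cI_K$ to $\QQ$, and Theorem~\ref{thm:algebraic} finishes.

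You instead argue directly on column sets. Completeness makes the $2^K$ columns of $\PPhi(\QQ)$ distinct, so containment becomes equality and yields a bijection $\phi$ of $\{0,1\}^K$; this bijection is then a Boolean-lattice automorphism, i.e., a coordinate permutation. The step you flag as the main obstacle closes in one line, so your fallbacks are unnecessary. The columns of $\PPhi(\bQQ)$ at $\zero$ and at $\ee_k$ must differ, which requires a row with $\zero\neq\bar{\qq}_j\preceq\ee_k$, i.e., $\bar{\qq}_j=\ee_k$. Hence $\bQQ$ is itself complete, and the componentwise order on its columns coincides with $\preceq$ for exactly the same reason as for $\QQ$.

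Your route is self-contained and gives extra information: any competitor $\bQQ$ must be complete, and the latent relabeling is forced to be a coordinate permutation. The paper's route is shorter given results it needs anyway. Its key ingredient, Theorem~\ref{thm:sub}\textup{(ii)}, also covers incomplete row submatrices such as $\QQ^{(2)}$, where $\PPhi$ has repeated columns and your bijection argument is unavailable. Your disjunctive reduction, complementing entries and reindexing columns by $\aaa\mapsto\one-\aaa$, is precisely what the paper's appeal to duality abbreviates.
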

}
% To illustrate, consider again the $\QQ$-matrix in Example~\ref{ex2}. One can verify that there is no $\bQQ \nsim \QQ$ such that $\text{Cols}\bigl(\PPhi^R(\QQ)\bigr) \subseteq \text{Cols}(\PPhi^R(\bQQ))$, and thus conclude that this $\QQ$-matrix, which does not contain any pure nodes, is indeed identifiable. 

While Theorems~\ref{thm:graph} and~\ref{thm:algebraic} precisely establish the sufficient and necessary conditions for the identifiability of the $\QQ$-matrix, verifying these conditions requires examining \emph{all} 
\(\bQQ \nsim \QQ\). For a $\QQ \in \{0,1\}^{J \times K}$,  the number of candidate $\bQQ$-matrices could grow exponentially as $O(2^{JK})$, making direct computation challenging.

%We show that when the operator $\ast$ is instantiated, 
To address this issue, we show that exploiting the structural properties of the $\QQ$-matrix can significantly reduce this computational complexity. 
%More importantly, the existential, yes–or–no nature of the question—\emph{does there exist a matrix $\bQQ$ satisfying the conditions?}—naturally frames it as a pure \emph{feasibility decision problem}.
Furthermore, the yes–or–no nature of the question—\emph{does there exist a matrix $\bQQ$ satisfying the conditions?}—naturally frames it as a \emph{feasibility  problem}, and the Boolean structure of the problem means it can be expressed entirely using logical operations and hence formulated as a \emph{Boolean satisfiability (SAT)} problem.
By formulating our identifiability conditions explicitly through logical propositional formulas, we can efficiently leverage modern SAT solvers to practically and efficiently determine the identifiability of any given $\QQ$.

%to demonstrate our methods and, for the remainder of this section, assume that the operator is conjunctive and that the partial order is the product order $\preceq$.
%Leveraging the structure of the operator, 

% feasible space from \(2^{JK}\) possibilities to \(\prod_{j=1}^J\sum_{k=0}^{C_j(\mathbf{Q})}\binom{K}{k}\).
% \subsubsection{Structural Properties and SAT Formulation of the Identifiability Problem}\label{sec:SAT}
\subsubsection{Structural Properties and SAT Verification of the
Graphical Condition}\label{sec:SAT}
To illustrate our method, we focus on the conjunctive operator \eqref{eq:ope-conj} in this subsection; all the results easily extend to the disjunctive case  \eqref{eq:ope-disj2}. 
We first establish structural properties that simplify verification of the algebraic condition \eqref{eq:a_condition}. Specifically,
we relate the condition for $\QQ$ to that for its basis
submatrix (defined below), and its row and column submatrices.
\begin{definition}[Basis submatrix $\Qbasis$]\label{def:Qbasis}
{\it For a given $\QQ$, a non-zero \(\qq\)-vector \(\qq_{j}\) is a \emph{basis} \(\qq\)-vector if there is no nonempty subset \(\mcs\subseteq[J]\) satisfying \(\qq_{j}=\bigvee_{j'\in\mcs}\qq_{j'}\) with \(\qq_{j'}\neq\qq_{j}\); that is, \(\qq_j\) cannot be expressed as a logical `` $\bigvee$'' combination of other distinct \(\qq\)-vectors. The \emph{basis submatrix} \(\Qbasis\) is the submatrix consisting of one copy of each distinct basis \(\qq\)-vector of \(\QQ\).}
\end{definition}
\begin{theorem}\label{thm:sub}
Under the conjunctive operator, suppose that $\QQ \neq \zero$. Then
\textup{(i)} \(\QQ\) satisfies the algebraic condition if and only if
\(\Qbasis\) satisfies the algebraic condition;
\textup{(ii)} if \(\QQ\) contains a row submatrix satisfying the
algebraic condition, then \(\QQ\) satisfies the algebraic condition;
\textup{(iii)} if \(\QQ\) contains a column submatrix that does not
satisfy the algebraic condition, then \(\QQ\) does not satisfy the
algebraic condition.
\end{theorem}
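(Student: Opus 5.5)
The plan is to work directly with the conjunctive form of $\PPhi$. Under \eqref{eq:ope-conj}, $\PPhi_{j,\aaa}(\QQ)=\1\{\qq_j\preceq\aaa\}$. So the column set $\operatorname{Cols}(\PPhi(\QQ))$ is the set of ideal response patterns $\{\1\{\qq_j\preceq\aaa\}\}_{j\in[J]}$ as $\aaa$ ranges over $\{0,1\}^K$. Two elementary facts drive everything.
\begin{enumerate}
\item Containment of column sets is preserved under restricting to any common set of row indices.
\item If $\qq_j=\bigvee_{j'\in\mcs}\qq_{j'}$, then $\qq_j\preceq\aaa$ iff $\qq_{j'}\preceq\aaa$ for all $j'\in\mcs$. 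Hence row $j$ of $\PPhi(\QQ)$ is the entrywise AND of the rows indexed by $\mcs$, and duplicated rows of $\QQ$ give duplicated rows of $\PPhi(\QQ)$.
\end{enumerate}
Where a purely algebraic argument stalls, I will invoke Theorem~\ref{thm:algebraic}. It says the algebraic condition is equivalent to joint identifiability of $(\QQ,p_{[\Cdot]})$, so a sub-model satisfying the condition lets me pin down the grouped proportions and then reason about the remaining rows probabilistically.

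For (i), ``$\Qbasis$ fails $\Rightarrow$ $\QQ$ fails'' is constructive. Take $\bar\QQ^{b}\nsim\Qbasis$ with $\operatorname{Cols}(\PPhi(\Qbasis))\subseteq\operatorname{Cols}(\PPhi(\bar\QQ^{b}))$, and build $\bar\QQ$ as follows.
\begin{itemize}
\item Each copy of a basis row gets the corresponding row of $\bar\QQ^{b}$.
\item Each non-basis row $\qq_j=\bigvee_{\mcs_j}\qq_{j'}$ gets $\bigvee_{\mcs_j}\bar\qq_{j'}$ (a zero row stays zero).
\end{itemize}
By fact 2 applied to both matrices, every column of $\PPhi(\QQ)$ is obtained from a column of $\PPhi(\Qbasis)$ by the same fixed AND-map that produces columns of $\PPhi(\bar\QQ)$ from $\PPhi(\bar\QQ^{b})$. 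This gives the containment. If $\bar\QQ\sim\QQ$ via a column permutation, restricting to the basis rows would give $\bar\QQ^{b}\sim\Qbasis$, a contradiction.

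The converse, together with (ii), has the same shape. Suppose a row submatrix $\QQ_1$ (for (i), $\QQ_1=\Qbasis$) satisfies the condition, and suppose $\bar\QQ$ witnesses failure for $\QQ$. Restricting to the rows of $\QQ_1$ gives containment for $\bar\QQ_1$, so $\bar\QQ_1\sim\QQ_1$; after permuting columns I may take $\bar\QQ_1=\QQ_1$. It remains to show every other row satisfies $\bar\qq_j=\qq_j$. I will do this by applying Theorem~\ref{thm:algebraic} to $\QQ_1$, which gives joint identifiability of $(\QQ_1,p_{[\Cdot]})$. The idea is to choose $p$ on the $\QQ$-side so that the induced distribution of $\RR$ equals the one from some $\bar{p}$ on $\bar\QQ$; the containment guarantees such a $\bar{p}$ exists. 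The marginal law of the $\QQ_1$-responses then forces the two class distributions to agree at the $\QQ_1$-level. Finally I will compare $\P(R_j=1,\RR_1=\rr_1)$ across all patterns $\rr_1$. With a judicious choice of $p$, for instance putting mass on a single $\QQ_1$-class at a time, this shows that $\{\aaa:\qq_j\preceq\aaa\}$ and $\{\aaa:\bar\qq_j\preceq\aaa\}$ meet every $\QQ_1$-class identically, and hence that $\bar\qq_j=\qq_j$ via the minimal element $\aaa=\qq_j$.

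I expect this step to be the main obstacle. A $\QQ_1$-class can contain several latent vectors, so it is not immediate that $\qq_j$ is pinned down. For (i) the obstacle disappears: $\qq_j$ is an OR of basis rows, so $\qq_j$ is itself the representative $\bigvee_{\mcs_j}\qq_{j'}$ of a class as in \eqref{eq:RQ_conj}. In general I would first try to prove that the rows of $\QQ_1$ already separate the relevant latent vectors $\qq_j$ and $\bar\qq_j$ from their classes. If that fails, I would try to reduce (ii) to (i) by comparing basis submatrices.

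For (iii), let $\QQ_{\Cdot,T}$ fail with witness $\bar\QQ'\nsim\QQ_{\Cdot,T}$, and set $\bar\QQ=(\bar\QQ',\QQ_{\Cdot,T^c})$ with columns placed in their original positions. Because $\qq_j\preceq\aaa$ iff $\qq_{j,T}\preceq\aaa_T$ and $\qq_{j,T^c}\preceq\aaa_{T^c}$, the column of $\PPhi(\QQ)$ at $\aaa$ is matched by the column of $\PPhi(\bar\QQ)$ at $(\bar\bb,\aaa_{T^c})$. Here $\bar\bb$ is chosen so that $\bar\QQ'$ reproduces the pattern of $\QQ_{\Cdot,T}$ at $\aaa_T$. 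This gives the containment. The remaining point is $\bar\QQ\nsim\QQ$, since a permutation could mix $T$ and $T^c$ columns. I would handle this by a multiset argument: if $\bar\QQ$ were a column permutation of $\QQ$, the multisets of columns would coincide. Both matrices share the columns $\QQ_{\Cdot,T^c}$, so the multisets of $T$-columns would also coincide, giving $\bar\QQ'\sim\QQ_{\Cdot,T}$, a contradiction.
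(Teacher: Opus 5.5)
Your proofs of (iii) and of the direction ``$\Qbasis$ fails $\Rightarrow$ $\QQ$ fails'' in (i) are correct. They are the paper's constructions, written in column-set rather than factorization language. For the latter, state explicitly that every nonzero row is an OR of \emph{basis} rows (by induction on the number of ones), so that $\mcs_j$ indexes basis rows. Your multiset argument also justifies the step $(\bar\QQ',\QQ'')\sim(\QQ',\QQ'')\Rightarrow\bar\QQ'\sim\QQ'$, which the paper only asserts.

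The gap is (ii), and with it the direction ``$\Qbasis$ satisfies $\Rightarrow$ $\QQ$ satisfies'' of (i), which you route through (ii). After aligning $\bar\QQ_1=\QQ_1$, all you know is that for each $\aaa$ some $\bbb\in[\aaa]_{\QQ_1}$ satisfies $\1\{\bar\qq_j\preceq\bbb\}=\1\{\qq_j\preceq\aaa\}$. By Lemma~\ref{lem:boolean-column-set-equivalence}, the probabilistic detour through Theorem~\ref{thm:algebraic} adds nothing beyond this. It is an existence statement inside each $\QQ_1$-class, not the set equality you call ``meeting every class identically''. So testing at the minimal element $\aaa=\qq_j$ only gives $\bar\qq_j\preceq\bbb$ for some possibly larger $\bbb\in[\qq_j]_{\QQ_1}$.

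Your claim that this obstacle disappears in (i) is false. Being the minimum $\bigvee_{\mcs_j}\qq_{j'}$ of its class does not make the class a singleton. Take $\Qbasis=\QQ^{(2)}$ from Example~\ref{ex2} and a zero row $\qq_j=\zero$ (the empty OR). Then $[\zero]_{\QQ^{(2)}}=\{\zero,\ee_1,\ldots,\ee_4\}$, and the test at $\aaa=\zero$ only yields $\bar\qq_j\in\{\zero,\ee_1,\ldots,\ee_4\}$.

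The missing idea is to test at the near-top profiles $\one-\ee_k$ instead. Prove (iii) first, and combine it with the one- and two-column cases (Lemma~\ref{lm:onetwo}). There, a zero column can be filled with ones, and $\QQ_{1,\Cdot\ell}\preceq\QQ_{1,\Cdot k}$ lets you replace column $k$ by $\QQ_{1,\Cdot k}\wedge(\one-\QQ_{1,\Cdot\ell})$. Together these show that $\QQ_1$ has nonzero columns and $\QQ_{1,\Cdot\ell}\npreceq\QQ_{1,\Cdot k}$ for $\ell\neq k$.

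It follows that $[\one-\ee_k]_{\QQ_1}=\{\one-\ee_k\}$. Indeed, under conjunction, $\PPhi_{\Cdot,\bbb}(\QQ_1)=\PPhi_{\Cdot,\one-\ee_k}(\QQ_1)$ means $\bigvee_{\ell:\beta_\ell=0}\QQ_{1,\Cdot\ell}=\QQ_{1,\Cdot k}\neq\zero$, which forces $\{\ell:\beta_\ell=0\}=\{k\}$. So containment at $\aaa=\one-\ee_k$, restricted to the rows of $\QQ_1$, forces the matching latent vector under $\bar\QQ$ to be $\one-\ee_k$ itself. Hence $\PPhi_{\Cdot,\one-\ee_k}(\bar\QQ)=\PPhi_{\Cdot,\one-\ee_k}(\QQ)$ for every $k$. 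Since $\QQ_{\Cdot,k}=\one-\PPhi_{\Cdot,\one-\ee_k}(\QQ)$, this gives $\bar\QQ=\QQ$ for all rows at once. This is the paper's argument, phrased there as $\QQ_1=\QQ_1\odot\HH$ forcing $\HH=\cI_K$.
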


Theorem~\ref{thm:sub}\textup{(i)} shows that verifying the
algebraic condition for $\QQ$ reduces to verifying it for
$\Qbasis$. This reduction can be particularly useful when
$J$ is large, since $\Qbasis$ has
$O\bigl(\binom{K}{\lfloor K/2\rfloor}\bigr)$ rows
\citep{kleitman1976extremal}.
Parts~\textup{(ii)--(iii)} allow identifiability results for
smaller matrices to be used in verifying the algebraic
condition. For part~\textup{(ii)},
Corollary~\ref{cor:pure-node} shows that completeness is
sufficient. For part~\textup{(iii)}, Lemma~\ref{lm:onetwo}
in the Supplementary Material shows that a two-column
$\QQ$-matrix satisfies the algebraic condition if and only if it is complete.
This yields the following necessary condition for the
algebraic condition, which will also be used in our
probabilistic identifiability results.

{\color{black}
\begin{definition}[Two-column condition]\label{def:two-column}
{\it
For $K\geq2$, a matrix $\QQ\in\{0,1\}^{J\times K}$
satisfies the \emph{two-column condition} if, for every
distinct $k,\ell\in[K]$, the submatrix
$\QQ_{\Cdot,\{k,\ell\}}$ contains both row patterns
$(1,0)$ and $(0,1)$.
}
\end{definition}

Similarly, Lemma~\ref{lm:threefour} in the Supplementary Material implies that, if $\QQ$ satisfies
the algebraic condition, every three-column submatrix
must contain a row with exactly one 1.
}
% These requirements must hold for all \(\binom{K}{2}\) pairs and
% \(\binom{K}{3}\) triples of columns. 
% We empirically investigate the effects of these submatrix conditions in Section~\ref{sec:sim_preprocessing} of the Supplementary Material.

The preceding results reduce the size of $\QQ$ and provide
preliminary checks of the algebraic condition.
For matrices that remain unresolved, the following proposition
allows us to formulate verification as a SAT problem.
{\color{black}
\begin{proposition}\label{prop:sat-nonequivalence}
Under the conjunctive operator, let
$\QQ\in\{0,1\}^{J\times K}$ have its columns arranged
in weakly non-increasing lexicographic order.
Then $\QQ$ satisfies the algebraic condition if and only if
there do not exist $\bQQ\in\{0,1\}^{J\times K}$,
also with its columns arranged in weakly non-increasing
lexicographic order, and $\HH\in\{0,1\}^{K\times K}$
such that
\begin{equation}\label{eq:sat_cond}
    \QQ=\bQQ\odot\HH,
    \qquad
    \bQQ\neq\QQ.
\end{equation}
\end{proposition}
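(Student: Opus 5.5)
Under the conjunctive operator, the plan is to rewrite the column-containment condition \eqref{eq:a_condition} as a statement about Boolean column spans, and then read off the factorization \eqref{eq:sat_cond}. Lexicographic sorting will serve as a canonical representative of each column-permutation class.

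\textbf{Step 1 (columns of $\PPhi(\QQ)$ as a Boolean span).} By Proposition~\ref{prop:path-response}, $\PPhi_{j,\aaa}(\QQ)=\qq_j\ast\aaa=\1\{\qq_j\preceq\aaa\}$. Now $\qq_j\preceq\aaa$ fails exactly when some $k$ has $q_{jk}=1$ and $\alpha_k=0$. Put $\bb=\one-\aaa$. Then
\[
\PPhi_{j,\aaa}(\QQ)=1-\bigvee_{k}q_{jk}\wedge\beta_k=1-(\QQ\odot\bb)_j .
\]
As $\aaa$ ranges over $\{0,1\}^K$, so does $\bb$. Hence $\operatorname{Cols}(\PPhi(\QQ))$ is the set of coordinatewise complements of
\[
\mathcal L(\QQ)=\{\QQ\odot\bb:\bb\in\{0,1\}^K\}.
\]
This set is the Boolean (OR-)span of the columns of $\QQ$, with $\zero$ included as the empty OR. Complementation is a bijection, so
\[
\operatorname{Cols}(\PPhi(\QQ))\subseteq\operatorname{Cols}(\PPhi(\bQQ))\iff\mathcal L(\QQ)\subseteq\mathcal L(\bQQ).
\]

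\textbf{Step 2 (span containment as a factorization).} Suppose $\mathcal L(\QQ)\subseteq\mathcal L(\bQQ)$. Each column $\QQ_{\Cdot,k}=\QQ\odot\ee_k$ lies in $\mathcal L(\bQQ)$, so it equals $\bQQ\odot\hh_k$ for some $\hh_k\in\{0,1\}^K$. Taking $\HH=(\hh_1,\ldots,\hh_K)$ gives $\QQ=\bQQ\odot\HH$.

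Conversely, suppose $\QQ=\bQQ\odot\HH$. By associativity of the Boolean product, $\QQ\odot\bb=\bQQ\odot(\HH\odot\bb)\in\mathcal L(\bQQ)$ for every $\bb$. So the algebraic condition fails if and only if some $\bQQ\nsim\QQ$ and some $\HH$ satisfy $\QQ=\bQQ\odot\HH$.

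\textbf{Step 3 (replacing $\nsim$ by $\neq$ via sorting).} Let $\bQQ'$ be $\bQQ$ with its columns sorted in weakly non-increasing lexicographic order, so $\bQQ'=\bQQ\PP$ for a permutation matrix $\PP$. Then $\QQ=\bQQ'\odot(\PP^\top\HH)$, and $\PP^\top\HH$ is still binary. So we may assume $\bQQ$ is sorted without loss of generality.

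For two matrices that are both sorted, $\bQQ\sim\QQ$ holds exactly when $\bQQ=\QQ$, because the sorted arrangement of a multiset of columns is unique. Therefore $\bQQ\nsim\QQ$ becomes $\bQQ\neq\QQ$, and \eqref{eq:a_condition} is equivalent to the non-existence of a pair $(\bQQ,\HH)$ satisfying \eqref{eq:sat_cond}.

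The main obstacle is Step 1, namely seeing that the conjunctive column patterns are exactly complements of the OR-span. After that, Steps 2 and 3 are bookkeeping. The one point to check in Step 3 is that sorting the candidate $\bQQ$ does not break the existence of a binary $\HH$, and the permutation argument above handles this.
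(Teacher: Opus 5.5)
Your proposal is correct and follows the paper's route. Steps 1--2 re-derive the paper's two supporting lemmas: the identity $\PPhi(\QQ)_{\Cdot,\aaa}=\one-\QQ\odot(\one-\aaa)$, and the equivalence of column containment with $\QQ=\bQQ\odot\HH$, where $\HH$ is built from the near-top columns and the converse uses associativity. Step 3 is exactly the paper's argument that permuting the columns of $\bQQ$ together with the rows of $\HH$ lets you assume $\bQQ$ is sorted, so that $\bQQ\nsim\QQ$ becomes $\bQQ\neq\QQ$.
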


Proposition~\ref{prop:sat-nonequivalence} yields a Boolean
satisfiability (SAT) formulation, which asks whether binary
variables can jointly satisfy a collection of logical
constraints. Introduce binary variables $x_{j\ell}$ and
$h_{\ell k}$ representing $\bar q_{j\ell}$ and the entries
of $\HH$, respectively. Then $q_{jk}
=
\bigvee_{\ell=1}^K
\bigl(x_{j\ell}\wedge h_{\ell k}\bigr),$ for $j\in[J],\ k\in[K]$. The constraint $\bQQ \neq \QQ$ in ~\eqref{eq:sat_cond} can be encoded by
\begin{equation}\label{eq:sat-factorization}
\begin{gathered}
\left(\bigvee_{(j,k):\,q_{jk}=0}x_{jk}\right)
\vee
\left(\bigvee_{(j,k):\,q_{jk}=1}\neg x_{jk}\right)
=1.
\end{gathered}
\end{equation}

The encoding of the lexicographic ordering constraints in
Proposition~\ref{prop:sat-nonequivalence} is deferred to
Section~\ref{app:sat-primer} of the Supplementary Material.
For readers unfamiliar with SAT, that section also provides
a gentle introduction.
A satisfying assignment certifies that $\QQ$ does not
satisfy the algebraic condition, whereas unsatisfiability
certifies that it does.
Combining this formulation with the preceding reductions
gives Algorithm~\ref{alg:id}.
We implement the formulation using PySAT, with
\texttt{Glucose42} as the default solver.
After basis reduction, the factorization constraints admit
an encoding of size $O(J_bK^2)$, where $J_b$ is the number
of rows of $\Qbasis$.
See Section~\ref{app:sub_comp_comx} of the Supplementary
Material for a discussion of computational complexity.
The formulation can also quantify the extent of
non-identifiability by enumerating alternative matrices
$\bQQ$ up to column permutation; see
Section~\ref{app:quantify_non_id} of the Supplementary Material.
}

\renewcommand{\algorithmicrequire}{\textbf{Input:}}
\renewcommand{\algorithmicensure}{\textbf{Output:}}
\begin{algorithm}[ht]
  \caption{Verification of the algebraic condition}
  \label{alg:id}
  \begin{algorithmic}[1]
    \REQUIRE Binary matrix
      $\QQ\in\{0,1\}^{J\times K}$ with $K\geq2$.
    \ENSURE ``Satisfies the algebraic condition'' or
      ``Does not satisfy the algebraic condition.''

    \STATE \textbf{Step 0: Basis reduction.}
      Compute $\Qbasis$, set $\QQ\gets\Qbasis$.

    \STATE \textbf{Step 1: Submatrix checks.}
    \IF{$\QQ$ contains an identity submatrix $\cI_K$}
      \STATE \textbf{return} Satisfies the algebraic condition
    \ENDIF
    \IF{$\QQ$ does not satisfy the two-column condition}
      \STATE \textbf{return} Does not satisfy the algebraic condition
    \ENDIF
    \IF{some three-column submatrix contains no row
         having exactly one 1}
      \STATE \textbf{return} Does not satisfy the algebraic condition
    \ENDIF

    \STATE \textbf{Step 2: SAT verification.}
    \STATE Arrange the columns of $\QQ$ in weakly non-increasing
      lexicographic order.

    \STATE \quad\textbf{2.1 Decision variables:}
      Introduce $x_{j\ell}$ for $\bQQ$ and
      $h_{\ell k}$ for $\HH$.

    \STATE \quad\textbf{2.2 Constraints:}
      Encode \eqref{eq:sat_cond} and the weakly non-increasing
      lexicographic ordering of the columns of $\bQQ$.

    \STATE \quad\textbf{2.3 Solve the SAT instance.}
    \IF{the solver returns \textsc{sat}}
      \STATE \textbf{return} Does not satisfy the algebraic condition
    \ELSE
      \STATE \textbf{return} Satisfies the algebraic condition
    \ENDIF
  \end{algorithmic}
\end{algorithm}

{\color{black}
To provide a sense of how restrictive the algebraic condition is and
to evaluate the computational efficiency of Algorithm~\ref{alg:id},
we conduct simulation studies in
Section~\ref{sec:simulation} of the Supplementary Material. The studies
show that many matrices satisfying the algebraic condition, and therefore identifiable, are not
complete and contain no pure nodes. 
For example, among 1,000 generated $50\times10$ matrices with
independent $\operatorname{Bernoulli}(0.5)$ entries, all satisfy
the algebraic condition, none are complete, and $62.3\%$
contain no pure nodes.
They also show that basis reduction
can substantially reduce the size of the SAT formulation and that the
submatrix checks can often determine whether the algebraic condition
holds before SAT verification is needed.
}

{\color{black}
\section{Identifiability for Probabilistic Boolean Models}
\label{sec:prob}
In this section, we present identifiability results for probabilistic
Boolean factor models. We adopt the following nondegeneracy convention.
For given $\ast$ and $\QQ$, let $\mca_j^+
=
\{\aaa\in\{0,1\}^K:\qq_j\ast\aaa=1\},$ and $\mca_j^-=\{0,1\}^K\setminus\mca_j^+.$
We assume that
$\varnothing\subsetneq\mca_j^+\subsetneq\{0,1\}^K$
for every $j\in[J]$, so that each item has both capable and
incapable latent vectors. Under the conjunctive operator,
this condition is equivalent to requiring $\qq_j\neq\zero$
for every $j\in[J]$.
% This convention is consistent with standard assumptions in the identifiability literature. For instance, under the conjunctive operator, since $\one\in\mathcal A_j^+$ always holds, the condition is equivalent to requiring $\qq_j\neq\zero$ for all $j\in[J]$. Existing results show that when a $\QQ$-matrix contains zero $\qq$-vectors, its identifiability conclusion remains unchanged after deleting those zero $\qq$-vectors \citep{gu21}.
For results stated specifically under the conjunctive operator, corresponding results for the disjunctive operator follow by duality \citep{chen2015statistical}; for simplicity, we state only the conjunctive versions.

\subsection{Identifiability for One- and Two-parameter Models}
\label{app:prob_12}

We first consider the one-parameter model in~\eqref{eq:one-param-model}, where \(\TTT=(g,p_{[\Cdot]})\). The following result gives a sufficient and necessary condition for identifiability of \(\QQ\) and joint identifiability of \((\QQ,\TTT)\).

 \begin{theorem}\label{thm:1P}
Consider the one-parameter Boolean factor model~\eqref{eq:one-param-model}
with \(J\geq2\) and \(p_{[\aaa]}>0\) for every
\([\aaa]\in\mathcal E(\QQ)\).
Then \(\QQ\) is identifiable if and only if
\((\QQ,\TTT)\) is jointly identifiable, which holds if and only if
\(\QQ\) satisfies the algebraic condition~\eqref{eq:a_condition}
and \(\lvert\mathcal E(\QQ)\rvert<2^J\).\\
 In the special case of the conjunctive operator, these conditions are equivalent
to \(\QQ\) satisfying the algebraic condition~\eqref{eq:a_condition}
and \(J>K\).
\end{theorem}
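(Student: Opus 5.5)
The plan is to prove the three-way equivalence by showing (a) failure of the algebraic condition~\eqref{eq:a_condition} destroys identifiability of \(\QQ\), (b) \(\lvert\mathcal E(\QQ)\rvert=2^J\) destroys it as well, and (c) the algebraic condition together with \(\lvert\mathcal E(\QQ)\rvert<2^J\) yields joint identifiability of \((\QQ,\TTT)\). Joint identifiability trivially implies identifiability of \(\QQ\), so these steps close the cycle. The basic representation I will use throughout is
\[
\P(\RR=\rr\mid\QQ,\TTT)=\sum_{\mcc\in\mathcal E(\QQ)}p_{\mcc}\prod_{j=1}^J g^{\lvert r_j-\Psi_j(v(\aaa_{\mcc}))\rvert}(1-g)^{1-\lvert r_j-\Psi_j(v(\aaa_{\mcc}))\rvert}.
\]
That is, the law of \(\RR\) is the law \(\pi_{\QQ}\) of the ideal response vector \(\Psi(v(\aaa))\in\{0,1\}^J\), passed through \(J\) independent binary symmetric channels with flip probability \(g\); I denote this channel by \(\mathrm{BSC}(g)^{\otimes J}\). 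The support of \(\pi_{\QQ}\) is exactly \(\operatorname{Cols}(\PPhi(\QQ))\), which has \(\lvert\mathcal E(\QQ)\rvert\) elements, and \(\pi_{\QQ}\) has full mass on these columns by the positivity assumption.

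For (a), suppose \(\bQQ\nsim\QQ\) satisfies \(\operatorname{Cols}(\PPhi(\QQ))\subseteq\operatorname{Cols}(\PPhi(\bQQ))\). For any \(g\), I take \(\bar g=g\) and assign to each class of \(\bQQ\) the mass of the \(\QQ\)-class with the same column, and zero mass to the other classes. Then \(\bar\pi=\pi\), so the two laws of \(\RR\) coincide, and \(\QQ\) is not identifiable.

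For (c), suppose \((\bQQ,\bar g,\bar p)\) gives the same law of \(\RR\); recall \(0\le g,\bar g<1/2\).
\begin{itemize}
\item If \(\bar g>g\), then \(\mathrm{BSC}(\bar g)=\mathrm{BSC}(g)\circ\mathrm{BSC}(\delta)\) with \(\delta=(\bar g-g)/(1-2g)\in(0,1/2)\). Since \(\mathrm{BSC}(g)^{\otimes J}\) is invertible when \(g<1/2\), this gives \(\pi=\mathrm{BSC}(\delta)^{\otimes J}\bar\pi\). Any distribution passed through \(\mathrm{BSC}(\delta)^{\otimes J}\) with \(\delta>0\) has full support, so \(\lvert\mathcal E(\QQ)\rvert=2^J\), a contradiction.
\item If \(\bar g<g\), the same computation gives \(\bar\pi=\mathrm{BSC}(\delta')^{\otimes J}\pi\) with full support. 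Hence \(\operatorname{Cols}(\PPhi(\bQQ))=\{0,1\}^J\supseteq\operatorname{Cols}(\PPhi(\QQ))\). The algebraic condition then forces \(\bQQ\sim\QQ\), which again gives \(\lvert\mathcal E(\QQ)\rvert=2^J\), a contradiction.
\end{itemize}
So \(\bar g=g\), and invertibility of the channel gives \(\bar\pi=\pi\). This reduces to the deterministic model~\eqref{eq:Boolean}, and Theorem~\ref{thm:algebraic} then yields \(\bQQ\sim\QQ\) and \(\bar p_{[\Cdot]}=p_{[\Cdot]}\).

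For (b), suppose \(\lvert\mathcal E(\QQ)\rvert=2^J\).
\begin{itemize}
\item Non-identifiability of \(\TTT\) is easy. Perturb \(g\) to a nearby \(\bar g\) and set \(\bar\pi=(\mathrm{BSC}(\bar g)^{-1}\mathrm{BSC}(g))^{\otimes J}\pi\). This is a small perturbation of a strictly positive vector on all \(2^J\) patterns, so it is still a valid positive distribution, and it is supported on \(\operatorname{Cols}(\PPhi(\QQ))\).
\item The harder part, and the step I expect to be the main obstacle, is producing \(\bQQ\nsim\QQ\) with the same law. By the argument above, any such \(\bQQ\) must also have all \(2^J\) patterns. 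So I need a structurally different matrix that also realizes every pattern. My first attempt is to permute two rows of \(\QQ\), or to alter an entry of one row inside its ``private'' region; injectivity of \(\ast\) in \(\qq\) should keep the new matrix distinct. Here \(J\ge2\) is exactly what leaves room to permute rows. The difficulty is the case where every such modification is a column permutation of \(\QQ\), for example \(\QQ=\cI_J\); there I would construct \(\bQQ\) by hand, for instance by adding a 1 to one row in a coordinate that keeps all \(2^J\) patterns.
\end{itemize}

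Finally, the conjunctive specialization. Here \(\lvert\mathcal E(\QQ)\rvert=2^J\) holds iff every row \(\qq_j\) has a coordinate not covered by \(\bigvee_{j'\neq j}\qq_{j'}\); this forces \(J\le K\), so \(J>K\) implies \(\lvert\mathcal E(\QQ)\rvert<2^J\). For the converse, assume the algebraic condition and \(J\le K\), and suppose some row is covered by the join of the others. Then by Theorem~\ref{thm:sub}(i), \(\Qbasis\) has fewer than \(K\) rows. I would then use Proposition~\ref{prop:sat-nonequivalence} to exhibit a factorization \(\QQ=\bQQ\odot\HH\) with \(\bQQ\neq\QQ\), which is possible when there are fewer basis rows than columns. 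This contradicts the algebraic condition, so \(\lvert\mathcal E(\QQ)\rvert=2^J\) exactly when \(J\le K\).
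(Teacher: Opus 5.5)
Your steps (a) and (c), and the perturbation-of-$g$ half of (b), are correct and match the paper's argument. The gap is the other half of (b). That half is exactly what shows that identifiability of $\QQ$ forces $\lvert\mathcal E(\QQ)\rvert<2^J$, so without it the first equivalence is not proved.

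Your premise that every confounding $\bQQ$ must realize all $2^J$ patterns is false. The computation in (c) forces $\bar\pi$ to have full support only when $\bar g\le g$. When $\bar g>g$ it only says that $\pi=\mathrm{BSC}(\delta)^{\otimes J}\bar\pi$ has full support, which is automatic here, and it places no constraint on $\bar\pi$.

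Worse, the search you propose provably fails in the only case still open. By (a) you may assume the algebraic condition holds. Then any $\bQQ$ with $\operatorname{Cols}(\PPhi(\bQQ))=\{0,1\}^J\supseteq\operatorname{Cols}(\PPhi(\QQ))$ must satisfy $\bQQ\sim\QQ$. For the conjunctive $\QQ=\cI_J$ with $J=K$, for instance, swapping rows gives a column permutation, and adding a one destroys a pattern.

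The missing idea is to use the direction $\bar g>g$, and to note that non-identifiability requires only \emph{one} true $\TTT$. The construction is:
\begin{itemize}
\item Let every row of $\bQQ$ equal $\qq_1$. Then $\operatorname{Cols}(\PPhi(\bQQ))=\{\zero_J,\one_J\}$, since item $1$ takes both ideal values when all patterns occur under $\QQ$.
\item Let $\bar\pi$ put mass $1/2$ on each of these two patterns.
\item Choose $0\le g<\bar g<1/2$, and \emph{define} the truth by $\pi=\mathrm{BSC}(\delta)^{\otimes J}\bar\pi$ with $\delta=(\bar g-g)/(1-2g)$.
\end{itemize}
This $\pi$ is strictly positive and sums to one. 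Because $\operatorname{Cols}(\PPhi(\QQ))=\{0,1\}^J$, it gives valid positive grouped proportions under $\QQ$, and $\mathrm{BSC}(g)^{\otimes J}\pi=\mathrm{BSC}(\bar g)^{\otimes J}\bar\pi$. Finally, $\bQQ\nsim\QQ$: the rows of $\QQ$ are pairwise distinct when all $2^J$ patterns occur, while $\bQQ$ has equal rows. This is where $J\ge2$ enters, not in leaving room to permute rows.

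There is also a smaller error in the conjunctive converse. A row lying below the join of the other rows need not equal a join of other rows. So it need not be a non-basis row, and $\Qbasis$ need not have fewer than $K$ rows. For example, take the rows $110,101,011$ with $J=K=3$. Each is covered by the other two, yet all three are basis rows, so $J_b=K$.

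The direct argument avoids this. If $J\le K$, the matrix $(\cI_J\;\zero)$ realizes all $2^J$ patterns. The algebraic condition then forces $\QQ\sim(\cI_J\;\zero)$, and hence $\lvert\mathcal E(\QQ)\rvert=2^J$.
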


Since \(\lvert\mathcal E(\QQ)\rvert\leq2^K\), \(\lvert\mathcal E(\QQ)\rvert<2^J\) holds when
\(J>K\) for any  operator. Therefore, when $J>K$  adding the unknown item parameter \(g\) does not
require any condition on \(\QQ\) beyond the algebraic condition. We next provide an example to illustrate the use of Theorem~\ref{thm:1P}.
\begin{example}\label{ex:1P}
{\it
Consider the one-parameter Boolean factor model with the conjunctive
operator and \(\QQ=\QQ^{(2)}\) in Equation~\eqref{eq:ex_two_Q}.
Example~\ref{ex2} establishes that this matrix satisfies the
algebraic condition. Since \(J=6>K=4\), Theorem~\ref{thm:1P}
implies that \((\QQ,g,p_{[\Cdot]})\) is jointly identifiable
whenever \(p_{[\aaa]}>0\) for every
\([\aaa]\in\mathcal E(\QQ)\).
Thus introducing the unknown parameter \(g\) preserves the identifiability of \(\QQ\), while \(g\) and $p_{[\Cdot]}$ are also identifiable.
}
\end{example}

We next consider the two-parameter model in~\eqref{eq:two-param-model},
with \(\TTT=(s,g,p_{[\Cdot]})\). The following result gives
sufficient conditions for identifiability of \(\QQ\) and joint identifiability of \((\QQ,\TTT)\).

\begin{theorem}\label{thm:2P}
Consider the two-parameter Boolean factor model~\eqref{eq:two-param-model}
with \(p_{[\aaa]}>0\) for every
\([\aaa]\in\mathcal E(\QQ)\).
Suppose that \(\bigcap_{j=1}^J\mca_j^+\neq\varnothing\)
and \(\bigcap_{j=1}^J\mca_j^-\neq\varnothing\).
Then \(\QQ\) is identifiable if it satisfies the algebraic
condition~\eqref{eq:a_condition} and
\(2^J>2^K\lvert\mathcal E(\QQ)\rvert\).
%, where the inequality holds whenever \(J>2K\).
Under the same conditions, \((\QQ,\TTT)\) is jointly identifiable. \\
In the special case of the conjunctive operator, the above identifiability conditions hold if \(\QQ\) has no zero \(\qq\)-vectors,
satisfies the algebraic condition~\eqref{eq:a_condition},
and \(2^J>2^K\lvert\mathcal E(\QQ)\rvert\).
\end{theorem}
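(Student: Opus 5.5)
Suppose $\P(\RR=\rr\mid\QQ,\TTT)=\P(\RR=\rr\mid\bQQ,\bar\TTT)$ for all $\rr$, with $\TTT=(s,g,p_{[\Cdot]})$. The plan has three stages: identify $(s,g)$ from the observed distribution, reduce to the deterministic Boolean model, and then invoke Theorem~\ref{thm:algebraic}.

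\textbf{Stage 1: a polynomial identity.} Write the response distribution as a mixture over ideal response patterns. Under $\QQ$ the distribution is
\[
\sum_{[\aaa]\in\mathcal E(\QQ)} p_{[\aaa]} \prod_{j=1}^J (1-s)^{r_j\eta_j}s^{(1-r_j)\eta_j}g^{r_j(1-\eta_j)}(1-g)^{(1-r_j)(1-\eta_j)},
\]
where $\eta_j=\qq_j\ast\aaa$ and $\boldsymbol\eta=\Psi(v(\aaa))$ ranges over the distinct columns of $\PPhi(\QQ)$. I would pass to the marginal moments $T_\mcs=\P(R_j=1,\ j\in\mcs)$ for $\mcs\subseteq[J]$, which form an invertible linear transform of the distribution. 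Under $\QQ$,
\[
T_\mcs=\sum_{[\aaa]}p_{[\aaa]}\prod_{j\in\mcs}\bigl(g+(1-s-g)\eta_j(\aaa)\bigr).
\]

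\textbf{Stage 2: recovering $g$ and $1-s$.} The nonemptiness assumptions give a class $\aaa^-\in\bigcap_j\mca_j^-$ with all-zero ideal pattern and a class $\aaa^+\in\bigcap_j\mca_j^+$ with all-one pattern. The natural route is the "$T$-matrix" trick of \citet{xu2017}: shift the item probabilities by a vector $\boldsymbol\theta$ and consider $\sum_{[\aaa]} p_{[\aaa]}\prod_{j\in\mcs}(\P(R_j=1\mid\aaa)-\theta_j)$. Taking $\theta_j=g$ for all $j$ annihilates every class $\aaa$ with some $\eta_j(\aaa)=0$, $j\in\mcs$. With $\mcs=[J]$, only $\aaa^+$ survives, giving
\[
\prod_j(1-s-g)\,p_{[\aaa^+]}.
\]
Symmetrically, $\theta_j=1-s$ leaves only $\aaa^-$. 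I would then compare the two parameterizations via these identities. Since both sides are polynomials in the free shift, and the observed side is fixed, the set of values $\bar g$ that annihilate all but the all-one term must coincide.

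\textbf{The counting condition.} The role of $2^J>2^K|\mathcal E(\QQ)|$ is a dimension/degree argument. The competing model has at most $2^K$ classes, so at most $2^K$ ideal patterns. The observed distribution lies in a mixture of product-Bernoulli laws indexed by at most $2^K+|\mathcal E(\QQ)|$ patterns. If $(\bar s,\bar g)\neq(s,g)$, then the equality of $2^J$-dimensional vectors forces the $\QQ$-side product measures to be linear combinations of the $\bar\QQ$-side ones. The key claim is that product measures with distinct patterns, at common parameters $(1-s,g)$, are linearly independent. I would expect a rank argument showing that $2^J>2^K|\mathcal E(\QQ)|$ precludes such a representation via a Kronecker-product rank bound: the span of cross-parameter products has dimension at most $2^K|\mathcal E(\QQ)|<2^J$. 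This yields $\bar g=g$ and $\bar s=s$. \textbf{I expect this step to be the main obstacle}, and I would first try to make the Kronecker rank argument precise using the extreme classes $\aaa^\pm$.

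\textbf{Stage 3: reduction to the deterministic model.} Once $(\bar s,\bar g)=(s,g)$, each product measure is indexed injectively by its ideal pattern. Because $g<1-s$, these measures are linearly independent over $\{0,1\}^J$: they are Kronecker products of the invertible $2\times2$ matrices. Hence the two mixtures coincide as distributions over ideal patterns. Every column of $\PPhi(\QQ)$ has positive mass, so $\operatorname{Cols}(\PPhi(\QQ))\subseteq\operatorname{Cols}(\PPhi(\bQQ))$. The algebraic condition then gives $\bQQ\sim\QQ$, and the masses give $\bar p_{[\Cdot]}=p_{[\Cdot]}$, as in Theorem~\ref{thm:algebraic}.

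\textbf{Conjunctive case.} For the conjunctive operator, take $\aaa^+=\one$, which is capable for every item. Take $\aaa^-=\zero$, which is incapable for all $\qq_j\neq\zero$ by the standing assumption $\qq\ast\zero=0$. The intersection hypotheses then hold automatically whenever $\QQ$ has no zero rows.
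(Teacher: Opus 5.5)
Your Stage 3 and the conjunctive reduction are correct and match the paper's proof once $(s,g)$ is known. The gap is the step you flag yourself: showing $(\bar s,\bar g)=(s,g)$. The mechanism you propose for it does not work.

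\textbf{Why the dimension count fails.} Write the hypothesis as the single vector identity $M^{\otimes J}\boldsymbol\mu=\bar M^{\otimes J}\bar{\boldsymbol\mu}$. Here $\boldsymbol\mu,\bar{\boldsymbol\mu}$ are the masses on ideal patterns and $M,\bar M$ are the $2\times2$ item channels. Both spans contain this one vector by construction, so counting the dimension of a span of product measures cannot produce a contradiction. The identity also does not force each $\QQ$-side product measure into the span of the $\bQQ$-side ones; only their mixture lies there.

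\textbf{Why Stage 2 cannot suffice.} Your shift polynomials use only the extreme classes, and these cannot identify $(s,g)$ by themselves. In the paper's boundary example $\QQ=(\cI_K^\top\ \cI_K^\top)^\top$, both $\one$ and $\zero$ are extreme classes. Yet $(s,g)=(3/10,1/10)$ and $(\bar s,\bar g)=(1/5,1/5)$ give the same distribution. There $2^J=2^K\lvert\mathcal E(\QQ)\rvert$, so the counting condition must enter in an essential way.

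\textbf{The missing idea: a support uncertainty principle.} Rewrite the identity as $\bar{\boldsymbol\mu}=B^{\otimes J}\boldsymbol\mu$ with $B=\bar M^{-1}M$. Your intersection hypotheses make every $\mca_j^+$ and $\mca_j^-$ nonempty. The single-item marginals then give $1-\bar s>g$ and $1-s>\bar g$, so both diagonal entries of $B$ are positive. The key lemma is: for an invertible $2\times2$ matrix $A$ with no zero entry and any $x\neq0$,
\[
\lvert\operatorname{supp}(x)\rvert\,\lvert\operatorname{supp}(A^{\otimes n}x)\rvert\geq2^n .
\]
It is proved by induction on $n$, splitting $x$ by its first coordinate.

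\textbf{Case $s\neq\bar s$ and $g\neq\bar g$.} Then $B$ has no zero entry, and the lemma gives $2^J\leq\lvert\mathcal E(\QQ)\rvert\cdot2^K$, contradicting the hypothesis. This is where the product $2^K\lvert\mathcal E(\QQ)\rvert$ actually comes from.

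\textbf{Case exactly one parameter agrees.} Then $B$ is triangular, the lemma does not apply, and this is where the extreme classes are needed. For $g=\bar g$ and $s>\bar s$, $B$ is nonnegative and upper triangular with a strictly positive second column. Hence $\bar\mu_{\vv}\geq\mu_{\one_J}\prod_j B_{v_j,1}>0$ for every $\vv$, which forces $\lvert\operatorname{supp}(\bar{\boldsymbol\mu})\rvert=2^J>2^K$. For $s<\bar s$, argue through $B^{-1}$: first transfer positivity to $\bar\mu_{\one_J}$, then show $\boldsymbol\mu$ has full support. The cases with $s=\bar s$ are symmetric and use $\mu_{\zero_J}>0$.

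Without these two ingredients, Stage 2 is not yet a proof.
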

 Since \(\lvert\mathcal E(\QQ)\rvert\leq2^K\), \(2^J>2^K\lvert\mathcal E(\QQ)\rvert\) holds whenever \(J>2K\) for any operator. Conditions $\bigcap_{j=1}^J\mca_j^+\neq\varnothing$ and $\bigcap_{j=1}^J\mca_j^-\neq\varnothing$ mean that both \(\one_J\) and \(\zero_J\) occur in $\operatorname{Cols}\bigl(\PPhi(\QQ)\bigr)$. In particular, under the conjunctive operator, they hold whenever $\QQ$ has no zero $\qq$-vectors.
At the boundary \(J=2K\), the conjunctive operator admits the following refinement.
\begin{proposition}\label{prop:two-param-conjunctive}
Consider the two-parameter Boolean factor
model~\eqref{eq:two-param-model} under the conjunctive operator,
with \(p_{[\aaa]}>0\) for every
\([\aaa]\in\mathcal E(\QQ)\).
Suppose that \(J=2K\). 
Then \(\QQ\) is identifiable if and only if it satisfies the algebraic condition~\eqref{eq:a_condition}.
Furthermore, \((\QQ,\TTT)\) is jointly identifiable for every such \(\TTT\) if and only if \(\QQ\) satisfies the algebraic condition and  \(\QQ\) cannot be transformed into \(\left(\begin{smallmatrix}\cI_K\\ \cI_K\end{smallmatrix}\right)\) by permuting its rows and columns.
%after rearrangement of rows, \(\QQ\nsim
%\left(\begin{smallmatrix}\cI_K\\ \cI_K\end{smallmatrix}\right)\).
\end{proposition}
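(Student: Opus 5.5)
The plan splits the proposition into a necessity part, which is general, and a sufficiency part that reduces to a single boundary case. For necessity of the algebraic condition, suppose \eqref{eq:a_condition} fails. Theorem~\ref{thm:algebraic} then gives some \(\bQQ\nsim\QQ\) with \(\operatorname{Cols}(\PPhi(\QQ))\subseteq\operatorname{Cols}(\PPhi(\bQQ))\). For each class \([\aaa]\in\mathcal E(\QQ)\), pick \(\bar\aaa\) with \(\PPhi_{\Cdot,\bar\aaa}(\bQQ)=\PPhi_{\Cdot,\aaa}(\QQ)\). Keep the same \((s,g)\), place mass \(p_{[\aaa]}\) on \([\bar\aaa]\), and put zero mass on the unused classes. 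Then \(\P(\RR\mid\bQQ,\bar\TTT)=\P(\RR\mid\QQ,\TTT)\), so \(\QQ\) is not identifiable. This same construction also shows that joint identifiability requires \eqref{eq:a_condition}.

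For sufficiency, assume \(\QQ\) satisfies \eqref{eq:a_condition} and \(J=2K\). By Theorem~\ref{thm:sub}(iii) and the two-column condition, \(\QQ\) has no zero \(\qq\)-vectors, so both intersection conditions of Theorem~\ref{thm:2P} hold. If \(\lvert\mathcal E(\QQ)\rvert<2^K\), then \(2^J=4^K>2^K\lvert\mathcal E(\QQ)\rvert\), and Theorem~\ref{thm:2P} gives identifiability of \(\QQ\) and joint identifiability of \((\QQ,\TTT)\).

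That leaves the boundary case \(\lvert\mathcal E(\QQ)\rvert=2^K\). Under the conjunctive operator, I will first show this forces \(\QQ\) to be complete. By \eqref{eq:RQ_conj}, the classes are indexed by distinct joins \(\bigvee_{j\in\mcs}\qq_j\). Having all \(2^K\) of them means every \(\ee_k\) is a join of rows, hence some row equals \(\ee_k\). So, up to permutation, \(\QQ=\bigl(\begin{smallmatrix}\cI_K\\ \QQ'\end{smallmatrix}\right)\) with \(\QQ'\in\{0,1\}^{K\times K}\).

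In this complete case I will run the \(T\)-matrix argument used for DINA-type models, with marginal probability vectors indexed by item subsets and the shift \(\theta\mapsto\theta-g\). The goal is to show \(\bar g=g\) and \(\bar s=s\) from the equations \(T(\QQ,s,g)\pp=T(\bQQ,\bar s,\bar g)\bar\pp\) and their shifted versions. The step of identifying the item parameters is where \(\QQ'\) matters and is the main obstacle. If some row of \(\QQ'\) is not a copy of the row \(\ee_k\) for some \(k\), then \(\QQ'\) contains a row of norm at least two, or else two rows equal to the same \(\ee_k\) so that some \(\ee_\ell\) appears only once. Either way, an item's ideal response is a nontrivial product of the responses of the identity items. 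The polynomial identity obtained by shifting by \(\bar g\) on the identity block and evaluating at that item then gives one more independent equation than in the boundary count, and this pins down \(\bar g=g\) and then \(\bar s=s\). I would try this first on \(K=1,2\) to see which equation supplies the extra constraint.

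Once \((s,g)\) are identified, the ideal-response distribution is recovered by linear inversion. The algebraic condition, via the proof of Theorem~\ref{thm:algebraic}, then identifies \(\QQ\) and \(p_{[\Cdot]}\). Note that the parameter-identification step is needed only for joint identifiability. Identifiability of \(\QQ\) alone in the \((\cI_K;\cI_K)\) case also needs its own argument: for instance, any competing \(\bQQ\) must reproduce the same pairwise dependence pattern among items, which forces completeness with each column measured by exactly two items, i.e.\ \(\bQQ\sim\QQ\).

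Finally, I will show joint identifiability fails for \(\QQ=(\cI_K;\cI_K)\) by exhibiting one \(\TTT\) with a distinct equivalent \(\bar\TTT\). Take \(\pp\) to be the product of \(K\) identical Bernoulli\((\pi)\) marginals. Then \(\RR\) splits into \(K\) i.i.d.\ two-item blocks, each a two-item latent class model with parameters \((s,g,\pi)\). Such a block has only three free cell probabilities, and I expect its parameter map to have a one-dimensional fibre. Choosing another point \((\bar s,\bar g,\bar\pi)\neq(s,g,\pi)\) on that fibre, still with \(\bar g<1-\bar s\), makes every block have the same law, hence the same joint law. Since the classes are singletons here, \(p_{[\Cdot]}\) differs, so joint identifiability fails for this \(\TTT\) while \(\QQ\) remains identifiable.
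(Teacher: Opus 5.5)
Your reduction is correct as far as it goes. The necessity construction is the paper's. Theorem~\ref{thm:2P} does settle $\lvert\mathcal E(\QQ)\rvert<2^K$; note, though, that the absence of zero rows comes from the standing nondegeneracy convention of Section~\ref{sec:prob}, since the two-column condition only excludes zero columns. You are also right that $\lvert\mathcal E(\QQ)\rvert=2^K$ means $\QQ$ is complete.

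\textbf{The gap.} That complete case is the whole content of the proposition beyond Theorem~\ref{thm:2P}, and you have not proved it. The T-matrix step is a plan rather than an argument, and it does not say how the unknown $\bQQ$ enters. Its proposed mechanism also fails in one of your own subcases. If $\QQ'$ consists of pure rows but is not a permutation of $\cI_K$ (e.g.\ $K=2$, $\QQ=(\ee_1,\ee_2,\ee_1,\ee_1)^\top$), no item's ideal response is a nontrivial product of the identity items' responses, so ``either way'' is false.

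Moreover, identifiability of $\QQ$ alone cannot be separated from the item parameters. The algebraic condition applies only once you know $\operatorname{Cols}(\PPhi(\QQ))\subseteq\operatorname{Cols}(\PPhi(\bQQ))$, and linear inversion yields this only when $(\bar s,\bar g)=(s,g)$. So for every complete $\QQ$, including $\bigl(\begin{smallmatrix}\cI_K\\ \cI_K\end{smallmatrix}\bigr)$, you must rule out alternatives $(\bQQ,\bar{\TTT})$ with changed item parameters. Your pairwise-dependence sketch does not do this: with correlated attributes all items are pairwise dependent, and changing $(s,g)$ merely rescales those dependences.

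\textbf{The missing idea} is the paper's support-counting argument. Write $\boldsymbol\mu,\bar{\boldsymbol\mu}$ for the ideal-pattern distributions; then $\bar{\boldsymbol\mu}=B^{\otimes J}\boldsymbol\mu$ with $B=\bar M^{-1}M$.
\begin{itemize}
\item If exactly one of $s,g$ changes, $B$ or $B^{-1}$ is triangular with a strictly positive column. Together with $\mu_{\zero_J},\mu_{\one_J}>0$, this forces a support of size $2^J>2^K$ on one side, a contradiction; this step needs only $J>K$.
\item If both change, $B$ has no zero entries. Lemma~\ref{lem:two-param-support} gives $\lvert\operatorname{supp}\boldsymbol\mu\rvert\,\lvert\operatorname{supp}\bar{\boldsymbol\mu}\rvert\geq 4^K$, so both supports have size exactly $2^K$. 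The equality case, Lemma~\ref{lem:two-param-support-equality}, then shows two things: no two patterns in $\operatorname{supp}\bar{\boldsymbol\mu}$ differ in one coordinate, and each nonzero row of $\bQQ$ is capable on exactly half of it. This forces every row of $\bQQ$ to be pure with each attribute used exactly twice, so $\bQQ\sim\bigl(\begin{smallmatrix}\cI_K\\ \cI_K\end{smallmatrix}\bigr)$. Then all patterns $(\vv,\vv)$ have positive mass under $\QQ$, so $\operatorname{Cols}(\PPhi(\QQ))=\operatorname{Cols}(\PPhi(\bQQ))$, and the algebraic condition gives $\QQ\sim\bQQ$.
\end{itemize}
This single argument proves identifiability of $\QQ$ in the complete case. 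It also shows that the both-change scenario is impossible unless $\QQ\sim\bigl(\begin{smallmatrix}\cI_K\\ \cI_K\end{smallmatrix}\bigr)$, which is exactly the joint-identifiability claim.

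\textbf{The counterexample.} Your plan matches the paper's, but fix the count. Within a block, $\P(R_k=0,R_{K+k}=1)=\P(R_k=1,R_{K+k}=0)$, so there are two free cell probabilities for three parameters. Three free cells, as you wrote, would not suggest a one-dimensional fibre. You still need to certify an admissible second point; the paper uses $(s,g,\pi)=(3/10,1/10,7/12)$ and $(1/5,1/5,5/12)$.
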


Indeed, if, after rearranging rows,
\(\QQ\sim\left(\begin{smallmatrix}\cI_K\\ \cI_K\end{smallmatrix}\right)\),
then \(\QQ\) remains identifiable, but there exist
parameters \(\TTT\) for which \((\QQ,\TTT)\) is not jointly
identifiable; see Section~\ref{app:proofs_12} of the Supplementary Material for a construction.

% The next result uses the minimum number of items whose
% \(\qq\)-vectors jointly cover all \(K\) attributes. Define $\rho(\QQ)
% =
% \min\left\{
% |S|:S\subseteq[J],\
% \bigvee_{j\in S}\qq_j=\one
% \right\}.$
% \begin{proposition}\label{prop:common-cover}
% Consider the two-parameter Boolean factor
% model~\eqref{eq:two-param-model} under the conjunctive operator,
% with \(p_{[\aaa]}>0\) for every
% \([\aaa]\in\mathcal E(\QQ)\).
% Suppose that \(\QQ\) satisfies the algebraic
% condition~\eqref{eq:a_condition} and \(J>K+\rho(\QQ)\).
% Then \((\QQ,\TTT)\) is jointly identifiable.
% \end{proposition}

\subsection{Identifiability for Multi-parameter Models}\label{app:prob_multi}

We consider the multi-parameter model in~\eqref{eq:multi-model},
with \(\TTT=(\mathbf s,\mathbf g,p_{[\Cdot]})\).
The following theorem
provides sufficient conditions for identifiability of
\(\QQ\) and joint identifiability of \((\QQ,\TTT)\).

\begin{theorem}
\label{thm:multi-general}
Consider the multi-parameter Boolean factor model~\eqref{eq:multi-model} with \(p_{[\aaa]}>0\) for all
\([\aaa]\in\mathcal E(\QQ)\).
$\QQ$ is identifiable if there exist $B \geq 2$ blocks
\(I_1,\ldots,I_B\subseteq[J]\) such that
\begin{enumerate}
\item[\textup{(i)}]
For every \(b\in[B]\), \(\QQ_{I_b}\) satisfies the algebraic
condition~\eqref{eq:a_condition}, and every class in
\(\mathcal E(\QQ_{I_b})\) contains a class in
\(\mathcal E(\QQ_{[J]\setminus I_b})\).
\item[\textup{(ii)}]
The blocks cover \([J]\) and can be ordered so that, for every
\(b\geq2\), the matrix
\(\QQ_{\,I_b\cap(I_1\cup\cdots\cup I_{b-1})}\)
has \(K\) pairwise distinct columns.
\end{enumerate}
In addition, \((\QQ,\TTT)\) is jointly identifiable if (i), (ii) and the following condition (iii) hold:
\begin{enumerate}
\item[\textup{(iii)}]
For every \(j\in[J]\), there exists a block \(I_b\) containing
\(j\) such that a class in
\(\mathcal E(\QQ_{I_b\setminus\{j\}})\) is contained in
\(\mca_j^+\), and there exists a block \(I_{b'}\) containing
\(j\) such that a class in
\(\mathcal E(\QQ_{I_{b'}\setminus\{j\}})\) is contained in
\(\mca_j^-\).
\end{enumerate}
\end{theorem}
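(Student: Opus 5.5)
The plan is to fix an alternative $(\bQQ,\bar\TTT)$ that generates the same distribution of $\RR$. I would first prove identifiability of $\QQ$ block by block, then glue the blocks using (ii), and finally recover $\mathbf s,\mathbf g,p_{[\Cdot]}$ using (iii).

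\textbf{Step 1: reduce each block to the algebraic condition.} Fix a block $I_b$ and write $I_b^c=[J]\setminus I_b$. By local independence, the joint distribution of $(\RR_{I_b},\RR_{I_b^c})$ is a finite mixture over the classes of $\mathcal E(\QQ)$. Each component is a product of Bernoulli laws whose means are $1-s_j$ or $g_j$, according to the path status vector $\Psi$.

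Because $g_j<1-s_j$, the vectors $\bigl(\P(R_j=1\mid\aaa)\bigr)_{j\in S}$ for distinct ideal patterns on $S$ give linearly independent product measures. This is the standard argument that the $2^{|S|}\times|\text{patterns}|$ matrix of products $\prod_j\theta_j^{r_j}(1-\theta_j)^{1-r_j}$ has full column rank after the usual invertible transform $r\mapsto\prod_{j\in S}(R_j-g_j)$.

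Condition (i) says every class of $\mathcal E(\QQ_{I_b})$ contains a class of $\mathcal E(\QQ_{I_b^c})$. So for each ideal pattern $\xi$ of the block there is a complement pattern $\eta(\xi)$ whose latent class forces block pattern $\xi$. I would use this to write the distribution of $\RR_{I_b}$ as a mixture in which every column of $\PPhi(\QQ_{I_b})$ appears with positive weight, and in which the components are separated through the complement. The same decomposition holds for the $\bQQ$ side.

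Matching the two representations by linear independence would show that every column of $\PPhi(\QQ_{I_b})$ is also a column of $\PPhi(\bQQ_{I_b})$. The algebraic condition for $\QQ_{I_b}$ (Theorem~\ref{thm:algebraic}) then gives $\bQQ_{I_b}\sim\QQ_{I_b}$ via some column permutation $\pi_b$.

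This is the step I expect to be the main obstacle. The item parameters on the $\bQQ$ side are unknown, so the pattern matching must be done without knowing $\bar s_j,\bar g_j$. My first attempt would be to work with the ordered structure: the support pattern of the mixture, meaning which ideal patterns carry positive mass, is a property of the column set. I would then use conditioning events on $\RR_{I_b^c}$, together with the inequalities $g_j<1-s_j$, to show that the set of ideal patterns is determined up to relabeling of $0/1$ levels. The order-preservation needed to rule out relabeling comes from the fact that $1-s_j>g_j$ on both sides.

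\textbf{Step 2: glue the permutations.} Order the blocks as in (ii). For $b\ge2$, on the rows $I_b\cap(I_1\cup\cdots\cup I_{b-1})$, both $\pi_b$ and the permutation already fixed on earlier blocks map $\QQ$ to the same submatrix of $\bQQ$. Since that submatrix has $K$ pairwise distinct columns, the two permutations must coincide. By induction a single permutation $\pi$ works on $\bigcup_b I_b=[J]$, so $\bQQ\sim\QQ$.

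\textbf{Step 3: joint identifiability under (iii).} With $\QQ$ aligned, fix $j$ and take the block $I_b\ni j$ and the class of $\mathcal E(\QQ_{I_b\setminus\{j\}})$ contained in $\mca_j^+$ that (iii) provides. Conditioning on responses to the other items in the block that single out this class, $R_j$ behaves like a Bernoulli$(1-s_j)$ variable. I would formalize this as a ratio of identified mixture moments, using Step 1's linear independence for $\QQ_{I_b\setminus\{j\}}$, which identifies $s_j$. The symmetric argument with $\mca_j^-$ identifies $g_j$.

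Once $\mathbf s,\mathbf g$ and $\QQ$ are known, $p_{[\Cdot]}$ solves a linear system whose coefficient matrix is the full-column-rank product-Bernoulli matrix indexed by $\mathcal E(\QQ)$. Hence $p_{[\Cdot]}$ is unique, which gives $(\bQQ,\bar\TTT)\sim(\QQ,\TTT)$.
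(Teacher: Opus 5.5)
Your architecture matches the paper's: a per-block inclusion of ideal-pattern sets, the algebraic condition on each block, gluing through the distinct overlap columns, then (iii), then inversion for $p_{[\Cdot]}$. Your Step~2 is exactly the paper's gluing argument. The gap is Step~1, which carries the theorem and which you leave open.

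\textbf{Why your Step~1 does not work as sketched.} Linear independence of the product measures $\bigl(\bigotimes_{j\in I}M_j\bigr)\ee_{\vv}$ holds for one fixed channel. The alternative, however, uses a different and unknown channel $\bar M_I=\bigotimes_{j\in I}\bar M_j$, so ``matching the two representations'' yields nothing. The support of the mixture is also not an observable invariant. For instance, if $s_j<\bar s_j$, $g_j<\bar g_j$ and $\bar g_j<1-s_j$, $g_j<1-\bar s_j$, then $M_j^{-1}\bar M_j$ is entrywise positive and column-stochastic. Hence a fully supported pattern distribution on $I$ under $(\mathbf s,\mathbf g)$ gives the same law of $\RR_I$ as a point mass under $(\bar{\mathbf s},\bar{\mathbf g})$. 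Your remark that ``the same decomposition holds for the $\bQQ$ side'' is also unjustified, since (i) constrains only $\QQ$; fortunately, it is not needed.

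\textbf{The missing idea.} The paper uses a column-space argument on the two-way table
\[
P_{I,I^c}=M_I\boldsymbol\mu M_{I^c}^{\top}=\bar M_I\bar{\boldsymbol\mu}\bar M_{I^c}^{\top}.
\]
By (i), for each $\vv\in\operatorname{Cols}\{\PPhi(\QQ_I)\}$ some complement pattern $\mathbf w$ has $\boldsymbol\mu_{\Cdot,\mathbf w}=p_{[\bbb]}\ee_{\vv}$. Hence $M_I\ee_{\vv}\in\operatorname{col}(P_{I,I^c})\subseteq\operatorname{span}\{\bar M_I\ee_{\mathbf u}:\mathbf u\in\operatorname{Cols}\{\PPhi(\bQQ_I)\}\}$. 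On the alternative side, this uses only that rows of $\bar{\boldsymbol\mu}$ outside $\operatorname{Cols}\{\PPhi(\bQQ_I)\}$ vanish. Applying $\bar M_I^{-1}$ shows that $\bigl(\bigotimes_{j\in I}B_j\bigr)\ee_{\vv}$, with $B_j=\bar M_j^{-1}M_j$, is supported on $\operatorname{Cols}\{\PPhi(\bQQ_I)\}$. Its $\vv$-th coordinate is $\prod_{j\in I}(B_j)_{v_j,v_j}$.

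The diagonal entries of $B_j$ are $(1-\bar s_j-g_j)/(1-\bar s_j-\bar g_j)$ and $(1-s_j-\bar g_j)/(1-\bar s_j-\bar g_j)$. So what you actually need are the cross inequalities $g_j<1-\bar s_j$ and $\bar g_j<1-s_j$. These do not follow from $g_j<1-s_j$ holding ``on both sides.'' They come from the marginal $\P(R_j=1)$: it lies strictly between $g_j$ and $1-s_j$, by nondegeneracy of $\mca_j^+$ and positive grouped proportions, and it lies in $[\bar g_j,1-\bar s_j]$ under the alternative. The result is the inclusion $\operatorname{Cols}\{\PPhi(\QQ_I)\}\subseteq\operatorname{Cols}\{\PPhi(\bQQ_I)\}$. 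You need inclusion, not determination of the pattern set, since the algebraic condition does the rest.

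\textbf{Step~3 has the same defect.} Observed responses never single out a latent class. After aligning $\bQQ=\QQ$, the parameters $\bar s_h,\bar g_h$ of the other items in $I_b$ are still unknown, so fixed-parameter linear independence yields no identified ``ratio of mixture moments.'' The paper instead reuses the vanishing identity above. Take $\vv=\PPhi_{\Cdot,\aaa}(\QQ_{I_b})$ for $\aaa$ in the class supplied by (iii), and let $\mathbf u$ be $\vv$ with its $j$th coordinate flipped. Because that class lies in $\mca_j^+$ (respectively $\mca_j^-$), $\mathbf u\notin\operatorname{Cols}\{\PPhi(\QQ_{I_b})\}=\operatorname{Cols}\{\PPhi(\bQQ_{I_b})\}$. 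Hence $0=(B_j)_{u_j,v_j}\prod_{h\in I_b\setminus\{j\}}(B_h)_{v_h,v_h}$. Positivity of the diagonal factors then forces $(s_j-\bar s_j)/(1-\bar s_j-\bar g_j)=0$ (respectively $(g_j-\bar g_j)/(1-\bar s_j-\bar g_j)=0$). Your final inversion for $p_{[\Cdot]}$ is then fine.
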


To illustrate, suppose $I_1\cup I_2=[J]$ and $\QQ=
\bigl(
\QQ_{I_1\setminus I_2}^\top,\,
\QQ_{I_1\cap I_2}^\top,\,
\QQ_{I_2\setminus I_1}^\top
\bigr)^\top$ after a rearrangement of rows.
If $\QQ_{I_1}$ and $\QQ_{I_2}$ satisfy the algebraic
condition, $\mathcal E(\QQ_{I_1\setminus I_2})
=\mathcal E(\QQ_{I_2\setminus I_1})
=\mathcal E(\QQ)$, and $\QQ_{I_1\cap I_2}$ has $K$ pairwise distinct
columns, then $\QQ$ is identifiable. Joint identifiability
follows if \textup{(iii)} also holds. Notably, these conditions do not require pure nodes at all. 

Under the conjunctive operator, the conditions can be further simplified. The next theorem
requires the algebraic condition on one block and the
two-column condition (Definition~\ref{def:two-column}) on a disjoint block.

\begin{theorem}[Identifiability of the DINA model]\label{thm:multi-conj}
Consider model~\eqref{eq:multi-model} under the conjunctive operator with \(p_{[\aaa]}>0\) for all
\([\aaa]\in\mathcal E(\QQ)\) and $K \geq 2$. $\QQ$ is
identifiable if, after a rearrangement of rows, $\QQ=(\QQ_{I_1}^\top,\QQ_{I_2}^\top)^\top$,
where $\QQ_{I_1}$ satisfies the algebraic condition and $\QQ_{I_2}$ satisfies the two-column condition.
In addition, if $\sum_{j=1}^Jq_{jk}\geq3$ for all
$k\in[K]$, then
$(\QQ,\mathbf s,\mathbf g,p_{[\Cdot]})$ is jointly identifiable.
\end{theorem}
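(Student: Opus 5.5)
The plan is to obtain Theorem~\ref{thm:multi-conj} as a specialization of Theorem~\ref{thm:multi-general} with $B=2$ blocks. For identifiability of $\QQ$ I would take $I_1$ as in the statement and $I_2' = I_2\cup I_1$ (or, if needed, $I_1$ together with a suitable copy of rows). The cleaner choice, however, is to work directly with the DINA structure, re-running the argument of Theorem~\ref{thm:multi-general} with the conjunctive-specific simplifications.

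First, I will verify condition~(i) for the block $I_1$. $\QQ_{I_1}$ satisfies the algebraic condition by hypothesis. For the containment requirement, every class of $\mathcal E(\QQ_{I_1})$ must contain a class of $\mathcal E(\QQ_{I_2})$. Under the conjunctive operator I will show that the two-column condition on $\QQ_{I_2}$ makes its classes fine enough. For $\aaa\ne\aaa'$ with, say, $\alpha_k=1,\alpha'_k=0$, I need some row of $\QQ_{I_2}$ separating them. Here I expect a genuine gap: two-column separation only distinguishes $\ee_k$-type patterns. So rather than full separability, I will aim for containment via the representative vectors $\bigvee_{j\in\mcs}\qq_j$ of \eqref{eq:RQ_conj}. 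Each class of $\QQ_{I_1}$ has a maximal element of the form $\aaa_{\mcs}$ and a minimal one, and I will show that the $\QQ_{I_2}$-class of the maximal element of each $\QQ_{I_1}$-class stays inside it. This requires that any $\aaa\succ\aaa_{\mcs}$ in the same $\QQ_{I_1}$-class be separated by some row of $I_2$ with support meeting the added coordinate but avoiding nothing else, which is where the rows with patterns $(1,0)$ and $(0,1)$ are used.

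Second, for condition~(ii), I need the overlap block to have $K$ distinct columns. With disjoint $I_1,I_2$ I would instead use the Kruskal/tensor-style argument underlying Theorem~\ref{thm:multi-general}. The two-column condition on $\QQ_{I_2}$ says exactly that its columns are pairwise incomparable, hence distinct, and this can play the role of the overlap block. Concretely, I set the blocks $I_1$ and $I_1\cup I_2$... but the latter's algebraic condition follows from Theorem~\ref{thm:sub}(ii), since it contains the row submatrix $\QQ_{I_1}$. Taking $I_1$ and $[J]$ as the blocks, the overlap is $I_1$. So I would instead take blocks $[J]$ and $I_1\cup$(nothing), and reorganize so that the overlap is $\QQ_{I_2}$: blocks $[J]$ and $I_2\cup I_1$ coincide. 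This bookkeeping is the main obstacle. I would first try to prove directly that the conditional distribution of $\RR_{I_2}$ given the $\QQ_{I_1}$-class, combined with the identified marginal of $\RR_{I_1}$, pins down the item parameters, using the standard DINA argument of \citet{gu19}: contrasts $\P(R_j=1,\RR_{S}=\one)$ over $S\subseteq I_2$ separate $1-s_j$ from $g_j$ once each pair of attributes is distinguished in $I_2$.

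Third, for joint identifiability under $\sum_jq_{jk}\ge3$, I would check condition~(iii). For each $j$, $\mca_j^-$ contains a class of the remaining rows because some other row in the block involves an attribute absent from $\qq_j$, by the two-column condition. $\mca_j^+$ contains the class of $\one$, which is a singleton class after deleting $j$ precisely when each attribute still appears in at least two other rows. The hypothesis of three appearances guarantees this, giving identifiability of $\mathbf s,\mathbf g$ and hence of $p_{[\Cdot]}$.
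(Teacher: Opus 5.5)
Your reduction to Theorem~\ref{thm:multi-general} does not go through, and the repair you suggest for the gap you flag is false. With blocks $I_1$ and $I_2=[J]\setminus I_1$, condition~(i) requires every class of $\mathcal E(\QQ_{I_1})$ to contain a class of $\mathcal E(\QQ_{I_2})$. The two-column condition on $\QQ_{I_2}$ does not give this. Take $K=3$, $\QQ_{I_1}=\cI_3$, and $\QQ_{I_2}$ with rows $110,101,011$. The hypotheses of the theorem hold and every class of $\QQ_{I_1}$ is a singleton. Yet $[\zero]_{\QQ_{I_2}}=\{000,100,010,001\}$, so the class $\{\zero\}$ contains no $\QQ_{I_2}$-class. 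This also refutes your maximal-element claim. Moreover, the vectors $\bigvee_{j\in\mcs}\qq_j$ of \eqref{eq:RQ_conj} are the \emph{minimal} elements of conjunctive classes, and maximal elements need not be unique.

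Condition~(ii) is unavailable as well. Disjoint blocks have empty overlap, and taking $[J]$ as a block violates~(i), because $\QQ_{\varnothing}$ has the single class $\{0,1\}^K$. Pairwise incomparability of the columns of $\QQ_{I_2}$ cannot stand in for an overlap either. Condition~(ii) only transports a permutation already identified on both blocks, and $\QQ_{I_2}$ need not satisfy the algebraic condition. Your fallback contrast argument is unspecified. It also targets the item parameters rather than the main task, which is recovering $\QQ_{I_2}$.

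The first missing idea is that, under conjunction, it suffices to anchor the $K$ near-top profiles. If $\QQ_H$ satisfies the two-column condition, then $[\one-\ee_k]_{\QQ_H}=\{\one-\ee_k\}$: the rows with $q_{hk}=0$ join to $\one-\ee_k$, and a row with $q_{hk}=1$ excludes $\one$. Lemma~\ref{lem:multi-anchor-extraction} needs class containment at a single $\aaa$ only. It therefore places each near-top column $\PPhi_{\Cdot,\one-\ee_k}(\QQ_{I_1})$ in $\operatorname{Cols}\{\PPhi(\bar{\QQ}_{I_1})\}$. Every column of $\PPhi(\QQ_{I_1})$ is a $\wedge$ of near-top columns (Lemma~\ref{lm:conjunctive-generation}), and $\operatorname{Cols}\{\PPhi(\bar{\QQ}_{I_1})\}$ contains $\one$ and is $\wedge$-closed. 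So the full inclusion follows, and the algebraic condition gives $\bar{\QQ}_{I_1}\sim\QQ_{I_1}$.

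The second missing idea is that, after aligning $\bar{\QQ}_{I_1}=\QQ_{I_1}$, the block $I_2$ is recovered by a row swap rather than an overlap. For $h\in I_2$ with $q_{hk}=1$, exchange $h$ with some $j\in I_1$ having $q_{jk}=1$.
\begin{itemize}
\item The profile $\one-\ee_k$ remains a singleton class on the new complement.
\item The $I_1$-rows with $q_{rk}=0$ stay in the anchored block, unchanged under $\bar{\QQ}$, and force the matching alternative profile to dominate $\one-\ee_k$.
\item Item $h$ is incapable at $\one-\ee_k$, so $\bar q_{hk}=0$ would make it capable there; hence $\bar q_{hk}=1$.
\end{itemize}
This yields $\bar{\QQ}_{I_2}\succeq\QQ_{I_2}$. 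The reverse inequality comes from the same block inclusion applied to $I_2$, anchored by $\QQ_{I_1}$, which satisfies the two-column condition because it satisfies the algebraic condition. Combine this with the pairwise incomparability of the columns of $\QQ_{I_2}$.

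For the joint statement, your check of~(iii) inherits the failure of~(i), and ``two other rows'' is not the operative criterion. For each $j$ and $k$ with $q_{jk}=1$, pick three rows $j,h,\ell$ with a one in column $k$. Split $[J]$ into $I=\{j,h\}\cup\{r\in I_1:q_{rk}=0\}$ and $I^c=[J]\setminus I$, which contains $\ell$ and the $I_2$-rows with $q_{rk}=0$. This makes both $\one$ and $\one-\ee_k$ singleton classes under $\QQ_{I\setminus\{j\}}$ and under $\QQ_{I^c}$. Then \eqref{eq:multi-anchor-missing-pattern} yields $s_j=\bar s_j$ and $g_j=\bar g_j$.
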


When $\QQ$ has no pure nodes, the block conditions in
Theorem~\ref{thm:multi-conj} imply
$\sum_{j=1}^J q_{jk}\geq 3$ for every $k\in[K]$,
so the additional condition for joint identifiability
is automatic.

\begin{remark}\label{rmk:compare}
{\it 
Existing DINA identifiability results  \citep{JLGXZY2011,chen2015statistical,xu2018,gu21} all require the completeness of $\QQ$, whereas our results do not require pure nodes. 
We point out that the necessity of completeness
established in these works concerns joint identifiability
of $(\QQ,\mathbf s,\mathbf g,\pp)$, rather than
identifiability of $\QQ$ itself.
In their necessity proofs, completeness is shown to be
necessary for identifying $\pp$.
Consequently, it is necessary for jointly
identifying $(\QQ,\mathbf s,\mathbf g,\pp)$, but this does not imply that an incomplete $\QQ$
is itself unidentifiable.
The two block conditions in
Theorem~\ref{thm:multi-conj} suffice for identifying
$\QQ$ without requiring the other model parameters
to be identifiable.
These conditions can hold even when $\QQ$ contains
no pure nodes, as illustrated in the following
Example~\ref{ex:multi-conjunctive}.
Furthermore, even for complete matrices, the joint
identifiability conditions in \citet{gu21} can be
stronger than necessary for identifying $\QQ$ only.
For example, our theorem establishes identifiability of $\QQ=\left(\begin{smallmatrix}\cI_K\\ \cI_K\end{smallmatrix}\right),$
although this matrix does not satisfy their requirement
of at least three ones per column. Under the same block conditions, our theorem further
shows that $(\mathbf s,\mathbf g,p_{[\Cdot]})$ is
identifiable jointly with $\QQ$ if every column of
$\QQ$ additionally contains at least three ones.
This joint identifiability conclusion also holds
without requiring pure nodes.
}
\end{remark}

The following example illustrates
Theorem~\ref{thm:multi-conj} with a $\QQ$-matrix
containing no pure nodes.

\begin{example}\label{ex:multi-conjunctive}
{\it
Consider a multi-parameter Boolean factor model
under the conjunctive operator with the following
$\QQ$-matrix:
\[
{\renewcommand{\arraystretch}{0.6}
\QQ=
\begin{pmatrix}
\QQ_{I_1}\\
\QQ_{I_2}
\end{pmatrix},
\qquad
\QQ_{I_1}=
\begin{pmatrix}
1&1&0&0\\
1&0&1&0\\
1&0&0&1\\
0&1&1&0\\
0&1&0&1\\
0&0&1&1
\end{pmatrix},
\qquad
\QQ_{I_2}=
\begin{pmatrix}
1&1&0&0\\
0&1&1&0\\
0&0&1&1\\
1&0&0&1
\end{pmatrix}.
}
\]
The completeness condition in \citet{gu21} fails for this
$\QQ$, and $(\QQ,\mathbf s,\mathbf g,\pp)$ is not jointly
identifiable. Indeed, latent vectors $\zero$ and $\ee_k$ for $k \in [4]$ induce identical conditional
response distributions, so their individual proportions
cannot be identified separately.
Nevertheless, by Example~\ref{ex2}, $\QQ_{I_1}$ satisfies
the algebraic condition. Since $\QQ_{I_2}$ satisfies the
two-column condition and every column of $\QQ$ contains
five ones, Theorem~\ref{thm:multi-conj} establishes joint
identifiability of $(\QQ,\mathbf s,\mathbf g,p_{[\Cdot]})$
whenever $p_{[\aaa]}>0$ for every
$[\aaa]\in\mathcal E(\QQ)$.
}
\end{example}
We finally consider the special case where $\mathbf g$ is known
\citep{JLGXZY2011,chen2015statistical,xu2016}.
This setting is practically relevant. For example, in educational
testing, items that are difficult to answer correctly by guessing
may motivate treating \(g_j=0\) as a modeling assumption
\citep{chen2015statistical}.
When \(\mathbf g\) is known, the conjunctive operator admits a simpler sufficient and necessary condition for identifiability of \(\QQ\) and joint identifiability.

\begin{proposition}\label{prop:multi-known-g-conjunctive}
Consider the multi-parameter Boolean factor model~\eqref{eq:multi-model}
under the conjunctive operator, with \(K\geq2\), \(\mathbf g\) known,
and \(p_{[\aaa]}>0\) for every \([\aaa]\in\mathcal E(\QQ)\).
Then \(\QQ\) is identifiable if and only if
\((\QQ,\mathbf s,p_{[\Cdot]})\) is jointly identifiable,
which holds if and only if \(\QQ\) satisfies the algebraic
condition~\eqref{eq:a_condition} and every column of \(\QQ\)
contains at least two ones.
\end{proposition}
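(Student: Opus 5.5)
We prove the chain of implications in three parts: joint identifiability of $(\QQ,\mathbf s,p_{[\Cdot]})$ implies identifiability of $\QQ$; identifiability of $\QQ$ implies the algebraic condition~\eqref{eq:a_condition} and that every column has at least two ones; and these two conditions imply joint identifiability.

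\textbf{Joint implies $\QQ$-identifiability.} The first implication is immediate from Definition~\ref{def:identifiability}. Joint identifiability rules out any $(\bQQ,\bar\TTT)$ with $\bQQ\nsim\QQ$ matching the distribution.

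\textbf{Necessity of the two conditions.} We argue by contrapositive, with two cases.

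If the algebraic condition fails, take $\bQQ\nsim\QQ$ with $\operatorname{Cols}(\PPhi(\QQ))\subseteq\operatorname{Cols}(\PPhi(\bQQ))$. Keep $\mathbf s$ and the known $\mathbf g$. Assign each class of $\QQ$ its grouped proportion on a latent vector of $\bQQ$ with the same path status vector, and give the remaining classes of $\bQQ$ probability zero. The response distribution depends only on the law of the ideal response vector $(\qq_j\ast\aaa)_j$ and on $(\mathbf s,\mathbf g)$, so it is unchanged. This is exactly the argument behind Theorem~\ref{thm:algebraic}.

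If some column $k$ has at most one $1$, we construct an alternative directly.
\begin{itemize}
\item Zero ones in column $k$: attribute $k$ is irrelevant. Since the algebraic condition is required for identifiability anyway, adding a $1$ in column $k$ to some row $j$ must be compensated. The simplest route is to note that the two-column condition (Lemma~\ref{lm:onetwo}, Theorem~\ref{thm:sub}(iii)) already forces each column to have a $1$, so this case reduces to the algebraic condition failing.
\item Exactly one $1$, in row $j$: let $\bar\qq_j=\qq_j-\ee_k$. Since $g_j$ is known, the freedom is in $s_j$ and $p$. Row $j$ is the only item that sees $\alpha_k$. Take the alternative model in which item $j$'s capable set is $\{\aaa\succeq\bar\qq_j\}$. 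Choose $\bar p$ and $\bar s_j$ so that, conditionally on the other items' ideal responses, the probability that $R_j=1$ matches. Concretely, for each configuration $\aaa_{-k}\succeq\bar\qq_j$, the original gives mixture weight $w_1(1-s_j)+w_0 g_j$ with $w_1=\P(\alpha_k=1\mid\aaa_{-k})$. We want $\bar w(1-\bar s_j)=w_1(1-s_j)+w_0 g_j$, a single free $\bar s_j$ against one constraint per stratum. The plan is to pick $\bar s_j$ with $1-\bar s_j\ge 1-s_j$ compatible, or shift mass via $p$ (attributes other than $k$ now merged). Since $\alpha_k$ no longer appears in $\bQQ$, $\bar p$ only needs grouped proportions, leaving room. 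I would aim to set $1-\bar s_j$ equal to the average and verify consistency using that the classes with $\bar\qq_j$ capable split by $\alpha_k$ only. If this fails, instead move the $1$ to a different column $\ell\notin\mcs(\qq_j)$ when the resulting $\bQQ$ is nonequivalent.
\end{itemize}
This construction is the main obstacle, and I expect the paper to handle it by an explicit $\bar s_j$ formula.

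\textbf{Sufficiency.} Write $\theta_j=1-s_j$, with $g_j$ known. Matching marginals of the observed distribution gives polynomial equations $\sum_{\aaa}p_{[\aaa]}\prod_{j\in S}(\text{rate}_j)$. With $g$ known, set $T$-transforms $\tilde R_j=(R_j-g_j)/(\theta_j-g_j)$. Their moments over subsets $S$ equal $\P(\text{all } j\in S \text{ capable})$, the same for both models.

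To identify $\theta_j$, use column-count at least two. For the attribute set of $\qq_j$, find another item $j'$ and compare $\E[\tilde R_j\tilde R_S]$ with $\E[\tilde R_S]$ for $S$ a set whose capable region lies inside item $j$'s capable set, via the $\bigvee$ structure. The intended set is $S=\{j' : \bigvee \qq_{j'}\succeq\qq_j\}$ excluding $j$, which exists precisely when each column of $\qq_j$ is covered by another row, guaranteed by column sums at least two. The ratio then forces $\bar\theta_j=\theta_j$, after first showing $\bQQ$'s corresponding set also lies inside via a triangular argument on the lattice.

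Once $\mathbf s$ is recovered, the ideal-response distribution is identified. Theorem~\ref{thm:algebraic} then yields $\bQQ\sim\QQ$ and $\bar p_{[\Cdot]}=p_{[\Cdot]}$. The delicate part is showing the covering set for $\bQQ$ is also inside; I would handle it by proving $\bQQ$ and $\QQ$ share the relation ``$\qq_j\preceq\bigvee_{S}\qq$'' via moment equalities with $g$ known.
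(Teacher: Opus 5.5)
Your three-step architecture is fine, and the case where the algebraic condition fails is handled as in the paper. The case that carries the content of necessity, a column $k$ whose only $1$ is in row $j$, is not proved, and the route you sketch fails as stated.

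\textbf{Necessity: the single-$1$ column.} Once the algebraic condition is assumed, which you may do after disposing of its failure, $K\geq2$ and the two-column condition force $\qq_j=\ee_k$. So your $\bar{\qq}_j=\qq_j-\ee_k$ is the zero row. Under that alternative $R_j$ is independent of $\RR_{-j}$ for every $\bpp$. Under a true $\pp$ in which $\alpha_k$ is correlated with attributes driving $\RR_{-j}$, it is not. Hence no choice of $\bar s_j$ and $\bpp$ matches a general true $\pp$; your over-determined stratum-wise constraints, with varying $\P(\alpha_k=1\mid\aaa_{-k})$, are exactly this obstruction.

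The missing idea is that non-identifiability of $\QQ$ only requires exhibiting \emph{one} true $\TTT$. Make $\alpha_k$ independent of the other attributes. Then a single $\bar s_j$ with $1-\bar s_j=g_j+(1-s_j-g_j)\P(\alpha_k=1)$ works; note $1-\bar s_j<1-s_j$, not $\geq$. The paper does this with uniform $\pp$ while avoiding a degenerate row:
\begin{itemize}
\item it keeps $\bar{\qq}_j=\ee_k$ and lets $\bpp$ fix $\alpha_k=1$;
\item it adds a $1$ at $(h,k)$ for some $h$ with $q_{hk}=0$ (such $h$ exists by the two-column condition), so that $\bQQ\nsim\QQ$;
\item it sets $1-\bar s_j-g_j=(1-s_j-g_j)/2$.
\end{itemize}
Then $R_j$ is independent of $\RR_{-j}$ under both specifications, and the law of $R_j$ and the law of $\RR_{-j}$ coincide across the two.

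Separately, your step from joint identifiability to identifiability of $\QQ$ is immediate only if joint identifiability is demanded for every $\TTT$. Under the per-$\TTT$ reading of Definition~\ref{def:identifiability}, you also need that a single-$1$ column destroys joint identifiability at \emph{every} true $\TTT$. The paper gets this by keeping $\bQQ=\QQ$, dividing $1-s_j-g_j$ by some $t>1$, and moving mass from pattern $(0,\mathbf w)$ to $(1,\mathbf w)$ of item $j$.

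\textbf{Sufficiency.} The ratio $\mathbb E\{(R_j-g_j)H_j\}/\mathbb E(H_j)$ with $H_j=\prod_{h\neq j}(R_h-g_h)$ is precisely the paper's device. Note that only these unnormalized moments are common to both models; your $\tilde R_j$ involve the unknown $\theta_j$. What you leave open is how to get $\bar s_j=s_j$ without already knowing that the alternative's capable sets satisfy the same containment. As written, ``identify $\mathbf s$ first, then $\QQ$'' is circular.

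The paper breaks the circle with a one-sided bound. Under any alternative the ratio equals $1-\bar s_j-g_j$ times a conditional probability at most one, so $s_j\geq\bar s_j$ for every $j$, with no information on $\bQQ$. This makes each $B_j=\bar M_j^{-1}M_j$ (with $M_j,\bar M_j$ the $2\times2$ item channel matrices) upper triangular and nonnegative, with positive diagonal. Consequently $\bar\mu_{\vv}\geq\mu_{\vv}\prod_j(B_j)_{v_jv_j}>0$ on $\operatorname{supp}(\boldsymbol\mu)$, i.e.\ $\operatorname{Cols}\{\PPhi(\QQ)\}\subseteq\operatorname{Cols}\{\PPhi(\bQQ)\}$. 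The algebraic condition then gives $\bQQ\sim\QQ$. Only after aligning columns does the ratio become an equality, yielding $\mathbf s=\bar{\mathbf s}$ and then $\bar p_{[\Cdot]}=p_{[\Cdot]}$.
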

% For $K \geq 2$, if $\QQ$ has no pure nodes, the algebraic condition guarantees at
% least two ones in every column and thus suffices for
% joint identifiability.
}
\section{Discussion}\label{sec: discussion}
In this paper, we study the identifiability of graphical structures in Boolean factor models. These models allow for flexible interactions between observed and latent variables, encompassing widely used models such as cognitive diagnosis models and Boolean matrix decomposition, which are popular in various disciplines.
Our proposed sufficient and necessary conditions are novel in that they do not rely on the conventional and stringent pure-node assumption. In particular, our graphical condition uses a new tool based on Hasse diagrams to transform the identifiability problem into a uniqueness problem for the graphical representation, helping to explain why pure nodes are not necessary for identifiability.
%Our proposed sufficient and necessary conditions are novel in a sense that they do not rely on the conventional stringent assumption of the pure nodes. Particularly, our graphical condition leverages a novel graphical tool we develop which utilizes Hasse diagram and transforms the identifiability problem into the uniqueness of the graphical representation, aiding in understanding why pure nodes are not necessary for identifiability. 

% As indicated in Section~\ref{},
% while there is extensive literature 
There are several interesting future directions building upon this work.
First, although our primary focus in this work is on identifying the graphical structure linking the observed variables \(\RR\) and those latent variables \(\aaa\) that are direct parents to \(\RR\), the techniques and identifiability conditions could naturally extend to more general latent–latent connections. For instance, in models with hierarchical structure of latent variables (e.g., Figure~\ref{fig:models}(b)), one could apply similar arguments by treating each pair of consecutive latent layers as a local Boolean factor model. This extension 
would allow researchers to infer the graphical structures among deeper latent layers, potentially revealing complex dependencies 
that generate \(\aaa\), thereby enhancing both interpretability and 
inference accuracy in rich graphical models.
{\color{black}
Second, extending our results to more general models,
including LCDM, GDINA, and GDM, is an important direction.
The present paper studies strict identifiability, whereas
generic identifiability permits non-identifiability on
a measure-zero subset of the continuous parameter space
\citep{chen2020sparse,gu2020partial}.
In these more general models, additional main effects
and interactions may distinguish latent vectors with
identical columns of $\PPhi(\QQ)$.
Our graphical and algebraic characterizations may help
construct parameter configurations satisfying the rank
conditions in existing generic-identifiability arguments.
For example, under GDINA, showing that the relevant
polynomial determinants are nonzero at one configuration
establishes the corresponding rank conditions generically.
Establishing these rank conditions and identifying the
unknown $\QQ$ under the more general model assumptions
require further arguments beyond our strict
identifiability results.
Finally, in this paper, \(K\) is treated as fixed. When $K$ is unknown, identifying \(K\) is also an important problem. Existing results which consider unknown $K$, such as \citet{Bing2020}, rely on pure variable anchoring and additional model structure. It would be interesting to extend our framework to unknown-$K$ settings and investigate whether identifiability without pure node assumptions is possible under suitable restrictions on the possible latent representations. 
% We leave this direction for future work.
}

%Additionally, the identifiability of models for polytomous latent attributes is increasingly significant as the availability of general categorical data grows \citep{de2011generalized,GPDINA}. While existing studies have examined the identifiability of polytomous models \citep{lin2024polytomousDINA}, they often rely on the assumption of pure nodes. Extending our identifiability conditions to polytomous data models without such assumptions is a promising direction for future work.

% For instance, by incorporating weighted edges into the graphical representations, it becomes possible to capture richer, non-deterministic distributions. Additionally, reformulating the identifiability problem as a general optimization problem could enable it to be addressed using convex or non-convex feasibility frameworks, significantly broadening the scope and applicability of our methodological approach.

%%%%%%%%%%%%%%%%%%%%%%%%%%%%%%%%%%%%%%%%%%%%%%%%%%

%%%%%%%%%%%%%%%%%%%%%%%%%%%%%%%%%%%%%%%%%%%%%%%%%%
%% References
%%%%%%%%%%%%%%%%%%%%%%%%%%%%%%%%%%%%%%%%%%%%%%%%%%

\begingroup
\setlength{\bibsep}{0pt}

\bibliography{ref}
\endgroup

% =====================================================================
% Supplementary material (included in this file; no external .aux files).
% =====================================================================
\clearpage
\newgeometry{margin=1in}
\setlength{\footskip}{24pt}
\doublespacing
\setlength{\parindent}{1.5em}
\setlength{\parskip}{0pt plus 1pt}
\setcounter{secnumdepth}{3}
\let\paragraph\oldparagraph
\let\subparagraph\oldsubparagraph

% Preserve the original S-numbering and give hyperlinks unique destinations.
\setcounter{section}{0}
\renewcommand{\thesection}{S\arabic{section}}
\expandafter\def\csname theHsection\endcsname{supp.\arabic{section}}
\setcounter{figure}{0}
\renewcommand{\thefigure}{S\arabic{figure}}
\expandafter\def\csname theHfigure\endcsname{supp.\arabic{figure}}
\setcounter{table}{0}
\renewcommand{\thetable}{S\arabic{table}}
\expandafter\def\csname theHtable\endcsname{supp.\arabic{table}}
\setcounter{equation}{0}
\renewcommand{\theequation}{S\arabic{equation}}
\expandafter\def\csname theHequation\endcsname{supp.\arabic{equation}}
\setcounter{theorem}{0}
\renewcommand{\thetheorem}{S\arabic{theorem}}
\expandafter\def\csname theHtheorem\endcsname{supp.\arabic{theorem}}
\setcounter{lemma}{0}
\renewcommand{\thelemma}{S\arabic{lemma}}
\expandafter\def\csname theHlemma\endcsname{supp.\arabic{lemma}}
\setcounter{proposition}{0}
\renewcommand{\theproposition}{S\arabic{proposition}}
\expandafter\def\csname theHproposition\endcsname{supp.\arabic{proposition}}
\setcounter{corollary}{0}
\renewcommand{\thecorollary}{S\arabic{corollary}}
\expandafter\def\csname theHcorollary\endcsname{supp.\arabic{corollary}}
\setcounter{definition}{0}
\renewcommand{\thedefinition}{S\arabic{definition}}
\expandafter\def\csname theHdefinition\endcsname{supp.\arabic{definition}}
\setcounter{assumption}{0}
\renewcommand{\theassumption}{S\arabic{assumption}}
\expandafter\def\csname theHassumption\endcsname{supp.\arabic{assumption}}
\setcounter{condition}{0}
\renewcommand{\thecondition}{S\arabic{condition}}
\expandafter\def\csname theHcondition\endcsname{supp.\arabic{condition}}
\setcounter{example}{0}
\renewcommand{\theexample}{S\arabic{example}}
\expandafter\def\csname theHexample\endcsname{supp.\arabic{example}}
\setcounter{notation}{0}
\renewcommand{\thenotation}{S\arabic{notation}}
\expandafter\def\csname theHnotation\endcsname{supp.\arabic{notation}}
\setcounter{problem}{0}
\renewcommand{\theproblem}{S\arabic{problem}}
\expandafter\def\csname theHproblem\endcsname{supp.\arabic{problem}}
\setcounter{remark}{0}
\renewcommand{\theremark}{S\arabic{remark}}
\expandafter\def\csname theHremark\endcsname{supp.\arabic{remark}}
\setcounter{claim}{0}
\renewcommand{\theclaim}{S\arabic{claim}}
\expandafter\def\csname theHclaim\endcsname{supp.\arabic{claim}}
\setcounter{fact}{0}
\renewcommand{\thefact}{S\arabic{fact}}
\expandafter\def\csname theHfact\endcsname{supp.\arabic{fact}}
\setcounter{conjecture}{0}
\renewcommand{\theconjecture}{S\arabic{conjecture}}
\expandafter\def\csname theHconjecture\endcsname{supp.\arabic{conjecture}}
\setcounter{algorithm}{0}
\renewcommand{\thealgorithm}{S\arabic{algorithm}}
\expandafter\def\csname theHalgorithm\endcsname{supp.\arabic{algorithm}}

\renewcommand{\thesubsection}{\thesection.\arabic{subsection}}
\renewcommand{\thesubsubsection}{\thesubsection.\arabic{subsubsection}}
\let\fact\supplementfact
\let\endfact\endsupplementfact

\pdfbookmark[0]{Supplementary Material}{supplementary-material}
\begin{center}
\setstretch{1.1}
{\Large\bfseries Supplementary Material for\\[0.5em]
``Characterizing Identifiability in Boolean Factor Models''\par}
\end{center}
\bigskip

%——————————————————————————————————————————
% Body of Supplement
%——————————————————————————————————————————

This supplementary material presents simulation studies,
additional explanations of the graphical construction and
identifiability conditions, details of SAT implementation, and proofs of all theoretical results.
It is organized as follows:
\begin{itemize}
    \item Section~\ref{sec:simulation} presents simulation
    studies examining the identifiability conditions and
    the computational performance of Algorithm~\ref{alg:id}.

    \item Section~\ref{app:more_graph} provides a guide to
    the graphical construction and further interpretation
    of the identifiability results. It also details the
    SAT formulation and implementation, analyzes
    computational complexity, and explains how SAT
    enumeration can quantify non-identifiability.

    \item Sections~\ref{app:secgraph}
    and~\ref{app:boolean_proof} contain proofs of the
    graphical and algebraic characterizations, the
    structural reductions, and the SAT formulation.

    \item Section~\ref{app:prob} contains proofs of the
    identifiability results for the one-, two-, and
    multi-parameter Boolean factor models presented
    in Section~\ref{sec:prob}.
\end{itemize}

\newpage
{\color{black}
\section{Simulation Studies}
\label{sec:simulation}
We conduct simulations under the conjunctive operator to examine how
frequently randomly generated $\QQ$-matrices satisfy the algebraic
condition~\eqref{eq:a_condition} and to evaluate the computational
performance of Algorithm~\ref{alg:id}. The first study compares the
algebraic condition with completeness and the necessary two- and
three-column conditions, with particular attention to identifiable
matrices without pure nodes. The second study examines basis reduction,
the submatrix checks, and runtime.

\subsection{Empirical behavior of the identifiability conditions}
\label{sec:sim_conditions}

\paragraph{Simulation designs.}
We consider two ways of generating $\QQ$-matrices. In the
\emph{Bernoulli design}, the entries are independent with
$q_{jk}\sim\operatorname{Bernoulli}(p)$, where
\[
J\in\{25,50,100\},\qquad
K\in\{5,10\},\qquad
p\in\{0.1,0.3,0.5,0.7,0.9\}.
\]
In the \emph{sparsity design}, each item involves at most $m$
attributes. Independently for each item $j$, we draw
$D_j=\|\qq_j\|_0$ uniformly from $\{1,\ldots,m\}$ and select its
$D_j$ nonzero coordinates uniformly without replacement from the $K$
attributes. We consider $J\in\{25,50,100\}$, $K=10$, and
$m\in\{3,4\}$. This design reflects diagnostic settings in which
each item involves only a few attributes.
For each setting under either design, we independently generate
$1{,}000$ matrices and apply Algorithm~\ref{alg:id}.

\paragraph{Quantities of interest.}
Let $I_{\mathrm{id}}(\QQ)$ indicate whether $\QQ$ satisfies the
algebraic condition~\eqref{eq:a_condition}. By
Theorem~\ref{thm:algebraic}, this is equivalent to identifiability of
$\QQ$ when $p_{[\aaa]}>0$ for every
$[\aaa]\in\mathcal E(\QQ)$. We use ``identifiable'' in this sense
throughout this section. Because the algebraic condition depends only
on $\QQ$, no response data or latent proportions need to be generated.

We compare $I_{\mathrm{id}}$ with three indicators corresponding to
the submatrix checks in Algorithm~\ref{alg:id}. Let
$I_{\mathrm{comp}}(\QQ)$ indicate that $\QQ$ is complete, and let
$I_{\mathrm{2c}}(\QQ)$ indicate that every two-column submatrix
contains both row patterns $(1,0)$ and $(0,1)$. Finally, let
$I_{\mathrm{3c}}(\QQ)$ indicate that $I_{\mathrm{2c}}(\QQ)=1$ and
every three-column submatrix contains a row with exactly one nonzero
entry among the three selected columns. Thus, $I_{\mathrm{3c}}$
records whether the matrix passes both necessary checks. By
Corollary~\ref{cor:pure-node}, Theorem~\ref{thm:sub}, and
Lemmas~\ref{lm:onetwo} and~\ref{lm:threefour},
\begin{equation}
\label{eq:indicator-hierarchy}
I_{\mathrm{comp}}(\QQ)
\leq I_{\mathrm{id}}(\QQ)
\leq I_{\mathrm{3c}}(\QQ)
\leq I_{\mathrm{2c}}(\QQ).
\end{equation}
We estimate the probability that each indicator equals one by its
empirical proportion, denoted by $\widehat{\P}$.

To assess how often identifiability holds beyond completeness, we also
report $\widehat{\P}(I_{\mathrm{comp}}=0\mid I_{\mathrm{id}}=1)$.
Under the Bernoulli design, we further report
$\widehat{\P}(I_{\mathrm{np}}=1\mid I_{\mathrm{id}}=1)$, where
$I_{\mathrm{np}}(\QQ)$ indicates that $\QQ$ contains no pure nodes,
that is, no row with exactly one nonzero entry. Conditional proportions
are computed among the identifiable matrices and are left undefined
when none of the generated matrices is identifiable. All proportions
are reported as percentages.

\paragraph{Bernoulli design results.}
Table~\ref{tab:bern} shows that the algebraic condition holds frequently
over a broad range of the settings considered. For
$p\in\{0.3,0.5\}$ and $J\geq50$, at least $99.8\%$ of the generated
matrices are identifiable. For fixed $K$ and $p$, the empirical
proportion does not decrease with $J$. This agrees with
Theorem~\ref{thm:sub}\textup{(ii)}: adding rows to an identifiable
matrix preserves identifiability. For fixed $J$ and $p$, the proportion
for $K=10$ is no larger than that for $K=5$ in any setting considered.
For example, at $(J,p)=(25,0.7)$, it decreases from $79.9\%$ to
$5.8\%$. Increasing $J$ can offset this difference: at $(J,p)=(100,0.7)$,
all generated matrices are identifiable for both values of $K$.

Identifiability is less frequent at the extreme values of $p$,
especially at $p=0.9$. The necessary conditions help explain this
pattern. For a given pair of columns, each of the patterns $(1,0)$
and $(0,1)$ has probability $p(1-p)$ in any row. Thus, the probability
of satisfying the two-column condition is symmetric in $p$ and $1-p$.
For a given triple, however, the probability that a row has exactly
one nonzero entry is $3p(1-p)^2$, which is much smaller at $p=0.9$
than at $p=0.1$. For $(J,K)=(100,10)$, $99.5\%$ of the generated
matrices pass the two-column check at both values of $p$, whereas
$99.5\%$ pass both necessary checks at $p=0.1$ and only $3.3\%$
do so at $p=0.9$.

The necessary checks do not always determine identifiability.
At $(J,K,p)=(25,10,0.7)$, $56.5\%$ of the generated matrices pass
both checks, but only $5.8\%$ satisfy the algebraic condition.
Thus, SAT verification remains necessary for matrices that pass these
checks and are not complete.

\begin{table}[htbp]
\centering

\begin{subtable}[t]{0.48\textwidth}
\centering
\resizebox{\linewidth}{!}{%
\begin{tabular}{c cc cc cc}
\toprule
& \multicolumn{2}{c}{\(J=25\)}
& \multicolumn{2}{c}{\(J=50\)}
& \multicolumn{2}{c}{\(J=100\)}\\
\cmidrule(lr){2-3}\cmidrule(lr){4-5}\cmidrule(lr){6-7}
\(p\)
& \(K=5\) & \(K=10\)
& \(K=5\) & \(K=10\)
& \(K=5\) & \(K=10\)\\
\midrule
0.1 & 44.6 & 6.1 & 90.0 & 71.6 & 99.6 & 99.5\\
0.3 & 94.2 & 83.9 & 100.0 & 99.8 & 100.0 & 100.0\\
0.5 & 98.0 & 93.2 & 100.0 & 100.0 & 100.0 & 100.0\\
0.7 & 79.9 & 5.8 & 99.8 & 93.5 & 100.0 & 100.0\\
0.9 & 0.1 & 0.0 & 1.1 & 0.0 & 20.4 & 0.0\\
\bottomrule
\end{tabular}%
}
\caption{\(\widehat{\P}(I_{\mathrm{id}}=1)\).}
\label{tab:bern-id}
\end{subtable}
\hfill
\begin{subtable}[t]{0.48\textwidth}
\centering
\resizebox{\linewidth}{!}{%
\begin{tabular}{c cc cc cc}
\toprule
& \multicolumn{2}{c}{\(J=25\)}
& \multicolumn{2}{c}{\(J=50\)}
& \multicolumn{2}{c}{\(J=100\)}\\
\cmidrule(lr){2-3}\cmidrule(lr){4-5}\cmidrule(lr){6-7}
\(p\)
& \(K=5\) & \(K=10\)
& \(K=5\) & \(K=10\)
& \(K=5\) & \(K=10\)\\
\midrule
0.1 & 37.3 & 0.3 & 83.9 & 19.5 & 99.2 & 81.7\\
0.3 & 40.2 & 0.0 & 88.6 & 0.0 & 99.8 & 2.5\\
0.5 & 3.4 & 0.0 & 28.3 & 0.0 & 81.1 & 0.0\\
0.7 & 0.0 & 0.0 & 0.0 & 0.0 & 0.9 & 0.0\\
0.9 & 0.0 & 0.0 & 0.0 & 0.0 & 0.0 & 0.0\\
\bottomrule
\end{tabular}%
}
\caption{\(\widehat{\P}(I_{\mathrm{comp}}=1)\).}
\end{subtable}

\vspace{0.8em}

\begin{subtable}[t]{0.48\textwidth}
\centering
\resizebox{\linewidth}{!}{%
\begin{tabular}{c cc cc cc}
\toprule
& \multicolumn{2}{c}{\(J=25\)}
& \multicolumn{2}{c}{\(J=50\)}
& \multicolumn{2}{c}{\(J=100\)}\\
\cmidrule(lr){2-3}\cmidrule(lr){4-5}\cmidrule(lr){6-7}
\(p\)
& \(K=5\) & \(K=10\)
& \(K=5\) & \(K=10\)
& \(K=5\) & \(K=10\)\\
\midrule
0.1 & 44.6 & 6.3 & 90.0 & 71.6 & 99.6 & 99.5\\
0.3 & 94.2 & 83.9 & 100.0 & 99.8 & 100.0 & 100.0\\
0.5 & 98.0 & 93.8 & 100.0 & 100.0 & 100.0 & 100.0\\
0.7 & 95.6 & 82.6 & 100.0 & 100.0 & 100.0 & 100.0\\
0.9 & 43.5 & 6.0 & 92.5 & 73.0 & 99.9 & 99.5\\
\bottomrule
\end{tabular}%
}
\caption{\(\widehat{\P}(I_{\mathrm{2c}}=1)\).}
\label{tab:bern-2c}
\end{subtable}
\hfill
\begin{subtable}[t]{0.48\textwidth}
\centering
\resizebox{\linewidth}{!}{%
\begin{tabular}{c cc cc cc}
\toprule
& \multicolumn{2}{c}{\(J=25\)}
& \multicolumn{2}{c}{\(J=50\)}
& \multicolumn{2}{c}{\(J=100\)}\\
\cmidrule(lr){2-3}\cmidrule(lr){4-5}\cmidrule(lr){6-7}
\(p\)
& \(K=5\) & \(K=10\)
& \(K=5\) & \(K=10\)
& \(K=5\) & \(K=10\)\\
\midrule
0.1 & 44.6 & 6.1 & 90.0 & 71.6 & 99.6 & 99.5\\
0.3 & 94.2 & 83.9 & 100.0 & 99.8 & 100.0 & 100.0\\
0.5 & 98.0 & 93.7 & 100.0 & 100.0 & 100.0 & 100.0\\
0.7 & 91.4 & 56.5 & 99.9 & 99.7 & 100.0 & 100.0\\
0.9 & 1.7 & 0.0 & 13.2 & 0.0 & 60.5 & 3.3\\
\bottomrule
\end{tabular}%
}
\caption{\(\widehat{\P}(I_{\mathrm{3c}}=1)\).}
\label{tab:bern-3c}
\end{subtable}

\caption{Empirical proportions under the Bernoulli design,
expressed as percentages. Panels \textup{(a)}--\textup{(d)} report,
respectively, the proportions of generated \(\QQ\)-matrices that are
identifiable, complete, satisfy the two-column condition, and pass both necessary checks.}
\label{tab:bern}
\end{table}

Table~\ref{tab:bern-pure-node-gap} shows that many identifiable
matrices are not complete and, in several settings, contain no pure
nodes. At $(J,K,p)=(50,10,0.5)$, all generated matrices are
identifiable, none are complete, and $62.3\%$ contain no pure nodes.
At $(J,K,p)=(50,10,0.7)$, $93.5\%$ of the generated matrices are
identifiable; among these, none are complete and $99.5\%$ contain no
pure nodes. Thus, in these settings, the absence of pure nodes is
common among identifiable matrices. In contrast, at $p=0.1$, every
identifiable matrix generated contains at least one pure node,
although many are not complete.

\begin{table}[htbp]
\centering

\begin{subtable}[t]{0.48\textwidth}
\centering
\resizebox{\linewidth}{!}{%
\begin{tabular}{c cc cc cc}
\toprule
& \multicolumn{2}{c}{\(J=25\)}
& \multicolumn{2}{c}{\(J=50\)}
& \multicolumn{2}{c}{\(J=100\)}\\
\cmidrule(lr){2-3}\cmidrule(lr){4-5}\cmidrule(lr){6-7}
\(p\)
& \(K=5\) & \(K=10\)
& \(K=5\) & \(K=10\)
& \(K=5\) & \(K=10\)\\
\midrule
0.1 & 16.4 & 95.1 & 6.8 & 72.8 & 0.4 & 17.9\\
0.3 & 57.3 & 100.0 & 11.4 & 100.0 & 0.2 & 97.5\\
0.5 & 96.5 & 100.0 & 71.7 & 100.0 & 18.9 & 100.0\\
0.7 & 100.0 & 100.0 & 100.0 & 100.0 & 99.1 & 100.0\\
0.9 & 100.0 & -- & 100.0 & -- & 100.0 & --\\
\bottomrule
\end{tabular}%
}
\caption{\(\widehat{\P}(I_{\mathrm{comp}}=0\mid
I_{\mathrm{id}}=1)\).}
\end{subtable}
\hfill
\begin{subtable}[t]{0.48\textwidth}
\centering
\resizebox{\linewidth}{!}{%
\begin{tabular}{c cc cc cc}
\toprule
& \multicolumn{2}{c}{\(J=25\)}
& \multicolumn{2}{c}{\(J=50\)}
& \multicolumn{2}{c}{\(J=100\)}\\
\cmidrule(lr){2-3}\cmidrule(lr){4-5}\cmidrule(lr){6-7}
\(p\)
& \(K=5\) & \(K=10\)
& \(K=5\) & \(K=10\)
& \(K=5\) & \(K=10\)\\
\midrule
0.1 & 0.0 & 0.0 & 0.0 & 0.0 & 0.0 & 0.0\\
0.3 & 0.0 & 3.5 & 0.0 & 0.1 & 0.0 & 0.0\\
0.5 & 1.1 & 80.6 & 0.0 & 62.3 & 0.0 & 36.4\\
0.7 & 46.6 & 100.0 & 23.3 & 99.5 & 6.7 & 98.6\\
0.9 & 100.0 & -- & 90.9 & -- & 86.3 & --\\
\bottomrule
\end{tabular}%
}
\caption{\(\widehat{\P}(I_{\mathrm{np}}=1\mid
I_{\mathrm{id}}=1)\).}
\label{tab:bern-np-given-id}
\end{subtable}

\caption{Conditional empirical proportions among identifiable
\(\QQ\)-matrices under the Bernoulli design, expressed as percentages.
Panel \textup{(a)} reports the proportion of identifiable matrices that
are not complete, and panel \textup{(b)} reports the proportion of
identifiable matrices that contain no pure nodes. A dash indicates that
no generated matrix in the corresponding configuration is identifiable.
For $(J,K,p)=(25,5,0.9)$, the conditional percentages are based on
one identifiable matrix.}
\label{tab:bern-pure-node-gap}
\end{table}

\paragraph{Sparsity design results.}
Table~\ref{tab:sparse} shows that identifiability also holds frequently
when each item involves only a few attributes. At $J=25$, the
proportions of identifiable matrices are $65.4\%$ for $m=3$ and
$74.1\%$ for $m=4$; at least $99.8\%$ of these identifiable matrices
are not complete. At $J=50$, the proportions of identifiable matrices
increase to $99.2\%$ and $99.7\%$, respectively, while $90.8\%$ and
$97.5\%$ of the identifiable matrices remain incomplete. At $J=100$,
all generated matrices are identifiable, and the corresponding
proportions that are incomplete are $27.3\%$ and $55.0\%$.

In all six settings, the proportions passing the two-column check,
passing both necessary checks, and satisfying the algebraic condition
coincide. Hence, every generated matrix that passes the two-column
check is identifiable in this study. This empirical agreement is
specific to these simulations; as the Bernoulli results illustrate,
passing the necessary checks does not generally establish
identifiability.

\begin{table}[htbp]
\centering

\begin{subtable}[t]{0.48\textwidth}
\centering
\begin{tabular}{c cc}
\toprule
\(J\) & \(m=3\) & \(m=4\)\\
\midrule
25 & 65.4 & 74.1\\
50 & 99.2 & 99.7\\
100 & 100.0 & 100.0\\
\bottomrule
\end{tabular}
\caption{\(\widehat{\P}(I_{\mathrm{id}}=1)\).}
\end{subtable}
\hfill
\begin{subtable}[t]{0.48\textwidth}
\centering
\begin{tabular}{c cc}
\toprule
\(J\) & \(m=3\) & \(m=4\)\\
\midrule
25 & 99.8 & 100.0\\
50 & 90.8 & 97.5\\
100 & 27.3 & 55.0\\
\bottomrule
\end{tabular}
\caption{\(\widehat{\P}(I_{\mathrm{comp}}=0\mid I_{\mathrm{id}}=1)\).}
\end{subtable}

\vspace{0.8em}

\begin{subtable}[t]{0.48\textwidth}
\centering
\begin{tabular}{c cc}
\toprule
\(J\) & \(m=3\) & \(m=4\)\\
\midrule
25 & 65.4 & 74.1\\
50 & 99.2 & 99.7\\
100 & 100.0 & 100.0\\
\bottomrule
\end{tabular}
\caption{\(\widehat{\P}(I_{\mathrm{2c}}=1)\).}
\end{subtable}
\hfill
\begin{subtable}[t]{0.48\textwidth}
\centering
\begin{tabular}{c cc}
\toprule
\(J\) & \(m=3\) & \(m=4\)\\
\midrule
25 & 65.4 & 74.1\\
50 & 99.2 & 99.7\\
100 & 100.0 & 100.0\\
\bottomrule
\end{tabular}
\caption{\(\widehat{\P}(I_{\mathrm{3c}}=1)\).}
\end{subtable}

\caption{Empirical proportions under the sparsity design, expressed as
percentages. Panel \textup{(a)} reports identifiability, panel
\textup{(b)} reports noncompleteness conditional on identifiability,
and panels \textup{(c)} and~\textup{(d)} report the proportions
passing the two-column check and both necessary checks, respectively.}
\label{tab:sparse}
\end{table}

\subsection{Computational performance of Algorithm~\ref{alg:id}}
\label{sec:sim_preprocessing}
We examine the effects of basis reduction and the submatrix checks
under the Bernoulli design with $K=10$,
$J\in\{25,50,100\}$, and $p\in\{0.1,0.3,0.5,0.7,0.9\}$.
Table~\ref{tab:bern-computation} reports the mean number $J_b$ of
rows in $\Qbasis$, the percentage of matrices requiring SAT
verification, and the mean runtimes for Steps~0--1 and for the full
algorithm. Each summary is computed over $1{,}000$ generated matrices,
including those resolved before SAT verification.
Computations were performed on a computing cluster using Intel Xeon Gold 6154 processors (3.00\,GHz), with one CPU
core per task. We used \texttt{idQ} version~0.1.3 with Python~3.11.7
and PySAT~1.9.dev15, using \texttt{Glucose42} as the SAT solver.
Runtimes are elapsed wall-clock times. The full algorithm time includes
basis reduction, the submatrix checks, CNF construction, SAT solving,
and verification of returned counterexamples; it excludes matrix
generation and file writing.

\begin{table}[htbp]
\centering
\small
\begin{tabular}{ccrrrr}
\toprule
\(J\) & \(p\)
& Mean \(J_b\)
& SAT invoked (\%)
& Steps~0--1 runtime
& Total runtime\\
\midrule
25 & 0.1 & 10.3 & 5.8 & \(<0.001\) & \(<0.001\)\\
25 & 0.3 & 20.0 & 83.9 & 0.002 & 0.009\\
25 & 0.5 & 21.6 & 93.7 & 0.002 & 0.040\\
25 & 0.7 & 18.2 & 56.5 & 0.002 & 0.210\\
25 & 0.9 & 10.2 & 0.0 & \(<0.001\) & \(<0.001\)\\
\addlinespace
50 & 0.1 & 11.8 & 52.1 & 0.001 & 0.004\\
50 & 0.3 & 27.7 & 99.8 & 0.003 & 0.013\\
50 & 0.5 & 35.3 & 100.0 & 0.004 & 0.026\\
50 & 0.7 & 29.4 & 99.7 & 0.003 & 2.166\\
50 & 0.9 & 14.1 & 0.0 & 0.001 & 0.001\\
\addlinespace
100 & 0.1 & 10.6 & 17.8 & \(<0.001\) & 0.002\\
100 & 0.3 & 25.3 & 97.5 & 0.004 & 0.012\\
100 & 0.5 & 49.5 & 100.0 & 0.006 & 0.026\\
100 & 0.7 & 44.6 & 100.0 & 0.005 & 0.548\\
100 & 0.9 & 21.7 & 3.3 & 0.002 & 0.003\\
\bottomrule
\end{tabular}\caption{Computational performance under the Bernoulli design for
\(K=10\). ``SAT invoked'' is the percentage of matrices passed to the
SAT verification. The runtimes for Steps~0--1 and for the full
algorithm are reported as mean seconds per matrix, averaged over all $1{,}000$ matrices in each setting.}\label{tab:bern-computation}
\end{table}

\paragraph{Basis reduction.}
By Theorem~\ref{thm:sub}\textup{(i)}, replacing $\QQ$ by $\Qbasis$
preserves the algebraic condition. This reduces the number of rows
entering the Boolean-factorization encoding from $J$ to $J_b$;
the resulting encoding has size $O(J_bK^2)$, as detailed in
Section~\ref{app:sub_comp_comx}. The reduction is substantial in several
settings. At $(J,p)=(100,0.3)$, the mean $J_b$ is $25.3$, and at
$(J,p)=(100,0.7)$ it is $44.6$. SAT verification is required for
$97.5\%$ and $100.0\%$ of the matrices in these settings,
respectively, so the reduction applies to nearly all subsequent SAT
instances. These results demonstrate a reduction in formulation size;
they do not measure the runtime improvement relative to solving
without basis reduction.

\paragraph{Submatrix checks.}
Algorithm~\ref{alg:id} invokes SAT precisely when a matrix is not
complete and passes both necessary checks. By
\eqref{eq:indicator-hierarchy}, the empirical proportion requiring
SAT is therefore
$\widehat{\P}(I_{\mathrm{3c}}=1)
-\widehat{\P}(I_{\mathrm{comp}}=1)$.
The contribution of each check varies across settings.
At $(J,p)=(100,0.1)$, completeness establishes identifiability for
$81.7\%$ of the generated matrices, while the two-column check rules
it out for $0.5\%$, leaving $17.8\%$ for SAT verification.
At $(J,p)=(25,0.9)$, the two-column check rules out identifiability
for $94.0\%$ of the matrices, and the three-column check rules it out
for the remaining $6.0\%$, so SAT is never invoked.
At $(J,p)=(100,0.9)$, the two-column check rules out identifiability
for only $0.5\%$, but the three-column check does so for another
$96.2\%$, leaving $3.3\%$ for SAT verification.
By contrast, at $p\in\{0.3,0.5\}$ and $J\geq50$, at least $97.5\%$
of the matrices require SAT. The submatrix checks can therefore
resolve most matrices in some settings, while SAT remains essential
in others.

\paragraph{Runtime.}
The reported mean runtime for Steps~0--1 is below $0.007$ seconds
per matrix in every setting. The largest mean total runtime is
$2.166$ seconds at $(J,p)=(50,0.7)$; all other reported means are
at most $0.548$ seconds. Runtime does not increase monotonically
with the matrix dimensions. For example, at $p=0.7$, the mean total
runtime decreases from $2.166$ seconds for $J=50$ to $0.548$ seconds
for $J=100$, even though the mean $J_b$ and the proportion requiring
SAT both increase.

}

{\color{black}
\section{More results of Section~\ref{sec:graph}}\label{app:more_graph}
\subsection{A guide to the graphical construction}
\label{app:graphical-guide}

The graphical construction involves several intermediate objects.
To help readers keep track of them, Table~\ref{tab:graphical-objects}
collects the main notation, objects, and definitions, whereas
Figure~\ref{fig:Q-to-G} provides a schematic overview of how $G(\QQ)$ is constructed. 

\begin{table}[htbp]
\centering
\small
\renewcommand{\arraystretch}{1.18}
\setlength{\tabcolsep}{4pt}
\begin{tabular}{
    |p{0.16\textwidth}
     |p{0.24\textwidth}
     |p{0.51\textwidth}|}
\hline
\textbf{Notation} & \textbf{Object} & \textbf{Definition} \\
\hline

\(\ast\)
&
Boolean operator
&
A map from
\(\{0,1\}^K\times\{0,1\}^K\) to \(\{0,1\}\).
\\
\hline

\(\QQ\)
&
\(\QQ\)-matrix
&
The graphical structure of the model.
\\
\hline

\(\preceq^\ast\)
&
Induced partial order
&
\(\aaa\preceq^\ast\aaa'\) if
\(\qq\ast\aaa\leq\qq\ast\aaa'\) for every
\(\qq\in\{0,1\}^K\).
\\
\hline

\(H(\{0,1\}^K,\preceq^\ast)\)
&
Hasse diagram
&
The Hasse diagram of the latent vector space
\(\{0,1\}^K\) under the partial order \(\preceq^\ast\).
\\
\hline

\(v(\aaa)\)
&
latent vector node
&
The node in \(H(\{0,1\}^K,\preceq^\ast)\) corresponding to
\(\aaa\in\{0,1\}^K\).
\\
\hline

\(\mca_j^+\)
&
Capable class
&
\(\mca_j^+=\{\aaa\in\{0,1\}^K:\qq_j\ast\aaa=1\}\).
\\
\hline

\(v_j^*(\qq_j)\)
&
$\qq_j$-vector node
&
The node that connects to the minimal latent vector
nodes in \(\mca_j^+\).
\\
\hline

\(\mch(\QQ)\)
&
Extended Hasse diagram
&
The graph obtained by adding the nodes
\(v_1^*(\qq_1),\ldots,v_J^*(\qq_J)\) and their associated edges to
\(H(\{0,1\}^K,\preceq^\ast)\).
\\
\hline

\(\Psi(v(\aaa))\)
&
Path status vector
&
Its \(j\)th entry equals one if there is a directed path from
\(v_j^*(\qq_j)\) to \(v(\aaa)\) in \(\mch(\QQ)\), and zero otherwise.
\\
\hline

\(v(\aaa)\sim v(\aaa')\)
&
Node equivalence
&
Two latent vector nodes are equivalent if
\(\Psi(v(\aaa))=\Psi(v(\aaa'))\).
\\
\hline

\([v(\aaa)]\)
&
Equivalence class
&
The collection of latent vector nodes having the same path status vector as
\(v(\aaa)\).
\\
\hline
\(\mcr(\QQ)\)
&
Representative latent vectors
&
A set containing one latent vector from each equivalence class.
\\
\hline

\(R(\QQ)\)
&
Representative latent vector nodes
&
A set containing one latent vector node from each equivalence class.
\\
\hline

\(G(\QQ)\)
&
Hasse-based distribution graph
&
The graph obtained by merging equivalent latent vector nodes
of \(\mch(\QQ)\) and redrawing the edges according to
Definition~\ref{def:GQ}.
\\
\hline
\end{tabular}\caption{Notation and definitions used in the graphical construction.}
\label{tab:graphical-objects}
\end{table}

\begin{figure}[htbp]
\centering
\resizebox{0.92\textwidth}{!}{
\begin{tikzpicture}[
    box/.style={
        draw,
        rounded corners,
        align=center,
        inner xsep=5pt,
        inner ysep=4pt,
        font=\small
    },
    arr/.style={->, thick}
]

\node[box] (star) at (0,0)
    {Boolean operator \(\ast\)};
\node[box] (H) at (4.0,0)
    {$H(\{0,1\}^K,\preceq^\ast)$};

\node[box] (Q) at (4.0,-1.4)
    {Matrix \(\QQ\)};
\node[box] (mch) at (8.0,-1.4)
    {$\mch(\QQ)$};
\node[box] (G) at (13.0,-1.4)
    {$G(\QQ)$};

\draw[arr] (star) -- (H);
\draw[arr] (H) -- (mch);
\draw[arr] (Q) -- (mch);
\draw[arr] (mch) --
    node[above, font=\scriptsize] {merge equivalent nodes}
    node[below, font=\scriptsize] {and redraw edges}
    (G);

\end{tikzpicture}
}
\caption{Pipeline for the construction of \(G(\QQ)\).
The Boolean operator \(\ast\) induces the Hasse diagram on the latent vector
space. Incorporating the \(\QQ\)-matrix produces the extended Hasse diagram
\(\mch(\QQ)\), and merging equivalent latent vector nodes yields
\(G(\QQ)\).}
\label{fig:Q-to-G}
\end{figure}

% Thus, \(H(\{0,1\}^K,\preceq^\ast)\) is determined by the Boolean operator \(\ast\), while \(\mch(\QQ)\) and \(G(\QQ)\) also incorporate the information in
% \(\QQ\). The passage from \(\mch(\QQ)\) to \(G(\QQ)\) merges precisely those
% latent vector nodes having the same path status vector.

\subsection{Interpretation of Theorems~\ref{thm:graph}
and~\ref{thm:algebraic} and the role of \texorpdfstring{\(K\)}{K}}
\label{app:theory_intuition}

In this subsection, we provide some intuition for
Theorems~\ref{thm:graph} and~\ref{thm:algebraic} and discuss how the number of
latent variables \(K\) affects identifiability.

\paragraph{Graphical characterization.}
In  \(G(\QQ)\), each representative latent vector node \(v(\aaa)\) corresponds
to a distinct path status vector and hence to a distinct conditional
distribution of \(\RR\). The marginal distribution of \(\RR\) is therefore a
mixture over these nodes, with mixing weights given by the grouped latent probabilities \(p_{[\aaa]}\).
Suppose that, for some non-equivalent matrix \(\bQQ\nsim\QQ\), there exists a
set of representative latent vector nodes \(A\) such that
\(G(\bQQ)-A\sim G(\QQ)\). The graph isomorphism matches each representative
latent vector node under \(\QQ\) with one under \(\bQQ\), and hence matches
every conditional response distribution generated by \(\QQ\) with one
generated by \(\bQQ\). Assigning the same grouped probabilities to the
matched nodes and zero grouped probability to the additional nodes in \(A\)
allows \(\bQQ\) to generate the same marginal distribution of \(\RR\) as
\(\QQ\). Hence, \(\QQ\) is not identifiable. 
Conversely, if no such \(\bQQ\) and \(A\) exist, then every
non-equivalent \(\bQQ\) lacks at least one path status vector
generated by \(\QQ\). Because the grouped probability associated
with this path status vector under \(\QQ\) is positive, the
corresponding response vector has positive probability under
\(\QQ\) but zero probability under \(\bQQ\). Thus, \(\bQQ\) cannot
generate the same marginal distribution of \(\RR\) as \(\QQ\).

\paragraph{Algebraic characterization.}
The algebraic characterization expresses this reasoning directly in terms of
path status vectors. Each column of \(\PPhi(\QQ)\) is the path status vector
generated by one latent vector. Therefore, the condition
\(\operatorname{Cols}\bigl(\PPhi(\QQ)\bigr)\subseteq
\operatorname{Cols}\bigl(\PPhi(\bQQ)\bigr)\) means that every path status
vector generated under \(\QQ\), and hence every corresponding conditional
response distribution, also appears under \(\bQQ\). Any additional distinct
columns under \(\bQQ\) correspond to the additional nodes deleted from
\(G(\bQQ)\). Theorems~\ref{thm:graph} and~\ref{thm:algebraic} therefore give
graphical and algebraic descriptions of the same condition.

\paragraph{The role of \(K\).}
$K$ determines the size and complexity of the latent space. The number of latent vectors is \(2^K\), and the \(\PPhi(\QQ)\)-matrix has one column for each latent vector. Thus, as \(K\) increases, the rows of \(\QQ\) generate path status vectors over a larger latent space. Identifiability does not require all \(2^K\) columns of \(\PPhi(\QQ)\) to be distinct. By Theorem~\ref{thm:algebraic}, however, it requires that \(\operatorname{Cols}\bigl(\PPhi(\QQ)\bigr)\nsubseteq \operatorname{Cols}\bigl(\PPhi(\bQQ)\bigr)\) for every non-equivalent \(\bQQ\nsim\QQ\). If the rows of \(\QQ\) are not sufficiently diverse, many latent vectors may generate the same path status vector, leaving fewer distinct columns with which to distinguish \(\QQ\) from a non-equivalent \(\bQQ\).

For the basic conjunctive and disjunctive models,
Theorem~\ref{thm:sub}\textup{(iii)} makes this effect more concrete. The
necessary two- and three-column conditions must hold for all
\(\binom{K}{2}\) pairs and \(\binom{K}{3}\) triples of columns. These numbers
increase from \(10\) and \(10\), respectively, when \(K=5\), to \(45\) and
\(120\) when \(K=10\). Thus, when \(J\) is fixed, the same \(J\) rows must
provide the required patterns over many more subsets of latent variables.
This helps explain the empirical tendency for identifiability to become less
common as \(K\) increases with \(J\) fixed. The simulations in
Section~\ref{sec:sim_conditions} illustrate this effect and also show that
increasing \(J\) can compensate for an increase in \(K\).

% Manuscript fragment: retain the macros and main-text labels in your paper.
% New bibliography entries are provided in idQ-SAT-references.bib.
% The size bounds below apply to the explicit encoding described here.
% If the existing implementation uses another cardinality encoding, its
% additional cost should be checked before attributing the full bound to it.

\subsection{SAT formulation and implementation}
\label{app:sat-primer}

Boolean satisfiability (SAT) asks whether binary variables can be
assigned values that make a logical formula true. The values 1 and 0
represent true and false. The operations $\wedge$, $\vee$, and $\neg$
denote AND, OR, and NOT, respectively. Thus, $x\wedge y$ is true when
both variables are true, $x\vee y$ is true when at least one is true,
and $\neg x$ is true when $x$ is false. A variable or its negation is
called a \emph{literal}. A \emph{clause} is a disjunction of literals,
and a formula in \emph{conjunctive normal form} (CNF) is a conjunction
of clauses. Such a formula is true precisely when every clause is true.
For example, $(x_1\vee x_2)\wedge\neg x_1$ is satisfied by $x_1=0$
and $x_2=1$. A formula is \emph{satisfiable} if at least one satisfying
assignment exists and \emph{unsatisfiable} otherwise. See
\citet{biere2021handbook} for further background.

In our problem, $\QQ$ is fixed, and the unknowns are the entries of
$\bQQ$ and $\HH$. As in Proposition~\ref{prop:sat-nonequivalence},
the columns of $\QQ$ are arranged in weakly non-increasing
lexicographic order. For Algorithm~\ref{alg:id}, this ordering is
applied after basis reduction. Throughout this subsection, $J$
denotes the number of rows supplied to the encoding, which is the
number of rows of $\Qbasis$ when basis reduction is used.
The binary variables $x_{j\ell}$ and $h_{\ell k}$ represent
$\bar q_{j\ell}$ and the entries of $\HH$, respectively.
We encode the Boolean factorization, the lexicographic ordering
of the columns of $\bQQ$, and the inequality $\bQQ\neq\QQ$.

First consider the Boolean factorization. If $q_{jk}=0$, every term
$x_{j\ell}\wedge h_{\ell k}$ must be zero. This is enforced by the
$K$ clauses
\begin{equation}\label{eq:supp-sat-zero}
\neg x_{j\ell}\vee\neg h_{\ell k},
\qquad \ell\in[K].
\end{equation}
If $q_{jk}=1$, at least one of these terms must be one. For this entry,
introduce auxiliary binary variables $z_{j\ell k}$, $\ell\in[K]$,
and impose
\begin{equation}\label{eq:supp-sat-and}
\neg z_{j\ell k}\vee x_{j\ell},
\qquad
\neg z_{j\ell k}\vee h_{\ell k},
\qquad \ell\in[K],
\end{equation}
together with
\begin{equation}\label{eq:supp-sat-positive}
\bigvee_{\ell=1}^K z_{j\ell k}.
\end{equation}
These clauses require at least one selected term to have both
factors equal to one. The reverse implication
$(x_{j\ell}\wedge h_{\ell k})\Rightarrow z_{j\ell k}$
is unnecessary: the $z$-variables need only witness one valid term.
Every assignment satisfying the Boolean factorization extends to
these clauses by selecting such a term for each positive entry.
Conversely, the clauses provide a positive term at every positive
entry and forbid all positive terms at every zero entry.
They therefore encode $\QQ=\bQQ\odot\HH$ exactly after
existentially quantifying the auxiliary variables.

Next, we impose weakly non-increasing lexicographic order on the
columns of $\bQQ$. Lexicographic ordering is a standard way to
remove permutation symmetries; see \citet{frisch2006propagation}.
For each adjacent pair of columns $k$ and $k+1$, introduce
auxiliary variables $e_{j,k}$ for $j=0,\ldots,J-1$.
The variable $e_{j,k}$ records whether the first $j$ entries
of these columns agree. Set $e_{0,k}=1$ using a unit clause and,
for $j=1,\ldots,J-1$, impose
\[
e_{j,k}\leftrightarrow
\bigl(e_{j-1,k}\wedge(x_{j,k}\leftrightarrow x_{j,k+1})\bigr).
\]
This equivalence is encoded by the five clauses
\begin{equation}\label{eq:lex-encoding}
\begin{aligned}
&\neg e_{j,k}\vee e_{j-1,k},\\
&\neg e_{j,k}\vee\neg x_{j,k}\vee x_{j,k+1},\\
&\neg e_{j,k}\vee x_{j,k}\vee\neg x_{j,k+1},\\
&\neg e_{j-1,k}\vee\neg x_{j,k}\vee\neg x_{j,k+1}\vee e_{j,k},\\
&\neg e_{j-1,k}\vee x_{j,k}\vee x_{j,k+1}\vee e_{j,k}.
\end{aligned}
\end{equation}
For every $j\in[J]$, we also impose
\begin{equation}\label{eq:supp-sat-lex-order}
\neg e_{j-1,k}\vee x_{j,k}\vee\neg x_{j,k+1}.
\end{equation}
If the preceding entries agree, this clause forbids
$x_{j,k}=0$ and $x_{j,k+1}=1$.
Thus, at the first position where the columns differ, the left
column must have a 1 and the right column a 0. Equal columns
are allowed. Applying these constraints for $k=1,\ldots,K-1$
gives the required weak lexicographic ordering.

Since both $\QQ$ and $\bQQ$ have their columns in this order,
they are equivalent up to column permutation if and only if
they are equal. Consequently, non-equivalence is enforced
by the single clause
\begin{equation}\label{eq:supp-sat-exclusion}
\left(\bigvee_{(j,k):\,q_{jk}=0}x_{jk}\right)
\vee
\left(\bigvee_{(j,k):\,q_{jk}=1}\neg x_{jk}\right).
\end{equation}
This clause requires at least one entry of $\bQQ$ to differ
from the corresponding entry of $\QQ$.

For matrices reaching Step~2 of Algorithm~\ref{alg:id}, every
row and column of $\QQ$ is nonzero: basis reduction removes
zero rows, and the two-column check excludes zero columns.
The implementation also includes the implied clauses
\begin{equation}\label{eq:supp-sat-nonempty}
\bigvee_{\ell=1}^K x_{j\ell},
\qquad j\in[J],
\qquad\text{and}\qquad
\bigvee_{\ell=1}^K h_{\ell k},
\qquad k\in[K].
\end{equation}
A zero row of $\bQQ$ would produce a zero row of $\QQ$,
and a zero column of $\HH$ would produce a zero column of $\QQ$.
These additional clauses therefore do not change satisfiability.

For matrices reaching Step~2 of Algorithm~\ref{alg:id}, the resulting CNF formula is satisfiable precisely when the
matrices in Proposition~\ref{prop:sat-nonequivalence} exist.
Indeed, every satisfying assignment gives the required
factorization with ordered columns of $\bQQ$ and $\bQQ\neq\QQ$.
Conversely, every such pair of matrices extends to a satisfying
assignment by choosing the $z$-variables as above and assigning
the $e$-variables their prefix-equality values.
The ordering does not remove any alternative up to column
permutation, because columns of $\bQQ$ can be sorted by
simultaneously permuting the corresponding rows of $\HH$.
Thus, satisfiability demonstrates failure of the algebraic
condition, whereas unsatisfiability establishes the condition.

We implement the formulation using PySAT \citep{ignatiev2018pysat},
with \texttt{Glucose42} as the default solver.
A satisfiable result provides values for the entries of $\bQQ$
and $\HH$, and the implementation independently verifies the
returned factorization and non-equivalence.
A solver interruption or time limit, if used, leaves the instance
unresolved and is not interpreted as an unsatisfiable result.

\subsection{Computational complexity of Algorithm~\ref{alg:id}}
\label{app:sub_comp_comx}

Algorithm~\ref{alg:id} first reduces $\QQ$ to $\Qbasis$ in
Step~0 and then applies the submatrix checks in Step~1.
For an input with $J$ rows and $K$ columns, a direct basis
reduction can be implemented in $O(J^2K)$ time: for each
distinct nonzero row, compute the Boolean union of all rows
strictly below it in the coordinatewise order and check
whether this union equals the row.
Let $J_b$ denote the number of rows of $\Qbasis$.
The identity, two-column, and three-column checks can be
performed by direct scans in $O(J_bK)$, $O(J_bK^2)$, and
$O(J_bK^3)$ time, respectively.
Basis reduction can substantially reduce the subsequent SAT
instance, while the submatrix checks can settle the problem
without invoking the solver.
Section~\ref{sec:sim_preprocessing} examines their empirical
costs and their effects on SAT verification and total runtime.

For matrices reaching Step~2, there are $J_bK$ variables
for $\bQQ$ and $K^2$ variables for $\HH$.
Each zero entry of $\Qbasis$ contributes $K$ binary clauses.
Each positive entry contributes $K$ auxiliary variables
and $2K+1$ clauses: $2K$ binary implication clauses and
one clause of length $K$.
Writing $N=\|\Qbasis\|_0$ for the number of positive entries,
the factorization encoding therefore uses
\[
KN\ \text{auxiliary variables},\qquad
J_bK^2+(K+1)N\ \text{clauses},\qquad
2J_bK^2+3KN\ \text{literal occurrences}.
\]
Since $N\leq J_bK$, each of these quantities is $O(J_bK^2)$.

The lexicographic encoding uses $O(J_b)$ auxiliary variables,
clauses, and literal occurrences for each of the $K-1$
adjacent column pairs, giving $O(J_bK)$ in total.
The exclusion of $\bQQ=\QQ$ requires one clause with $J_bK$
literals and no auxiliary variables.
The implied nonzero-row and nonzero-column constraints
add $J_b+K$ clauses and $J_bK+K^2$ literal occurrences.
Consequently, the complete CNF formula has $O(J_bK^2)$
variables, clauses, and literal occurrences.
Sorting the columns of $\Qbasis$ takes
$O(J_bK\log K)$ time by comparison sorting, and constructing
and storing the CNF formula takes $O(J_bK^2)$ time and
space, respectively.

These bounds describe preprocessing and construction of
the SAT instance, not the time required to solve it.
A polynomial-size encoding does not provide a polynomial-time
guarantee for SAT verification.
Solver runtime depends on the structure of the instance
as well as the encoding, and a smaller formula need not
be faster on every instance.
We therefore evaluate the computational performance
empirically in Section~\ref{sec:simulation}.
\subsection{Quantifying non-identifiability by SAT enumeration}
\label{app:quantify_non_id}

The SAT formulation can also quantify the extent of
non-identifiability by counting $\bQQ$-matrices up to column
permutation. For a fixed $\QQ\in\{0,1\}^{J\times K}$, define
\[
\mathcal C_{\supseteq}(\QQ)
=
\left\{
[\bQQ]_\sim:
\bQQ\nsim\QQ,\ 
\operatorname{Cols}\{\PPhi(\QQ)\}
\subseteq
\operatorname{Cols}\{\PPhi(\bQQ)\}
\right\},
\]
where $\bQQ$ ranges over $\{0,1\}^{J\times K}$ and
$[\bQQ]_\sim$ denotes its equivalence class under column
permutation. Let
\[
D_{\supseteq}(\QQ)=|\mathcal C_{\supseteq}(\QQ)|.
\]
Then $D_{\supseteq}(\QQ)=0$ if and only if $\QQ$ satisfies
the algebraic condition. Under the assumptions of
Theorem~\ref{thm:algebraic}, this is equivalent to
identifiability of $\QQ$.
When positive, $D_{\supseteq}(\QQ)$ counts the distinct
equivalence classes of $\bQQ$-matrices satisfying the
containment relation above.
We compute this quantity using the original $\QQ$, without
basis reduction, because basis reduction preserves the
algebraic condition but may change the number of alternatives.

Under the conjunctive operator, the alternatives counted by
$D_{\supseteq}(\QQ)$ are exactly the matrices $\bQQ\nsim\QQ$
for which $\QQ=\bQQ\odot\HH$ holds for some
$\HH\in\{0,1\}^{K\times K}$.
As in Proposition~\ref{prop:sat-nonequivalence}, arrange the
columns of $\QQ$ in weakly non-increasing lexicographic order
and impose the same ordering on $\bQQ$. We then encode
\begin{equation}\label{eq:supp-sat-enumeration}
\begin{aligned}
&\QQ=\bQQ\odot\HH,
\qquad
\bQQ\neq\QQ,\\
&\bQQ_{\Cdot,1}
\succeq_{\mathrm{lex}}
\bQQ_{\Cdot,2}
\succeq_{\mathrm{lex}}\cdots
\succeq_{\mathrm{lex}}
\bQQ_{\Cdot,K}.
\end{aligned}
\end{equation}
This formulation does not require the two-column condition.
The ordering loses no feasible equivalence class, since
sorting the columns of $\bQQ$ and correspondingly permuting
the rows of $\HH$ preserves the Boolean product.
Because both $\QQ$ and $\bQQ$ have ordered columns,
$\bQQ\sim\QQ$ holds if and only if $\bQQ=\QQ$.
Thus, each class in $\mathcal C_{\supseteq}(\QQ)$ has exactly
one ordered matrix representative, including when columns
are repeated.

The factorization, ordering, and inequality constraints are
encoded as described in Section~\ref{app:sat-primer}.
For enumeration, the additional nonzero-row clauses for
$\bQQ$ and nonzero-column clauses for $\HH$ in
\eqref{eq:supp-sat-nonempty} are included only when the
corresponding row or column of $\QQ$ is nonzero.
No further restrictions on $\bQQ$ are imposed.

Let $\mathcal F_{\QQ}$ denote the resulting SAT formula,
with variables $x_{j\ell}$ encoding the entries of $\bQQ$.
Solve $\mathcal F_{\QQ}$ repeatedly. Suppose the $m$th
solution returns the matrix $\bQQ^{(m)}$.
Record this matrix and add the blocking clause
\begin{equation}\label{eq:supp-sat-block}
\left(
\bigvee_{(j,\ell):\,\bar q^{(m)}_{j\ell}=1}
\neg x_{j\ell}
\right)
\vee
\left(
\bigvee_{(j,\ell):\,\bar q^{(m)}_{j\ell}=0}
x_{j\ell}
\right).
\end{equation}
This clause requires at least one entry of the next matrix
to differ from $\bQQ^{(m)}$.
Since it involves only the $x_{j\ell}$ variables, it excludes
this matrix for every choice of $\HH$ and of the auxiliary
variables, while leaving every other feasible matrix
unchanged. Consequently, each equivalence class is counted
once, regardless of how many factorizations or auxiliary
assignments its representative admits.

Repeat until the solver returns \textsc{unsat}.
At that point, the number of recorded matrices equals
$D_{\supseteq}(\QQ)$.
If enumeration is terminated earlier, the number recorded
is a lower bound on $D_{\supseteq}(\QQ)$.
% If enumeration is stopped before completion, the number recorded is a lower bound. Complete enumeration can be substantially more expensive than deciding whether any alternative exists, since it requires finding and excluding every alternative matrix.
}
\section{Proofs of results in Section~\ref{sec:graph}}
\subsection{Proofs of results in Sections~\ref{sec:graph-representative} and~\ref{sec:graphcond}}\label{app:secgraph}

\subsubsection{Statement and Proof of Fact~\ref{fact:injective}}

\begin{fact}\label{fact:injective}
When the operator $\ast$ is defined either by an existing non-strict partial order as:
\begin{equation}\label{eq:ope_exist}
    \qq*\aaa=\mathbf 1\{\qq\preccurlyeq\aaa\}
\end{equation}
or as the disjunctive operator~\eqref{eq:ope-disj2}, it is injective in both $\aaa$ and $\qq$. 
\end{fact}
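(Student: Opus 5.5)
Here $\preccurlyeq$ is an arbitrary non-strict partial order on $\{0,1\}^K$, and $\qq\ast\aaa=\1\{\qq\preccurlyeq\aaa\}$. We treat the two families of operators separately; in each case the plan is to exhibit a single ``witness'' argument that distinguishes two distinct vectors.

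\textbf{Operator defined by a partial order.} First we show injectivity in $\qq$. Suppose $\qq_1\ast\aaa=\qq_2\ast\aaa$ for all $\aaa$, and take $\aaa=\qq_1$. Reflexivity gives $\qq_1\ast\qq_1=1$, so $\qq_2\preccurlyeq\qq_1$. Taking $\aaa=\qq_2$ symmetrically gives $\qq_1\preccurlyeq\qq_2$, and antisymmetry then yields $\qq_1=\qq_2$.

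Injectivity in $\aaa$ follows the same pattern. Suppose $\qq\ast\aaa_1=\qq\ast\aaa_2$ for all $\qq$, and take $\qq=\aaa_1$. Then $\aaa_1\preccurlyeq\aaa_1$ forces $\aaa_1\preccurlyeq\aaa_2$. Taking $\qq=\aaa_2$ gives the reverse relation, and antisymmetry concludes.

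Only reflexivity and antisymmetry are used here. This case covers the conjunctive operator~\eqref{eq:ope-conj}, where $\preccurlyeq$ is the product order $\preceq$.

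\textbf{Disjunctive operator.} We use the unit vectors $\ee_k$ as probes, which works because $\qq\odot\ee_k=q_k$. If $\qq_1\odot\aaa=\qq_2\odot\aaa$ for all $\aaa$, then setting $\aaa=\ee_k$ gives $q_{1k}=q_{2k}$ for every $k$, so $\qq_1=\qq_2$. Since $\odot$ is symmetric in its two arguments, $\ee_k\odot\aaa=\alpha_k$, and the same argument with $\qq=\ee_k$ shows $\aaa_1=\aaa_2$.

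No step is a genuine obstacle. The only point requiring care is that in the partial-order case the witness must be the vector itself rather than a unit vector, since $\preccurlyeq$ is an arbitrary partial order.
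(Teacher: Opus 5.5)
Your proof is correct. For the partial-order operator it is the same as the paper's argument: witnesses $\aaa=\qq_1,\qq_2$ and $\qq=\aaa_1,\aaa_2$, followed by antisymmetry.

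For the disjunctive operator you take a different route.
\begin{itemize}
\item The paper rewrites the disjunctive operator as $\qq\odot\aaa = 1-\1\{\qq\preceq\tilde{\aaa}\}$, where $\tilde{\aaa}$ flips the entries of $\aaa$. It then reruns the product-order self-witness argument, choosing $\tilde{\aaa}=\qq_1,\qq_2$ for injectivity in $\qq$ and $\qq=\tilde{\aaa}_1,\tilde{\aaa}_2$ for injectivity in $\aaa$. This implicitly uses that $\aaa\mapsto\tilde{\aaa}$ is a bijection of $\{0,1\}^K$.
\item You instead probe with unit vectors. Using $\qq\odot\ee_k=q_k$ and the symmetry of $\odot$, you read off each coordinate directly.
\end{itemize}

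Your version is shorter, fully elementary, and needs no reformulation of the operator. The paper's version gives uniformity: both operators are handled by the single ``the vector is its own witness'' pattern. It also makes explicit the complementation duality between the conjunctive and disjunctive operators, which the paper reuses later, for example to show that the disjunctive operator induces the product order.
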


\begin{proof}

We first prove for the case when $\ast$ is defined as in Equation~\eqref{eq:ope_exist}.

\smallskip
\noindent\textbf{Injectivity in $\qq$.}  
Suppose that for all $\aaa \in \{0,1\}^K$,  
\[
\qq_1 \preccurlyeq \aaa \quad\text{holds if and only if}\quad \qq_2 \preccurlyeq \aaa \ \text{holds}.
\]  
Taking $\aaa = \qq_1$, we have $\qq_1 \preccurlyeq \qq_1$, hence $\qq_2 \preccurlyeq \qq_1$.  
Similarly, taking $\aaa = \qq_2$ yields $\qq_1 \preccurlyeq \qq_2$.  
Since $\preccurlyeq$ is a partial order, by antisymmetry, these two relations imply $\qq_1 = \qq_2$.  
Thus, $\ast$ is injective in $\qq$.  

\smallskip
\noindent\textbf{Injectivity in $\aaa$.}  
Suppose that for all $\qq \in \{0,1\}^K$,  
\[
\qq \preccurlyeq \aaa_1 \quad\text{holds if and only if}\quad \qq \preccurlyeq \aaa_2 \ \text{holds}.
\]  
Taking $\qq = \aaa_1$ gives $\aaa_1 \preccurlyeq \aaa_2$, and taking $\qq = \aaa_2$ gives $\aaa_2 \preccurlyeq \aaa_1$.  
Since $\preccurlyeq$ is a partial order, by antisymmetry,  $\aaa_1 = \aaa_2$, so $\ast$ is injective in $\aaa$.  

\smallskip

We next prove when the operator is the disjunctive operator~\eqref{eq:ope-disj2}.
Notice that the disjunctive operator can be written equivalently as
\begin{equation}\label{eq:ope-disj3}
    \qq \ast \aaa \;=\; \qq \npreceq \tilde{\aaa},
\end{equation}
where $\tilde{\aaa}$ is obtained by flipping the zeros and ones in
$\aaa$, and $\preceq$ denotes the product order.

\smallskip
\noindent\textbf{Injectivity in $\qq$.}
Assume that for all
$\aaa\in\{0,1\}^{K}$ we have
\[
  \qq_1 \npreceq \tilde{\aaa}
  \quad\text{holds if and only if}\quad
  \qq_2 \npreceq \tilde{\aaa}.
\]
Equivalently,
\[
  \qq_1 \preceq \tilde{\aaa}
  \quad\text{if and only if}\quad
  \qq_2 \preceq \tilde{\aaa}.
\]
Taking $\tilde{\aaa}=\qq_1$ yields
$\qq_2 \preceq \qq_1$, and taking $\tilde{\aaa}=\qq_2$ yields
$\qq_1 \preceq \qq_2$.  Hence $\qq_1=\qq_2$, so the operator is
injective in $\qq$.

\smallskip
\noindent\textbf{Injectivity in $\aaa$.}
Assume that for every $\qq\in\{0,1\}^{K}$ we have
\[
  \qq \npreceq \tilde{\aaa}_1
  \quad\text{holds if and only if}\quad
  \qq \npreceq \tilde{\aaa}_2,
\]
that is,
\[
  \qq \preceq \tilde{\aaa}_1
  \quad\text{if and only if}\quad
  \qq \preceq \tilde{\aaa}_2.
\]
Taking $\qq=\tilde{\aaa}_1$ gives
$\tilde{\aaa}_1 \preceq \tilde{\aaa}_2$, and taking
$\qq=\tilde{\aaa}_2$ gives the reverse inequality.  Therefore
$\tilde{\aaa}_1=\tilde{\aaa}_2$ and hence $\aaa_1=\aaa_2$.  Thus the
operator is injective in $\aaa$.
\end{proof}

\subsubsection{Proof of Proposition~\ref{prop:partial}}

\begin{proof}
A binary relation \(\preccurlyeq\) on a set $S$ is a \emph{non-strict partial order} \citep{davey2002introduction} if, for all $a,b,c \in S$, it satisfies reflexivity (\(a \preccurlyeq a\)), antisymmetry (\(a \preccurlyeq b\) and \(b \preccurlyeq a\) imply \(a = b\)), and transitivity (\(a \preccurlyeq b\) and \(b \preccurlyeq c\) imply \(a \preccurlyeq c\)). We verify these three properties of the relation $\preceq^\ast$ as follows.

\smallskip
\noindent\textbf{Reflexivity.}  
For any \(\aaa \in \{0,1\}^K\), we have \(\qq \ast \aaa \leq \qq \ast \aaa\) for all \(\qq \in \{0,1\}^K\), implying \(\aaa \preceq^\ast \aaa\).

\smallskip
\noindent\textbf{Antisymmetry.}  
For any \(\aaa_1, \aaa_2 \in \{0,1\}^K\), if \(\aaa_1 \preceq^\ast \aaa_2\) and \(\aaa_2 \preceq^\ast \aaa_1\), then for all \(\qq \in \{0,1\}^K\) we have \(\qq \ast \aaa_1 \leq \qq \ast \aaa_2 \leq \qq \ast \aaa_1\). This implies \(\qq \ast \aaa_1 = \qq \ast \aaa_2\) for all \(\qq\). By the standing assumption that \(\ast\) is injective in \(\aaa\), we conclude that \(\aaa_1 = \aaa_2\).

\smallskip
\noindent\textbf{Transitivity.}  
For any \(\aaa_1, \aaa_2, \aaa_3 \in \{0,1\}^K\), if \(\aaa_1 \preceq^\ast \aaa_2\) and \(\aaa_2 \preceq^\ast \aaa_3\), then for all \(\qq \in \{0,1\}^K\) we have \(\qq \ast \aaa_1 \leq \qq \ast \aaa_2 \leq \qq \ast \aaa_3\), which implies \(\qq \ast \aaa_1 \leq \qq \ast \aaa_3\) for all \(\qq\). Hence \(\aaa_1 \preceq^\ast \aaa_3\).

Since $\preceq^\ast$ satisfies reflexivity, antisymmetry, and transitivity, it is a non-strict partial order on $\{0,1\}^K$.
\end{proof}

\subsubsection{Induced partial orders under the conjunctive and disjunctive operators}
\label{supp:product-order}

We first establish a general observation.

\begin{lemma}\label{lem:ope_partial}
Let $\preccurlyeq$ be a partial order on $\{0,1\}^K$.
If $\qq\ast\aaa=\1\{\qq\preccurlyeq\aaa\}$, then the
induced order $\preceq^\ast$ coincides with $\preccurlyeq$.
\end{lemma}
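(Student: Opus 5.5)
The plan is to unwind the definition of the induced order~\eqref{eq:partial_order_def} and prove both inclusions directly. The key observation is that, for this operator, testing $\aaa_1\preceq^\ast\aaa_2$ against the particular choice $\qq=\aaa_1$ already decides everything. We need nothing beyond the partial-order axioms of $\preccurlyeq$, namely reflexivity and transitivity. Antisymmetry only enters, via Fact~\ref{fact:injective}, to guarantee that $\ast$ is injective in $\aaa$, so that $\preceq^\ast$ is indeed a partial order by Proposition~\ref{prop:partial}.

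\textbf{Step 1} ($\preccurlyeq$ implies $\preceq^\ast$). Suppose $\aaa_1\preccurlyeq\aaa_2$ and fix any $\qq\in\{0,1\}^K$. If $\qq\ast\aaa_1=0$, the inequality $\qq\ast\aaa_1\le\qq\ast\aaa_2$ is trivial. Otherwise $\qq\preccurlyeq\aaa_1$, and transitivity gives $\qq\preccurlyeq\aaa_2$, so $\qq\ast\aaa_2=1$. Since $\qq$ was arbitrary, $\aaa_1\preceq^\ast\aaa_2$.

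\textbf{Step 2} ($\preceq^\ast$ implies $\preccurlyeq$). Suppose $\qq\ast\aaa_1\le\qq\ast\aaa_2$ for every $\qq$. Take $\qq=\aaa_1$. Reflexivity gives $\aaa_1\ast\aaa_1=\1\{\aaa_1\preccurlyeq\aaa_1\}=1$, so $\aaa_1\ast\aaa_2=1$, which means $\aaa_1\preccurlyeq\aaa_2$.

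Together the two steps show that the relations coincide as sets of ordered pairs, and hence $\preceq^\ast=\preccurlyeq$. No step is a genuine obstacle. The only point needing care is Step~2, where we must choose the witness $\qq=\aaa_1$; this is possible because $\qq$ and $\aaa$ range over the same set $\{0,1\}^K$.

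As a follow-up, I would apply the lemma to get the product-order claims. For the conjunctive operator~\eqref{eq:ope-conj}, take $\preccurlyeq$ to be the product order $\preceq$. For the disjunctive operator, I would not rely on~\eqref{eq:ope-disj3}, which is not of the lemma's form. Instead I would argue directly from~\eqref{eq:ope-disj2}. First, $\qq\odot\aaa$ is monotone in $\aaa$ under $\preceq$, so $\preceq$ implies $\preceq^\ast$. Conversely, testing $\qq=\ee_k$ for each $k$ gives $\alpha_{1k}\le\alpha_{2k}$ for all $k$, so $\preceq^\ast$ implies $\preceq$.
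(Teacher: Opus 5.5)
Your proof is correct and is essentially the paper's argument: the witness $\qq=\aaa_1$ together with reflexivity shows that $\aaa_1\preceq^\ast\aaa_2$ implies $\aaa_1\preccurlyeq\aaa_2$, and transitivity gives the converse. Your follow-up for the disjunctive operator is also fine. The paper likewise argues that case directly rather than through the lemma, testing $\qq=\one-\aaa_2$ in the form $\qq\ast\aaa=1-\1\{\qq\preceq\one-\aaa\}$; your witnesses $\qq=\ee_k$ work equally well.
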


\begin{proof}
By definition, $\aaa_1\preceq^\ast\aaa_2$ if and only if
$\1\{\qq\preccurlyeq\aaa_1\}\leq
\1\{\qq\preccurlyeq\aaa_2\}$ for every $\qq\in\{0,1\}^K$.
Taking $\qq=\aaa_1$ gives $\aaa_1\preccurlyeq\aaa_2$.
Conversely, if $\aaa_1\preccurlyeq\aaa_2$, the required
inequality follows from transitivity.
\end{proof}

For the conjunctive operator,
$\qq\ast\aaa=\1\{\qq\preceq\aaa\}$, so
Lemma~\ref{lem:ope_partial} gives $\preceq^\ast=\preceq$.

For the disjunctive operator,
$\qq\ast\aaa=1-\1\{\qq\preceq\one-\aaa\}$,
where $\one$ is the all-ones vector. Thus, by definition,
\[
\aaa_1\preceq^\ast\aaa_2
\quad\Longleftrightarrow\quad
\1\{\qq\preceq\one-\aaa_2\}
\leq
\1\{\qq\preceq\one-\aaa_1\}
\quad\text{for every }\qq\in\{0,1\}^K.
\]
Taking $\qq=\one-\aaa_2$ gives
$\one-\aaa_2\preceq\one-\aaa_1$, or equivalently
$\aaa_1\preceq\aaa_2$. Conversely, if
$\aaa_1\preceq\aaa_2$, then
$\one-\aaa_2\preceq\one-\aaa_1$, and the displayed
inequality follows from transitivity. Hence, the
disjunctive operator also induces the product order.
\subsubsection{Proof of Proposition~\ref{prop:path-response}}

\begin{proof}
We first show the equivalence between $\qq_j \ast \aaa = 1$ (i.e., $\aaa \in \mca_j^+$) and the existence of a minimal capable vector $\aaa_j^{\min} \in \mcm_j^+$ such that $\aaa_j^{\min} \preceq^\ast \aaa$.

\smallskip
\noindent\textbf{Necessity.}  
Suppose $\aaa \in \mca_j^+$, i.e., $\qq_j \ast \aaa = 1$.  
Assume, for the sake of contradiction, that there does not exist any $\aaa_j^{\min} \in \mcm_j^+$ with $\aaa_j^{\min} \preceq^\ast \aaa$.  
This means $\aaa \notin \mcm_j^+$, since if $\aaa \in \mcm_j^+$ we could take $\aaa_j^{\min} = \aaa$ and the property would hold trivially.  
Because $\aaa \notin \mcm_j^+$, by the definition of $\mcm_j^+$ there must exist some $\aaa_1 \in \mca_j^+$ such that $\aaa_1 \prec^\ast \aaa$.  
By assumption, $\aaa_1 \notin \mcm_j^+$, so there exists $\aaa_2 \in \mca_j^+$ with $\aaa_2 \prec^\ast \aaa_1 \prec^\ast \aaa$.  
Continuing in this way produces a strictly decreasing chain
\[
\aaa \succ^\ast \aaa_1 \succ^\ast \aaa_2 \succ^\ast \cdots
\]
inside the finite set $\{0,1\}^K$.  
Such an infinite descending chain in a finite set is impossible, yielding a contradiction.  
Therefore, there must exist some $\aaa_j^{\min} \in \mcm_j^+$ such that $\aaa_j^{\min} \preceq^\ast \aaa$.

\smallskip
\noindent\textbf{Sufficiency.}  
Let $\aaa_j^{\min} \in \mcm_j^+$ with $\aaa_j^{\min} \preceq^\ast \aaa$.  
Since $\aaa_j^{\min} \in \mca_j^+$, we have $\qq_j \ast \aaa_j^{\min} = 1$.  
By the definition of $\preceq^\ast$, we have $\qq_j \ast \aaa \ge \qq_j \ast \aaa_j^{\min} = 1$, hence $\qq_j \ast \aaa = 1$.

\bigskip
We now show the equivalence between the existence of such $\aaa_j^{\min}$ for $\aaa$ with $\aaa_j^{\min} \preceq^\ast \aaa$ and the existence of a directed path from $v_j^*(\qq_j)$ to $v(\aaa)$ in the extended Hasse diagram.

\smallskip
\noindent\textbf{ Necessity.}  
If $\aaa_j^{\min} \preceq^\ast \aaa$ for some $\aaa_j^{\min} \in \mcm_j^+$, then by Definition~\ref{def:extended_hasse} there is a directed edge from $v_j^*(\qq_j)$ to $v(\aaa_j^{\min})$.  
If $\aaa = \aaa_j^{\min}$, this edge connects $v_j^*(\qq_j)$ directly to $v(\aaa)$.  
If $\aaa_j^{\min} \prec^\ast \aaa$, then by the properties of the Hasse diagram there is a directed path from $v(\aaa_j^{\min})$ to $v(\aaa)$, and concatenating the edge from $v_j^*(\qq_j)$ to $v(\aaa_j^{\min})$ gives a path from $v_j^*(\qq_j)$ to $v(\aaa)$.

\smallskip
\noindent\textbf{ Sufficiency.}  
If there is a directed path from $v_j^*(\qq_j)$ to $v(\aaa)$, then by Definition~\ref{def:extended_hasse} the first edge of this path must be an edge from $v_j^*(\qq_j)$ to some $v(\aaa_j^{\min})$ with $\aaa_j^{\min} \in \mcm_j^+$.  
If $v(\aaa_j^{\min}) = v(\aaa)$, then $\aaa_j^{\min} = \aaa$ and clearly $\aaa_j^{\min} \preceq^\ast \aaa$.  
Otherwise, the remainder of the path is from $v(\aaa_j^{\min})$ to $v(\aaa)$ in the Hasse diagram, which implies $\aaa_j^{\min} \preceq^\ast \aaa$.
\end{proof}

\subsubsection{Proof of Theorem~\ref{thm:graph}}

By Definition~\ref{def:path_status}, for every
\(\aaa\in\{0,1\}^K\),
\[
\Psi(v(\aaa))
=
\PPhi(\QQ)_{\Cdot,\aaa}.
\]
Since \(R(\QQ)\) contains exactly one representative node from each
equivalence class, its path status vectors are distinct. Therefore,
\begin{equation}\label{eq:path-status-columns}
\bigl\{\Psi(v):v\in R(\QQ)\bigr\}
=
\operatorname{Cols}\{\PPhi(\QQ)\}.
\end{equation}

\begin{lemma}\label{lem:graph-column}
For any \(A\subseteq R(\bQQ)\), $G(\QQ)\sim G(\bQQ)-A$
if and only if
\[
\operatorname{Cols}\{\PPhi(\QQ)\}
=
\bigl\{
\Psi(v):v\in R(\bQQ)\setminus A
\bigr\}.
\]
\end{lemma}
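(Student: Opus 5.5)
The plan is to show that the graph \(G(\QQ)\), and likewise \(G(\bQQ)-A\), is determined up to isomorphism fixing the item labels by its set of representative path status vectors. We do this by recovering every edge from those vectors, so that an isomorphism amounts to equality of the vector sets.

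First note that, by Definition~\ref{def:GQ}, both the edges among representative nodes and the edges from \(v_j^*(\qq_j)\) are defined purely in terms of the vectors \(\Psi(v)\) of the remaining representatives. For latent–latent edges the rule is covering in the product order \(\prec\) on \(\{0,1\}^J\), restricted to the current set. For item edges the rule is minimality among vectors with \(j\)-th entry one. The deletion operation \(G(\bQQ)-A\) redraws edges by the same rule, restricted to \(R(\bQQ)\setminus A\). Hence if \(S\subseteq\{0,1\}^J\) denotes the set of path status vectors of the surviving representatives, the graph is a labeled copy of a canonical graph \(\Gamma(S)\). This canonical graph has node set \(S\cup\{1,\ldots,J\}\), covering edges within \(S\), and edges \(j\to s\) for the minimal \(s\in S\) with \(s_j=1\). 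Combined with \eqref{eq:path-status-columns}, this gives the ``if'' direction immediately: when \(\operatorname{Cols}\{\PPhi(\QQ)\}=\{\Psi(v):v\in R(\bQQ)\setminus A\}\), both graphs are \(\Gamma(S)\) for the same \(S\). Matching nodes with equal path status vectors and item nodes by index is then the required isomorphism.

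For the ``only if'' direction, take an isomorphism \(f\) that preserves item labels, and show that \(\Psi(f(v))=\Psi(v)\) for every representative \(v\). The key observation is that in \(\Gamma(S)\) the path status can be read from reachability. There is a directed path from item node \(j\) to \(s\) if and only if \(s_j=1\). For ``if'', choose a minimal \(s'\preceq s\) with \(s'_j=1\) in the finite set \(S\), then follow covering edges up from \(s'\) to \(s\). For ``only if'', note that edges out of \(j\) go to vectors with \(j\)-th entry one, and covering edges go upward, so the entry stays one. Thus \(\Psi(v)_j=1\) exactly when \(v\) is reachable from \(v_j^*\). This mirrors Proposition~\ref{prop:path-response}, now applied to \(G\) rather than \(\mch\). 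Since \(f\) preserves edges, reachability and item labels, \(\Psi(v)=\Psi(f(v))\), and \(f\) restricted to representatives is a bijection. Therefore the two vector sets coincide, and equal \(\operatorname{Cols}\{\PPhi(\QQ)\}\) by \eqref{eq:path-status-columns}.

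The main obstacle is the reachability claim. We must justify that the covering relation restricted to \(S\) still connects any two comparable elements by a directed path, and that item edges to minimal elements suffice. This is handled by finiteness, inserting intermediate elements of \(S\) until each step is a cover, as in the standard Hasse-diagram argument. The claim also relies on path status vectors within \(R(\bQQ)\setminus A\) being distinct, so that nodes are identified with vectors; this holds because each representative comes from a distinct equivalence class.
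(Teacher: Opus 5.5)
Your proposal is correct and follows essentially the same route as the paper. You characterize $\Psi_j(v)=1$ as reachability from the $j$th item node, in both $G(\QQ)$ and $G(\bQQ)-A$, using a minimal element followed by a chain of covers. You then transfer path status vectors through the label-preserving isomorphism for the forward direction, and for the converse you note that both graphs are built by the same edge rules from the same vector set; your canonical graph $\Gamma(S)$ is just an explicit packaging of the paper's direct matching of nodes with equal path status vectors.
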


\begin{proof}
First, in either \(G(\QQ)\) or \(G(\bQQ)-A\), a surviving
representative node \(v\) has \(\Psi_j(v)=1\) if and only if
there is a directed path from the \(j\)th item node to \(v\).
Indeed, if \(\Psi_j(v)=1\), choose a minimal surviving path
status vector \(\Psi(u)\preceq\Psi(v)\) with
\(\Psi_j(u)=1\). Definition~\ref{def:GQ} gives an edge from the
\(j\)th item node to \(u\); if \(u\neq v\), the cover relations
give a directed path from \(u\) to \(v\). Conversely, every
edge from the \(j\)th item node ends at a node with \(j\)th
entry one, and this entry remains one along each subsequent
edge. These arguments apply to \(G(\bQQ)-A\) because its edges
are redrawn using the same rules on the surviving nodes.

Suppose first that \(G(\QQ)\sim G(\bQQ)-A\). By
Definition~\ref{def:iso}, the graph isomorphism matches the \(j\)th
item node of \(G(\QQ)\) with the \(j\)th item node of
\(G(\bQQ)-A\). It therefore preserves the existence of a directed
path from each item node to every representative node. Hence, matched
representative nodes have the same path status vector.

The isomorphism gives a bijection from \(R(\QQ)\) to
\(R(\bQQ)\setminus A\). Equation~\eqref{eq:path-status-columns}
therefore gives
\[
\operatorname{Cols}\{\PPhi(\QQ)\}
=
\bigl\{
\Psi(v):v\in R(\bQQ)\setminus A
\bigr\}.
\]

Conversely, suppose that the two sets of path status vectors are
equal. Since the path status vectors of the representative nodes are
distinct, every node in \(R(\QQ)\) can be matched uniquely with a node
in \(R(\bQQ)\setminus A\) having the same path status vector. Match
also the \(j\)th item nodes of the two graphs.

The edges between representative nodes are determined by the cover
relations among their path status vectors. The edges from the \(j\)th
item node are determined by the minimal path status vectors whose
\(j\)th entry equals one. Since \(G(\bQQ)-A\) is obtained by applying
these same edge rules to the surviving path status vectors, the
matching preserves every edge. It therefore defines an isomorphism $G(\QQ)\sim G(\bQQ)-A.$
\end{proof}

\begin{proof}[Proof of Theorem~\ref{thm:graph}]
By Theorem~\ref{thm:algebraic}, \(\QQ\) is not identifiable if and
only if there exists \(\bQQ\nsim\QQ\) such that
\[
\operatorname{Cols}\{\PPhi(\QQ)\}
\subseteq
\operatorname{Cols}\{\PPhi(\bQQ)\}.
\]
By Equation~\eqref{eq:path-status-columns}, this holds if and only if there exists \(A\subseteq R(\bQQ)\) such that
\[
\operatorname{Cols}\{\PPhi(\QQ)\}
=
\bigl\{
\Psi(v):v\in R(\bQQ)\setminus A
\bigr\}.
\]
Indeed, when the inclusion holds, we may take
\[
A
=
\left\{
v(\bar\aaa)\in R(\bQQ):
\PPhi(\bQQ)_{\Cdot,\bar\aaa}
\notin
\operatorname{Cols}\{\PPhi(\QQ)\}
\right\}.
\]
Conversely, the preceding equality immediately implies the column-set
inclusion.

By Lemma~\ref{lem:graph-column}, this is equivalent to the existence
of \(\bQQ\nsim\QQ\) and \(A\subseteq R(\bQQ)\) such that
\[
G(\QQ)\sim G(\bQQ)-A.
\]
Therefore, \(\QQ\) is not identifiable if and only if the graphical
condition~\eqref{eq:g_condition} fails.
\end{proof}

\subsection{Proofs of results in Section~\ref{sec:algebraic}}
\label{app:boolean_proof}

\subsubsection{Proofs of Theorem~\ref{thm:algebraic} and Corollary~\ref{cor:pure-node}}\label{app:alge_and_cor}
\begin{lemma}\label{lem:boolean-column-set-equivalence}
Consider a Boolean factor model with
\(p_{[\aaa]}>0\) for every
\([\aaa]\in\mathcal E(\QQ)\). For any
\(\bQQ\in\{0,1\}^{J\times K}\), the following statements are
equivalent:

\textup{(i)} There exists \(\bpp\) such that
\[
\P(\RR=\rr\mid\QQ,\pp)
=
\P(\RR=\rr\mid\bQQ,\bpp),
\qquad \rr\in\{0,1\}^J.
\]

\textup{(ii)}
\[
\operatorname{Cols}\{\PPhi(\QQ)\}
\subseteq
\operatorname{Cols}\{\PPhi(\bQQ)\}.
\]
\end{lemma}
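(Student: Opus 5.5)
The plan is to work directly with the mixture representation of the response distribution under the basic Boolean model \eqref{eq:Boolean}. Given $\aaa$, the response vector is deterministic: $\RR=\PPhi_{\Cdot,\aaa}(\QQ)$ almost surely, since $\P(R_j=1\mid\QQ,\aaa)=\qq_j\ast\aaa\in\{0,1\}$ and local independence holds. Hence
\[
\P(\RR=\rr\mid\QQ,\pp)=\sum_{\aaa:\,\PPhi_{\Cdot,\aaa}(\QQ)=\rr}p_{\aaa}=\sum_{[\aaa]\in\mathcal E(\QQ):\,\Psi(v(\aaa))=\rr}p_{[\aaa]} ,
\]
and the same formula holds for $(\bQQ,\bpp)$. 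Because equivalence classes correspond bijectively to distinct columns of $\PPhi(\QQ)$, the right-hand side equals $p_{[\aaa]}$ if $\rr$ is the column of some class $[\aaa]$, and $0$ if $\rr\notin\operatorname{Cols}\{\PPhi(\QQ)\}$. So the distribution of $\RR$ is simply the pushforward of the grouped proportions onto the column set. Its support is exactly $\operatorname{Cols}\{\PPhi(\QQ)\}$, using the positivity assumption $p_{[\aaa]}>0$.

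For (i)$\Rightarrow$(ii), take any column $\rr\in\operatorname{Cols}\{\PPhi(\QQ)\}$. It has positive probability under $(\QQ,\pp)$ by positivity. By (i), it then has positive probability under $(\bQQ,\bpp)$. Under $\bQQ$, every vector outside $\operatorname{Cols}\{\PPhi(\bQQ)\}$ has probability zero, so $\rr\in\operatorname{Cols}\{\PPhi(\bQQ)\}$.

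For (ii)$\Rightarrow$(i), I will construct $\bpp$ explicitly. For each class $\mcc\in\mathcal E(\QQ)$ with column $\rr_{\mcc}$, the inclusion gives some $\bar\aaa_{\mcc}\in\{0,1\}^K$ with $\PPhi_{\Cdot,\bar\aaa_{\mcc}}(\bQQ)=\rr_{\mcc}$. Distinct classes have distinct columns, so the chosen $\bar\aaa_{\mcc}$ are distinct. Set $\bar p_{\bar\aaa_{\mcc}}=p_{\mcc}$ and $\bar p_{\vv}=0$ for all other $\vv$. This is a valid probability vector, since $\sum_{\mcc}p_{\mcc}=1$. By the displayed formula, both models then assign probability $p_{\mcc}$ to each $\rr_{\mcc}$ and $0$ elsewhere, so (i) holds. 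As the paper notes, the competing representation is allowed to put zero mass on some classes, so no positivity is needed for $\bpp$.

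The only delicate point is bookkeeping rather than a real obstacle. One must check that the sum over $\aaa$ with a given column collapses to the grouped proportion, and that the classes of $\mathcal E(\QQ)$ are in bijection with $\operatorname{Cols}\{\PPhi(\QQ)\}$. Both follow from the definition of $\sim$ via $\Psi$ together with Proposition~\ref{prop:path-response}, which gives $\Psi(v(\aaa))=\PPhi_{\Cdot,\aaa}(\QQ)$.
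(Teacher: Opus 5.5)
Your proposal is correct and follows the paper's proof essentially step for step. The paper also writes the response distribution as $\sum_{\aaa\in\mcr(\QQ)}p_{[\aaa]}\mathbf 1\{\rr=\PPhi(\QQ)_{\Cdot,\aaa}\}$ and gets (ii) from the positive probability of each column. It proves the converse by transferring each grouped proportion to a matched, necessarily distinct, latent profile under $\bQQ$ and putting zero mass elsewhere; you assign that mass to individual latent vectors rather than to equivalence classes of $\bQQ$, which is equivalent.
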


\begin{proof}
Under the Boolean factor model, for every
\(\rr\in\{0,1\}^J\),
\begin{equation}\label{eq:boolean-response-probability}
\P(\RR=\rr\mid\QQ,\pp)
=
\sum_{\aaa\in\mcr(\QQ)}
p_{[\aaa]}\,
\mathbf 1\left\{
\rr=\PPhi(\QQ)_{\Cdot,\aaa}
\right\}.
\end{equation}

Suppose first that \textup{(i)} holds. Fix
\(\aaa\in\mcr(\QQ)\) and take
\(\rr=\PPhi(\QQ)_{\Cdot,\aaa}\). Since the profiles in
\(\mcr(\QQ)\) index distinct columns,
\[
\P\left(
\RR=\PPhi(\QQ)_{\Cdot,\aaa}
\mid\QQ,\pp
\right)
=
p_{[\aaa]}
>0.
\]
Equality of the response distributions therefore gives
\[
\P\left(
\RR=\PPhi(\QQ)_{\Cdot,\aaa}
\mid\bQQ,\bpp
\right)>0.
\]
Applying \eqref{eq:boolean-response-probability} to
\((\bQQ,\bpp)\), there must exist
\(\bar\aaa\in\mcr(\bQQ)\) such that
\[
\PPhi(\bQQ)_{\Cdot,\bar\aaa}
=
\PPhi(\QQ)_{\Cdot,\aaa}.
\]
Since this holds for every \(\aaa\in\mcr(\QQ)\), \textup{(ii)}
follows.

Conversely, suppose that \textup{(ii)} holds. For each
\(\aaa\in\mcr(\QQ)\), choose a corresponding
\(\bar\aaa\in\mcr(\bQQ)\) such that
\[
\PPhi(\bQQ)_{\Cdot,\bar\aaa}
=
\PPhi(\QQ)_{\Cdot,\aaa}.
\]
The selected profiles \(\bar\aaa\) are distinct because the profiles
in \(\mcr(\QQ)\) index distinct columns of \(\PPhi(\QQ)\).
Set \(\bar p_{[\bar\aaa]}=p_{[\aaa]}\) for each selected
\(\bar\aaa\), and assign zero probability to every unselected
equivalence class of \(\bQQ\). Then, for every
\(\rr\in\{0,1\}^J\),
\[
\begin{aligned}
\P(\RR=\rr\mid\bQQ,\bpp)
&=
\sum_{\bar\aaa'\in\mcr(\bQQ)}
\bar p_{[\bar\aaa']}\,
\mathbf 1\left\{
\rr=\PPhi(\bQQ)_{\Cdot,\bar\aaa'}
\right\}\\
&=
\sum_{\aaa\in\mcr(\QQ)}
\bar p_{[\bar\aaa]}\,
\mathbf 1\left\{
\rr=\PPhi(\bQQ)_{\Cdot,\bar\aaa}
\right\}\\
&=
\sum_{\aaa\in\mcr(\QQ)}
p_{[\aaa]}\,
\mathbf 1\left\{
\rr=\PPhi(\QQ)_{\Cdot,\aaa}
\right\}\\
&=
\P(\RR=\rr\mid\QQ,\pp).
\end{aligned}
\]
Thus, \textup{(i)} holds.
\end{proof}

\begin{proof}[Proof of Theorem~\ref{thm:algebraic}]
By Lemma~\ref{lem:boolean-column-set-equivalence}, a matrix
\(\bQQ\nsim\QQ\) can generate the same response distribution as
\(\QQ\) for some \(\bpp\) if and only if
\[
\operatorname{Cols}\{\PPhi(\QQ)\}
\subseteq
\operatorname{Cols}\{\PPhi(\bQQ)\}.
\]
Therefore, \(\QQ\) is identifiable if and only if the algebraic
condition holds.

Once \(\QQ\) is identified, equality of the response distributions
and Equation~\eqref{eq:boolean-response-probability} identify every
grouped proportion \(p_{[\aaa]}\). Hence
\((\QQ,p_{[\Cdot]})\) is jointly identifiable.
\end{proof}

\begin{proposition}\label{prop:conjunctive-JK}
Under the conjunctive operator, let
\(\QQ\in\{0,1\}^{J\times K}\) with \(1\leq J\leq K\).
Then \(\QQ\) satisfies the algebraic condition if and only if
\(J=K\) and \(\QQ\sim\cI_K\).
\end{proposition}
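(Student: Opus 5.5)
Under the conjunctive operator, $\PPhi(\QQ)_{\Cdot,\aaa}=(\1\{\qq_j\preceq\aaa\})_{j\in[J]}$. I would prove the two directions separately, working directly with the algebraic condition~\eqref{eq:a_condition}, i.e.\ by exhibiting or excluding a $\bQQ\nsim\QQ$ with $\operatorname{Cols}\{\PPhi(\QQ)\}\subseteq\operatorname{Cols}\{\PPhi(\bQQ)\}$.

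\emph{Sufficiency ($\QQ\sim\cI_K$ satisfies the condition).} Here $\PPhi(\cI_K)_{\Cdot,\aaa}=\aaa$, so $\operatorname{Cols}\{\PPhi(\cI_K)\}=\{0,1\}^K$. Suppose $\operatorname{Cols}\{\PPhi(\bQQ)\}\supseteq\{0,1\}^K$. In particular each $\ee_j$ appears as a column. Hence there is $\aaa^{(j)}$ with $\bqq_j\preceq\aaa^{(j)}$ and $\bqq_{j'}\npreceq\aaa^{(j)}$ for all $j'\neq j$. This forces the $\bqq_j$ to be pairwise $\preceq$-incomparable, since $\bqq_{j'}\preceq\bqq_j\preceq\aaa^{(j)}$ would give a contradiction. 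Now use the column $\one-\ee_j$. There is $\bbb^{(j)}$ with $\bqq_{j'}\preceq\bbb^{(j)}$ for every $j'\neq j$ but $\bqq_j\npreceq\bbb^{(j)}$. We may take $\bbb^{(j)}=\bigvee_{j'\neq j}\bqq_{j'}$. So each $\bqq_j$ has a coordinate $k_j$ not covered by the union of the other rows. The map $j\mapsto k_j$ is injective into $[K]$, hence a bijection since $J=K$. For every $j$, $\mcs(\bqq_j)$ contains $k_j$, and every other coordinate $k_{j'}$ is excluded from $\bqq_j$. Since $\{k_j\}=[K]$, we get $\bqq_j=\ee_{k_j}$, so $\bQQ\sim\cI_K$. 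Thus no $\bQQ\nsim\QQ$ exists.

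\emph{Necessity.} Assume $\QQ\not\sim\cI_K$ with $J\le K$; I would build an explicit $\bQQ\nsim\QQ$ whose $\PPhi$ has all of $\{0,1\}^J$ as columns. The natural candidate is $\bQQ=(\cI_J\;\;\zero_{J\times(K-J)})$, i.e.\ $\bqq_j=\ee_j$. Then $\PPhi(\bQQ)_{\Cdot,\aaa}=(\alpha_1,\dots,\alpha_J)$, which ranges over all of $\{0,1\}^J$. This contains $\operatorname{Cols}\{\PPhi(\QQ)\}$ trivially.

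It remains to check $\bQQ\nsim\QQ$. If $J<K$, this candidate has a zero column, so I would instead take $\bqq_j=\ee_j$ for $j<J$ and $\bqq_J=\ee_J+\cdots+\ee_K$. Its $\PPhi$ still realises every vector in $\{0,1\}^J$, by choosing $\alpha_{J},\dots,\alpha_K$ all $1$ or all $0$. This also gives a second candidate, e.g.\ $\bqq_J=\ee_J$ alone. Two pairwise non-equivalent candidates each covering $\{0,1\}^J$ mean at least one is $\nsim\QQ$. If $J=K$, the candidate $\cI_K$ is $\nsim\QQ$ by assumption.

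The main obstacle is the sufficiency direction, specifically extracting the "private coordinate" $k_j$ and concluding each $\bqq_j$ is a unit vector. I expect the union-of-others argument above to handle it. The degenerate case $K=1$ (where $J=K=1$ and only $\bqq=1$ or $0$ is possible) needs a one-line check in the necessity part.
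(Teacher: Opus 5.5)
Your proof is correct. For necessity you use the paper's idea in different language. The paper writes $\QQ=\bQQ_0\odot\HH=\bQQ_1\odot\HH$, where $\HH$ stacks $\QQ$ on a zero block, $\bQQ_0=(\cI_J,\zero)$ and $\bQQ_1=(\cI_J,\one_J,\ldots,\one_J)$, and then invokes the factorization form of the algebraic condition. You instead observe directly that any $\bQQ$ whose ideal patterns exhaust $\{0,1\}^J$ satisfies the column containment for every $\QQ$. Your second candidate, with last row $\ee_J+\cdots+\ee_K$, plays the role of $\bQQ_1$. Its non-equivalence to $(\cI_J,\zero)$ is immediate, since the two candidates contain $J$ and $K>J$ ones respectively. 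The real reason a second candidate is needed is that $\QQ$ itself might be $\sim(\cI_J,\zero)$, not the zero column as such.

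For sufficiency the routes genuinely differ. The paper passes to $\cI_K=\bQQ\odot\HH$ and argues column-wise: each $\ee_k$ is an OR of columns of $\bQQ$, hence equal to one of them. You argue row-wise from the patterns $\one-\ee_j$ that every row of $\bQQ$ has a private coordinate, and then count. Your version needs only the definition of the algebraic condition, not the Boolean-factorization reformulation, at the cost of a few more lines. The paper's is shorter once that reformulation is available.

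Two small remarks. The incomparability step via the patterns $\ee_j$ is superfluous, since the private-coordinate argument alone forces $\bar{\qq}_j=\ee_{k_j}$. And $K=1$ needs no separate check, because your $J=K$ case already covers it.
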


\begin{proof}
Suppose first that \(J<K\). Let
\[
\HH=
\begin{pmatrix}
\QQ\\
\zero_{(K-J)\times K}
\end{pmatrix},
\qquad
\bQQ_0=
\left(
\cI_J,\zero_{J\times(K-J)}
\right),
\]
and
\[
\bQQ_1=
\left(
\cI_J,
\underbrace{\one_J,\ldots,\one_J}_{K-J}
\right).
\]
Then
\[
\QQ
=
\bQQ_0\odot\HH
=
\bQQ_1\odot\HH.
\]
Since \(\bQQ_0\nsim\bQQ_1\), at least one of
\(\bQQ_0\) and \(\bQQ_1\) is not equivalent to \(\QQ\).
Thus, \eqref{eq:boolean-identifiability} fails, and \(\QQ\) does not
satisfy the algebraic condition.

Now suppose that \(J=K\). Since
\[
\QQ=\cI_K\odot\QQ,
\]
the algebraic condition for \(\QQ\) implies \(\QQ\sim\cI_K\).

Conversely, suppose
\(\cI_K=\bQQ\odot\HH\). For every \(k\in[K]\),
\[
\ee_k
=
\bigvee_{\ell:h_{\ell k}=1}
\bQQ_{\Cdot,\ell}.
\]
Hence, for some \(\ell_k\),
\[
\zero
\neq
\bQQ_{\Cdot,\ell_k}
\preceq
\ee_k,
\]
and therefore \(\bQQ_{\Cdot,\ell_k}=\ee_k\). The indices
\(\ell_1,\ldots,\ell_K\) are distinct, so
\(\bQQ\sim\cI_K\). Thus \(\cI_K\), and hence every
\(\QQ\sim\cI_K\), satisfies the algebraic condition.
\end{proof}

\begin{proof}[Proof of Corollary~\ref{cor:pure-node}]
By duality, it suffices to consider the conjunctive operator.
Proposition~\ref{prop:conjunctive-JK} shows that \(\cI_K\)
satisfies the algebraic condition. The result follows from
Theorem~\ref{thm:sub}\textup{(ii)} and
Theorem~\ref{thm:algebraic}.
\end{proof}

\subsubsection{Proof of Theorem~\ref{thm:sub}}
The profiles \(\one-\ee_k\), \(k\in[K]\), each lack exactly one attribute. We
call them the \emph{near-top profiles}; the vectors
\(\PPhi(\QQ)_{\Cdot,\one-\ee_k}\), \(k\in[K]\), are the
\emph{near-top columns} of \(\PPhi(\QQ)\).

\begin{lemma}\label{lm:conjunctive-generation}
Under the conjunctive operator, let
\(\QQ\in\{0,1\}^{J\times K}\). For every
\(\aaa_1,\aaa_2\in\{0,1\}^K\),
\[
\PPhi(\QQ)_{\Cdot,\aaa_1\wedge\aaa_2}
=
\PPhi(\QQ)_{\Cdot,\aaa_1}
\wedge
\PPhi(\QQ)_{\Cdot,\aaa_2}.
\]
Moreover, for every \(\aaa\in\{0,1\}^K\),
\begin{equation}
\label{eq:conjunctive-generation}
\PPhi(\QQ)_{\Cdot,\aaa}
=
\bigwedge_{k:\alpha_k=0}
\PPhi(\QQ)_{\Cdot,\one-\ee_k}
=
\one-\bigvee_{k:\alpha_k=0}\QQ_{\Cdot,k}
=
\one-\QQ\odot(\one-\aaa),
\end{equation}
where the empty conjunction and disjunction are understood as \(\one\) and
\(\zero\), respectively. Consequently,
\[
\operatorname{Cols}\bigl(\PPhi(\QQ)\bigr)
=
\left\{
\bigwedge_{k\in S}
\PPhi(\QQ)_{\Cdot,\one-\ee_k}:
S\subseteq[K]
\right\},
\]
and
\[
\QQ_{\Cdot,k}
=
\one-\PPhi(\QQ)_{\Cdot,\one-\ee_k},
\qquad k\in[K].
\]
\end{lemma}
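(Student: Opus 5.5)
The whole lemma follows from the entrywise formula for the columns of \(\PPhi(\QQ)\) given by Proposition~\ref{prop:path-response}: under the conjunctive operator,
\[
\PPhi(\QQ)_{j,\aaa}=\qq_j\ast\aaa=\1\{\qq_j\preceq\aaa\}.
\]
I will therefore work row by row, using only the elementary fact that \(\qq_j\preceq\aaa\) holds if and only if \(q_{jk}=0\) for every \(k\) with \(\alpha_k=0\). No graph-theoretic machinery is needed beyond this identification of \(\Psi(v(\aaa))\) with \((\qq_1\ast\aaa,\ldots,\qq_J\ast\aaa)^\top\).

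\emph{Meet property.} Since \(\qq_j\preceq\aaa_1\wedge\aaa_2\) holds if and only if \(\qq_j\preceq\aaa_1\) and \(\qq_j\preceq\aaa_2\) (the product order is a lattice order and the meet is the componentwise minimum), the \(j\)-th entries satisfy \(\PPhi_{j,\aaa_1\wedge\aaa_2}=\PPhi_{j,\aaa_1}\wedge\PPhi_{j,\aaa_2}\) for every \(j\).

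\emph{Formula \eqref{eq:conjunctive-generation}.} Rewrite the row criterion as
\[
\PPhi_{j,\aaa}=1-\bigvee_{k:\alpha_k=0}q_{jk}.
\]
In vector form this is \(\one-\bigvee_{k:\alpha_k=0}\QQ_{\Cdot,k}\). The latter equals \(\one-\QQ\odot(\one-\aaa)\) directly from the definition of \(\odot\). Specializing to \(\aaa=\one-\ee_k\) gives \(\PPhi_{\Cdot,\one-\ee_k}=\one-\QQ_{\Cdot,k}\), which is the last displayed identity. Now apply De Morgan:
\[
\one-\bigvee_{k\in S}\QQ_{\Cdot,k}=\bigwedge_{k\in S}\bigl(\one-\QQ_{\Cdot,k}\bigr)=\bigwedge_{k\in S}\PPhi_{\Cdot,\one-\ee_k},
\]
where \(S=\{k:\alpha_k=0\}\). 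This yields the first equality in \eqref{eq:conjunctive-generation}. Alternatively, one can iterate the meet property using \(\aaa=\bigwedge_{k:\alpha_k=0}(\one-\ee_k)\).

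\emph{Empty case.} If \(S=\varnothing\), then \(\aaa=\one\). Here \(\PPhi_{\Cdot,\one}=\one\), because \(\qq_j\preceq\one\) always holds. This matches the stated conventions for the empty conjunction and the empty disjunction.

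\emph{Column set.} As \(\aaa\) ranges over \(\{0,1\}^K\), the set \(S=\{k:\alpha_k=0\}\) ranges over all subsets of \([K]\). The column-set description is therefore immediate from \eqref{eq:conjunctive-generation}.

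The only step needing care is keeping these conventions consistent for the degenerate case \(\aaa=\one\). Everything else is a routine entrywise verification.
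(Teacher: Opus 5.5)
Your proposal is correct and follows essentially the same route as the paper: an entrywise computation from \(\PPhi(\QQ)_{j,\aaa}=\1\{\qq_j\preceq\aaa\}=1-\bigvee_{k:\alpha_k=0}q_{jk}\), from which the column-set description and the identity \(\QQ_{\Cdot,k}=\one-\PPhi(\QQ)_{\Cdot,\one-\ee_k}\) are read off. The only cosmetic difference is in the meet identity: the paper derives it from \(\{k:(\aaa_1\wedge\aaa_2)_k=0\}=\{k:\alpha_{1k}=0\}\cup\{k:\alpha_{2k}=0\}\), whereas you use the equivalent characterization \(\qq_j\preceq\aaa_1\wedge\aaa_2\iff\qq_j\preceq\aaa_1\text{ and }\qq_j\preceq\aaa_2\).
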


\begin{proof}
For every \(j\in[J]\),
\[
\PPhi(\QQ)_{j,\aaa}
=
\bigwedge_{k:\alpha_k=0}(1-q_{jk})
=
1-\bigvee_{k:\alpha_k=0}q_{jk}
=
1-\qq_j\odot(\one-\aaa).
\]
This proves \eqref{eq:conjunctive-generation}. Also,
\[
\{k:(\aaa_1\wedge\aaa_2)_k=0\}
=
\{k:\alpha_{1k}=0\}\cup\{k:\alpha_{2k}=0\},
\]
which gives the first identity. As \(\aaa\) ranges over \(\{0,1\}^K\), the
set \(\{k:\alpha_k=0\}\) ranges over all subsets of \([K]\); taking
\(\aaa=\one-\ee_k\) gives the final identity.
\end{proof}

\begin{lemma}\label{lem:boolean-factorization}
Under the conjunctive operator, let
\(\QQ,\bar{\QQ}\in\{0,1\}^{J\times K}\). The following statements are
equivalent:

\textup{(i)}
\(\operatorname{Cols}\bigl(\PPhi(\QQ)\bigr)\subseteq
\operatorname{Cols}\bigl(\PPhi(\bar{\QQ})\bigr)\);

\textup{(ii)}
\(\PPhi(\QQ)_{\Cdot,\one-\ee_k}\in
\operatorname{Cols}\bigl(\PPhi(\bar{\QQ})\bigr)\) for every \(k\in[K]\);

\textup{(iii)}
\(\QQ=\bar{\QQ}\odot\HH\) for some
\(\HH\in\{0,1\}^{K\times K}\).

\end{lemma}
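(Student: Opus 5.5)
I would prove the cycle of implications (i)$\Rightarrow$(ii)$\Rightarrow$(iii)$\Rightarrow$(i), relying throughout on Lemma~\ref{lm:conjunctive-generation}. That lemma says every column of $\PPhi(\QQ)$ is a meet of near-top columns, that $\PPhi(\QQ)_{\Cdot,\aaa}=\one-\QQ\odot(\one-\aaa)$, and that $\QQ_{\Cdot,k}=\one-\PPhi(\QQ)_{\Cdot,\one-\ee_k}$.

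The implication (i)$\Rightarrow$(ii) is immediate, because each near-top column $\PPhi(\QQ)_{\Cdot,\one-\ee_k}$ is itself a column of $\PPhi(\QQ)$.

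For (ii)$\Rightarrow$(iii), I fix $k\in[K]$. By (ii) there is some $\bar\aaa^{(k)}\in\{0,1\}^K$ with $\PPhi(\QQ)_{\Cdot,\one-\ee_k}=\PPhi(\bQQ)_{\Cdot,\bar\aaa^{(k)}}$. Applying the final identity of Lemma~\ref{lm:conjunctive-generation} to $\QQ$ and formula \eqref{eq:conjunctive-generation} to $\bQQ$ gives
\[
\QQ_{\Cdot,k}=\one-\PPhi(\QQ)_{\Cdot,\one-\ee_k}=\bQQ\odot(\one-\bar\aaa^{(k)}).
\]
So I define $\HH$ to be the matrix whose $k$-th column is $\one-\bar\aaa^{(k)}$. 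Column by column this yields $\QQ=\bQQ\odot\HH$.

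For (iii)$\Rightarrow$(i), I take an arbitrary $\aaa$ and set $\bar\aaa=\one-\HH\odot(\one-\aaa)$. I then want the identity
\[
\PPhi(\QQ)_{\Cdot,\aaa}=\one-(\bQQ\odot\HH)\odot(\one-\aaa)=\one-\bQQ\odot\bigl(\HH\odot(\one-\aaa)\bigr)=\PPhi(\bQQ)_{\Cdot,\bar\aaa}.
\]
Given this identity, every column of $\PPhi(\QQ)$ is a column of $\PPhi(\bQQ)$, which is (i).

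The only step needing genuine care is the middle equality, namely associativity of the Boolean matrix product $\odot$ when the right factor is a vector. I would verify it entrywise:
\[
\bigvee_k\Bigl(\bigvee_\ell \bar q_{j\ell}\wedge h_{\ell k}\Bigr)\wedge(1-\alpha_k)=\bigvee_\ell \bar q_{j\ell}\wedge\bigvee_k\bigl(h_{\ell k}\wedge(1-\alpha_k)\bigr).
\]
This follows from distributivity of $\wedge$ over $\vee$ in the Boolean lattice. Otherwise the argument is routine bookkeeping with complements.

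An alternative route to (ii)$\Rightarrow$(i) is also available. It uses the meet-closure of the set $\operatorname{Cols}(\PPhi(\bQQ))$, which follows from the first identity of Lemma~\ref{lm:conjunctive-generation} applied to $\bQQ$. It also uses the fact that $\operatorname{Cols}(\PPhi(\QQ))$ is generated by meets of near-top columns, with the empty meet $\one$ equal to $\PPhi(\bQQ)_{\Cdot,\one}$. I would mention this route only as a cross-check.
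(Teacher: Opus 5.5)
Your proposal is correct and follows the paper's proof essentially step for step. It uses the same choice $\HH_{\Cdot,k}=\one-\bar\aaa^{(k)}$ for (ii)$\Rightarrow$(iii) and the same witness $\bar\aaa=\one-\HH\odot(\one-\aaa)$ for (iii)$\Rightarrow$(i); your entrywise check of associativity of $\odot$ supplies a step the paper leaves implicit.
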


\begin{proof}
The implication \textup{(i)} \(\Rightarrow\) \textup{(ii)} is immediate.
Suppose \textup{(ii)} holds. For each \(k\in[K]\), choose
\(\aaa^{(k)}\in\{0,1\}^K\) such that
\(\PPhi(\bar{\QQ})_{\Cdot,\aaa^{(k)}}
=
\PPhi(\QQ)_{\Cdot,\one-\ee_k}\),
and set \(\HH_{\Cdot,k}=\one-\aaa^{(k)}\). By
\eqref{eq:conjunctive-generation},
\(\bar{\QQ}\odot\HH_{\Cdot,k}=\QQ_{\Cdot,k}\) for every \(k\), so
\(\QQ=\bar{\QQ}\odot\HH\). Thus \textup{(iii)} holds.

Conversely, suppose \textup{(iii)} holds. For
\(\aaa\in\{0,1\}^K\), set
\(\bar\aaa=\one-\HH\odot(\one-\aaa)\). Then
\[
\PPhi(\QQ)_{\Cdot,\aaa}
=
\one-\bar{\QQ}\odot\HH\odot(\one-\aaa)
=
\PPhi(\bar{\QQ})_{\Cdot,\bar\aaa}
\]
by \eqref{eq:conjunctive-generation}. Hence \textup{(i)} holds.
\end{proof}

Combining Theorem~\ref{thm:algebraic} and
Lemma~\ref{lem:boolean-factorization}, under the conjunctive
operator, \(\QQ\) satisfies the algebraic condition if and only if every
\(\bar{\QQ}\in\{0,1\}^{J\times K}\) and
\(\HH\in\{0,1\}^{K\times K}\) satisfy
\begin{equation}
\label{eq:boolean-identifiability}
\QQ=\bar{\QQ}\odot\HH
\quad\Longrightarrow\quad
\bar{\QQ}\sim\QQ.
\end{equation}

\begin{lemma}\label{lm:onetwo}
Under the conjunctive operator, the following statements hold for
\(\QQ\in\{0,1\}^{J\times K}\), where \(J\geq1\).

\textup{(i)} If \(K=1\), then \(\QQ\) satisfies the algebraic
condition if and only if \(\QQ\neq\zero\).

\textup{(ii)} If \(K=2\), then \(\QQ\) satisfies the algebraic
condition if and only if it contains an identity submatrix
\(\cI_2\), up to a permutation of its columns.
\end{lemma}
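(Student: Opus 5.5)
The plan is to work throughout with the factorization criterion \eqref{eq:boolean-identifiability}. By Theorem~\ref{thm:algebraic} and Lemma~\ref{lem:boolean-factorization}, $\QQ$ satisfies the algebraic condition if and only if every Boolean factorization $\QQ=\bQQ\odot\HH$ with $\HH\in\{0,1\}^{K\times K}$ forces $\bQQ\sim\QQ$. Each part then reduces to either exhibiting a non-equivalent factorization or showing that every factorization is trivial.

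\textbf{Part (i), $K=1$.} Here $\HH=(h)$ is a scalar and $\QQ=h\,\bQQ$.
\begin{itemize}
\item If $\QQ=\zero$, take $h=0$ and $\bQQ=\one\neq\QQ$. This is a non-equivalent factorization, so the condition fails.
\item If $\QQ\neq\zero$, then $h=1$ is forced, so $\bQQ=\QQ$. With one column, $\sim$ is equality, so the condition holds.
\end{itemize}

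\textbf{Part (ii), sufficiency for $K=2$.} Suppose $\QQ$ contains the rows $(1,0)$ and $(0,1)$. Proposition~\ref{prop:conjunctive-JK} gives that $\cI_2$ satisfies the algebraic condition. Theorem~\ref{thm:sub}(ii) then transfers the condition to $\QQ$, since $\cI_2$ is a row submatrix, up to a column permutation that does not affect the condition. If one prefers not to rely on Theorem~\ref{thm:sub}, which is proved after this lemma, a direct argument works. Restrict the factorization to the two pure rows, which gives $\cI_2=\bQQ_{\mathrm{sub}}\odot\HH$. The argument in the proof of Proposition~\ref{prop:conjunctive-JK} forces $\HH$ to be a permutation matrix, possibly with extra ones. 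Concretely, each column $\ee_k$ of $\cI_2$ needs a column of $\bQQ_{\mathrm{sub}}$ equal to $\ee_k$, and $\HH$ must select it. Then $\HH$ contains a permutation $\pi$ with $h_{\pi(k)k}=1$, so $\bQQ_{\Cdot,\pi(k)}\preceq\QQ_{\Cdot,k}$. The remaining step is to show equality. Any extra one in $\HH$ would make $\QQ_{\Cdot,k}\succeq\QQ_{\Cdot,k'}$-type containments incompatible with the pure rows, which I expect to be a short case check. For the reverse inequality, use that every row of $\QQ$ is $\bqq_j\odot\HH$: a row $(1,1)$ of $\QQ$ could come from $\bqq_j=(1,0)$ only if $\HH$ has a full row, which the pure rows exclude.

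\textbf{Part (ii), necessity for $K=2$.} Suppose $\QQ$ lacks the row pattern $(1,0)$; the case of $(0,1)$ is symmetric. Then every row satisfies $q_{j1}\le q_{j2}$, that is, $\QQ_{\Cdot,1}\preceq\QQ_{\Cdot,2}$. The plan is to build a non-equivalent factorization, trying two candidates.
\begin{itemize}
\item First candidate: $\bQQ=(\QQ_{\Cdot,1},\QQ_{\Cdot,2})$ with $\HH=\begin{pmatrix}1&1\\0&1\end{pmatrix}$. This returns $\QQ$ itself, so it is trivial and does not help.
\item Second candidate: $\bQQ=(\QQ_{\Cdot,1},\QQ_{\Cdot,2})$ with $\HH=\begin{pmatrix}1&0\\0&1\end{pmatrix}$, likewise trivial.
\end{itemize}
Instead, set $\bQQ=(\zero,\QQ_{\Cdot,2})$ or $\bQQ=(\QQ_{\Cdot,2},\QQ_{\Cdot,2})$ and use that $\QQ_{\Cdot,1}=\QQ_{\Cdot,1}$ must be generated. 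Generating it requires a column of $\bQQ$ below $\QQ_{\Cdot,1}$, so this is not immediate. The better approach is to mirror Proposition~\ref{prop:conjunctive-JK}. Take $\bQQ_0=(\QQ_{\Cdot,1},\zero)$ and $\bQQ_1=(\QQ_{\Cdot,1},\QQ_{\Cdot,2}\vee\text{anything})$, where the second column only needs to be reachable as a join of columns.

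A cleaner choice is to define $\bQQ$ with columns $\QQ_{\Cdot,1}$ and $\QQ_{\Cdot,2}\setminus$-type differences. Let $\mathbf d$ be the indicator of rows with $(0,1)$, so that $\QQ_{\Cdot,2}=\QQ_{\Cdot,1}\vee\mathbf d$. Then take $\bQQ=(\QQ_{\Cdot,1},\mathbf d)$ and $\HH=\begin{pmatrix}1&1\\0&1\end{pmatrix}$, which gives $\bQQ\odot\HH=\QQ$. This $\bQQ$ is non-equivalent to $\QQ$ unless $\mathbf d=\QQ_{\Cdot,2}$, that is, unless $\QQ_{\Cdot,1}=\zero$. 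In that remaining case, $\bQQ=(\one,\QQ_{\Cdot,2})$ with $\HH=\begin{pmatrix}0&0\\0&1\end{pmatrix}$ works whenever $\QQ_{\Cdot,2}\neq\one$. The degenerate subcases, namely $\mathbf d=\QQ_{\Cdot,1}$ and the all-one column, need separate small constructions such as $(\QQ_{\Cdot,2},\zero)$ or $(\zero,\one)$.

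I expect the main obstacle to be this case bookkeeping in the necessity direction: ensuring that the constructed $\bQQ$ is genuinely non-equivalent in every degenerate subcase.
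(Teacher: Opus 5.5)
Your part \textup{(i)} and the two constructions you finally settle on for the necessity half of \textup{(ii)} are exactly the paper's. These are $\bQQ=(\QQ_{\Cdot,1},\mathbf d)$ with $\mathbf d=\QQ_{\Cdot,2}\wedge(\one-\QQ_{\Cdot,1})$ and $\HH=\left(\begin{smallmatrix}1&1\\0&1\end{smallmatrix}\right)$ when $\QQ_{\Cdot,1}\neq\zero$, and $\bQQ=(\one_J,\QQ_{\Cdot,2})$ with $\HH=\left(\begin{smallmatrix}0&0\\0&1\end{smallmatrix}\right)$ when $\QQ_{\Cdot,1}=\zero$.

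These two cases already finish the proof, and the ``degenerate subcases'' you leave open do not exist:
\begin{itemize}
\item Since $\mathbf d$ and $\QQ_{\Cdot,1}$ have disjoint supports, $\mathbf d=\QQ_{\Cdot,1}$ forces $\QQ=\zero$, which lies in the second case.
\item $(\one_J,\QQ_{\Cdot,2})\sim(\zero,\QQ_{\Cdot,2})$ would require $\one_J=\zero$, or $\one_J=\QQ_{\Cdot,2}$ together with $\QQ_{\Cdot,2}=\zero$. Both are impossible for $J\geq1$, whether or not $\QQ_{\Cdot,2}=\one_J$.
\end{itemize}
So your restriction ``whenever $\QQ_{\Cdot,2}\neq\one$'' is unnecessary, and the patches you propose should be discarded because they would fail. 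For $\QQ=(\zero,\one_J)$, the matrix $(\QQ_{\Cdot,2},\zero)$ is the column swap of $\QQ$ and $(\zero,\one_J)$ is $\QQ$ itself, so neither can witness failure of the condition.

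For sufficiency, avoid the route through Theorem~\ref{thm:sub}\textup{(ii)}. The paper's proof of that result invokes the ``only if'' directions of this very lemma, so you would at least have to prove necessity first.

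Your direct argument is valid and genuinely different from the paper's, and it is cleaner than you make it:
\begin{itemize}
\item \emph{The paper} argues globally. The pure rows make $\QQ_{\Cdot,1}$ and $\QQ_{\Cdot,2}$ incomparable. A column of $\HH$ with two ones would give $\QQ_{\Cdot,k'}\preceq\QQ_{\Cdot,k}$, and equal columns of $\HH$ would give equal columns of $\QQ$, so $\HH$ is a permutation matrix.
\item \emph{You} argue locally on the pure rows. The equations $\bar\qq_{j_1}\odot\HH=(1,0)$ and $\bar\qq_{j_2}\odot\HH=(0,1)$ force $\HH$ to contain both rows $(1,0)$ and $(0,1)$. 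Hence $\HH$ is a permutation matrix outright, with no ``extra ones'' to exclude and no reverse inequality to check. Then $\QQ=\bQQ\odot\HH$ is a column permutation of $\bQQ$.
\end{itemize}
The paper's mechanism is tied to $K=2$. For $K=3$, the rows $110,101,011$ give pairwise incomparable columns, yet that $\QQ$ fails the condition. Your argument instead works verbatim for any $\QQ$ containing $\cI_K$, so it proves Corollary~\ref{cor:pure-node} directly, without Theorem~\ref{thm:sub}.
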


\begin{proof}
We use \eqref{eq:boolean-identifiability}. If \(K=1\) and
\(\QQ\neq\zero\), the single entry of \(\HH\) in any factorization must be
one, so \(\bar{\QQ}=\QQ\). If \(\QQ=\zero\), then
\(\QQ=\one_J\odot\zero\) gives a non-equivalent factorization. This proves
\textup{(i)}.

Suppose \(K=2\) and \(\QQ\) contains \(\cI_2\). Then
\(\QQ_{\Cdot,1}\npreceq\QQ_{\Cdot,2}\) and
\(\QQ_{\Cdot,2}\npreceq\QQ_{\Cdot,1}\). In any factorization
\(\QQ=\bar{\QQ}\odot\HH\), each column of \(\HH\) is nonzero. If
\(h_{1k}=h_{2k}=1\) for some \(k\), then, with \(k'=3-k\),
\(\QQ_{\Cdot,k'}\preceq\QQ_{\Cdot,k}\), a contradiction. Thus each column
of \(\HH\) contains exactly one one. Moreover,
\(\HH_{\Cdot,1}=\HH_{\Cdot,2}\) would imply
\(\QQ_{\Cdot,1}=\QQ_{\Cdot,2}\). Hence \(\HH\) is a permutation matrix and
\(\bar{\QQ}\sim\QQ\).

Conversely, suppose \(\QQ\) does not contain \(\cI_2\). After possibly
permuting its columns, assume
\(\QQ_{\Cdot,1}\preceq\QQ_{\Cdot,2}\). If
\(\QQ_{\Cdot,1}\neq\zero\), let
\[
\bar{\QQ}
=
\left(
\QQ_{\Cdot,1},
\QQ_{\Cdot,2}\wedge(\one_J-\QQ_{\Cdot,1})
\right),
\qquad
\HH=
\begin{pmatrix}
1&1\\
0&1
\end{pmatrix}.
\]
Then \(\QQ=\bar{\QQ}\odot\HH\). Moreover,
\(\bar{\QQ}_{\Cdot,2}\wedge\QQ_{\Cdot,1}=\zero\), whereas
\(\QQ_{\Cdot,r}\wedge\QQ_{\Cdot,1}=\QQ_{\Cdot,1}\neq\zero\) for
\(r\in[2]\); hence \(\bar{\QQ}\nsim\QQ\). If
\(\QQ_{\Cdot,1}=\zero\), take
\[
\bar{\QQ}=(\one_J,\QQ_{\Cdot,2}),
\qquad
\HH=
\begin{pmatrix}
0&0\\
0&1
\end{pmatrix}.
\]
Again \(\QQ=\bar{\QQ}\odot\HH\). If \(\bar{\QQ}\sim\QQ\), then either
\(\one_J=\zero\) or \(\one_J=\QQ_{\Cdot,2}=\zero\), both impossible.
Thus \(\bar{\QQ}\nsim\QQ\), proving \textup{(ii)}.
\end{proof}

\begin{proof}[Proof of Theorem~\ref{thm:sub}]
We use \eqref{eq:boolean-identifiability} throughout. We first prove
\textup{(iii)}. After permuting columns, write \(\QQ=(\QQ',\QQ'')\), where
\(\QQ'\) does not satisfy the algebraic condition. Then
\(\QQ'=\bar{\QQ}'\odot\HH'\) for some
\(\bar{\QQ}'\nsim\QQ'\). Set
\[
\bar{\QQ}=(\bar{\QQ}',\QQ''),
\qquad
\HH=
\begin{pmatrix}
\HH'&\zero\\
\zero&\cI_{K-L}
\end{pmatrix},
\]
where \(L\) is the number of columns of \(\QQ'\). Then
\(\QQ=\bar{\QQ}\odot\HH\), and
\[
(\bar{\QQ}',\QQ'')\sim(\QQ',\QQ'')
\quad\Longrightarrow\quad
\bar{\QQ}'\sim\QQ'.
\]
Hence \(\bar{\QQ}\nsim\QQ\), so \(\QQ\) does not satisfy the algebraic
condition.

For \textup{(ii)}, after rearranging rows, write
\(\QQ=(\QQ'^{\top},\QQ''^{\top})^{\top}\), where \(\QQ'\) is a nonempty
row submatrix satisfying the algebraic condition. If \(\QQ=\bar{\QQ}\odot\HH\), restriction to these rows
gives \(\QQ'=\bar{\QQ}'\odot\HH\), and hence
\(\bar{\QQ}'\sim\QQ'\). After permuting the columns of \(\bar{\QQ}\) and
the corresponding rows of \(\HH\), assume \(\bar{\QQ}'=\QQ'\). By
\textup{(iii)} and Lemma~\ref{lm:onetwo},
\[
\QQ'_{\Cdot,k}\neq\zero,
\qquad
\QQ'_{\Cdot,\ell}\npreceq\QQ'_{\Cdot,k}
\quad (\ell\neq k).
\]
Since \(\QQ'=\QQ'\odot\HH\), the relation \(h_{\ell k}=1\) implies
\(\QQ'_{\Cdot,\ell}\preceq\QQ'_{\Cdot,k}\), and hence \(\ell=k\).
Moreover, \(\QQ'_{\Cdot,k}\neq\zero\) gives \(h_{kk}=1\). Thus
\(\HH=\cI_K\) and
\(\QQ=\bar{\QQ}\). Undoing the column permutation proves
\textup{(ii)}.

For \textup{(i)}, suppose that \(\QQ\) has at least one nonzero
row. Every nonzero row is a disjunction of basis rows: if a
nonzero row is not a basis row, its defining decomposition uses
rows of strictly smaller numbers of ones. Induction on the
number of ones therefore proves the assertion. In particular,
\(\Qbasis\) is nonempty. Since \(\Qbasis\) is a row submatrix
of \(\QQ\), \textup{(ii)} proves one direction of \textup{(i)}.
Conversely, suppose \(\Qbasis\) does not
satisfy the algebraic condition. Let
\(\qq_{b_1},\ldots,\qq_{b_{J_b}}\) be the rows of \(\QQ\) selected to form
\(\Qbasis\), and choose
\(\bar{\QQ}^{\mathrm{basis}}\nsim\Qbasis\) and \(\HH\) such that
\(\Qbasis=\bar{\QQ}^{\mathrm{basis}}\odot\HH\). For every \(j\in[J]\),
choose \(S_j\subseteq[J_b]\) such that
\[
\qq_j=\bigvee_{r\in S_j}\qq_{b_r},
\]
taking \(S_{b_r}=\{r\}\) for each selected basis row and
\(S_j=\varnothing\) whenever \(\qq_j=\zero\), with the empty
disjunction equal to \(\zero\). Define
\(\bar\qq_j=\bigvee_{r\in S_j}\bar\qq_r^{\mathrm{basis}}\). Then
\[
\bar\qq_j\odot\HH
=
\bigvee_{r\in S_j}
(\bar\qq_r^{\mathrm{basis}}\odot\HH)
=
\bigvee_{r\in S_j}\qq_{b_r}
=
\qq_j,
\]
so \(\QQ=\bar{\QQ}\odot\HH\). If
\(\bar{\QQ}\sim\QQ\), restricting to the selected basis rows would give
\(\bar{\QQ}^{\mathrm{basis}}\sim\Qbasis\), a contradiction. Hence \(\QQ\)
does not satisfy the algebraic condition, proving the converse.
\end{proof}
{\color{black}
\subsubsection{Three-column submatrices}

\begin{lemma}\label{lm:threefour}
Under the conjunctive operator, let
\(\QQ\in\{0,1\}^{J\times3}\).
Then \(\QQ\) satisfies the algebraic condition if and only if
it satisfies the two-column condition and contains a row
with exactly one 1.
Consequently, for \(K\geq3\), every three-column submatrix
of a matrix satisfying the algebraic condition contains
a row with exactly one 1.
\end{lemma}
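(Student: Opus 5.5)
We argue throughout via the reformulation \eqref{eq:boolean-identifiability}: under the conjunctive operator, \(\QQ\) satisfies the algebraic condition if and only if every factorization \(\QQ=\bQQ\odot\HH\) forces \(\bQQ\sim\QQ\). We also use Lemma~\ref{lem:boolean-factorization} and the generation formula \eqref{eq:conjunctive-generation}. The consequence for \(K\geq3\) will then follow at once from Theorem~\ref{thm:sub}\textup{(iii)}.

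\textbf{Necessity.} The two-column condition is necessary. If it failed for some pair of columns, that two-column submatrix would contain no \(\cI_2\) up to column permutation, so by Lemma~\ref{lm:onetwo}\textup{(ii)} it would violate the algebraic condition, and Theorem~\ref{thm:sub}\textup{(iii)} would transfer this failure to \(\QQ\).

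Now suppose no row of \(\QQ\) has exactly one 1, so every row has weight \(0\), \(2\) or \(3\). Take \(\HH=\one\one^\top-\cI_3\), so that \(\QQ_{\Cdot,k}\) must equal \(\bigvee_{\ell\neq k}\bQQ_{\Cdot,\ell}\). Define \(\bQQ\) row by row:
\begin{itemize}
\item a zero row of \(\QQ\) maps to \(\zero\);
\item a row \(\one-\ee_m\) maps to \(\ee_m\);
\item the row \(\one\) maps to \((1,1,0)\).
\end{itemize}
A direct check of the three row types gives \(\QQ=\bQQ\odot\HH\).

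To see that \(\bQQ\nsim\QQ\), note that column permutations preserve the multiset of row weights. Weight-2 rows of \(\QQ\) become weight-1 rows of \(\bQQ\), and weight-3 rows become weight-2 rows. So the two multisets differ as soon as \(\QQ\) has a nonzero row, which is forced by the two-column condition. Hence \(\QQ\) violates the algebraic condition.

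\textbf{Sufficiency.} Assume the two-column condition holds and some row of \(\QQ\) has exactly one 1. Let \(\QQ=\bQQ\odot\HH\). By Lemma~\ref{lem:boolean-factorization} and \eqref{eq:conjunctive-generation}, write \(n_k=\PPhi(\QQ)_{\Cdot,\one-\ee_k}=\one-\QQ_{\Cdot,k}\) and \(g_\ell=\one-\bQQ_{\Cdot,\ell}\). Each \(n_k\) then equals \(g_{S_k}:=\bigwedge_{\ell\in S_k}g_\ell\) for some \(S_k\subseteq[3]\), namely \(S_k=\{\ell:h_{\ell k}=1\}\).

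The two-column condition says the columns of \(\QQ\) are nonzero and pairwise incomparable. Hence the \(n_k\) are distinct, pairwise incomparable and different from \(\one\). This forces each \(S_k\) to be nonempty. It also forces the sets \(S_1,S_2,S_3\) to be pairwise incomparable under inclusion: if \(S_k\subseteq S_{k'}\), then \(g_{S_{k'}}\preceq g_{S_k}\), that is \(n_{k'}\preceq n_k\), a contradiction.

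The only antichains of size three among nonempty subsets of \([3]\) are the three singletons and the three two-element sets. This small enumeration is the main step, and it is routine.
\begin{itemize}
\item In the two-element case, each column of \(\QQ\) is \(\bQQ_{\Cdot,a}\vee\bQQ_{\Cdot,b}\), one column for each pair \(\{a,b\}\). For any row \(\xx\) of \(\bQQ\), the three values \(x_a\vee x_b\) over the pairs never contain exactly one 1. This contradicts the assumed row of weight one.
\item Hence the \(S_k\) are the singletons, so \(\HH\) is a permutation matrix. Then the columns of \(\QQ\) are a permutation of those of \(\bQQ\), giving \(\bQQ\sim\QQ\).
\end{itemize}
This proves sufficiency, and the statement for \(K\geq3\) follows from Theorem~\ref{thm:sub}\textup{(iii)}.
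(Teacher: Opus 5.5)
Your proof is correct. The necessity half is essentially the paper's. The paper uses the same $\HH$ up to a row permutation (rows $110,101,011$) and sends $111\mapsto111$ rather than $110$. It obtains $\bQQ\nsim\QQ$ because the rows $110,101,011$ change weight, whereas your version needs only one nonzero row. Since a column permutation preserves the weight of each individual row, you could also compare row by row instead of via multisets.

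The sufficiency half follows a genuinely different route.

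\emph{The paper} works with the rows of $\HH$, each row of $\QQ$ being a disjunction of rows of $\HH$. A weight-one row, normalized to $(1,0,0)$, forces $(1,0,0)$ to be a row of $\HH$. The patterns $(1,0)$ and $(0,1)$ in columns $2,3$ force the remaining rows to be $(a,1,0)$ and $(b,0,1)$. Rows of $\QQ$ with $q_{j1}=0$ and $q_{j2}=1$ (resp.\ $q_{j3}=1$) then force $a=b=0$.

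\emph{You} work with the columns of $\HH$. The two-column condition alone makes the supports $S_1,S_2,S_3$ a three-element antichain of nonempty subsets of $[3]$. That leaves only the singletons or the three pairs, and the weight-one row excludes the pairs.

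Your route separates the roles of the two hypotheses cleanly. It also yields a dichotomy the paper does not state: under the two-column condition, every factorization $\QQ=\bQQ\odot\HH$ has $\HH$ either a permutation matrix or a row permutation of $\one\one^\top-\cI_3$. The latter is precisely your necessity counterexample, so the ``if and only if'' becomes transparent. The same antichain argument over $[2]$ also gives the sufficiency half of Lemma~\ref{lm:onetwo}\textup{(ii)}. The paper's argument avoids enumerating antichains, but it relies on a normalization and on tracking which rows of $\HH$ represent which rows of $\QQ$.

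A minor simplification: the complements $n_k,g_\ell$ are unnecessary. Since $\QQ_{\Cdot,k}=\bigvee_{\ell\in S_k}\bQQ_{\Cdot,\ell}$, the inclusion $S_k\subseteq S_{k'}$ directly gives $\QQ_{\Cdot,k}\preceq\QQ_{\Cdot,k'}$.
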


\begin{proof}
We first prove necessity. Suppose that \(\QQ\) satisfies
the algebraic condition. By
Theorem~\ref{thm:sub}\textup{(iii)} and
Lemma~\ref{lm:onetwo}\textup{(ii)}, \(\QQ\) satisfies
the two-column condition.
If \(\QQ\) contains no row with exactly one one, its
possible rows are \(000,110,101,011,111\).
The two-column condition then requires all three rows
\(110,101,011\) to occur.

Set
\[
\HH=
\begin{pmatrix}
1&1&0\\
1&0&1\\
0&1&1
\end{pmatrix},
\]
and obtain \(\bar{\QQ}\) by replacing each row of \(\QQ\)
according to
\[
110\mapsto100,\qquad
101\mapsto010,\qquad
011\mapsto001,\qquad
111\mapsto111,\qquad
000\mapsto000.
\]
Then \(\QQ=\bar{\QQ}\odot\HH\).
The replacements of \(110,101,011\) change their row sums,
so \(\bar{\QQ}\nsim\QQ\), contradicting the algebraic
condition.

We next prove sufficiency. Suppose that \(\QQ\) satisfies
the two-column condition and contains a row with exactly
one one. After permuting columns, assume that this row is
\((1,0,0)\). Consider any Boolean factorization
\[
\QQ=\bar{\QQ}\odot\HH,
\qquad
\bar{\QQ}\in\{0,1\}^{J\times3},
\quad
\HH\in\{0,1\}^{3\times3}.
\]
Each row of \(\QQ\) is a logical \(\bigvee\) combination
of rows of \(\HH\). Thus the row \((1,0,0)\) forces
\(\HH\) to contain \((1,0,0)\).

The two-column condition for columns \(2\) and \(3\)
requires \(\QQ\) to contain a row with second entry one
and third entry zero, and another with second entry zero
and third entry one. Their representations require two
distinct rows of \(\HH\), neither equal to \((1,0,0)\).
Hence, after permuting the rows of \(\HH\) and the
corresponding columns of \(\bar{\QQ}\), we have
\[
\HH=
\begin{pmatrix}
1&0&0\\
a&1&0\\
b&0&1
\end{pmatrix},
\qquad a,b\in\{0,1\}.
\]
The two-column condition also requires a row of \(\QQ\)
with \(q_{j1}=0\) and \(q_{j2}=1\).
Representing this row requires the second row of \(\HH\),
so \(a=0\).
Likewise, a row with \(q_{j1}=0\) and \(q_{j3}=1\)
requires the third row of \(\HH\), so \(b=0\).
Therefore, \(\HH\) is a permutation matrix before the
rearrangement, giving \(\bar{\QQ}\sim\QQ\).
This proves the algebraic condition.

The final assertion follows from
Theorem~\ref{thm:sub}\textup{(iii)}.
\end{proof}

\subsubsection{Proof of Proposition~\ref{prop:sat-nonequivalence}}
\begin{proof}
By Lemma~\ref{lem:boolean-factorization}, $\QQ$ fails the algebraic condition if and only if
there exist $\bQQ\not\sim\QQ$ and
$H\in\{0,1\}^{K\times K}$ such that
\[
\QQ=\bQQ\odot H.
\]
Simultaneously permuting the columns of $\bQQ$ and the
corresponding rows of $H$ preserves this factorization
and the relation $\bQQ\not\sim\QQ$.
We may therefore assume that the columns of $\bQQ$ are
arranged in weakly non-increasing lexicographic order.
Since $\bQQ\not\sim\QQ$, we also have $\bQQ\ne\QQ$.

Conversely, suppose that $\QQ=\bQQ\odot H$, where
$\bQQ\ne\QQ$ and both matrices have their columns arranged
in weakly non-increasing lexicographic order.
Two matrices in this prescribed order are equivalent under
column permutation if and only if they are equal.
Hence $\bQQ\not\sim\QQ$, and Lemma~\ref{lem:boolean-factorization} implies that
$\QQ$ fails the algebraic condition.
\end{proof}

}

{\color{black}
\section{Proofs of results in Section~\ref{sec:prob}}\label{app:prob}
\subsection{Auxiliary lemmas}
We first record a lemma showing that, when the item parameters are known, the probabilistic Boolean factor model has the same identifiability conditions
for \((p_{[\Cdot]},\QQ)\) as the corresponding Boolean factor model.

To distinguish equivalence classes induced by different
$\QQ$-matrices, we denote the equivalence class of $\aaa$
under $\QQ$ by
\[
[\aaa]_{\QQ}
=
\left\{
\aaa'\in\{0,1\}^K:
\PPhi(\QQ)_{\Cdot,\aaa'}
=
\PPhi(\QQ)_{\Cdot,\aaa}
\right\}.
\]

\begin{lemma}\label{lem:knownitem}
Consider a probabilistic Boolean factor model with known
\((\mathbf s,\mathbf g)\) and \(p_{[\aaa]}>0\) for every
\([\aaa]\in\mathcal E(\QQ)\). Then
\((\QQ,p_{[\Cdot]})\) is jointly identifiable if and only if
it is jointly identifiable under the corresponding Boolean factor
model, or equivalently, if and only if \(\QQ\) satisfies the
algebraic condition~\eqref{eq:a_condition}.
\end{lemma}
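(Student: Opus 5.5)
Our plan is to reduce everything to the Boolean case already settled in Lemma~\ref{lem:boolean-column-set-equivalence} and Theorem~\ref{thm:algebraic}. The key step is to show that, with $(\mathbf s,\mathbf g)$ known, the response distribution determines the mixture over path status vectors. Under \eqref{eq:multi-model} with local independence,
\[
\P(\RR=\rr\mid\QQ,\TTT)=\sum_{\aaa\in\mcr(\QQ)}p_{[\aaa]}\,\prod_{j=1}^J\theta_j(\PPhi(\QQ)_{j,\aaa})^{r_j}\bigl(1-\theta_j(\PPhi(\QQ)_{j,\aaa})\bigr)^{1-r_j},
\]
where $\theta_j(1)=1-s_j$ and $\theta_j(0)=g_j$. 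Thus the observed law is $\sum_{\mathbf t\in\{0,1\}^J}w_{\mathbf t}\,\mathcal B_{\mathbf t}$. Here $\mathcal B_{\mathbf t}$ is the product-Bernoulli law with means $\theta_j(t_j)$, and $w_{\mathbf t}$ equals $p_{[\aaa]}$ if $\mathbf t=\PPhi(\QQ)_{\Cdot,\aaa}$ and zero if $\mathbf t\notin\operatorname{Cols}(\PPhi(\QQ))$.

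We will show that the $2^J$ measures $\mathcal B_{\mathbf t}$ are linearly independent on $\{0,1\}^J$. Each $\mathcal B_{\mathbf t}$ is a tensor product $\bigotimes_j b_j(t_j)$ of two-point laws on $\{0,1\}$. Because $g_j<1-s_j$, the vectors $b_j(0)=(1-g_j,g_j)$ and $b_j(1)=(s_j,1-s_j)$ are linearly independent in $\mathbb R^2$, so the $2\times2$ matrix they form is invertible. The $2^J\times2^J$ matrix of all $\mathcal B_{\mathbf t}$ is the Kronecker product of these matrices and is therefore invertible. Hence the observed distribution determines the weight vector $(w_{\mathbf t})$ uniquely once $(\mathbf s,\mathbf g)$ is known.

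The equivalence then follows. If $(\QQ,p_{[\Cdot]})$ and $(\bQQ,\bar p_{[\Cdot]})$, with the same known $(\mathbf s,\mathbf g)$, give the same response law, their weight vectors coincide. This is exactly the equality of the deterministic Boolean-model laws $\sum_{\aaa}p_{[\aaa]}\mathbf 1\{\rr=\PPhi(\QQ)_{\Cdot,\aaa}\}$ in \eqref{eq:boolean-response-probability}. Conversely, equal Boolean-model laws give equal weights, and mixing the same $\mathcal B_{\mathbf t}$ gives equal probabilistic laws. So the set of alternatives $(\bQQ,\bar p_{[\Cdot]})$ producing the same distribution is identical in the two models, and joint identifiability transfers verbatim in both directions. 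Theorem~\ref{thm:algebraic} then shows this is equivalent to the algebraic condition~\eqref{eq:a_condition}.

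The main obstacle is the linear-independence step, specifically making sure the Kronecker structure is applied to the full family indexed by all $\mathbf t\in\{0,1\}^J$. This must include vectors not realized as columns, since a competing $\bQQ$ may use path status vectors outside $\operatorname{Cols}(\PPhi(\QQ))$. Once that holds, matching the equivalence-class bookkeeping (comparing grouped proportions through matched classes) is routine and mirrors the proof of Lemma~\ref{lem:boolean-column-set-equivalence}.
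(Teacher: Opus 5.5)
Your proposal is correct and follows essentially the same route as the paper. The paper writes the response law as $\bigl(\bigotimes_{j=1}^J M_j\bigr)\boldsymbol\mu$, with each $M_j$ invertible because $\det(M_j)=1-s_j-g_j>0$, concludes $\boldsymbol\mu=\bar{\boldsymbol\mu}$, and identifies $\boldsymbol\mu$ with the Boolean-model response distribution; it then derives the algebraic condition from $\operatorname{supp}(\boldsymbol\mu)=\operatorname{Cols}\{\PPhi(\QQ)\}$, which matches your appeal to Theorem~\ref{thm:algebraic}.
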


\begin{proof}
For \(\vv\in\{0,1\}^J\), let
\[
\mu_{\vv}
=\sum_{\aaa:\,\PPhi_{\Cdot,\aaa}(\QQ)=\vv}p_{\aaa},
\qquad
\bar\mu_{\vv}
=\sum_{\aaa:\,\PPhi_{\Cdot,\aaa}(\bar{\QQ})=\vv}\bar p_{\aaa},
\]
and write \(\boldsymbol\mu=(\mu_{\vv})_{\vv}\) and
\(\bar{\boldsymbol\mu}=(\bar\mu_{\vv})_{\vv}\). Let
\[
M_j=\begin{pmatrix}1-g_j&s_j\\g_j&1-s_j\end{pmatrix},
\qquad j\in[J].
\]
Since the item parameters are known, local independence gives
\[
\begin{aligned}
&\P(\RR=\rr\mid\QQ,\mathbf s,\mathbf g,p_{[\Cdot]})
=\P(\RR=\rr\mid\bar{\QQ},\mathbf s,\mathbf g,
\bar p_{[\Cdot]}),\quad \rr\in\{0,1\}^J\\
&\qquad\Longleftrightarrow\quad
\left(\bigotimes_{j=1}^J M_j\right)\boldsymbol\mu
=\left(\bigotimes_{j=1}^J M_j\right)\bar{\boldsymbol\mu}
\quad\Longleftrightarrow\quad
\boldsymbol\mu=\bar{\boldsymbol\mu},
\end{aligned}
\]
where the last equivalence follows from
\(\det(M_j)=1-s_j-g_j>0\). The vectors
\(\boldsymbol\mu\) and \(\bar{\boldsymbol\mu}\) are exactly
the response distributions under the corresponding Boolean
factor models. This proves the first equivalence.

Moreover, positivity of all grouped proportions gives
\(\operatorname{supp}(\boldsymbol\mu)
=\operatorname{Cols}\{\PPhi(\QQ)\}\). Thus an alternative
\(\bar{\QQ}\) can give the same distribution if and only if
\[
\operatorname{Cols}\{\PPhi(\QQ)\}
\subseteq\operatorname{Cols}\{\PPhi(\bar{\QQ})\}.
\]
The algebraic condition is precisely the requirement that this
inclusion imply \(\bar{\QQ}\sim\QQ\). After aligning their
columns, \(\boldsymbol\mu=\bar{\boldsymbol\mu}\) also gives
\(p_{[\aaa]}=\bar p_{[\aaa]}\) for every
\([\aaa]\in\mathcal E(\QQ)\), proving the last equivalence.
\end{proof}

\begin{lemma}\label{lem:one-item-separation}
Suppose that
\(\varnothing\subsetneq\mca_j^+(\QQ)\subsetneq\{0,1\}^K\) and
\[
\P(\RR=\rr\mid\QQ,\mathbf s,\mathbf g,p_{[\Cdot]})
=
\P(\RR=\rr\mid\bar{\QQ},\bar{\mathbf s},\bar{\mathbf g},
\bar p_{[\Cdot]})
\quad\text{for all }\rr\in\{0,1\}^J.
\]
If \(p_{[\aaa]}>0\) for every
\([\aaa]\in\mathcal E(\QQ)\), then
\[
0<\sum_{\aaa\in\mca_j^+(\QQ)}p_{\aaa}<1,
\qquad
1-s_j>\bar g_j,
\qquad
1-\bar s_j>g_j.
\]
\end{lemma}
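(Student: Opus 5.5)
The plan is to reduce everything to the one-dimensional marginal distribution of \(R_j\). Write \(\pi_j=\sum_{\aaa\in\mca_j^+(\QQ)}p_{\aaa}\) under the true parameters and \(\bar\pi_j=\sum_{\aaa\in\mca_j^+(\bQQ)}\bar p_{\aaa}\) under the alternative. Summing the equality of response distributions over all \(\rr\) with \(r_j=1\) and using local independence gives
\[
\pi_j(1-s_j)+(1-\pi_j)g_j
=\P(R_j=1)
=\bar\pi_j(1-\bar s_j)+(1-\bar\pi_j)\bar g_j .
\]

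\textbf{Step 1: \(0<\pi_j<1\).} I would first observe that \(\mca_j^+(\QQ)\) is a union of equivalence classes in \(\mathcal E(\QQ)\). Indeed, two latent vectors in the same class share the same column of \(\PPhi(\QQ)\), hence the same \(j\)-th entry \(\qq_j\ast\aaa\). The nondegeneracy assumption \(\varnothing\subsetneq\mca_j^+(\QQ)\subsetneq\{0,1\}^K\) then gives at least one class inside \(\mca_j^+\) and at least one inside \(\mca_j^-\). Each such class has positive grouped proportion, so \(\pi_j>0\) and \(1-\pi_j>0\).

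\textbf{Step 2: the cross inequalities.} By the standing assumption \(g_j<1-s_j\), Step 1 shows that the left-hand side is a strict convex combination of \(g_j\) and \(1-s_j\). Hence
\[
g_j<\P(R_j=1)<1-s_j.
\]
On the alternative side, \(\bar\pi_j\) may equal \(0\) or \(1\), since positivity is not assumed for \((\bQQ,\bar\TTT)\). Still, the standing assumption \(\bar g_j<1-\bar s_j\) for admissible parameters yields the weak bounds
\[
\bar g_j\le\P(R_j=1)\le 1-\bar s_j.
\]
Chaining the two displays gives \(\bar g_j\le\P(R_j=1)<1-s_j\) and \(g_j<\P(R_j=1)\le1-\bar s_j\), which are exactly the two claimed inequalities.

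\textbf{Main obstacle.} The only delicate point is the asymmetry between the two representations. Strictness must come entirely from the true side, through Step 1, because the competing representation may place zero mass on \(\mca_j^+(\bQQ)\) or on its complement. One must also check that the alternative parameters obey \(\bar g_j\le1-\bar s_j\), which is part of the model's parameter space. If only the unordered case were allowed, the same argument would still give \(\min(\bar g_j,1-\bar s_j)\le\P(R_j=1)\le\max(\bar g_j,1-\bar s_j)\), but the ordering assumption is what makes the conclusion clean.
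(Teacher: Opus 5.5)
Your proposal is correct and matches the paper's proof essentially step for step. The paper likewise notes that each equivalence class lies entirely in the capable or incapable class of item $j$, and uses positivity of the grouped proportions to get $0<\pi_j<1$. It then chains the strict bounds $g_j<\P(R_j=1)<1-s_j$ from the true representation with the weak bounds $\bar g_j\le\P(R_j=1)\le 1-\bar s_j$ from the alternative, relying on the same asymmetry you flag.
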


\begin{proof}
If \(\aaa'\in[\aaa]_{\QQ}\), then
\(\PPhi(\QQ)_{j,\aaa'}=\PPhi(\QQ)_{j,\aaa}\), and hence
\(\qq_j\ast\aaa'=\qq_j\ast\aaa\). Thus each equivalence class is
contained entirely in either \(\mca_j^+(\QQ)\) or
\(\mca_j^-(\QQ)\). Since both sets are nonempty, each contains at
least one equivalence class. Therefore, \(p_{[\aaa]}>0\) for every
\([\aaa]\in\mathcal E(\QQ)\) implies that both
\(\sum_{\aaa\in\mca_j^+(\QQ)}p_{\aaa}\) and
\(\sum_{\aaa\in\mca_j^-(\QQ)}p_{\aaa}\) are positive. Since the two
sets partition \(\{0,1\}^K\),
\[
0<\sum_{\aaa\in\mca_j^+(\QQ)}p_{\aaa}<1.
\]

Finally, we have
\[
\P(R_j=1\mid\QQ,\mathbf s,\mathbf g,p_{[\Cdot]})
=
g_j+(1-s_j-g_j)\sum_{\aaa\in\mca_j^+(\QQ)}p_{\aaa}.
\]
Since \(0<\sum_{\aaa\in\mca_j^+(\QQ)}p_{\aaa}<1\), this probability lies strictly between
\(g_j\) and \(1-s_j\). 
By equality of the response distributions,
\[
\bar g_j
\leq \P(R_j=1)
<1-s_j,
\qquad
g_j
<\P(R_j=1)
\leq 1-\bar s_j.
\]
Thus \(1-s_j>\bar g_j\) and \(1-\bar s_j>g_j\).
\end{proof}

\subsection{Proofs of results in Section~\ref{app:prob_12}}
\label{app:proofs_12}

\subsubsection{Proof of Theorem~\ref{thm:1P}}
\label{app:proof-1P}

\begin{lemma}\label{lem:one-param-error-identifiability}
Consider the one-parameter Boolean factor model with \(J>K\). If
\[
\P(\RR=\rr\mid\QQ,g,p_{[\Cdot]})
=
\P(\RR=\rr\mid\bar{\QQ},\bar g,\bar p_{[\Cdot]}),
\qquad \rr\in\{0,1\}^J,
\]
then \(g=\bar g\).
\end{lemma}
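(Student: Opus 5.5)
The plan is to rewrite the response distribution as a Kronecker-product transform of the ideal-response distribution, and then compare supports. For $\vv\in\{0,1\}^J$ set
\[
\mu_{\vv}=\sum_{\aaa:\,\PPhi(\QQ)_{\Cdot,\aaa}=\vv}p_{\aaa},
\qquad
\bar\mu_{\vv}=\sum_{\aaa:\,\PPhi(\bQQ)_{\Cdot,\aaa}=\vv}\bar p_{\aaa}.
\]
For $h\in\mathbb R$ let $M_h=\begin{pmatrix}1-h&h\\ h&1-h\end{pmatrix}$. As in the proof of Lemma~\ref{lem:knownitem}, local independence in \eqref{eq:one-param-model} gives that the response distribution under $(\QQ,g,p_{[\Cdot]})$ equals $M_g^{\otimes J}\boldsymbol\mu$, and under $(\bQQ,\bar g,\bar p_{[\Cdot]})$ it equals $M_{\bar g}^{\otimes J}\bar{\boldsymbol\mu}$. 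Each $\boldsymbol\mu$ is a probability vector supported on $\operatorname{Cols}\{\PPhi(\cdot)\}$. That set has at most $2^K$ elements, which is strictly fewer than $2^J$ because $J>K$.

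The standing assumption $0\le g<1-s$, with $s=g$ in the one-parameter model, gives $g<1/2$. The same holds for $\bar g$, so both $M_g$ and $M_{\bar g}$ are invertible. The matrices $M_h$ share the eigenvectors $(1,1)^\top$ and $(1,-1)^\top$, with eigenvalues $1$ and $1-2h$. Hence $M_{h_1}M_{h_2}=M_{h_3}$ with $1-2h_3=(1-2h_1)(1-2h_2)$, and $M_{h}^{-1}=M_{h'}$ with $1-2h'=(1-2h)^{-1}$.

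Suppose, for contradiction, that $g\neq\bar g$. By symmetry of the roles of the two parameterizations, assume $\bar g>g$. Equality of the two distributions gives
\[
\boldsymbol\mu=\bigl(M_g^{-1}M_{\bar g}\bigr)^{\otimes J}\bar{\boldsymbol\mu}=M_t^{\otimes J}\bar{\boldsymbol\mu},
\qquad
1-2t=\frac{1-2\bar g}{1-2g}\in(0,1).
\]
It follows that $t\in(0,1/2)$, so every entry of $M_t$, and therefore of $M_t^{\otimes J}$, is strictly positive. Since $\bar{\boldsymbol\mu}$ is nonnegative and nonzero (its entries sum to one), every one of the $2^J$ entries of $\boldsymbol\mu$ is positive. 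This contradicts $|\operatorname{supp}(\boldsymbol\mu)|\le|\operatorname{Cols}\{\PPhi(\QQ)\}|\le 2^K<2^J$.

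The case $g>\bar g$ is identical with the roles exchanged. In that case $\bar{\boldsymbol\mu}$ would have full support, contradicting $|\operatorname{Cols}\{\PPhi(\bQQ)\}|\le 2^K$. Note that this bound holds for any $\bQQ$, including one whose equivalence classes receive zero probability. Hence $g=\bar g$.

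The main point to get right is choosing the direction of the comparison so that the transfer matrix $M_t$ has $t\in(0,1/2)$, and hence strictly positive entries. Going the other way produces $t<0$ and negative entries, and then no support conclusion follows. The other ingredient to check carefully is that the competing model also satisfies $\bar g<1/2$, which is what keeps the ratio $(1-2\bar g)/(1-2g)$ in $(0,1)$.
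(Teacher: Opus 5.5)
Your proof is correct and follows essentially the same route as the paper. Both arguments write the two response distributions as Kronecker products of $2\times2$ symmetric noise matrices applied to the ideal-response distributions. Both then factor the noisier matrix through the less noisy one using a strictly positive $M_t$ with $t\in(0,1/2)$; your $t=(\bar g-g)/(1-2g)$ coincides with the paper's. The resulting full support then contradicts the bound $2^K<2^J$. The only difference is cosmetic: you compose the matrices via their common eigenvectors, whereas the paper uses the explicit identity $A(a)A(b)=A(a+b-2ab)$.
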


\begin{proof}
For \(\vv\in\{0,1\}^J\), let
\[
\mu_{\vv}
=
\sum_{\aaa:\,\PPhi_{\Cdot,\aaa}(\QQ)=\vv}p_{\aaa},
\qquad
\bar\mu_{\vv}
=
\sum_{\aaa:\,\PPhi_{\Cdot,\aaa}(\bar{\QQ})=\vv}\bar p_{\aaa},
\]
and write
\(\boldsymbol\mu=(\mu_{\vv})_{\vv}\) and
\(\bar{\boldsymbol\mu}=(\bar\mu_{\vv})_{\vv}\). Let
\[
A(u)=
\begin{pmatrix}
1-u&u\\
u&1-u
\end{pmatrix}.
\]
By local independence, equality of the response distributions gives
\[
A(g)^{\otimes J}\boldsymbol\mu
=
A(\bar g)^{\otimes J}\bar{\boldsymbol\mu}.
\]

Suppose first that \(g>\bar g\). Let
\[
t=\frac{g-\bar g}{1-2\bar g}\in(0,1/2).
\]
Since \(A(a)A(b)=A(a+b-2ab)\), we have
\(A(g)=A(\bar g)A(t)\). Therefore,
\[
\bar{\boldsymbol\mu}
=
A(t)^{\otimes J}\boldsymbol\mu.
\]
Every entry of \(A(t)^{\otimes J}\) is positive, so every entry of
\(\bar{\boldsymbol\mu}\) is positive. Hence
\[
2^J
=
\bigl|\operatorname{supp}(\bar{\boldsymbol\mu})\bigr|
\leq
\left|\operatorname{Cols}\{\PPhi(\bar{\QQ})\}\right|
\leq
2^K,
\]
contradicting \(J>K\).

If \(g<\bar g\), let
\[
t=\frac{\bar g-g}{1-2g}\in(0,1/2).
\]
Then \(A(\bar g)=A(g)A(t)\), and therefore
\[
\boldsymbol\mu
=
A(t)^{\otimes J}\bar{\boldsymbol\mu}.
\]
Thus every entry of \(\boldsymbol\mu\) is positive, which gives
\[
2^J
=
\bigl|\operatorname{supp}(\boldsymbol\mu)\bigr|
\leq
\left|\operatorname{Cols}\{\PPhi(\QQ)\}\right|
\leq
2^K,
\]
again contradicting \(J>K\). Hence \(g=\bar g\).
\end{proof}

% Revised proof of Theorem 1P, v1 (2026-09-20).
% For the theorem stated with J >= 2 and both identifiability targets.
% Uses the manuscript notation and the existing error-identifiability lemma.

\begin{proof}[Proof of Theorem~\ref{thm:1P}]
We first prove sufficiency. Suppose that \(\QQ\) satisfies the
algebraic condition and \(\lvert\mathcal E(\QQ)\rvert<2^J\).
Let \((\bar{\QQ},\bar g,\bar p_{[\Cdot]})\) give the same
response distribution. Use the notation in the proof of
Lemma~\ref{lem:one-param-error-identifiability}. Since
\(p_{[\aaa]}>0\) for every \([\aaa]\in\mathcal E(\QQ)\),
\[
\operatorname{supp}(\boldsymbol\mu)
=\operatorname{Cols}\{\PPhi(\QQ)\},
\qquad
\bigl|\operatorname{supp}(\boldsymbol\mu)\bigr|
=\lvert\mathcal E(\QQ)\rvert<2^J.
\]

If \(J>K\), Lemma~\ref{lem:one-param-error-identifiability}
gives \(g=\bar g\). For \(J\leq K\), we use the two calculations in the proof of
Lemma~\ref{lem:one-param-error-identifiability}. If \(g<\bar g\), then
\[
\boldsymbol\mu
=A\left(\frac{\bar g-g}{1-2g}\right)^{\otimes J}
\bar{\boldsymbol\mu},
\]
so \(\mu_{\vv}>0\) for every \(\vv\in\{0,1\}^J\), a
contradiction. If \(g>\bar g\), then
\[
\bar{\boldsymbol\mu}
=A\left(\frac{g-\bar g}{1-2\bar g}\right)^{\otimes J}
\boldsymbol\mu,
\]
so \(\bar\mu_{\vv}>0\) for every \(\vv\in\{0,1\}^J\).
Consequently,
\[
\operatorname{Cols}\{\PPhi(\QQ)\}
\subseteq\{0,1\}^J
=\operatorname{Cols}\{\PPhi(\bar{\QQ})\}.
\]
The algebraic condition gives \(\bar{\QQ}\sim\QQ\). By
permutation invariance,
\(\operatorname{Cols}\{\PPhi(\bar{\QQ})\}
=\operatorname{Cols}\{\PPhi(\QQ)\}\), again contradicting
\(\lvert\mathcal E(\QQ)\rvert<2^J\). Hence \(g=\bar g\).

Since \(\det A(g)=1-2g>0\),
\[
A(g)^{\otimes J}\boldsymbol\mu
=A(g)^{\otimes J}\bar{\boldsymbol\mu}
\quad\Longrightarrow\quad
\boldsymbol\mu=\bar{\boldsymbol\mu}.
\]
It follows that
\[
\operatorname{Cols}\{\PPhi(\QQ)\}
=\operatorname{supp}(\boldsymbol\mu)
=\operatorname{supp}(\bar{\boldsymbol\mu})
\subseteq\operatorname{Cols}\{\PPhi(\bar{\QQ})\}.
\]
The algebraic condition gives \(\bar{\QQ}\sim\QQ\). After
aligning their columns, \(\boldsymbol\mu=\bar{\boldsymbol\mu}\)
gives \(p_{[\aaa]}=\bar p_{[\aaa]}\) for every
\([\aaa]\in\mathcal E(\QQ)\). This proves joint
identifiability. Since this holds for every \(\TTT\) with
positive grouped proportions, \(\QQ\) is identifiable.

We next prove necessity. If the algebraic condition fails, there
exists \(\bar{\QQ}\nsim\QQ\) such that
\[
\operatorname{Cols}\{\PPhi(\QQ)\}
\subseteq\operatorname{Cols}\{\PPhi(\bar{\QQ})\}.
\]
Choose alternative grouped proportions giving
\(\bar{\boldsymbol\mu}=\boldsymbol\mu\), and set
\(\bar g=g\). Then
\[
A(\bar g)^{\otimes J}\bar{\boldsymbol\mu}
=A(g)^{\otimes J}\boldsymbol\mu,
\]
so \(\QQ\) is not identifiable and \((\QQ,\TTT)\) is not
jointly identifiable.

If \(\lvert\mathcal E(\QQ)\rvert=2^J\), then
\(\mu_{\vv}>0\) for every \(\vv\in\{0,1\}^J\). Keep
\(\bar{\QQ}=\QQ\), and choose \(g<\bar g<1/2\)
sufficiently close to \(g\). Define
\[
\bar{\boldsymbol\mu}
=\{A(\bar g)^{-1}A(g)\}^{\otimes J}\boldsymbol\mu.
\]
As \(\bar g\to g\),
\(\bar{\boldsymbol\mu}\to\boldsymbol\mu\). Thus
\(\bar\mu_{\vv}>0\) for every \(\vv\) when
\(\bar g-g\) is sufficiently small. The columns of
\(A(\bar g)^{-1}A(g)\) sum to one, so
\(\sum_{\vv}\bar\mu_{\vv}=\sum_{\vv}\mu_{\vv}=1\).
These probabilities define alternative grouped proportions under
\(\bar{\QQ}=\QQ\), and
\[
A(\bar g)^{\otimes J}\bar{\boldsymbol\mu}
=A(g)^{\otimes J}\boldsymbol\mu.
\]
Since \(\bar g\neq g\), joint identifiability fails.

To prove necessity for identifiability of \(\QQ\), suppose
again that \(\lvert\mathcal E(\QQ)\rvert=2^J\). We now
construct a true parameter specification and an alternative
with \(\bar{\QQ}\nsim\QQ\).
Let every row of \(\bar{\QQ}\) equal \(\qq_1\).
Since \(\operatorname{Cols}\{\PPhi(\QQ)\}=\{0,1\}^J\),
item \(1\) has both ideal responses, and therefore
\[
\operatorname{Cols}\{\PPhi(\bar{\QQ})\}
=\{\zero_J,\one_J\}.
\]
Moreover, no two rows of \(\QQ\) can coincide, since that
would force the corresponding coordinates of every ideal
response pattern to agree. As \(J\geq2\), we have
\(\bar{\QQ}\nsim\QQ\).

Choose alternative grouped proportions giving
\[
\bar\mu_{\zero_J}=\bar\mu_{\one_J}=\frac12,
\qquad
\bar\mu_{\vv}=0,
\quad \vv\notin\{\zero_J,\one_J\}.
\]
Choose \(0\leq g<\bar g<1/2\), and let
\[
t=\frac{\bar g-g}{1-2g}\in(0,1/2),
\qquad
\boldsymbol\mu=A(t)^{\otimes J}\bar{\boldsymbol\mu}.
\]
Every entry of \(A(t)^{\otimes J}\) is positive, and its
columns sum to one. Hence
\[
\mu_{\vv}>0\quad\text{for every }\vv\in\{0,1\}^J,
\qquad
\sum_{\vv}\mu_{\vv}=1.
\]
Since \(\operatorname{Cols}\{\PPhi(\QQ)\}=\{0,1\}^J\),
these probabilities define positive true grouped proportions by
\[
p_{[\aaa]}=\mu_{\PPhi_{\Cdot,\aaa}(\QQ)},
\qquad [\aaa]\in\mathcal E(\QQ).
\]
The identity \(A(g)A(t)=A(\bar g)\), used in the proof of
Lemma~\ref{lem:one-param-error-identifiability}, gives
\[
A(g)^{\otimes J}\boldsymbol\mu
=\{A(g)A(t)\}^{\otimes J}\bar{\boldsymbol\mu}
=A(\bar g)^{\otimes J}\bar{\boldsymbol\mu}.
\]
Thus the response distributions coincide, although
\(\bar{\QQ}\nsim\QQ\). This proves that \(\QQ\) is not
identifiable.

Finally, consider the conjunctive operator. If
\(J>K\), then
\(\lvert\mathcal E(\QQ)\rvert\leq2^K<2^J\).
If \(J\leq K\), the matrix
\(\bar{\QQ}=(\cI_J\ \mathbf0)\) satisfies
\[
\operatorname{Cols}\{\PPhi(\bar{\QQ})\}
=\{0,1\}^J.
\]
Thus the algebraic condition for \(\QQ\) implies
\(\QQ\sim\bar{\QQ}\), and hence
\(\lvert\mathcal E(\QQ)\rvert=2^J\). Therefore, under
the conjunctive operator, the algebraic condition and
\(\lvert\mathcal E(\QQ)\rvert<2^J\) hold if and only if
the algebraic condition and \(J>K\) hold.
\end{proof}

\subsubsection{Proof of Theorem~\ref{thm:2P}}
\label{app:proof-2P}

\begin{lemma}\label{lem:two-param-support}
Let \(A\) be an invertible \(2\times2\) matrix with no zero entries.
For every nonzero \(x\in\mathbb R^{2^n}\),
\[
\bigl|\operatorname{supp}(x)\bigr|
\bigl|\operatorname{supp}(A^{\otimes n}x)\bigr|
\geq2^n.
\]
\end{lemma}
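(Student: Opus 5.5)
We will prove the bound by induction on \(n\), exploiting the Kronecker structure \(A^{\otimes n}=A\otimes A^{\otimes(n-1)}\). For \(n=0\) the claim is trivial (\(x\) is a nonzero scalar). For \(n=1\), \(x\in\mathbb R^2\setminus\{0\}\): if \(|\operatorname{supp}(x)|=1\), then \(Ax\) is a nonzero multiple of a column of \(A\), which has no zero entries, so \(|\operatorname{supp}(Ax)|=2\). If \(|\operatorname{supp}(x)|=2\), invertibility gives \(Ax\neq0\), so \(|\operatorname{supp}(Ax)|\ge1\). In both cases the product is at least \(2\).

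For the inductive step, we write \(x=(x_0^\top,x_1^\top)^\top\) with \(x_0,x_1\in\mathbb R^{2^{n-1}}\), indexed by the first coordinate. Set \(y_i=A^{\otimes(n-1)}x_i\), so that \(A^{\otimes n}x\) has blocks \(z_r=a_{r0}y_0+a_{r1}y_1\) for \(r\in\{0,1\}\). Equivalently, we view \(x\) as a \(2\times 2^{n-1}\) matrix \(X\); then \(A^{\otimes n}x\) corresponds to \(A X (A^{\otimes(n-1)})^\top\). We transform columns first and then rows: let \(Y=X(A^{\otimes(n-1)})^\top\) with rows \(y_0,y_1\), and \(Z=AY\). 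The plan splits on how many rows of \(X\) are nonzero.

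\emph{Case 1: exactly one row of \(X\) is nonzero,} say \(x_0\). Then \(|\operatorname{supp}(x)|=|\operatorname{supp}(x_0)|\), and \(y_0\neq0\) with \(y_1=0\), so \(z_r=a_{r0}y_0\). Both blocks therefore have support equal to \(\operatorname{supp}(y_0)\), giving \(|\operatorname{supp}(A^{\otimes n}x)|=2|\operatorname{supp}(y_0)|\). Induction yields \(|\operatorname{supp}(x_0)|\,|\operatorname{supp}(y_0)|\ge2^{n-1}\), and multiplying by \(2\) gives the claim.

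\emph{Case 2: both rows of \(X\) are nonzero.} Here we instead use a column-wise decomposition, and this is the main step. Let \(c\) be the number of nonzero columns of \(X\), i.e., the coordinates \(m\) of \(\{0,1\}^{n-1}\) with \((x_{0m},x_{1m})\neq0\). Put \(W=AX\); its columns are \(A\) applied to the columns of \(X\), so each nonzero column of \(X\) maps to a nonzero column of \(W\), by invertibility of \(A\). Then \(Z=W(A^{\otimes(n-1)})^\top\), so row \(r\) of \(Z\) equals \(A^{\otimes(n-1)}w_r\), where \(w_r\) is row \(r\) of \(W\). Count, for each \(r\) with \(w_r\neq0\), \(s_r=|\operatorname{supp}(w_r)|\). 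Induction gives \(|\operatorname{supp}(z_r)|\ge2^{n-1}/s_r\). We also have \(|\operatorname{supp}(x)|\le 2c\) and \(s_r\le c\).

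If both \(w_0,w_1\) are nonzero, then
\[
|\operatorname{supp}(x)|\,|\operatorname{supp}(A^{\otimes n}x)|\ \ge\ |\operatorname{supp}(x)|\,2^{n-1}\Bigl(\tfrac1{s_0}+\tfrac1{s_1}\Bigr),
\]
which is at least \(2^n\) provided \(|\operatorname{supp}(x)|\ge c\) (true, since each nonzero column contributes at least one entry) and \(s_r\le c\). If only one row of \(W\) is nonzero, say \(w_0\), then \(W\) has rank one, so \(X=A^{-1}W\) has rows that are multiples \(\beta_0w_0,\beta_1w_0\) with \(\beta_0,\beta_1\neq0\), because \(A^{-1}\) also has no zero entries. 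Hence \(|\operatorname{supp}(x)|=2s_0\), and the bound \(2s_0\cdot2^{n-1}/s_0=2^n\) follows.

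The main obstacle is this bookkeeping in Case 2: we must ensure that the case split on whether a row of the intermediate matrix vanishes is exhaustive. For that we use the fact that an invertible \(2\times2\) matrix with no zero entries also has an inverse with no zero entries. This holds because \(A^{-1}\) is \(\det(A)^{-1}\) times the adjugate, whose entries are the entries of \(A\) up to sign.
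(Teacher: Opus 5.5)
Your proof is correct. Case~1 matches the paper's treatment of a vanishing slice; the main case is argued differently.

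The paper stays with the factorization $Z=AY$. Your $y_0,y_1$ are the paper's $u,v$, and invertibility of $A$ on each output fiber gives $|\operatorname{supp}(A^{\otimes n}x)|\geq|\operatorname{supp}(y_0)\cup\operatorname{supp}(y_1)|$. The paper then applies the induction hypothesis once, to whichever of $x_0,x_1$ has the smaller support, using $|\operatorname{supp}(x)|\geq 2\min_i|\operatorname{supp}(x_i)|$.

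You instead apply $A$ first. You compare supports column by column to get $s_r\leq c\leq|\operatorname{supp}(x)|$, invoke the induction hypothesis on both rows of $W$, and finish with $1/s_0+1/s_1\geq 2/c$. The subcase $w_1=0$ costs you the (easy) extra fact that $A^{-1}$ has no zero entries, which the paper never uses.

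In return, your argument is uniform. The $W$-analysis never uses that both rows of $X$ are nonzero: if $x_1=0$, then $w_r=a_{r0}x_0$ and $s_0=s_1=c=|\operatorname{supp}(x)|$. So your Case~1 is actually redundant.

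The paper's chain, for its part, is reused verbatim in Lemma~\ref{lem:two-param-support-equality}. When the bound is tight, it forces $|\operatorname{supp}(x_0)|=|\operatorname{supp}(x_1)|$ and shows that $\operatorname{supp}(A^{\otimes n}x)$ contains no two vectors differing only in the first coordinate. That is exactly what the proof of Proposition~\ref{prop:two-param-conjunctive} needs. Tightness in your main subcase instead forces each nonzero column of $X$ to have a single nonzero entry. The structural information therefore lands on the input side, so your chain would not feed that lemma as directly.
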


\begin{proof}
We use induction on \(n\). If \(n=1\) and \(x\) has one
nonzero entry, then \(Ax\) has two nonzero entries. If \(x\)
has two nonzero entries, then \(Ax\neq0\) because \(A\) is
invertible. Thus the result holds for \(n=1\).

Suppose the result holds for \(n-1\). Write
\[
x=\begin{pmatrix}x_0\\x_1\end{pmatrix},
\qquad
u=A^{\otimes(n-1)}x_0,
\qquad
v=A^{\otimes(n-1)}x_1,
\]
so that
\[
y:=A^{\otimes n}x
=\begin{pmatrix}a_{00}u+a_{01}v\\a_{10}u+a_{11}v\end{pmatrix}.
\]
If \(x_1=0\), then \(v=0\) and
\[
\bigl|\operatorname{supp}(x)\bigr|
\bigl|\operatorname{supp}(y)\bigr|
=2\bigl|\operatorname{supp}(x_0)\bigr|
\bigl|\operatorname{supp}(u)\bigr|
\geq2^n.
\]
If \(x_0=0\), then \(u=0\) and
\[
\bigl|\operatorname{supp}(x)\bigr|
\bigl|\operatorname{supp}(y)\bigr|
=2\bigl|\operatorname{supp}(x_1)\bigr|
\bigl|\operatorname{supp}(v)\bigr|
\geq2^n.
\]

Now suppose \(x_0\neq0\) and \(x_1\neq0\). Since \(A\)
is invertible, for each index \(z\),
\[
\begin{pmatrix}a_{00}u_z+a_{01}v_z\\a_{10}u_z+a_{11}v_z\end{pmatrix}
=0
\quad\Longleftrightarrow\quad
u_z=v_z=0.
\]
Consequently,
\[
\bigl|\operatorname{supp}(y)\bigr|
\geq
\bigl|\operatorname{supp}(u)\cup\operatorname{supp}(v)\bigr|.
\]
If \(\lvert\operatorname{supp}(x_0)\rvert
\leq\lvert\operatorname{supp}(x_1)\rvert\), then
\[
\begin{aligned}
\bigl|\operatorname{supp}(x)\bigr|
\bigl|\operatorname{supp}(y)\bigr|
&\geq
\bigl\{\lvert\operatorname{supp}(x_0)\rvert
+\lvert\operatorname{supp}(x_1)\rvert\bigr\}
\bigl|\operatorname{supp}(u)\bigr|\\
&\geq2\bigl|\operatorname{supp}(x_0)\bigr|
\bigl|\operatorname{supp}(u)\bigr|
\geq2^n.
\end{aligned}
\]
If \(\lvert\operatorname{supp}(x_1)\rvert
<\lvert\operatorname{supp}(x_0)\rvert\), then
\[
\begin{aligned}
\bigl|\operatorname{supp}(x)\bigr|
\bigl|\operatorname{supp}(y)\bigr|
&\geq
\bigl\{\lvert\operatorname{supp}(x_0)\rvert
+\lvert\operatorname{supp}(x_1)\rvert\bigr\}
\bigl|\operatorname{supp}(v)\bigr|\\
&\geq2\bigl|\operatorname{supp}(x_1)\bigr|
\bigl|\operatorname{supp}(v)\bigr|
\geq2^n.
\end{aligned}
\]
This completes the induction.
\end{proof}

\begin{lemma}\label{lem:two-param-error-identifiability}
Consider the two-parameter Boolean factor model with
\(2^J>2^K\lvert\mathcal E(\QQ)\rvert\).
Suppose that
\[
\bigcap_{j=1}^J\mca_j^+(\QQ)\neq\varnothing,
\qquad
\bigcap_{j=1}^J\mca_j^-(\QQ)\neq\varnothing,
\]
and \(p_{[\aaa]}>0\) for every \([\aaa]\in\mathcal E(\QQ)\).
If
\[
\P(\RR=\rr\mid\QQ,s,g,p_{[\Cdot]})
=
\P(\RR=\rr\mid\bar{\QQ},\bar s,\bar g,\bar p_{[\Cdot]}),
\qquad \rr\in\{0,1\}^J,
\]
then \(s=\bar s\) and \(g=\bar g\).
\end{lemma}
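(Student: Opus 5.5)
Throughout, reuse the notation of the proof of Lemma~\ref{lem:one-param-error-identifiability}: let \(\boldsymbol\mu=(\mu_{\vv})_{\vv\in\{0,1\}^J}\) and \(\bar{\boldsymbol\mu}\) be the ideal-response distributions under \(\QQ\) and \(\bar\QQ\), and let
\[
M(s,g)=\begin{pmatrix}1-g&s\\ g&1-s\end{pmatrix},
\]
with columns indexed by the ideal response and rows by the observed one. Local independence turns the equality of response distributions into
\[
M(s,g)^{\otimes J}\boldsymbol\mu=M(\bar s,\bar g)^{\otimes J}\bar{\boldsymbol\mu}.
\]
Lemma~\ref{lem:one-item-separation} gives \(1-s>\bar g\) and \(1-\bar s>g\). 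Together with \(g<1-s\) and \(\bar g<1-\bar s\), all four determinants relevant below are positive. Writing \(d=1-\bar s-\bar g>0\), I therefore plan to set \(B=M(\bar s,\bar g)^{-1}M(s,g)\), so that \(\bar{\boldsymbol\mu}=B^{\otimes J}\boldsymbol\mu\). A direct computation gives
\[
B=\frac1d\begin{pmatrix}1-\bar s-g& s-\bar s\\ g-\bar g& 1-s-\bar g\end{pmatrix}.
\]
The diagonal entries of \(B\) are positive, \(B\) is invertible, and its columns sum to one. The off-diagonal entries vanish exactly when \(s=\bar s\), respectively \(g=\bar g\).

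\emph{Case 1: \(s\neq\bar s\) and \(g\neq\bar g\).} Then \(B\) is invertible with no zero entries, and Lemma~\ref{lem:two-param-support} with \(x=\boldsymbol\mu\) gives \(|\operatorname{supp}\boldsymbol\mu|\,|\operatorname{supp}\bar{\boldsymbol\mu}|\ge 2^J\). Positivity of the grouped proportions gives \(|\operatorname{supp}\boldsymbol\mu|=|\mathcal E(\QQ)|\). Moreover \(\operatorname{supp}\bar{\boldsymbol\mu}\subseteq\operatorname{Cols}\{\PPhi(\bar\QQ)\}\), which has at most \(2^K\) elements. Hence \(2^J\le 2^K|\mathcal E(\QQ)|\), contradicting the hypothesis.

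\emph{Case 2: exactly one parameter differs.} Take \(g=\bar g\) and \(s\neq\bar s\); the case \(s=\bar s\), \(g\neq\bar g\) is symmetric, with the roles of \(\zero_J\) and \(\one_J\) interchanged. Now \(B\) is triangular. The two hypotheses \(\bigcap_j\mca_j^\pm\neq\varnothing\) say that \(\mu_{\one_J}>0\) and \(\mu_{\zero_J}>0\), and this is where they enter.
\begin{itemize}
\item If \(s>\bar s\), then \(B\) is entrywise nonnegative and its column indexed by \(1\) is strictly positive. Hence \(\bar{\boldsymbol\mu}\ge\mu_{\one_J}(B_{\Cdot,1})^{\otimes J}>0\) entrywise. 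This gives \(2^J=|\operatorname{supp}\bar{\boldsymbol\mu}|\le 2^K\), which contradicts \(2^J>2^K|\mathcal E(\QQ)|\ge 2^K\).
\item If \(s<\bar s\), I would instead write \(\boldsymbol\mu=C^{\otimes J}\bar{\boldsymbol\mu}\) with \(C=B^{-1}=M(s,g)^{-1}M(\bar s,g)\). This matrix is again nonnegative, triangular, with positive column \(1\). The same argument would force \(\boldsymbol\mu>0\) everywhere, i.e.\ \(|\mathcal E(\QQ)|=2^J\), contradicting \(2^J>2^K|\mathcal E(\QQ)|\). However, this needs \(\bar\mu_{\one_J}>0\), which is not given directly.
\end{itemize}

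This subcase is the main obstacle. My first attempt is to use the up-closure structure: for a nonnegative upper-triangular \(C\) with positive entries, \(\operatorname{supp}(C^{\otimes J}x)\) for \(x\ge0\) is the coordinatewise up-closure of \(\operatorname{supp}x\). Since \(\mu_{\zero_J}>0\), some point of \(\operatorname{supp}\bar{\boldsymbol\mu}\) must lie below \(\zero_J\), so \(\zero_J\in\operatorname{supp}\bar{\boldsymbol\mu}\), and then \(\operatorname{supp}\boldsymbol\mu\) is the whole cube. This again gives \(|\mathcal E(\QQ)|=2^J\) and the same contradiction. If this orientation check fails, the fallback is to compare the one-dimensional marginals \(\P(R_j=1)\), or the probability \(\P(\RR=\zero_J)\), under the two parametrizations to force the sign of \(s-\bar s\).

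With all cases excluded, we conclude \(s=\bar s\) and \(g=\bar g\).
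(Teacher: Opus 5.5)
Your overall route is the paper's own: the same $B=\bar M^{-1}M$, positive diagonal entries from Lemma~\ref{lem:one-item-separation}, Lemma~\ref{lem:two-param-support} when $s\neq\bar s$ and $g\neq\bar g$, and triangular positivity arguments otherwise. Case~1 and the subcase $g=\bar g$, $s>\bar s$ are correct as written.

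The step that fails is the one you flagged, $g=\bar g$ with $s<\bar s$, and it fails because the orientation is reversed. With rows indexed by observed responses and columns by ideal responses, $C=B^{-1}$ has $C_{1,0}=0$. So for $x\geq0$ one has $(C^{\otimes J}x)_{\vv}>0$ if and only if $\vv\preceq\mathbf u$ for some $\mathbf u\in\operatorname{supp}x$. The support is therefore the \emph{down}-closure of $\operatorname{supp}x$, not the up-closure. Consequently $\mu_{\zero_J}>0$ only says that some point of $\operatorname{supp}\bar{\boldsymbol\mu}$ lies above $\zero_J$, which is vacuous, and nothing follows. Your fallback via marginals is not developed, and it is not needed.

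The repair uses the other hypothesis, $\mu_{\one_J}>0$. Because $C_{1,0}=0$, only $\bar\mu_{\one_J}$ contributes to the $\one_J$-coordinate of $\boldsymbol\mu=C^{\otimes J}\bar{\boldsymbol\mu}$:
\[
\mu_{\one_J}=\Bigl(\frac{1-\bar s-g}{1-s-g}\Bigr)^{J}\bar\mu_{\one_J}.
\]
Hence $\bar\mu_{\one_J}>0$. Then $\boldsymbol\mu\geq\bar\mu_{\one_J}(C_{\Cdot,1})^{\otimes J}>0$ entrywise, since the down-closure of $\{\one_J\}$ is the whole cube. This gives $\lvert\mathcal E(\QQ)\rvert=2^J$ and the contradiction, which is exactly how the paper argues.

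Your up-closure argument from $\mu_{\zero_J}>0$ is the correct one for the mirrored subcase $s=\bar s$, $g<\bar g$, where $B^{-1}$ is lower triangular. So in your ``symmetric'' reduction, $\one_J$ is the anchor when $s<\bar s$ and $\zero_J$ is the anchor when $g<\bar g$, not the other way round.
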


\begin{proof}
For \(\vv\in\{0,1\}^J\), let
\[
\mu_{\vv}
=\sum_{\aaa:\,\PPhi_{\Cdot,\aaa}(\QQ)=\vv}p_{\aaa},
\qquad
\bar\mu_{\vv}
=\sum_{\aaa:\,\PPhi_{\Cdot,\aaa}(\bar{\QQ})=\vv}\bar p_{\aaa},
\]
and write
\(\boldsymbol\mu=(\mu_{\vv})_{\vv}\) and
\(\bar{\boldsymbol\mu}=(\bar\mu_{\vv})_{\vv}\).
The assumed bound implies \(J>K\). Full grouped positivity and
the two nonempty intersections give
\[
\mu_{\zero_J}>0,
\qquad
\mu_{\one_J}>0,
\qquad
\bigl|\operatorname{supp}(\boldsymbol\mu)\bigr|=\lvert\mathcal E(\QQ)\rvert,
\qquad
\bigl|\operatorname{supp}(\bar{\boldsymbol\mu})\bigr|\leq2^K.
\]
Let
\[
M=\begin{pmatrix}1-g&s\\g&1-s\end{pmatrix},
\qquad
\bar M=\begin{pmatrix}1-\bar g&\bar s\\\bar g&1-\bar s\end{pmatrix}.
\]
By local independence and equality of the response distributions,
\[
M^{\otimes J}\boldsymbol\mu
=\bar M^{\otimes J}\bar{\boldsymbol\mu}.
\]
Since \(\det(M)=1-s-g>0\) and
\(\det(\bar M)=1-\bar s-\bar g>0\),
\[
\bar{\boldsymbol\mu}=B^{\otimes J}\boldsymbol\mu,
\qquad
B=\bar M^{-1}M
=\frac1{1-\bar s-\bar g}
\begin{pmatrix}
1-\bar s-g&s-\bar s\\
g-\bar g&1-s-\bar g
\end{pmatrix}.
\]
By Lemma~\ref{lem:one-item-separation},
\(1-\bar s>g\) and \(1-s>\bar g\), so both diagonal
entries of \(B\) are positive. If \(s\neq\bar s\) and
\(g\neq\bar g\), then \(B\) has no zero entries.
Lemma~\ref{lem:two-param-support} gives
\[
2^J
\leq
\bigl|\operatorname{supp}(\boldsymbol\mu)\bigr|
\bigl|\operatorname{supp}(\bar{\boldsymbol\mu})\bigr|
\leq2^K\lvert\mathcal E(\QQ)\rvert<2^J,
\]
a contradiction. Therefore,
\(s=\bar s\) or \(g=\bar g\).

Suppose first that \(g=\bar g\). If \(s>\bar s\), then
\[
B=
\begin{pmatrix}
1&\dfrac{s-\bar s}{1-\bar s-g}\\[1ex]
0&\dfrac{1-s-g}{1-\bar s-g}
\end{pmatrix}.
\]
All entries of \(B\) are nonnegative, and both entries of
its second column are positive. Thus, for every
\(\vv\in\{0,1\}^J\),
\[
\bar\mu_{\vv}
\geq\mu_{\one_J}\prod_{j=1}^J B_{v_j,1}>0.
\]
Hence
\(\lvert\operatorname{supp}(\bar{\boldsymbol\mu})\rvert
=2^J>2^K\), a contradiction.

If \(s<\bar s\), then
\[
B^{-1}=
\begin{pmatrix}
1&\dfrac{\bar s-s}{1-s-g}\\[1ex]
0&\dfrac{1-\bar s-g}{1-s-g}
\end{pmatrix},
\qquad
\boldsymbol\mu=(B^{-1})^{\otimes J}\bar{\boldsymbol\mu}.
\]
The row indexed by \(\one_J\) gives
\[
0<\mu_{\one_J}
=\left(\frac{1-\bar s-g}{1-s-g}\right)^J
\bar\mu_{\one_J},
\]
so \(\bar\mu_{\one_J}>0\). For every
\(\vv\in\{0,1\}^J\),
\[
\mu_{\vv}
\geq\bar\mu_{\one_J}\prod_{j=1}^J(B^{-1})_{v_j,1}>0.
\]
Hence \(\lvert\operatorname{supp}(\boldsymbol\mu)\rvert
=2^J>2^K\), again a contradiction. Therefore,
\(g=\bar g\) implies \(s=\bar s\).

Now suppose that \(s=\bar s\). If \(g>\bar g\), then
\[
B=
\begin{pmatrix}
\dfrac{1-s-g}{1-s-\bar g}&0\\[1ex]
\dfrac{g-\bar g}{1-s-\bar g}&1
\end{pmatrix}.
\]
All entries of \(B\) are nonnegative, and both entries of
its first column are positive. Thus, for every
\(\vv\in\{0,1\}^J\),
\[
\bar\mu_{\vv}
\geq\mu_{\zero_J}\prod_{j=1}^J B_{v_j,0}>0.
\]
Hence
\(\lvert\operatorname{supp}(\bar{\boldsymbol\mu})\rvert
=2^J>2^K\), a contradiction.

If \(g<\bar g\), then
\[
B^{-1}=
\begin{pmatrix}
\dfrac{1-s-\bar g}{1-s-g}&0\\[1ex]
\dfrac{\bar g-g}{1-s-g}&1
\end{pmatrix},
\qquad
\boldsymbol\mu=(B^{-1})^{\otimes J}\bar{\boldsymbol\mu}.
\]
The row indexed by \(\zero_J\) gives
\[
0<\mu_{\zero_J}
=\left(\frac{1-s-\bar g}{1-s-g}\right)^J
\bar\mu_{\zero_J},
\]
so \(\bar\mu_{\zero_J}>0\). For every
\(\vv\in\{0,1\}^J\),
\[
\mu_{\vv}
\geq\bar\mu_{\zero_J}\prod_{j=1}^J(B^{-1})_{v_j,0}>0.
\]
Hence \(\lvert\operatorname{supp}(\boldsymbol\mu)\rvert
=2^J>2^K\), again a contradiction. Therefore,
\(s=\bar s\) implies \(g=\bar g\).
We conclude that \(s=\bar s\) and \(g=\bar g\).
\end{proof}

\begin{proof}[Proof of Theorem~\ref{thm:2P}]
Fix \(\TTT=(s,g,p_{[\Cdot]})\) with \(p_{[\aaa]}>0\) for
every \([\aaa]\in\mathcal E(\QQ)\). Suppose that
\[
\P(\RR=\rr\mid\QQ,s,g,p_{[\Cdot]})
=
\P(\RR=\rr\mid\bar{\QQ},\bar s,\bar g,\bar p_{[\Cdot]}),
\qquad \rr\in\{0,1\}^J.
\]
By Lemma~\ref{lem:two-param-error-identifiability},
\(s=\bar s\) and \(g=\bar g\). Use
\(M\), \(\boldsymbol\mu\), and \(\bar{\boldsymbol\mu}\)
as defined in the proof of
Lemma~\ref{lem:two-param-error-identifiability}. Then
\[
M^{\otimes J}\boldsymbol\mu
=M^{\otimes J}\bar{\boldsymbol\mu},
\qquad
\det(M)=1-s-g>0,
\]
so \(\boldsymbol\mu=\bar{\boldsymbol\mu}\). Therefore,
\[
\operatorname{Cols}\{\PPhi(\QQ)\}
=\operatorname{supp}(\boldsymbol\mu)
=\operatorname{supp}(\bar{\boldsymbol\mu})
\subseteq\operatorname{Cols}\{\PPhi(\bar{\QQ})\}.
\]
The algebraic condition~\eqref{eq:a_condition} gives
\(\bar{\QQ}\sim\QQ\). After a common relabeling of the
latent variables, assume that \(\bar{\QQ}=\QQ\). For every
\([\aaa]\in\mathcal E(\QQ)\),
\[
p_{[\aaa]}
=\mu_{\PPhi_{\Cdot,\aaa}(\QQ)}
=\bar\mu_{\PPhi_{\Cdot,\aaa}(\QQ)}
=\bar p_{[\aaa]}.
\]
Thus \((\QQ,\TTT)\) is jointly identifiable. Since \(\TTT\)
was arbitrary subject to positive grouped proportions, \(\QQ\)
is identifiable.
\end{proof}

\subsubsection{Proof of Proposition~\ref{prop:two-param-conjunctive}}
\label{app:proof-two-param-conjunctive}

\begin{lemma}\label{lem:two-param-support-equality}
Let \(A\) be an invertible \(2\times2\) matrix with no zero entries,
and let \(y=A^{\otimes n}x\), where \(x\neq0\). Suppose that
\[
\bigl|\operatorname{supp}(x)\bigr|
\bigl|\operatorname{supp}(y)\bigr|=2^n.
\]
Fix \(j\in[n]\). If \(\operatorname{supp}(x)\) contains vectors
whose \(j\)th coordinates are zero and one, then
\[
\bigl|\{\vv\in\operatorname{supp}(x):v_j=0\}\bigr|
=\bigl|\{\vv\in\operatorname{supp}(x):v_j=1\}\bigr|.
\]
Moreover, \(\operatorname{supp}(y)\) contains no two vectors
that differ only in coordinate \(j\).
\end{lemma}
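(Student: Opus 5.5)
The plan is to re-run the induction step from the proof of Lemma~\ref{lem:two-param-support}, but split along coordinate \(j\) rather than the first coordinate, and track where each inequality must be tight.

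First I reduce to the case where \(j\) is the leading coordinate. Permuting the tensor factors of \(\mathbb R^{2^n}=(\mathbb R^2)^{\otimes n}\) is a coordinate permutation \(P\). It commutes with \(A^{\otimes n}\), since all factors are the same matrix \(A\). It also preserves support sizes and the property of two indices differing only in one specified coordinate, after relabeling that coordinate. So I may assume \(j=1\) and write
\[
x=\begin{pmatrix}x_0\\x_1\end{pmatrix},\qquad
u=A^{\otimes(n-1)}x_0,\qquad
v=A^{\otimes(n-1)}x_1,
\]
so that the two blocks of \(y\) are \(a_{00}u+a_{01}v\) and \(a_{10}u+a_{11}v\). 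The hypothesis that \(\operatorname{supp}(x)\) has vectors with \(v_j=0\) and with \(v_j=1\) means exactly that \(x_0\neq0\) and \(x_1\neq0\). By invertibility of \(A^{\otimes(n-1)}\), this gives \(u\neq0\) and \(v\neq0\). Write \(s_b=\lvert\operatorname{supp}(x_b)\rvert\).

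Next comes the chain of inequalities. As in the earlier proof, invertibility of \(A\) shows that for each index \(z\) the pair \(\bigl(y_{(0,z)},y_{(1,z)}\bigr)\) vanishes iff \(u_z=v_z=0\). Hence
\[
\lvert\operatorname{supp}(y)\rvert\;\geq\;\lvert\operatorname{supp}(u)\cup\operatorname{supp}(v)\rvert.
\]
Suppose \(s_0\le s_1\); the other case is symmetric with \(v\) in place of \(u\). Then
\[
2^n=(s_0+s_1)\lvert\operatorname{supp}(y)\rvert
\geq(s_0+s_1)\lvert\operatorname{supp}(u)\cup\operatorname{supp}(v)\rvert
\geq(s_0+s_1)\lvert\operatorname{supp}(u)\rvert
\geq 2s_0\lvert\operatorname{supp}(u)\rvert
\geq 2^n.
\]
The last step uses Lemma~\ref{lem:two-param-support} in dimension \(n-1\) applied to \(x_0\neq0\).

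Every inequality in this chain is therefore an equality, and this is where the conclusions come from.
\begin{enumerate}
\item Since \(\lvert\operatorname{supp}(u)\rvert>0\), the equality \((s_0+s_1)\lvert\operatorname{supp}(u)\rvert=2s_0\lvert\operatorname{supp}(u)\rvert\) forces \(s_0=s_1\). This is the first claim.
\item The first equality gives \(\lvert\operatorname{supp}(y)\rvert=\lvert\operatorname{supp}(u)\cup\operatorname{supp}(v)\rvert\). Each \(z\) in the union contributes at least one nonzero entry among \(y_{(0,z)},y_{(1,z)}\), and \(z\) outside it contributes none. So equality means each such \(z\) contributes exactly one nonzero entry. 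Hence no two elements of \(\operatorname{supp}(y)\) differ only in coordinate \(j\), which is the second claim.
\end{enumerate}

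The only delicate point is the bookkeeping. The identification of the index of \(y\) as a pair \((b,z)\) must match the coordinate \(j\) after the permutation \(P\). The lower bound from the inductive lemma must be applied to the block whose support size is the smaller of \(s_0,s_1\). Neither step should present a real obstacle.
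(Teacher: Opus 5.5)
Your proposal is correct and follows the paper's proof step for step. You reduce to \(j=1\), split \(x\) into \(x_0,x_1\), run the chain \(2^n=(s_0+s_1)\lvert\operatorname{supp}(y)\rvert\geq\cdots\geq 2s_0\lvert\operatorname{supp}(u)\rvert\geq 2^n\), and read off \(s_0=s_1\) and the exactly-one-nonzero-entry-per-pair conclusion from tightness. The one point the paper makes explicit that you gloss over is \(n=1\): there the last inequality holds because \(u=x_0\) is a nonzero scalar, not by appeal to the inductive lemma, which is stated only for \(n\geq 1\).
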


\begin{proof}
It suffices to take \(j=1\). Write
\[
x=\begin{pmatrix}x_0\\x_1\end{pmatrix},
\qquad
u=A^{\otimes(n-1)}x_0,
\qquad
v=A^{\otimes(n-1)}x_1.
\]
Both \(x_0\) and \(x_1\) are nonzero. Suppose first that
\(\lvert\operatorname{supp}(x_0)\rvert
\leq\lvert\operatorname{supp}(x_1)\rvert\). As in the proof of
Lemma~\ref{lem:two-param-support},
\[
\begin{aligned}
2^n
&=\bigl\{\lvert\operatorname{supp}(x_0)\rvert
       +\lvert\operatorname{supp}(x_1)\rvert\bigr\}
       \lvert\operatorname{supp}(y)\rvert\\
&\geq\bigl\{\lvert\operatorname{supp}(x_0)\rvert
       +\lvert\operatorname{supp}(x_1)\rvert\bigr\}
       \bigl|\operatorname{supp}(u)\cup\operatorname{supp}(v)\bigr|\\
&\geq2\lvert\operatorname{supp}(x_0)\rvert
       \lvert\operatorname{supp}(u)\rvert
\geq2^n.
\end{aligned}
\]
For \(n=1\), the last inequality holds because \(x_0\) and
\(u=x_0\) are nonzero scalars; for \(n>1\), it follows from
Lemma~\ref{lem:two-param-support}. Thus every inequality is an
equality, and
\[
\lvert\operatorname{supp}(x_0)\rvert
=\lvert\operatorname{supp}(x_1)\rvert,
\qquad
\lvert\operatorname{supp}(y)\rvert
=\bigl|\operatorname{supp}(u)\cup\operatorname{supp}(v)\bigr|.
\]
For each index \(z\) in this union, invertibility of \(A\)
implies that at least one of
\[
y_{0,z}=a_{00}u_z+a_{01}v_z,
\qquad
y_{1,z}=a_{10}u_z+a_{11}v_z
\]
is nonzero. The last equality therefore implies that exactly one
is nonzero. This proves the second conclusion. If
\(\lvert\operatorname{supp}(x_1)\rvert
<\lvert\operatorname{supp}(x_0)\rvert\), interchanging the two
slices in the inequalities gives the same conclusions.
\end{proof}

\begin{proof}[Proof of Proposition~\ref{prop:two-param-conjunctive}]
We first prove sufficiency. Suppose that \(\QQ\) satisfies the
algebraic condition and that
\[
\P(\RR=\rr\mid\QQ,s,g,p_{[\Cdot]})
=\P(\RR=\rr\mid\bar{\QQ},\bar s,\bar g,\bar p_{[\Cdot]}),
\qquad \rr\in\{0,1\}^J.
\]
Use the notation \(\boldsymbol\mu\),
\(\bar{\boldsymbol\mu}\), and \(B=\bar M^{-1}M\) from the
proof of Lemma~\ref{lem:two-param-error-identifiability}.
Under the conjunctive operator, the standing nondegeneracy
convention and positive grouped proportions give
\(\mu_{\zero_J}>0\) and \(\mu_{\one_J}>0\).
Since \(J=2K>K\), the triangular-matrix arguments in the proof of
Lemma~\ref{lem:two-param-error-identifiability} show that
\(s=\bar s\) if and only if \(g=\bar g\); those arguments
require only \(J>K\).

If \(s=\bar s\) and \(g=\bar g\), then
\(\boldsymbol\mu=\bar{\boldsymbol\mu}\). Consequently,
\[
\operatorname{Cols}\{\PPhi(\QQ)\}
=\operatorname{supp}(\boldsymbol\mu)
=\operatorname{supp}(\bar{\boldsymbol\mu})
\subseteq\operatorname{Cols}\{\PPhi(\bar{\QQ})\}.
\]
The algebraic condition gives \(\bar{\QQ}\sim\QQ\), and,
after aligning their columns,
\(p_{[\aaa]}=\bar p_{[\aaa]}\) for every
\([\aaa]\in\mathcal E(\QQ)\).

It remains to consider \(s\neq\bar s\) and
\(g\neq\bar g\). In this case, \(B\) and \(B^{-1}\)
have no zero entries. By Lemma~\ref{lem:two-param-support},
\[
2^{2K}=2^J
\leq\bigl|\operatorname{supp}(\boldsymbol\mu)\bigr|
     \bigl|\operatorname{supp}(\bar{\boldsymbol\mu})\bigr|
\leq2^{2K}.
\]
Therefore,
\[
\bigl|\operatorname{supp}(\boldsymbol\mu)\bigr|
=\bigl|\operatorname{supp}(\bar{\boldsymbol\mu})\bigr|
=2^K.
\]
Both \(\PPhi(\QQ)\) and \(\PPhi(\bar{\QQ})\) thus have
\(2^K\) distinct columns, and \(p_{\aaa}>0\) and
\(\bar p_{\aaa}>0\) for every \(\aaa\in\{0,1\}^K\).

Since \(\mu_{\zero_J}>0\) and \(\mu_{\one_J}>0\),
Lemma~\ref{lem:two-param-support-equality}, applied to
\(\bar{\boldsymbol\mu}=B^{\otimes J}\boldsymbol\mu\),
shows that \(\operatorname{supp}(\bar{\boldsymbol\mu})\)
contains no two vectors that differ in exactly one coordinate.
For any nonzero row \(\bar{\qq}_j\), both values of its ideal
response occur in this support. Applying the same lemma to
\(\boldsymbol\mu=(B^{-1})^{\otimes J}\bar{\boldsymbol\mu}\)
gives
\[
\begin{aligned}
2^{K-1}
&=\bigl|\{\vv\in\operatorname{supp}(\bar{\boldsymbol\mu}):v_j=1\}\bigr|\\
&=\bigl|\{\aaa\in\{0,1\}^K:\aaa\succeq\bar{\qq}_j\}\bigr|
=2^{K-\sum_{k=1}^K\bar q_{jk}}.
\end{aligned}
\]
Thus every nonzero row of \(\bar{\QQ}\) is a pure row.
Every attribute occurs in at least one row, since otherwise
\(\PPhi(\bar{\QQ})\) would have fewer than \(2^K\)
distinct columns. If an attribute occurred in exactly one row,
the ideal-response vectors at \(\one\) and
\(\one-\ee_k\) would differ only in that row, a contradiction.
Hence every attribute occurs in at least two rows. Since
\(J=2K\), every row is pure and every attribute occurs exactly
twice.

After rearranging the rows and relabeling the alternative
attributes, we may therefore take
\(\bar{\QQ}=(\cI_K^\top\ \cI_K^\top)^\top\).
For every \(\vv\in\{0,1\}^K\), the equation
\(\boldsymbol\mu=(B^{-1})^{\otimes 2K}\bar{\boldsymbol\mu}\)
gives
\[
\mu_{(\vv,\vv)}
=\sum_{\aaa\in\{0,1\}^K}
  \bar p_{\aaa}\prod_{k=1}^K
  \bigl\{(B^{-1})_{v_k,\alpha_k}\bigr\}^{2}>0.
\]
There are \(2^K\) such vectors \((\vv,\vv)\). Consequently,
\[
\operatorname{Cols}\{\PPhi(\QQ)\}
=\operatorname{supp}(\boldsymbol\mu)
=\{(\vv,\vv):\vv\in\{0,1\}^K\}
=\operatorname{Cols}\{\PPhi(\bar{\QQ})\}.
\]
The algebraic condition again gives \(\bar{\QQ}\sim\QQ\).
In particular, \(\QQ\) also consists of two copies of every
pure row, after rearranging its rows and columns.

We have proved \(\bar{\QQ}\sim\QQ\) in every case, so
\(\QQ\) is identifiable. If \(\QQ\) cannot be rearranged
into two copies of \(\cI_K\), the case
\(s\neq\bar s\), \(g\neq\bar g\) is impossible. The first
case then gives \(s=\bar s\), \(g=\bar g\), and equality
of the grouped proportions after aligning the columns. This
proves joint identifiability for every \(\TTT\) with positive
grouped proportions.

We next prove necessity. If the algebraic condition fails,
there exists \(\bar{\QQ}\nsim\QQ\) such that
\[
\operatorname{Cols}\{\PPhi(\QQ)\}
\subseteq\operatorname{Cols}\{\PPhi(\bar{\QQ})\}.
\]
Keep \(\bar s=s\) and \(\bar g=g\), and choose
\[
\bar p_{[\bbb]}
=\mu_{\PPhi_{\Cdot,\bbb}(\bar{\QQ})},
\qquad [\bbb]\in\mathcal E(\bar{\QQ}).
\]
These grouped proportions are nonnegative and sum to one, and
\(\bar{\boldsymbol\mu}=\boldsymbol\mu\). Hence
\[
\bar M^{\otimes J}\bar{\boldsymbol\mu}
=M^{\otimes J}\boldsymbol\mu.
\]
Thus \(\QQ\) is not identifiable, and joint identifiability
fails at every true parameter specification with positive
grouped proportions.

Finally, suppose that \(\QQ\) can be transformed into two
copies of \(\cI_K\) by permuting its rows and columns.
We construct a parameter specification at which joint
identifiability fails. After these permutations, write
\[
\QQ=\begin{pmatrix}\cI_K\\\cI_K\end{pmatrix}.
\]
Under the conjunctive operator,
\(\PPhi_{\Cdot,\aaa}(\QQ)=(\aaa^\top,\aaa^\top)^\top\),
so every equivalence class contains a single latent vector.
Let \(\P\) and \(\bar{\P}\) denote the probabilities under two
parameter specifications with this same \(\QQ\). Under each
specification, let the attributes be mutually independent, and set
\[
\begin{aligned}
(s,g)&=\left(\frac{3}{10},\frac{1}{10}\right),
&\P(\alpha_k=1)&=\frac{7}{12},\\
(\bar s,\bar g)&=\left(\frac{1}{5},\frac{1}{5}\right),
&\bar{\P}(\alpha_k=1)&=\frac{5}{12},
\qquad k\in[K].
\end{aligned}
\]
Both specifications satisfy \(0<g<1-s<1\) and
\(0<\bar g<1-\bar s<1\), and
\(p_{\aaa}>0\), \(\bar p_{\aaa}>0\) for every
\(\aaa\in\{0,1\}^K\).

For every \(k\in[K]\), items \(k\) and \(K+k\) have the
same ideal response \(\alpha_k\). By local independence,
\[
\begin{aligned}
\P(R_k=0,R_{K+k}=0)
&=\frac{5}{12}\left(\frac{9}{10}\right)^2
 +\frac{7}{12}\left(\frac{3}{10}\right)^2
 =\frac{39}{100}\\
&=\frac{7}{12}\left(\frac{4}{5}\right)^2
 +\frac{5}{12}\left(\frac{1}{5}\right)^2
 =\bar{\P}(R_k=0,R_{K+k}=0),
\end{aligned}
\]
\[
\begin{aligned}
\P(R_k=0,R_{K+k}=1)
&=\P(R_k=1,R_{K+k}=0)\\
&=\frac{5}{12}\frac{9}{10}\frac{1}{10}
 +\frac{7}{12}\frac{3}{10}\frac{7}{10}
 =\frac{4}{25}\\
&=\frac{7}{12}\frac{4}{5}\frac{1}{5}
 +\frac{5}{12}\frac{1}{5}\frac{4}{5}\\
&=\bar{\P}(R_k=0,R_{K+k}=1)
 =\bar{\P}(R_k=1,R_{K+k}=0),
\end{aligned}
\]
and
\[
\begin{aligned}
\P(R_k=1,R_{K+k}=1)
&=\frac{5}{12}\left(\frac{1}{10}\right)^2
 +\frac{7}{12}\left(\frac{7}{10}\right)^2
 =\frac{29}{100}\\
&=\frac{7}{12}\left(\frac{1}{5}\right)^2
 +\frac{5}{12}\left(\frac{4}{5}\right)^2
 =\bar{\P}(R_k=1,R_{K+k}=1).
\end{aligned}
\]
The independence of the attributes and local independence imply
that the \(K\) response pairs are mutually independent under
each specification. Hence, for every \(\rr\in\{0,1\}^{2K}\),
\[
\begin{aligned}
\P(\RR=\rr)
&=\prod_{k=1}^K\P(R_k=r_k,R_{K+k}=r_{K+k})\\
&=\prod_{k=1}^K\bar{\P}(R_k=r_k,R_{K+k}=r_{K+k})
 =\bar{\P}(\RR=\rr).
\end{aligned}
\]
Since \((s,g)\neq(\bar s,\bar g)\), no relabeling of the
latent variables makes these two parameter specifications
equivalent. Thus \((\QQ,\TTT)\) is not jointly identifiable
at the specified parameters. In particular, joint identifiability
does not hold for every \(\TTT\) with positive grouped
proportions. This completes the proof.
\end{proof}

\subsection{Proofs of results in Section~\ref{app:prob_multi}}
\label{app:proofs_multi}

\subsubsection{Proof of Theorem~\ref{thm:multi-general}}
\label{app:proof-multi-general}

\begin{lemma}\label{lem:multi-block-identifiability}
Consider the multi-parameter Boolean factor model, and let
\(I\subseteq[J]\). Suppose that every class in
\(\mathcal E(\QQ_I)\) contains a class in
\(\mathcal E(\QQ_{[J]\setminus I})\), and that
\(p_{[\aaa]}>0\) for every \([\aaa]\in\mathcal E(\QQ)\).
If \eqref{eq:equal-response-distributions} holds, then
\[
\operatorname{Cols}\{\PPhi(\QQ_I)\}
\subseteq\operatorname{Cols}\{\PPhi(\bar{\QQ}_I)\}.
\]
In particular, if \(\QQ_I\) satisfies the algebraic condition,
then \(\bar{\QQ}_I\sim\QQ_I\).
\end{lemma}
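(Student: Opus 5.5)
The plan is to recover the set of ideal response patterns on block \(I\) from the column space of a two-way contingency table, exploiting conditional independence between \(\RR_I\) and \(\RR_{[J]\setminus I}\) given \(\aaa\). Write \(I^c=[J]\setminus I\). Set up the following objects:
\begin{itemize}
\item For \(\vv\in\{0,1\}^{|I|}\) and \(\ww\in\{0,1\}^{|I^c|}\), let \(T_{\vv,\ww}=\sum_{\aaa}p_{\aaa}\,\mathbf 1\{\PPhi(\QQ_I)_{\Cdot,\aaa}=\vv,\ \PPhi(\QQ_{I^c})_{\Cdot,\aaa}=\ww\}\). Define \(\bar T\) analogously from \((\bar{\QQ},\bar{\pp})\).
\item With \(M_j\) as in the proof of Lemma~\ref{lem:knownitem}, put \(M_I=\bigotimes_{j\in I}M_j\) and \(M_{I^c}=\bigotimes_{j\in I^c}M_j\). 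Define \(\bar M_I,\bar M_{I^c}\) in the same way from \((\bar{\mathbf s},\bar{\mathbf g})\).
\end{itemize}
By local independence, the \(2^{|I|}\times2^{|I^c|}\) table \(P\) of \(\P(\RR_I=\rr_I,\RR_{I^c}=\rr_{I^c})\) satisfies \(P=M_I\,T\,M_{I^c}^\top=\bar M_I\,\bar T\,\bar M_{I^c}^\top\). All these Kronecker factors are invertible, because \(\det M_j=1-s_j-g_j>0\).

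The key step is to identify the column space of \(T\). Let \(V=\operatorname{Cols}\{\PPhi(\QQ_I)\}\). Every column of \(T\) is supported in \(V\). Conversely, fix \(\vv\in V\) with class \(\mcc\in\mathcal E(\QQ_I)\). By hypothesis \(\mcc\) contains a whole class \(\mathcal D\in\mathcal E(\QQ_{I^c})\) with \(I^c\)-pattern \(\ww\). Then every latent vector with \(I^c\)-pattern \(\ww\) has \(I\)-pattern \(\vv\), so column \(\ww\) of \(T\) equals \(\bigl(\sum_{\aaa\in\mathcal D}p_{\aaa}\bigr)\ee_{\vv}\). This coefficient is positive, since \(\mathcal D\) is a union of classes of \(\mathcal E(\QQ)\), each of which has positive grouped proportion. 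Hence \(\operatorname{col}(T)=\operatorname{span}\{\ee_{\vv}:\vv\in V\}\). For \(\bar T\) we only need the inclusion \(\operatorname{col}(\bar T)\subseteq\operatorname{span}\{\ee_{\uu}:\uu\in\bar U\}\), where \(\bar U=\operatorname{Cols}\{\PPhi(\bar{\QQ}_I)\}\). Since \(M_{I^c}\) and \(\bar M_{I^c}\) are invertible, \(\operatorname{col}(P)=M_I\operatorname{col}(T)=\bar M_I\operatorname{col}(\bar T)\). Therefore \(B_I\ee_{\vv}\in\operatorname{span}\{\ee_{\uu}:\uu\in\bar U\}\) for every \(\vv\in V\), where \(B_I=\bar M_I^{-1}M_I=\bigotimes_{j\in I}\bar M_j^{-1}M_j\).

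It remains to show \(\vv\in\bar U\), which reduces to showing \((B_I)_{\vv,\vv}\neq0\). This entry is a product of diagonal entries of \(\bar M_j^{-1}M_j\), which equal \((1-\bar s_j-g_j)/(1-\bar s_j-\bar g_j)\) and \((1-s_j-\bar g_j)/(1-\bar s_j-\bar g_j)\). Lemma~\ref{lem:one-item-separation} shows these are positive. That lemma applies because of the standing nondegeneracy convention \(\varnothing\subsetneq\mca_j^+\subsetneq\{0,1\}^K\) together with the positivity of grouped proportions. Hence \(V\subseteq\bar U\), which is the claimed inclusion.

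The final claim is then immediate from the definition of the algebraic condition~\eqref{eq:a_condition} applied to \(\QQ_I\): the inclusion \(\operatorname{Cols}\{\PPhi(\QQ_I)\}\subseteq\operatorname{Cols}\{\PPhi(\bar{\QQ}_I)\}\) forces \(\bar{\QQ}_I\sim\QQ_I\).

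The main obstacle I expect is the column-space identification. It needs care to verify that the hypothesis gives a column of \(T\) that is exactly a positive multiple of \(\ee_{\vv}\), with no contamination from other \(I\)-patterns. This is precisely where "contains a whole class of \(\mathcal E(\QQ_{I^c})\)" is used, rather than merely "intersects" one.
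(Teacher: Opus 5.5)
Your proposal is correct and follows essentially the same route as the paper's proof. Both factor the $I\times I^c$ contingency table as $M_I\boldsymbol\mu M_{I^c}^\top=\bar M_I\bar{\boldsymbol\mu}\bar M_{I^c}^\top$ and use class containment to make column $\ww$ of $\boldsymbol\mu$ a positive multiple of $\ee_{\vv}$. They then pull $B_I\ee_{\vv}$ into $\operatorname{span}\{\ee_{\uu}:\uu\in\operatorname{Cols}\{\PPhi(\bar\QQ_I)\}\}$ and conclude from the positive diagonal entries of each $B_j$ given by Lemma~\ref{lem:one-item-separation}.
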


\begin{proof}
Write \(I^c=[J]\setminus I\). For \(j\in[J]\), let
\[
M_j=\begin{pmatrix}1-g_j&s_j\\g_j&1-s_j\end{pmatrix},
\qquad
\bar M_j=\begin{pmatrix}
1-\bar g_j&\bar s_j\\\bar g_j&1-\bar s_j
\end{pmatrix}.
\]
For \(H\subseteq[J]\), write
\(M_H=\bigotimes_{j\in H}M_j\) and
\(\bar M_H=\bigotimes_{j\in H}\bar M_j\).
These matrices are invertible because
\[
\det(M_j)=1-s_j-g_j>0,
\qquad
\det(\bar M_j)=1-\bar s_j-\bar g_j>0.
\]

For \(\vv\in\{0,1\}^{|I|}\) and
\(\mathbf w\in\{0,1\}^{|I^c|}\), let
\[
\mu_{\vv,\mathbf w}
=\sum_{\substack{\aaa:\,
\PPhi_{\Cdot,\aaa}(\QQ_I)=\vv,\\
\PPhi_{\Cdot,\aaa}(\QQ_{I^c})=\mathbf w}}p_{\aaa},
\qquad
\boldsymbol\mu=(\mu_{\vv,\mathbf w})_{\vv,\mathbf w},
\]
and let
\[
\bar\mu_{\vv,\mathbf w}
=\sum_{\substack{\aaa:\,
\PPhi_{\Cdot,\aaa}(\bar{\QQ}_I)=\vv,\\
\PPhi_{\Cdot,\aaa}(\bar{\QQ}_{I^c})=\mathbf w}}\bar p_{\aaa},
\qquad
\bar{\boldsymbol\mu}=(\bar\mu_{\vv,\mathbf w})_{\vv,\mathbf w}.
\]
Coordinates of block vectors retain their original
item labels. Let
\[
P_{I,I^c}
=\bigl[\P(\RR_I=\rr,\RR_{I^c}=\rr')\bigr]_{\rr,\rr'}.
\]
By local independence and equality of the response distributions,
\begin{equation}\label{eq:multi-block-factorization}
P_{I,I^c}
=M_I\boldsymbol\mu M_{I^c}^{\top}
=\bar M_I\bar{\boldsymbol\mu}\bar M_{I^c}^{\top}.
\end{equation}

Fix \(\vv\in\operatorname{Cols}\{\PPhi(\QQ_I)\}\).
By the class-containment assumption, there exists \(\bbb\) such that
\[
\PPhi_{\Cdot,\bbb}(\QQ_I)=\vv,
\qquad
[\bbb]_{\QQ_{I^c}}\subseteq[\bbb]_{\QQ_I}.
\]
Put \(\mathbf w=\PPhi_{\Cdot,\bbb}(\QQ_{I^c})\). Then
\[
[\bbb]_{\QQ_{I^c}}
=[\bbb]_{\QQ_I}\cap[\bbb]_{\QQ_{I^c}}
=[\bbb]_{\QQ},
\qquad
\boldsymbol\mu_{\Cdot,\mathbf w}
=p_{[\bbb]}\ee_{\vv},
\]
where \(\ee_{\vv}\) is the coordinate vector indexed by \(\vv\).
Since \(p_{[\bbb]}>0\), \eqref{eq:multi-block-factorization} gives
\[
M_I\ee_{\vv}
=\frac1{p_{[\bbb]}}
P_{I,I^c}M_{I^c}^{-\top}\ee_{\mathbf w}
\in\operatorname{col}(P_{I,I^c}).
\]
Here \(\operatorname{col}\) denotes the linear column space.
Every nonzero row of \(\bar{\boldsymbol\mu}\) is indexed by
a column of \(\PPhi(\bar{\QQ}_I)\). Therefore,
\[
M_I\ee_{\vv}
\in\operatorname{col}(P_{I,I^c})
\subseteq\operatorname{span}
\bigl\{\bar M_I\ee_{\mathbf u}:
\mathbf u\in\operatorname{Cols}\{\PPhi(\bar{\QQ}_I)\}\bigr\}.
\]

For \(j\in I\), let
\[
B_j=\bar M_j^{-1}M_j
=\frac1{1-\bar s_j-\bar g_j}
\begin{pmatrix}
1-\bar s_j-g_j&s_j-\bar s_j\\
g_j-\bar g_j&1-s_j-\bar g_j
\end{pmatrix}.
\]
Multiplying the preceding inclusion by \(\bar M_I^{-1}\) gives
\[
\left(\bigotimes_{j\in I}B_j\right)\ee_{\vv}
\in\operatorname{span}
\bigl\{\ee_{\mathbf u}:
\mathbf u\in\operatorname{Cols}\{\PPhi(\bar{\QQ}_I)\}\bigr\}.
\]
Thus, for every
\(\mathbf u\notin\operatorname{Cols}\{\PPhi(\bar{\QQ}_I)\}\),
\begin{equation}\label{eq:multi-missing-pattern}
0
=\left[\left(\bigotimes_{j\in I}B_j\right)
\ee_{\vv}\right]_{\mathbf u}
=\prod_{j\in I}(B_j)_{u_j,v_j}.
\end{equation}

By the nondegeneracy assumption and
Lemma~\ref{lem:one-item-separation},
\[
1-\bar s_j>g_j,
\qquad
1-s_j>\bar g_j,
\qquad j\in I.
\]
Both diagonal entries of every \(B_j\) are positive, so
\[
\left[\left(\bigotimes_{j\in I}B_j\right)
\ee_{\vv}\right]_{\vv}
=\prod_{j\in I}(B_j)_{v_j,v_j}>0.
\]
Equation~\eqref{eq:multi-missing-pattern} implies
\(\vv\in\operatorname{Cols}\{\PPhi(\bar{\QQ}_I)\}\).
Since \(\vv\) was arbitrary, the desired inclusion follows.
The last assertion follows from the algebraic condition.
\end{proof}

\begin{lemma}\label{lem:multi-Q-only}
Under conditions \textup{(i)}--\textup{(ii)} of
Theorem~\ref{thm:multi-general}, \(\QQ\) is identifiable
in the multi-parameter Boolean factor model~\eqref{eq:multi-model}.
\end{lemma}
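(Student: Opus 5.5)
The plan is to fix an arbitrary true parameter \(\TTT=(\mathbf s,\mathbf g,p_{[\Cdot]})\) with \(p_{[\aaa]}>0\) for every \([\aaa]\in\mathcal E(\QQ)\), together with an alternative \((\bar{\QQ},\bar{\TTT})\) for which \eqref{eq:equal-response-distributions} holds, and to show that \(\bar{\QQ}\sim\QQ\). The argument has a local step on each block and a gluing step across blocks.

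\emph{Local step.} For each block \(b\in[B]\), condition \textup{(i)} of Theorem~\ref{thm:multi-general} gives exactly the hypotheses of Lemma~\ref{lem:multi-block-identifiability} with \(I=I_b\). That lemma rests on Lemma~\ref{lem:one-item-separation}, which in turn uses the standing nondegeneracy convention of Section~\ref{sec:prob}. It yields \(\bar{\QQ}_{I_b}\sim\QQ_{I_b}\). Concretely, there is a permutation \(\pi_b\) of \([K]\) such that \(\bar q_{j,\pi_b(k)}=q_{jk}\) for all \(j\in I_b\) and \(k\in[K]\).

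\emph{Gluing step.} Order the blocks as in condition \textup{(ii)} and write \(U_b=I_1\cup\cdots\cup I_b\). I will prove by induction on \(b\) that a single permutation \(\pi:=\pi_1\) satisfies \(\bar q_{j,\pi(k)}=q_{jk}\) for all \(j\in U_b\) and \(k\in[K]\). The case \(b=1\) is the local step. For \(b\ge2\), set \(S=I_b\cap U_{b-1}\). On the rows in \(S\), the induction hypothesis gives \(\bar q_{j,\pi(k)}=q_{jk}\), and the local step gives \(\bar q_{j,\pi_b(k)}=q_{jk}\). Hence, for every \(k\), column \(\pi(k)\) of \(\bar{\QQ}_S\) equals \(\QQ_{S,k}\), and so does column \(\pi_b(k)\).

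Now suppose \(\pi^{-1}\pi_b\) is not the identity. Then there are \(k\neq k'\) with \(\pi_b(k)=\pi(k')\). Comparing the two descriptions of that single column of \(\bar{\QQ}_S\) gives \(\QQ_{S,k}=\QQ_{S,k'}\). This contradicts the requirement in condition \textup{(ii)} that \(\QQ_S\) have \(K\) pairwise distinct columns. Therefore \(\pi_b=\pi\), and \(\pi\) works on all of \(U_b=U_{b-1}\cup I_b\). Since the blocks cover \([J]\), the induction gives \(\bar{\QQ}=\QQ\) after the column permutation \(\pi\), that is, \(\bar{\QQ}\sim\QQ\). Because \(\TTT\) was arbitrary subject to positive grouped proportions, \(\QQ\) is identifiable.

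The main obstacle is the gluing step. Lemma~\ref{lem:multi-block-identifiability} only supplies block-wise equivalences, each with a possibly different relabeling, so the work is in showing these relabelings agree. The distinct-columns requirement on the overlaps \(I_b\cap(I_1\cup\cdots\cup I_{b-1})\) is exactly what forces the permutations to be unique. I would be careful that the overlap is nonempty, which is implied since \(K\) pairwise distinct columns need at least one row when \(K\ge2\). I would also check that the case \(K=1\) is trivial, since then every permutation of \([K]\) is the identity and the gluing is automatic.
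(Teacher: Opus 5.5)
Your proposal is correct and is essentially the paper's proof. Condition \textup{(i)} together with Lemma~\ref{lem:multi-block-identifiability} gives a column permutation for each block, and induction along the ordering in \textup{(ii)}, using the pairwise distinct columns of each overlap, forces every block's permutation to coincide with the first, which yields \(\bar{\QQ}=\QQ\) up to that single permutation; your explicit contradiction argument for \(\pi_b\neq\pi\) and your remarks on nonempty overlaps and \(K=1\) simply spell out what the paper states in one line.
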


\begin{proof}
Fix \(\TTT\) with \(p_{[\aaa]}>0\) for every
\([\aaa]\in\mathcal E(\QQ)\), and suppose that
\eqref{eq:equal-response-distributions} holds.
By condition \textup{(i)} and
Lemma~\ref{lem:multi-block-identifiability}, for every \(b\in[B]\)
there is a permutation matrix \(\Pi_b\) such that
\[
\bar{\QQ}_{I_b}=\QQ_{I_b}\Pi_b.
\]
Order the blocks as in \textup{(ii)}. Suppose inductively that
\[
\bar{\QQ}_{I_1\cup\cdots\cup I_{b-1}}
=\QQ_{I_1\cup\cdots\cup I_{b-1}}\Pi_1.
\]
On the overlap,
\[
\QQ_{I_b\cap(I_1\cup\cdots\cup I_{b-1})}\Pi_b
=\QQ_{I_b\cap(I_1\cup\cdots\cup I_{b-1})}\Pi_1.
\]
The overlap matrix has pairwise distinct columns, so
\(\Pi_b=\Pi_1\). Induction and \(\bigcup_{b=1}^B I_b=[J]\)
give \(\bar{\QQ}=\QQ\Pi_1\). Since \(\TTT\) was arbitrary,
\(\QQ\) is identifiable.
\end{proof}

\begin{proof}[Proof of Theorem~\ref{thm:multi-general}]
The first assertion follows from Lemma~\ref{lem:multi-Q-only}.
Suppose now that \textup{(iii)} also holds. Fix \(\TTT\) with
positive grouped proportions, and suppose that
\eqref{eq:equal-response-distributions} holds. By Lemma~\ref{lem:multi-Q-only},
\(\bar{\QQ}\sim\QQ\). After a common relabeling of the latent
variables, assume that \(\bar{\QQ}=\QQ\).

Fix \(j\in[J]\). By \textup{(iii)}, there exist \(I_b\ni j\)
and \(\aaa\) such that
\[
[\aaa]_{\QQ_{I_b\setminus\{j\}}}\subseteq\mca_j^+(\QQ).
\]
Let \(\vv=\PPhi_{\Cdot,\aaa}(\QQ_{I_b})\), and let
\(\mathbf u\) agree with \(\vv\) except that \(u_j=0\).
Then
\[
v_j=1,
\qquad
\vv\in\operatorname{Cols}\{\PPhi(\QQ_{I_b})\},
\qquad
\mathbf u\notin\operatorname{Cols}\{\PPhi(\QQ_{I_b})\}.
\]
Use \(M_h\) and \(B_h\) as defined in the proof of
Lemma~\ref{lem:multi-block-identifiability}.
Since \(\bar{\QQ}=\QQ\), \eqref{eq:multi-missing-pattern} gives
\[
0
=\prod_{h\in I_b}(B_h)_{u_h,v_h}
=\frac{s_j-\bar s_j}{1-\bar s_j-\bar g_j}
\prod_{h\in I_b\setminus\{j\}}(B_h)_{v_h,v_h}.
\]
The diagonal factors are positive, so \(s_j=\bar s_j\).

For the incapable class, \textup{(iii)} gives \(I_{b'}\ni j\)
and \(\aaa\) such that
\[
[\aaa]_{\QQ_{I_{b'}\setminus\{j\}}}\subseteq\mca_j^-(\QQ).
\]
Let \(\vv=\PPhi_{\Cdot,\aaa}(\QQ_{I_{b'}})\), and let
\(\mathbf u\) agree with \(\vv\) except that \(u_j=1\).
Now
\[
v_j=0,
\qquad
\vv\in\operatorname{Cols}\{\PPhi(\QQ_{I_{b'}})\},
\qquad
\mathbf u\notin\operatorname{Cols}\{\PPhi(\QQ_{I_{b'}})\}.
\]
Equation~\eqref{eq:multi-missing-pattern} gives
\[
0
=\prod_{h\in I_{b'}}(B_h)_{u_h,v_h}
=\frac{g_j-\bar g_j}{1-\bar s_j-\bar g_j}
\prod_{h\in I_{b'}\setminus\{j\}}(B_h)_{v_h,v_h}.
\]
Hence \(g_j=\bar g_j\). Since \(j\) was arbitrary,
\(\mathbf s=\bar{\mathbf s}\) and
\(\mathbf g=\bar{\mathbf g}\).

For \(\vv\in\{0,1\}^J\), let
\[
\mu_{\vv}
=\sum_{\aaa:\,\PPhi_{\Cdot,\aaa}(\QQ)=\vv}p_{\aaa},
\qquad
\bar\mu_{\vv}
=\sum_{\aaa:\,\PPhi_{\Cdot,\aaa}(\QQ)=\vv}\bar p_{\aaa}.
\]
Equality of the response distributions now gives
\[
\left(\bigotimes_{j=1}^J M_j\right)
(\mu_{\vv})_{\vv}
=\left(\bigotimes_{j=1}^J M_j\right)
(\bar\mu_{\vv})_{\vv}.
\]
Each \(M_j\) is invertible, so \(\mu_{\vv}=\bar\mu_{\vv}\)
for every \(\vv\). Therefore,
\[
p_{[\aaa]}
=\mu_{\PPhi_{\Cdot,\aaa}(\QQ)}
=\bar\mu_{\PPhi_{\Cdot,\aaa}(\QQ)}
=\bar p_{[\aaa]},
\qquad [\aaa]\in\mathcal E(\QQ).
\]
This proves joint identifiability.
\end{proof}

\subsubsection{Proof of Theorem~\ref{thm:multi-conj}}
\label{app:proof-multi-conj}

\begin{lemma}\label{lem:multi-anchor-extraction}
Suppose that \eqref{eq:equal-response-distributions} holds and that
\(p_{[\aaa]}>0\) for every \([\aaa]\in\mathcal E(\QQ)\).
Let \(I\subseteq[J]\) and suppose that, for some \(\aaa\),
\[
[\aaa]_{\QQ_{[J]\setminus I}}\subseteq[\aaa]_{\QQ_I}.
\]
With \(B_j\) defined in the proof of
Lemma~\ref{lem:multi-block-identifiability}, put
\(\vv=\PPhi_{\Cdot,\aaa}(\QQ_I)\). Then, for every
\(\mathbf u\notin\operatorname{Cols}\{\PPhi(\bar{\QQ}_I)\}\),
\begin{equation}\label{eq:multi-anchor-missing-pattern}
\prod_{j\in I}(B_j)_{u_j,v_j}=0.
\end{equation}
In particular, \(\vv\in\operatorname{Cols}\{\PPhi(\bar{\QQ}_I)\}\).
\end{lemma}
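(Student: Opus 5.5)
The plan is to rerun the argument from the proof of Lemma~\ref{lem:multi-block-identifiability}, but only at the single anchor pattern $\vv=\PPhi_{\Cdot,\aaa}(\QQ_I)$. The class-containment hypothesis was used there only through one latent vector $\bbb$ with $[\bbb]_{\QQ_{I^c}}\subseteq[\bbb]_{\QQ_I}$, where $I^c=[J]\setminus I$. Here that role is played by $\aaa$.

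First, I reuse the notation $M_j,\bar M_j$, $M_H,\bar M_H$, $\boldsymbol\mu$, $\bar{\boldsymbol\mu}$ indexed by pairs $(\vv,\mathbf w)$, together with the factorization \eqref{eq:multi-block-factorization}:
\[
P_{I,I^c}=M_I\boldsymbol\mu M_{I^c}^{\top}=\bar M_I\bar{\boldsymbol\mu}\bar M_{I^c}^{\top}.
\]
Next, I set $\mathbf w=\PPhi_{\Cdot,\aaa}(\QQ_{I^c})$. The hypothesis gives
\[
[\aaa]_{\QQ_{I^c}}=[\aaa]_{\QQ_I}\cap[\aaa]_{\QQ_{I^c}}=[\aaa]_{\QQ}.
\]
So every latent vector with $I^c$-pattern $\mathbf w$ also has $I$-pattern $\vv$. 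Hence the column $\boldsymbol\mu_{\Cdot,\mathbf w}$ equals $p_{[\aaa]}\ee_{\vv}$, and $p_{[\aaa]}>0$ by the positivity assumption.

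Multiplying \eqref{eq:multi-block-factorization} on the right by $M_{I^c}^{-\top}\ee_{\mathbf w}$, which is legitimate because $\det M_j=1-s_j-g_j>0$, gives $M_I\ee_{\vv}\in\operatorname{col}(P_{I,I^c})$. The other factorization shows that $\operatorname{col}(P_{I,I^c})$ lies in the span of $\{\bar M_I\ee_{\mathbf u}:\mathbf u\in\operatorname{Cols}\{\PPhi(\bar{\QQ}_I)\}\}$, since nonzero rows of $\bar{\boldsymbol\mu}$ are indexed only by realized patterns of $\bar\QQ_I$. Applying $\bar M_I^{-1}$ then puts $(\bigotimes_{j\in I}B_j)\ee_{\vv}$ in the span of those $\ee_{\mathbf u}$. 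Reading off the coordinates $\mathbf u\notin\operatorname{Cols}\{\PPhi(\bar\QQ_I)\}$ gives exactly \eqref{eq:multi-anchor-missing-pattern}, because the $\mathbf u$-entry of that Kronecker product applied to $\ee_{\vv}$ is $\prod_{j\in I}(B_j)_{u_j,v_j}$.

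For the final assertion, I take $\mathbf u=\vv$. Each diagonal entry $(B_j)_{v_j,v_j}$ is positive, by Lemma~\ref{lem:one-item-separation}, which gives $1-\bar s_j>g_j$ and $1-s_j>\bar g_j$. So the product at $\mathbf u=\vv$ is nonzero, which forces $\vv\in\operatorname{Cols}\{\PPhi(\bar\QQ_I)\}$.

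The main obstacle is checking that Lemma~\ref{lem:one-item-separation} actually applies: it needs the standing nondegeneracy convention $\varnothing\subsetneq\mca_j^+\subsetneq\{0,1\}^K$ for each $j\in I$, which I assume from Section~\ref{sec:prob}. The only other point needing care is the column identity $\boldsymbol\mu_{\Cdot,\mathbf w}=p_{[\aaa]}\ee_{\vv}$, which I justify directly from the class-containment hypothesis as above.
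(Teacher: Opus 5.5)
Your proposal is correct and follows the paper's proof essentially step for step. It uses the same column identity $\boldsymbol\mu_{\Cdot,\mathbf w}=p_{[\aaa]}\ee_{\vv}$ from the class containment, the same factorization \eqref{eq:multi-block-factorization} with invertible channel matrices to obtain the zero pattern, and the same positivity of the diagonal entries of $B_j$ via Lemma~\ref{lem:one-item-separation}. The only difference is cosmetic: you phrase the middle step as a span inclusion, as in the proof of Lemma~\ref{lem:multi-block-identifiability}. The paper instead writes the explicit identity $(\bigotimes_{j\in I}B_j)\ee_{\vv}=p_{[\aaa]}^{-1}\bar{\boldsymbol\mu}\bar M_{I^c}^{\top}M_{I^c}^{-\top}\ee_{\mathbf w}$ and reads off the zero rows of $\bar{\boldsymbol\mu}$.
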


\begin{proof}
Write \(I^c=[J]\setminus I\), and use the matrices
\(M_I,M_{I^c},\bar M_I,\bar M_{I^c},\boldsymbol\mu,
\bar{\boldsymbol\mu}\) from the proof of
Lemma~\ref{lem:multi-block-identifiability}.
Let \(\mathbf w=\PPhi_{\Cdot,\aaa}(\QQ_{I^c})\).
The assumed containment gives
\([\aaa]_{\QQ_{I^c}}=[\aaa]_{\QQ}\), so
\(\boldsymbol\mu_{\Cdot,\mathbf w}=p_{[\aaa]}\ee_{\vv}\).
By \eqref{eq:multi-block-factorization},
\[
\left(\bigotimes_{j\in I}B_j\right)\ee_{\vv}
=\frac{1}{p_{[\aaa]}}
\bar M_I^{-1}P_{I,I^c}M_{I^c}^{-\top}\ee_{\mathbf w}
=\frac{1}{p_{[\aaa]}}
\bar{\boldsymbol\mu}\bar M_{I^c}^{\top}
M_{I^c}^{-\top}\ee_{\mathbf w}.
\]
The \(\mathbf u\)-th row of \(\bar{\boldsymbol\mu}\) is zero
whenever
\(\mathbf u\notin\operatorname{Cols}\{\PPhi(\bar{\QQ}_I)\}\).
Taking the \(\mathbf u\)-th coordinate proves
\eqref{eq:multi-anchor-missing-pattern}.
The diagonal entries of \(B_j\) are positive by
Lemma~\ref{lem:one-item-separation}, as established in the proof of
Lemma~\ref{lem:multi-block-identifiability}. Taking \(\mathbf u=\vv\)
therefore gives the last assertion.
\end{proof}

\begin{lemma}\label{lem:multi-conj-two-column}
Under the conjunctive operator, let \(K\geq2\).
If \(\QQ\) satisfies the algebraic condition, then it satisfies
the two-column condition.
Moreover, if \(\QQ_H\) satisfies the two-column condition, then,
for every \(k\in[K]\),
\begin{equation}\label{eq:multi-conj-zero-k-cover}
\bigvee_{h\in H:\,q_{hk}=0}\qq_h=\one-\ee_k,
\qquad
\sum_{h\in H}q_{hk}\geq1,
\end{equation}
and \([\one-\ee_k]_{\QQ_H}=\{\one-\ee_k\}\).
\end{lemma}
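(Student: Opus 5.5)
The statement has three parts, and I will prove them in order.

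The first part says the algebraic condition implies the two-column condition. I will take distinct $k,\ell\in[K]$ and consider the two-column submatrix $\QQ_{\Cdot,\{k,\ell\}}$. By Theorem~\ref{thm:sub}\textup{(iii)}, if this submatrix failed the algebraic condition, so would $\QQ$. Hence $\QQ_{\Cdot,\{k,\ell\}}$ satisfies the algebraic condition. Lemma~\ref{lm:onetwo}\textup{(ii)} then says it contains $\cI_2$ up to column permutation, i.e. it has both row patterns $(1,0)$ and $(0,1)$. Since $k,\ell$ were arbitrary, $\QQ$ satisfies the two-column condition. This step is immediate.

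For the cover identity \eqref{eq:multi-conj-zero-k-cover}, fix $k$.
\begin{itemize}
\item Every row $\qq_h$ with $q_{hk}=0$ satisfies $\qq_h\preceq\one-\ee_k$, so the disjunction of such rows is $\preceq \one-\ee_k$.
\item For the reverse inequality, take any $\ell\neq k$. The two-column condition on $\QQ_H$ for the pair $(\ell,k)$ gives a row $h\in H$ with $q_{h\ell}=1$ and $q_{hk}=0$. That row enters the disjunction and contributes a one in coordinate $\ell$.
\item Hence the disjunction equals $\one-\ee_k$.
\item Using $K\ge2$, pick some $\ell\neq k$. The pattern $(0,1)$ on the pair $(\ell,k)$ gives a row with $q_{hk}=1$, so $\sum_{h\in H}q_{hk}\ge1$.
\end{itemize}

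For the singleton class, I will use Lemma~\ref{lm:conjunctive-generation} applied to $\QQ_H$: $\PPhi(\QQ_H)_{h,\aaa}=1$ iff $\qq_h\preceq\aaa$. I will show that any $\aaa'\neq\one-\ee_k$ has a different path status vector from $\one-\ee_k$, by splitting into two cases.
\begin{itemize}
\item If $\alpha'_k=1$: the row $h$ with $q_{hk}=1$ and $q_{h\ell}=0$ (pattern $(0,1)$ on $(\ell,k)$) is nonetheless not $\preceq \one-\ee_k$, because $q_{hk}=1$. Its entry at $\one-\ee_k$ is therefore $0$, so I need its entry at $\aaa'$ to be $1$. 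That is not automatic, so I will instead use the reverse characterization: if $\aaa'\succeq$ every row $\qq_h$ with $q_{hk}=0$, then $\aaa'\succeq\one-\ee_k$ by the cover identity. Combined with $\alpha'_k=1$ this gives $\aaa'=\one$. The vectors $\one$ and $\one-\ee_k$ then differ at any row with $q_{hk}=1$, which exists by the second part of \eqref{eq:multi-conj-zero-k-cover}.
\item If $\aaa'\neq\one$ in general: every row with $q_{hk}=0$ has entry $1$ at $\one-\ee_k$. If $\aaa'$ also had entry $1$ at all these rows, then $\aaa'\succeq\one-\ee_k$ by the cover identity, so $\aaa'\in\{\one-\ee_k,\one\}$.
\end{itemize}
Thus every $\aaa'\notin\{\one-\ee_k,\one\}$ differs from $\one-\ee_k$ on some row with $q_{hk}=0$, and $\one$ differs on a row with $q_{hk}=1$. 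This gives $[\one-\ee_k]_{\QQ_H}=\{\one-\ee_k\}$.

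The only care needed is organizing this case split. I do not expect a real obstacle.
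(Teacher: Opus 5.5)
Your proof is correct. For the cover identity, the bound $\sum_{h\in H}q_{hk}\geq1$, and the singleton class, you argue as the paper does. The paper takes $\bbb\in[\one-\ee_k]_{\QQ_H}$ and notes two things: $\bbb$ is capable on every row with $q_{hk}=0$, so $\bbb\succeq\one-\ee_k$ by the cover identity; and $\bbb$ is incapable on a row with $q_{hk}=1$, so $\bbb\neq\one$. That is exactly your closing sentence, once you drop the abandoned start in your $\alpha'_k=1$ bullet.

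The real difference is the first claim. You obtain it by reduction: Theorem~\ref{thm:sub}\textup{(iii)} transfers the algebraic condition to every two-column submatrix, and Lemma~\ref{lm:onetwo}\textup{(ii)} identifies such submatrices as those containing $\cI_2$. The hypothesis $\QQ\neq\zero$ of Theorem~\ref{thm:sub} is automatic, because $\zero=\bQQ\odot\zero$ for every $\bQQ$. This is the same reduction the paper uses in the necessity part of Lemma~\ref{lm:threefour}.

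The paper instead argues from scratch with explicit alternatives on the full matrix. A zero column $k$ is refuted by adding a single one to it, since every ideal pattern of $\QQ$ is then attained with $\alpha_k=1$. A dominated pair $\QQ_{\Cdot,k}\preceq\QQ_{\Cdot,\ell}$ is refuted by replacing $\QQ_{\Cdot,\ell}$ with $\QQ_{\Cdot,\ell}-\QQ_{\Cdot,k}$; every union of columns of $\QQ$ stays a union of columns of $\bQQ$, while the number of ones strictly drops. Your version is shorter and reuses structural results already proved; the paper's is self-contained and does not depend on Theorem~\ref{thm:sub}.
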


\begin{proof}
First suppose that column \(k\) of \(\QQ\) is zero. Add a one
to this column to obtain \(\bar{\QQ}\). Every true ideal pattern
is attained under \(\bar{\QQ}\) by setting \(\alpha_k=1\), so
\[
\operatorname{Cols}\{\PPhi(\QQ)\}
\subseteq\operatorname{Cols}\{\PPhi(\bar{\QQ})\}.
\]
But \(\bar{\QQ}\) has more ones than \(\QQ\), contradicting
the algebraic condition. Thus every column is nonzero.

If the two-column condition fails, there are distinct \(k,\ell\)
with \(\QQ_{\Cdot,k}\preceq\QQ_{\Cdot,\ell}\).
Let \(\bar{\QQ}\) agree with \(\QQ\) except that
\[
\bar{\QQ}_{\Cdot,\ell}
=\QQ_{\Cdot,\ell}-\QQ_{\Cdot,k}.
\]
Every changed row still has a one in column \(k\), so
no new zero rows are created. Also,
\[
\QQ_{\Cdot,\ell}
=\bar{\QQ}_{\Cdot,k}\vee\bar{\QQ}_{\Cdot,\ell}.
\]
Every union of columns of \(\QQ\) is therefore a union of
columns of \(\bar{\QQ}\). Since
\[
\PPhi_{\Cdot,\aaa}(\QQ)
=\one-\bigvee_{k:\,\alpha_k=0}\QQ_{\Cdot,k},
\]
we obtain
\(\operatorname{Cols}\{\PPhi(\QQ)\}
\subseteq\operatorname{Cols}\{\PPhi(\bar{\QQ})\}\).
However, \(\bar{\QQ}\) has fewer ones than \(\QQ\), so
\(\bar{\QQ}\nsim\QQ\). This again contradicts the algebraic
condition.

Now suppose that \(\QQ_H\) satisfies the two-column condition.
For every \(\ell\ne k\), some \(h\in H\) has
\(q_{hk}=0\) and \(q_{h\ell}=1\), proving the first equality
in \eqref{eq:multi-conj-zero-k-cover}. The second follows from
the existence of a row with \(q_{hk}=1\).
If \(\bbb\in[\one-\ee_k]_{\QQ_H}\), all rows with
\(q_{hk}=0\) are capable under \(\bbb\), so
\(\bbb\succeq\one-\ee_k\). A row with \(q_{hk}=1\) is
incapable, ruling out \(\bbb=\one\). Hence
\(\bbb=\one-\ee_k\).
\end{proof}

\begin{lemma}\label{lem:multi-conj-block-inclusion}
Consider the multi-parameter Boolean factor model under the
conjunctive operator, with \(K\geq2\) and
\(p_{[\aaa]}>0\) for every \([\aaa]\in\mathcal E(\QQ)\).
Suppose that \eqref{eq:equal-response-distributions} holds.
If \(\QQ_{[J]\setminus I}\) satisfies the two-column condition,
then
\[
\operatorname{Cols}\{\PPhi(\QQ_I)\}
\subseteq\operatorname{Cols}\{\PPhi(\bar{\QQ}_I)\}.
\]
\end{lemma}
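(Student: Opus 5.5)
The plan is to reduce the inclusion to the near-top profiles $\one-\ee_k$ and then use the closure of conjunctive ideal-response columns under coordinatewise meets. Write $I^c=[J]\setminus I$. By Lemma~\ref{lem:boolean-factorization}, applied with $\QQ_I$ and $\bar{\QQ}_I$ in place of $\QQ$ and $\bar{\QQ}$, it suffices to show that
\[
\PPhi(\QQ_I)_{\Cdot,\one-\ee_k}\in\operatorname{Cols}\{\PPhi(\bar{\QQ}_I)\}
\qquad\text{for every } k\in[K].
\]
Indeed, that lemma states (ii)$\Rightarrow$(i) for arbitrary pairs of $\QQ$-matrices with the same number of columns $K$ under the conjunctive operator. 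Equivalently, Lemma~\ref{lm:conjunctive-generation} shows that every column of $\PPhi(\QQ_I)$ is a meet of near-top columns, with the empty meet equal to $\one$. The same lemma shows that $\operatorname{Cols}\{\PPhi(\bar{\QQ}_I)\}$ is closed under $\wedge$ and contains $\one=\PPhi(\bar{\QQ}_I)_{\Cdot,\one}$. (If $I=\varnothing$ the claim is vacuous.)

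To obtain the near-top columns, I will use the anchor mechanism of Lemma~\ref{lem:multi-anchor-extraction}. Its hypothesis for a latent vector $\aaa$ is
\[
[\aaa]_{\QQ_{I^c}}\subseteq[\aaa]_{\QQ_I}.
\]
Since $\QQ_{I^c}$ satisfies the two-column condition, Lemma~\ref{lem:multi-conj-two-column} with $H=I^c$ gives $[\one-\ee_k]_{\QQ_{I^c}}=\{\one-\ee_k\}$ for every $k\in[K]$. A singleton class is trivially contained in $[\one-\ee_k]_{\QQ_I}$, so the hypothesis holds with $\aaa=\one-\ee_k$. The lemma's conclusion then gives
\[
\vv=\PPhi_{\Cdot,\one-\ee_k}(\QQ_I)\in\operatorname{Cols}\{\PPhi(\bar{\QQ}_I)\}.
\]
This conclusion relies on the positive diagonal entries of the $B_j$, which come from Lemma~\ref{lem:one-item-separation} under the standing nondegeneracy convention and $p_{[\aaa]}>0$.

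Combining the two steps finishes the argument. Each column $\PPhi_{\Cdot,\aaa}(\QQ_I)=\bigwedge_{k:\alpha_k=0}\PPhi_{\Cdot,\one-\ee_k}(\QQ_I)$ is a meet of vectors lying in $\operatorname{Cols}\{\PPhi(\bar{\QQ}_I)\}$. By the meet identity $\PPhi_{\Cdot,\bbb_1\wedge\bbb_2}(\bar{\QQ}_I)=\PPhi_{\Cdot,\bbb_1}(\bar{\QQ}_I)\wedge\PPhi_{\Cdot,\bbb_2}(\bar{\QQ}_I)$, that meet is again a column of $\PPhi(\bar{\QQ}_I)$. For $\aaa=\one$ the meet is empty, and $\one$ is attained at $\bbb=\one$.

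I expect the main obstacle to be checking that the anchor lemma applies verbatim. Its proof uses the full-model block factorization \eqref{eq:multi-block-factorization} and only the class containment at the chosen $\aaa$, so the singleton property from Lemma~\ref{lem:multi-conj-two-column} is what carries the argument. I would state explicitly that $K\geq2$ is needed for the two-column condition to be meaningful. The remaining steps are direct citations of the conjunctive generation identities.
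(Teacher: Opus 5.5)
Your proposal is correct and follows the paper's proof essentially step for step. Lemma~\ref{lem:multi-conj-two-column} gives the singleton class $[\one-\ee_k]_{\QQ_{[J]\setminus I}}=\{\one-\ee_k\}$, which feeds Lemma~\ref{lem:multi-anchor-extraction} to place every near-top column of $\PPhi(\QQ_I)$ in $\operatorname{Cols}\{\PPhi(\bar{\QQ}_I)\}$; closure of conjunctive ideal-response column sets under coordinatewise meets, with $\one$ as the empty meet, then gives the inclusion, and routing this last step through Lemma~\ref{lem:boolean-factorization} (ii)$\Rightarrow$(i) is only a repackaging of the same argument.
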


\begin{proof}
For every \(k\in[K]\), Lemma~\ref{lem:multi-conj-two-column}
gives
\[
[\one-\ee_k]_{\QQ_{[J]\setminus I}}
=\{\one-\ee_k\}\subseteq[\one-\ee_k]_{\QQ_I}.
\]
Lemma~\ref{lem:multi-anchor-extraction} implies
\[
\PPhi_{\Cdot,\one-\ee_k}(\QQ_I)
\in\operatorname{Cols}\{\PPhi(\bar{\QQ}_I)\},
\qquad k\in[K].
\]
Under conjunction, ideal-response column sets contain \(\one\)
and are closed under coordinatewise intersections. Consequently,
for every \(\aaa\in\{0,1\}^K\),
\[
\PPhi_{\Cdot,\aaa}(\QQ_I)
=\bigwedge_{k:\,\alpha_k=0}
\PPhi_{\Cdot,\one-\ee_k}(\QQ_I)
\in\operatorname{Cols}\{\PPhi(\bar{\QQ}_I)\},
\]
where the empty intersection is \(\one\).
\end{proof}

\begin{lemma}\label{lem:multi-conj-Q-only}
Consider the multi-parameter Boolean factor model~\eqref{eq:multi-model}
under the conjunctive operator, with \(K\geq2\) and
\(p_{[\aaa]}>0\) for every \([\aaa]\in\mathcal E(\QQ)\).
Suppose that \(I_1,I_2\) partition \([J]\), where
\(\QQ_{I_1}\) satisfies the algebraic
condition~\eqref{eq:a_condition} and \(\QQ_{I_2}\) satisfies
the two-column condition. Then \(\QQ\) is identifiable.
\end{lemma}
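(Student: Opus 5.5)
The plan is to fix a parameter specification $\TTT$ with $p_{[\aaa]}>0$ for every class and suppose that \eqref{eq:equal-response-distributions} holds for some $(\bQQ,\bar\TTT)$. The goal is then $\bQQ\sim\QQ$. The argument has two stages: first pin down the $I_1$ block, then transfer the resulting column permutation to the $I_2$ block.

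\textbf{Identifying $\bQQ_{I_1}$.} Since $I_1,I_2$ partition $[J]$, we have $[J]\setminus I_1=I_2$, and $\QQ_{I_2}$ satisfies the two-column condition. Lemma~\ref{lem:multi-conj-block-inclusion} therefore applies with $I=I_1$ and gives
\[
\operatorname{Cols}\{\PPhi(\QQ_{I_1})\}\subseteq\operatorname{Cols}\{\PPhi(\bQQ_{I_1})\}.
\]
Because $\QQ_{I_1}$ satisfies the algebraic condition, this forces $\bQQ_{I_1}\sim\QQ_{I_1}$. After a common relabeling of the latent variables, I will assume $\bQQ_{I_1}=\QQ_{I_1}$.

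\textbf{Identifying $\bQQ_{I_2}$.} The complement of $I_2$ is $I_1$, and $\QQ_{I_1}$ satisfies the two-column condition by Lemma~\ref{lem:multi-conj-two-column}. The same lemma, now applied with $I=I_2$, gives
\[
\operatorname{Cols}\{\PPhi(\QQ_{I_2})\}\subseteq\operatorname{Cols}\{\PPhi(\bQQ_{I_2})\}.
\]
However, $\QQ_{I_2}$ need not satisfy the algebraic condition, so this inclusion alone does not determine $\bQQ_{I_2}$. I intend to use the factorization form instead: by Lemma~\ref{lem:boolean-factorization}, $\QQ_{I_2}=\bQQ_{I_2}\odot\HH$ for some $\HH$. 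I would then show $\HH=\cI_K$, using the information already fixed on $I_1$.

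For that step I would use the anchor columns. By Lemma~\ref{lem:multi-conj-two-column}, $[\one-\ee_k]_{\QQ_{I_1}}=\{\one-\ee_k\}$, so $\one-\ee_k$ is a singleton class under $\QQ_{I_1}$ and hence also under $\bQQ_{I_1}=\QQ_{I_1}$. Lemma~\ref{lem:multi-anchor-extraction} (with $I=I_2$ and anchor $\aaa=\one-\ee_k$, valid since its $I_1$-class is a singleton) gives the vanishing relation \eqref{eq:multi-anchor-missing-pattern}. The plan is to strengthen this to the statement that the column $\PPhi_{\Cdot,\one-\ee_k}(\QQ_{I_2})$ is matched by the $\bQQ$-profile of the \emph{same} latent vector $\one-\ee_k$, i.e.\ $\PPhi_{\Cdot,\one-\ee_k}(\QQ_{I_2})=\PPhi_{\Cdot,\one-\ee_k}(\bQQ_{I_2})$. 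Heuristically, the joint distribution of $(\RR_{I_1},\RR_{I_2})$ under $\bQQ$ restricted to the $I_1$-pattern of $\one-\ee_k$ comes only from $\bar p_{\one-\ee_k}$. Granting this, $\QQ_{\Cdot,k}$ on $I_2$ equals $\one-\PPhi_{\Cdot,\one-\ee_k}$, which is the same for $\QQ$ and $\bQQ$ by Lemma~\ref{lm:conjunctive-generation}. Hence $\bQQ_{I_2}=\QQ_{I_2}$, and combining with the first stage gives $\bQQ=\QQ$ after relabeling.

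\textbf{Main obstacle.} I expect the hardest step to be justifying the matching through the same latent vector. The supports must be aligned by a joint block argument rather than the marginal inclusion. Concretely, I would redo the computation in the proof of Lemma~\ref{lem:multi-block-identifiability} with the roles of $I$ and $I^c$ swapped. The aim is to show that the column of $\bar{\boldsymbol\mu}$ indexed by the $I_1$-pattern $\PPhi_{\Cdot,\one-\ee_k}(\QQ_{I_1})$ is supported on a single $I_2$-pattern. For $\bar{\boldsymbol\mu}$ this holds because that pattern is a singleton class under $\bQQ_{I_1}=\QQ_{I_1}$. For $\boldsymbol\mu$ this holds by the singleton class from Lemma~\ref{lem:multi-conj-two-column}. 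Comparing the two supports via the diagonal positivity of the $B_j$ would then finish the argument.
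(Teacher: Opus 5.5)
Your first stage is correct and coincides with the paper's. The gap is the step you flag as the main obstacle. By Lemma~\ref{lm:conjunctive-generation}, the claim $\PPhi_{\Cdot,\one-\ee_k}(\QQ_{I_2})=\PPhi_{\Cdot,\one-\ee_k}(\bQQ_{I_2})$ for all $k$ is equivalent to $\bQQ_{I_2}=\QQ_{I_2}$ in the labeling fixed on $I_1$. So it carries the entire content of the lemma, and the mechanism you propose cannot establish it.

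Lemma~\ref{lem:multi-anchor-extraction} with $I=I_2$ only says that this column is attained by \emph{some} $\bQQ_{I_2}$-profile. That is the inclusion you already have. It neither determines $\bQQ_{I_2}$ nor ties its column labels to those fixed on $I_1$.

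The joint comparison fails because the data link the ideal-pattern arrays through noisy channels on \emph{both} blocks. Index rows of $\boldsymbol\mu$ by $I_2$-patterns and columns by $I_1$-patterns, and put $\mathbf w=\PPhi_{\Cdot,\one-\ee_k}(\QQ_{I_1})$. Then
\[
\bar{\boldsymbol\mu}\,\ee_{\mathbf w}
=\Bigl(\bigotimes_{j\in I_2}B_j\Bigr)\sum_{\uu}\Bigl\{\prod_{j\in I_1}(B_j)_{w_j,u_j}\Bigr\}\,\boldsymbol\mu\,\ee_{\uu}.
\]
Both $\boldsymbol\mu\ee_{\mathbf w}$ and $\bar{\boldsymbol\mu}\ee_{\mathbf w}$ are point masses. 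However, the right side mixes many columns of $\boldsymbol\mu$ with coefficients of either sign. Diagonal positivity of the $B_j$ only guarantees that the $\mathbf w$-column participates, not that it is the only one.

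To isolate that column you would need $(B_j)_{w_j,1-w_j}=0$ for $j\in I_1$, which is partial identification of $(\mathbf s,\mathbf g)$ on $I_1$. This is unavailable here. For instance, $\QQ=(\cI_K^\top,\cI_K^\top)^\top$ satisfies the lemma's hypotheses, yet the construction in the proof of Proposition~\ref{prop:two-param-conjunctive} produces $\bQQ=\QQ$ with different item parameters. Your heuristic conditions on the latent $I_1$-pattern, whereas the observable event $\RR_{I_1}=\mathbf w$ receives mass from every latent vector.

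The missing idea is to place the unknown row in the \emph{same} block as the already-identified rows of $I_1$. Then the single matching profile returned by the anchor lemma is constrained by both.

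\begin{itemize}
\item Fix $h\in I_2$ and $k$ with $q_{hk}=1$. The paper picks $j\in I_1$ with $q_{jk}=1$ and sets $I=(I_1\setminus\{j\})\cup\{h\}$ and $I^c=(I_2\setminus\{h\})\cup\{j\}$.
\item The rows of $I_2$ with $q_{rk}=0$ stay in $I^c$ and cover $\one-\ee_k$ by Lemma~\ref{lem:multi-conj-two-column}, while row $j$ rules out $\one$. Hence $[\one-\ee_k]_{\QQ_{I^c}}=\{\one-\ee_k\}$.
\item Lemma~\ref{lem:multi-anchor-extraction} then yields one $\bbb$ with $\PPhi_{\Cdot,\bbb}(\bQQ_I)=\PPhi_{\Cdot,\one-\ee_k}(\QQ_I)$.
\item The known rows $r\in I_1$ with $q_{rk}=0$ lie in $I$, are capable, and satisfy $\bar\qq_r=\qq_r$. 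This forces $\bbb\succeq\one-\ee_k$, and then incapability of item $h$ forces $\bar q_{hk}=1$.
\item Hence $\bQQ_{I_2}\succeq\QQ_{I_2}$ entrywise, in the labeling of $I_1$.
\end{itemize}

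Only now does your inclusion for $I_2$ finish the job. Any $\bbb$ with $\PPhi_{\Cdot,\bbb}(\bQQ_{I_2})=\PPhi_{\Cdot,\one-\ee_k}(\QQ_{I_2})$ gives $\QQ_{I_2,k}=\bigvee_{\ell:\beta_\ell=0}\bQQ_{I_2,\ell}\succeq\bigvee_{\ell:\beta_\ell=0}\QQ_{I_2,\ell}$. The two-column condition then forces $\bbb=\one-\ee_k$, and so $\bQQ_{I_2,k}=\QQ_{I_2,k}$.
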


\begin{proof}
Fix \(\TTT\) with \(p_{[\aaa]}>0\) for every
\([\aaa]\in\mathcal E(\QQ)\), and suppose that
\eqref{eq:equal-response-distributions} holds.
Since \([J]\setminus I_1=I_2\),
Lemma~\ref{lem:multi-conj-block-inclusion} gives
\[
\operatorname{Cols}\{\PPhi(\QQ_{I_1})\}
\subseteq\operatorname{Cols}\{\PPhi(\bar{\QQ}_{I_1})\}.
\]
The algebraic condition gives
\(\bar{\QQ}_{I_1}\sim\QQ_{I_1}\).
After a common relabeling of the latent variables, assume that
\(\bar{\QQ}_{I_1}=\QQ_{I_1}\).
By Lemma~\ref{lem:multi-conj-two-column},
\(\QQ_{I_1}\) also satisfies the two-column condition.

Fix \(h\in I_2\) and \(k\in[K]\) with \(q_{hk}=1\).
Choose \(j\in I_1\) with \(q_{jk}=1\), and let
\[
I=(I_1\setminus\{j\})\cup\{h\},
\qquad
I^c=(I_2\setminus\{h\})\cup\{j\}.
\]
Every \(r\in I_2\) with \(q_{rk}=0\) remains in \(I^c\),
and the added row \(j\) has \(q_{jk}=1\).
Equation~\eqref{eq:multi-conj-zero-k-cover} therefore gives
\[
[\one-\ee_k]_{\QQ_{I^c}}=\{\one-\ee_k\}.
\]
Put \(\vv=\PPhi_{\Cdot,\one-\ee_k}(\QQ_I)\).
Lemma~\ref{lem:multi-anchor-extraction} gives
\(\vv\in\operatorname{Cols}\{\PPhi(\bar{\QQ}_I)\}\).
Choose \(\bbb\) such that
\[
\PPhi_{\Cdot,\bbb}(\bar{\QQ}_I)=\vv.
\]
Every \(r\in I_1\) with \(q_{rk}=0\) remains in \(I\),
and \(\bar{\qq}_r=\qq_r\). These items are capable under
\(\bbb\), so \eqref{eq:multi-conj-zero-k-cover} implies
\(\bbb\succeq\one-\ee_k\).
If \(\bar q_{hk}=0\), item \(h\) would therefore be capable
under \(\bbb\), contradicting \(v_h=0\).
Thus \(\bar q_{hk}=1\). Since \(h,k\) were arbitrary,
\begin{equation}\label{eq:multi-conj-row-domination}
\bar{\QQ}_{I_2}\succeq\QQ_{I_2}.
\end{equation}

Since \([J]\setminus I_2=I_1\) and \(\QQ_{I_1}\) satisfies
the two-column condition,
Lemma~\ref{lem:multi-conj-block-inclusion} also gives
\[
\operatorname{Cols}\{\PPhi(\QQ_{I_2})\}
\subseteq\operatorname{Cols}\{\PPhi(\bar{\QQ}_{I_2})\}.
\]
For each \(k\in[K]\), choose \(\bbb\) such that
\[
\PPhi_{\Cdot,\bbb}(\bar{\QQ}_{I_2})
=\PPhi_{\Cdot,\one-\ee_k}(\QQ_{I_2}).
\]
Under the conjunctive operator, this equality and
\eqref{eq:multi-conj-row-domination} give
\[
\QQ_{I_2,k}
=\bigvee_{\ell:\,\beta_\ell=0}\bar{\QQ}_{I_2,\ell}
\succeq\bigvee_{\ell:\,\beta_\ell=0}\QQ_{I_2,\ell}.
\]
If \(\beta_\ell=0\), then
\(\QQ_{I_2,\ell}\preceq\QQ_{I_2,k}\).
The two-column condition rules this out for \(\ell\ne k\).
Because \(\QQ_{I_2,k}\ne\zero\), at least one
\(\beta_\ell\) is zero. Hence \(\bbb=\one-\ee_k\), and
\[
\bar{\QQ}_{I_2,k}=\QQ_{I_2,k}.
\]
This holds for every \(k\), proving \(\bar{\QQ}=\QQ\)
after aligning their columns.
Since \(\TTT\) was arbitrary subject to positive grouped
proportions, \(\QQ\) is identifiable.
\end{proof}

\begin{lemma}\label{lem:multi-conj-item-parameters}
Consider the multi-parameter Boolean factor model under the
conjunctive operator, with \(K\geq2\) and
\(p_{[\aaa]}>0\) for every \([\aaa]\in\mathcal E(\QQ)\).
Suppose that there exist disjoint row sets \(I_1,I_2\) such that
\(\QQ_{I_1}\) and \(\QQ_{I_2}\) satisfy the two-column condition,
and that every column of \(\QQ\) contains at least three ones.
If \eqref{eq:equal-response-distributions} holds with
\(\bar{\QQ}=\QQ\), then
\[
\bar{\mathbf g}=\mathbf g,\qquad
\bar{\mathbf s}=\mathbf s,\qquad
\bar p_{[\aaa]}=p_{[\aaa]},
\quad [\aaa]\in\mathcal E(\QQ).
\]
\end{lemma}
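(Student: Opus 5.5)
The plan is to extract each $s_j$ and $g_j$ from Lemma~\ref{lem:multi-anchor-extraction}, applied with $\bar{\QQ}=\QQ$, exactly as in the proof of Theorem~\ref{thm:multi-general}. Fix $j$. If we find a row set $I\ni j$ and an anchor $\aaa$ with $[\aaa]_{\QQ_{[J]\setminus I}}\subseteq[\aaa]_{\QQ_I}$, put $\vv=\PPhi_{\Cdot,\aaa}(\QQ_I)$ and let $\mathbf u$ differ from $\vv$ only in coordinate $j$. If in addition $\mathbf u\notin\operatorname{Cols}\{\PPhi(\QQ_I)\}$, then \eqref{eq:multi-anchor-missing-pattern} gives $(B_j)_{u_j,v_j}\prod_{h\in I\setminus\{j\}}(B_h)_{v_h,v_h}=0$. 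The diagonal entries are positive by Lemma~\ref{lem:one-item-separation}, so $(B_j)_{u_j,v_j}=0$. This gives $s_j=\bar s_j$ when $v_j=1$ and $g_j=\bar g_j$ when $v_j=0$.

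Once $\mathbf s,\mathbf g$ are identified, the grouped proportions follow as at the end of the proof of Theorem~\ref{thm:multi-general}: invertibility of each $M_j$ gives $\boldsymbol\mu=\bar{\boldsymbol\mu}$, and hence $p_{[\aaa]}=\bar p_{[\aaa]}$.

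\textbf{Guessing parameter $g_j$.} Choose $k$ with $q_{jk}=1$ and use the anchor $\aaa=\one-\ee_k$. Column $k$ has at least three ones. Let $h\neq j$ be a second row with $q_{hk}=1$, and let $r$ be a third. Choose a block $I_a$ for $I$ and the other block $I_{a'}$ for the complement. Set
\[
I=\{j,h\}\cup\{i\in I_a:q_{ik}=0\},
\]
and put the remaining rows, including $I_{a'}$ and $r$, in $[J]\setminus I$. We may need to swap $I_a$ and $I_{a'}$, or the roles of $h$ and $r$, so that $j,h\notin I_{a'}$ or $r\notin I$.

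By \eqref{eq:multi-conj-zero-k-cover}, the zero-in-$k$ rows of $I_{a'}$ cover $\one-\ee_k$, and $r$ supplies a one in column $k$. Hence $[\one-\ee_k]_{\QQ_{[J]\setminus I}}=\{\one-\ee_k\}$, so the containment holds.

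Now consider $\mathbf u$, which equals $\PPhi_{\Cdot,\one-\ee_k}(\QQ_I)$ except that $u_j=1$. Any $\bbb$ realizing $\mathbf u$ is capable for the zero-in-$k$ rows of $I_a$, so $\bbb\succeq\one-\ee_k$. Since $\bbb$ is capable for $j$, we get $\bbb=\one$. But then $\bbb$ is also capable for $h$, whereas $u_h=0$. So $\mathbf u$ is absent, and $g_j=\bar g_j$.

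\textbf{Slipping parameter $s_j$.} Use the anchor $\aaa=\one$, for which $\vv=\one$. The containment $[\one]_{\QQ_{[J]\setminus I}}=\{\one\}$ holds as soon as the complement has a one in every column. The pattern $\mathbf u=\one-\ee_j$ is absent as soon as the rows of $I\setminus\{j\}$ have union dominating $\qq_j$, since then any $\bbb$ capable for all of them is capable for $j$.

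So we need $I\setminus\{j\}$ to cover $\operatorname{supp}(\qq_j)$ while $[J]\setminus I$ covers every column. For each $k\in\operatorname{supp}(\qq_j)$, column $k$ contains two ones besides $j$. We put one of them in $I$ and leave the other in the complement. For columns outside $\operatorname{supp}(\qq_j)$, each block has a one there by the two-column condition. So we let one block, minus the finitely many rows moved into $I$, serve as the complement, and we check that every column keeps a one.

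\textbf{Main obstacle.} The main obstacle is this combinatorial bookkeeping, namely choosing the partition so that moving rows into $I$ never strips the complement of its only one in some column. This is also the only place the three-ones assumption enters. I would first try assigning, for each $k$, the "second" one to $I$ and the "third" to the complement. If conflicts arise because a single row serves several columns, I would fall back on using the whole other block as the complement and repairing each column individually.
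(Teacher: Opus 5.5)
Your $g_j$ argument is complete and is essentially the paper's. With $I=\{j,h\}\cup\{i\in I_a:q_{ik}=0\}$, the zero-in-$k$ rows of $I_{a'}$ together with $r$ force $[\one-\ee_k]_{\QQ_{[J]\setminus I}}=\{\one-\ee_k\}$, and $h$ excludes the flipped pattern. The swapping you anticipate is never needed. The rows $j$, $h$, $r$ all have a one in column $k$, so none of them is a zero-in-$k$ row of either block. Hence $r\notin I$ automatically, and, since $I_1,I_2$ are disjoint, the zero-in-$k$ rows of $I_{a'}$ automatically lie in $[J]\setminus I$. Your final step recovering $p_{[\Cdot]}$ from $(\mathbf s,\mathbf g)$ also matches the paper.

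The gap is in the slipping step. You correctly reduce it to finding $I\ni j$ such that the rows of $I\setminus\{j\}$ cover $\operatorname{supp}(\qq_j)$ and the rows of $[J]\setminus I$ cover all $K$ columns. However, you then leave the construction of such an $I$ open (``first try \dots\ fall back \dots\ repair each column individually''), and that is not an argument. As you note yourself, assigning rows column by column can strip the complement of its only one in some column.

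The missing observation is that no new partition is needed: the $I$ you already built for $g_j$ works.
\begin{itemize}
\item By \eqref{eq:multi-conj-zero-k-cover}, the rows of $I\setminus\{j\}$ have union $(\one-\ee_k)\vee\qq_h=\one$, which dominates $\qq_j$.
\item The rows of $[J]\setminus I$ contain $r$ and the zero-in-$k$ rows of $I_{a'}$, so their union is $(\one-\ee_k)\vee\qq_r=\one$.
\end{itemize}
Hence $[\one]_{\QQ_{[J]\setminus I}}=\{\one\}$. Moreover, the pattern equal to $\one$ except in coordinate $j$ is not a column of $\PPhi(\QQ_I)$, and this gives $s_j=\bar s_j$. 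This is exactly how the paper proceeds: a single $I$ per $j$ serves both anchors $\one$ and $\one-\ee_k$.

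Relatedly, your remark that the slipping step is ``the only place the three-ones assumption enters'' is inaccurate. Your $g_j$ step already uses that assumption to obtain both $h$ and $r$.
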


\begin{proof}
Fix \(j\in[J]\), and choose \(k\) with \(q_{jk}=1\).
The column-count assumption gives distinct items
\(h,\ell\in[J]\setminus\{j\}\) such that
\(q_{hk}=q_{\ell k}=1\). Let
\[
I=\{j,h\}\cup\{r\in I_1:q_{rk}=0\},
\qquad I^c=[J]\setminus I.
\]
Then \(I\setminus\{j\}\) contains \(h\) and all the rows of
\(I_1\) with \(q_{rk}=0\); \(I^c\) contains \(\ell\) and all
 the rows of \(I_2\) with \(q_{rk}=0\).
By Lemma~\ref{lem:multi-conj-two-column},
\[
\bigvee_{r\in I_1:\,q_{rk}=0}\qq_r
=\bigvee_{r\in I_2:\,q_{rk}=0}\qq_r
=\one-\ee_k.
\]
Consequently, both \(\QQ_{I\setminus\{j\}}\) and \(\QQ_{I^c}\)
have no zero column, and their rows with \(q_{rk}=0\) cover
all attributes other than \(k\). Therefore,
\begin{equation}\label{eq:multi-conj-joint-private-classes}
\begin{aligned}
{}[\one]_{\QQ_{I^c}}
&=[\one]_{\QQ_{I\setminus\{j\}}}=\{\one\},\\
[\one-\ee_k]_{\QQ_{I^c}}
&=[\one-\ee_k]_{\QQ_{I\setminus\{j\}}}
=\{\one-\ee_k\}.
\end{aligned}
\end{equation}
For the second equality, the capable rows with \(q_{rk}=0\)
force the latent vector to contain \(\one-\ee_k\); the
incapable item \(h\), or \(\ell\), rules out \(\one\).

First let \(\vv=\PPhi_{\Cdot,\one}(\QQ_I)=\one\), and let
\(\mathbf u\) agree with \(\vv\) except that \(u_j=0\).
The first line of \eqref{eq:multi-conj-joint-private-classes}
implies \(\mathbf u\notin\operatorname{Cols}\{\PPhi(\QQ_I)\}\).
By Lemma~\ref{lem:multi-anchor-extraction} and
\(\bar{\QQ}=\QQ\),
\[
0
=\prod_{r\in I}(B_r)_{u_r,v_r}
=\frac{s_j-\bar s_j}{1-\bar s_j-\bar g_j}
\prod_{r\in I\setminus\{j\}}(B_r)_{1,1}.
\]
Every diagonal factor is positive by
Lemma~\ref{lem:one-item-separation}, as in the proof of
Lemma~\ref{lem:multi-block-identifiability}.
Hence \(s_j=\bar s_j\).

Next let \(\vv=\PPhi_{\Cdot,\one-\ee_k}(\QQ_I)\), and let
\(\mathbf u\) agree with \(\vv\) except that \(u_j=1\).
Now \(v_j=0\), and the second line of
\eqref{eq:multi-conj-joint-private-classes} gives
\(\mathbf u\notin\operatorname{Cols}\{\PPhi(\QQ_I)\}\).
Lemma~\ref{lem:multi-anchor-extraction} gives
\[
0
=\prod_{r\in I}(B_r)_{u_r,v_r}
=\frac{g_j-\bar g_j}{1-\bar s_j-\bar g_j}
\prod_{r\in I\setminus\{j\}}(B_r)_{v_r,v_r}.
\]
The diagonal factors are again positive, so
\(g_j=\bar g_j\). Since \(j\) was arbitrary,
\(\mathbf s=\bar{\mathbf s}\) and
\(\mathbf g=\bar{\mathbf g}\).
Finally, the matrix inversion in the proof of
Theorem~\ref{thm:multi-general} gives
\(p_{[\aaa]}=\bar p_{[\aaa]}\) for every
\([\aaa]\in\mathcal E(\QQ)\).
\end{proof}

\begin{proof}[Proof of Theorem~\ref{thm:multi-conj}]
Lemma~\ref{lem:multi-conj-Q-only} proves identifiability of \(\QQ\).
Suppose now that every column contains at least three ones, and
that \eqref{eq:equal-response-distributions} holds. By the first
assertion, a common relabeling of the latent variables gives
\(\bar{\QQ}=\QQ\). The algebraic condition on \(\QQ_{I_1}\)
implies its two-column condition by
Lemma~\ref{lem:multi-conj-two-column}. Applying
Lemma~\ref{lem:multi-conj-item-parameters} to the disjoint blocks
\(I_1,I_2\) proves joint identifiability.
\end{proof}

\subsubsection{Proof of Proposition~\ref{prop:multi-known-g-conjunctive}}
\label{app:proof-multi-known-g}

\begin{lemma}\label{lem:multi-known-g-general}
Consider the multi-parameter Boolean factor model~\eqref{eq:multi-model}
with \(\mathbf g\) known and \(p_{[\aaa]}>0\) for every
\([\aaa]\in\mathcal E(\QQ)\). Suppose that \(\QQ\) satisfies
the algebraic condition. If, for every \(j\in[J]\), there exists
\(S_j\subseteq[J]\setminus\{j\}\) such that
\[
\varnothing\neq\bigcap_{h\in S_j}\mca_h^+(\QQ)
\subseteq\mca_j^+(\QQ),
\]
then \((\QQ,\mathbf s,p_{[\Cdot]})\) is jointly identifiable.
An intersection over an empty set is understood to be \(\{0,1\}^K\).
\end{lemma}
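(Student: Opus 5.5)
The plan is to show first that the slipping parameters are forced to agree, then reduce to the known-item-parameter case of Lemma~\ref{lem:knownitem}. Fix \(\TTT=(\mathbf s,p_{[\Cdot]})\) with positive grouped proportions, and suppose \((\bar{\QQ},\bar{\mathbf s},\bar p_{[\Cdot]})\) with the same known \(\mathbf g\) gives the same response distribution. Once we have \(\bar{\mathbf s}=\mathbf s\), Lemma~\ref{lem:knownitem} (item parameters known and equal) together with the algebraic condition yields \(\bar{\QQ}\sim\QQ\) and \(\bar p_{[\aaa]}=p_{[\aaa]}\). So everything hinges on proving \(s_j=\bar s_j\) for each \(j\), without knowing \(\bar{\QQ}\) in advance.

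To identify \(s_j\), I would use the set \(S_j\) to build a conditional moment in which item \(j\) is almost surely capable. Consider the joint probability
\[
T_j(\mathbf s,\QQ,\pp)=\P\Bigl(R_j=1,\ R_h=1\ \text{for }h\in S_j\Bigr)
\]
and compare it with \(\P(R_h=1,\ h\in S_j)\). Since \(\mathbf g\) is known, I would work with the "debiased'' quantities obtained by applying \(M_h^{-1}\)-type transforms coordinatewise: for a known \(g_h\), the functional \(\E\prod_{h\in S}(R_h-g_h)\) equals \(\sum_{\aaa}p_{\aaa}\prod_{h\in S}(1-s_h-g_h)\,\1\{\aaa\in\mca_h^+\}\). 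Then
\[
\E\Bigl[(R_j-g_j)\prod_{h\in S_j}(R_h-g_h)\Bigr]=(1-s_j-g_j)\,\E\Bigl[\prod_{h\in S_j}(R_h-g_h)\Bigr],
\]
because \(\bigcap_{h\in S_j}\mca_h^+\subseteq\mca_j^+\) makes the indicator for item \(j\) equal to one on the support of the product. The denominator is positive since the intersection is nonempty, hence contains a whole class of \(\mathcal E(\QQ)\) with positive mass, and each factor \(1-s_h-g_h>0\). Thus \(1-s_j-g_j\) is the ratio of two observable moments, so \(s_j\) is determined from the distribution alone. The same identity under the alternative parametrization gives \(\bar s_j\) as a ratio only if the analogous containment holds for \(\bar{\QQ}\); that is the step needing care.

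The main obstacle is precisely this asymmetry: the ratio identity under \((\bar{\QQ},\bar{\mathbf s})\) gives only \(\E[(R_j-g_j)\prod_{S_j}(R_h-g_h)]=\sum_{\aaa}\bar p_{\aaa}(1-\bar s_j-g_j)\,\1\{\aaa\in\bar\mca_j^+\}\prod_{S_j}(1-\bar s_h-g_h)\1\{\aaa\in\bar\mca_h^+\}\le(1-\bar s_j-g_j)\E[\prod_{S_j}(R_h-g_h)]\). Combining the two expressions yields \(1-s_j\le 1-\bar s_j\), i.e.\ \(\bar s_j\le s_j\) for all \(j\). I would try to close the gap by a reverse argument: with \(\bar s\le s\) componentwise, use the univariate marginals \(\P(R_j=1)=g_j+(1-s_j-g_j)\P(\aaa\in\mca_j^+)\) and the full-distribution equation written as \(\bar{\boldsymbol\mu}=\bigl(\bigotimes B_j\bigr)\boldsymbol\mu\), where \(B_j=\bar M_j^{-1}M_j\) is upper triangular with \(B_j=\bigl(\begin{smallmatrix}1&\ast\\0&(1-s_j-g_j)/(1-\bar s_j-g_j)\end{smallmatrix}\bigr)\) and nonnegative off-diagonal \((s_j-\bar s_j)/(1-\bar s_j-g_j)\). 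Then \(\bar{\boldsymbol\mu}\ge\) a positive multiple of \(\boldsymbol\mu\) and \(\operatorname{Cols}\{\PPhi(\QQ)\}\subseteq\operatorname{supp}(\bar{\boldsymbol\mu})\subseteq\operatorname{Cols}\{\PPhi(\bar{\QQ})\}\), so the algebraic condition gives \(\bar{\QQ}\sim\QQ\). With \(\bar{\QQ}=\QQ\) after relabeling, the moment identity holds with equality on both sides, forcing \(\bar s_j=s_j\), and then Lemma~\ref{lem:knownitem} finishes the proof.
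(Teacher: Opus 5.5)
Your proposal is correct and follows essentially the same route as the paper. The moment ratio built from $\prod_{h\in S_j}(R_h-g_h)$ gives $\bar s_j\le s_j$; the resulting nonnegative upper-triangular $B_j$ yields $\operatorname{Cols}\{\PPhi(\QQ)\}\subseteq\operatorname{Cols}\{\PPhi(\bar{\QQ})\}$ and hence $\bar{\QQ}\sim\QQ$; the ratio then becomes an equality and forces $\bar{\mathbf s}=\mathbf s$. The only differences are cosmetic: you finish with Lemma~\ref{lem:knownitem} where the paper directly inverts $\bigotimes_j M_j$, and the univariate-marginal remark in your last paragraph is never used.
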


\begin{proof}
Suppose that
\[
\P(\RR=\rr\mid\QQ,\mathbf s,\mathbf g,p_{[\Cdot]})
=\P(\RR=\rr\mid\bar{\QQ},\bar{\mathbf s},\mathbf g,
\bar p_{[\Cdot]}),
\qquad \rr\in\{0,1\}^J.
\]
For \(j\in[J]\), let \(H_j=\prod_{h\in S_j}(R_h-g_h)\).
By local independence,
\[
\mathbb E(H_j)
=\prod_{h\in S_j}(1-s_h-g_h)
\sum_{\aaa\in\bigcap_{h\in S_j}\mca_h^+(\QQ)}p_{\aaa}>0.
\]
The strict inequality follows because the intersection is
nonempty and is a union of equivalence classes of \(\QQ\).
The assumed containment gives
\[
\mathbb E\{(R_j-g_j)H_j\}
=(1-s_j-g_j)\mathbb E(H_j).
\]
Under the alternative parameters, the same expectations satisfy
\[
\begin{aligned}
1-s_j-g_j
&=\frac{\mathbb E\{(R_j-g_j)H_j\}}{\mathbb E(H_j)}\\
&=(1-\bar s_j-g_j)
\frac{\displaystyle
\sum_{\aaa\in\mca_j^+(\bar{\QQ})\cap
\bigcap_{h\in S_j}\mca_h^+(\bar{\QQ})}\bar p_{\aaa}}
{\displaystyle
\sum_{\aaa\in\bigcap_{h\in S_j}\mca_h^+(\bar{\QQ})}
\bar p_{\aaa}}\\
&\leq1-\bar s_j-g_j.
\end{aligned}
\]
The denominator is positive because \(\mathbb E(H_j)>0\).
Thus \(s_j\geq\bar s_j\) for every \(j\).

For \(\vv\in\{0,1\}^J\), let
\[
\mu_{\vv}
=\sum_{\aaa:\,\PPhi_{\Cdot,\aaa}(\QQ)=\vv}p_{\aaa},
\qquad
\bar\mu_{\vv}
=\sum_{\aaa:\,\PPhi_{\Cdot,\aaa}(\bar{\QQ})=\vv}\bar p_{\aaa},
\]
and write \(\boldsymbol\mu=(\mu_{\vv})_{\vv}\) and
\(\bar{\boldsymbol\mu}=(\bar\mu_{\vv})_{\vv}\).
Use \(M_j,\bar M_j,B_j\) from the proof of
Lemma~\ref{lem:multi-block-identifiability}, with
\(\bar g_j=g_j\). Equality of the response distributions gives
\[
\left(\bigotimes_{j=1}^J M_j\right)\boldsymbol\mu
=\left(\bigotimes_{j=1}^J\bar M_j\right)
\bar{\boldsymbol\mu}.
\]
Each \(\bar M_j\) is invertible, so
\begin{equation}\label{eq:multi-known-g-ideal-transform}
\bar{\boldsymbol\mu}
=\left(\bigotimes_{j=1}^J B_j\right)\boldsymbol\mu,
\qquad
B_j=\begin{pmatrix}
1&\dfrac{s_j-\bar s_j}{1-\bar s_j-g_j}\\[1ex]
0&\dfrac{1-s_j-g_j}{1-\bar s_j-g_j}
\end{pmatrix}.
\end{equation}
Since \(s_j\geq\bar s_j\), each \(B_j\) is nonnegative with
positive diagonal entries. Therefore, for every
\(\vv\in\operatorname{Cols}\{\PPhi(\QQ)\}\),
\[
\bar\mu_{\vv}
\geq\mu_{\vv}\prod_{j=1}^J(B_j)_{v_j,v_j}>0.
\]
It follows that
\[
\operatorname{Cols}\{\PPhi(\QQ)\}
\subseteq\operatorname{Cols}\{\PPhi(\bar{\QQ})\}.
\]
The algebraic condition gives \(\bar{\QQ}\sim\QQ\).
After a common relabeling of the latent variables, assume that
\(\bar{\QQ}=\QQ\). The class containment defining \(S_j\)
then also holds under the alternative model, and the displayed
ratio becomes
\[
1-s_j-g_j
=\frac{\mathbb E\{(R_j-g_j)H_j\}}{\mathbb E(H_j)}
=1-\bar s_j-g_j.
\]
Hence \(\mathbf s=\bar{\mathbf s}\).
The matrix inversion in the proof of
Theorem~\ref{thm:multi-general} now gives
\(p_{[\aaa]}=\bar p_{[\aaa]}\) for every
\([\aaa]\in\mathcal E(\QQ)\).
\end{proof}

\begin{proof}[Proof of Proposition~\ref{prop:multi-known-g-conjunctive}]
We first prove sufficiency. If every column of \(\QQ\) contains
at least two ones, then, for every \(j\),
\[
\bigvee_{h\ne j}\qq_h=\one,
\qquad
\bigcap_{h\ne j}\mca_h^+(\QQ)
=\{\one\}\subseteq\mca_j^+(\QQ).
\]
Lemma~\ref{lem:multi-known-g-general}, with
\(S_j=[J]\setminus\{j\}\), proves joint identifiability.
In particular, the ratio in the proof of
Lemma~\ref{lem:multi-known-g-general} gives
\begin{equation}\label{eq:known-g-slip-ratio}
1-s_j
=g_j+
\frac{\displaystyle\mathbb E\!\left\{\prod_{h=1}^J(R_h-g_h)\right\}}
{\displaystyle\mathbb E\!\left\{\prod_{h\ne j}(R_h-g_h)\right\}},
\qquad j\in[J].
\end{equation}
The denominator equals
\(p_{[\one]}\prod_{h\ne j}(1-s_h-g_h)>0\).
Since the sufficient conditions do not depend on the unknown
parameters, they also imply that \(\QQ\) is identifiable.

We next prove necessity for joint identifiability. If the algebraic
condition fails, there
exists \(\bar{\QQ}\nsim\QQ\) such that
\[
\operatorname{Cols}\{\PPhi(\QQ)\}
\subseteq\operatorname{Cols}\{\PPhi(\bar{\QQ})\}.
\]
Let \(\mu_{\vv}\) be defined as in the proof of
Lemma~\ref{lem:multi-known-g-general}. Keep
\(\bar{\mathbf s}=\mathbf s\), and set
\[
\bar p_{[\bbb]}
=\mu_{\PPhi_{\Cdot,\bbb}(\bar{\QQ})},
\qquad [\bbb]\in\mathcal E(\bar{\QQ}).
\]
Distinct equivalence classes correspond to distinct ideal
patterns, so these proportions sum to one and give
\(\bar{\boldsymbol\mu}=\boldsymbol\mu\).
The channel factorization in the proof of
Lemma~\ref{lem:multi-known-g-general} gives equal response
distributions. Thus \(\QQ\) is not identifiable, and joint
identifiability fails.

The algebraic condition also excludes a zero column: adding a
one to such a column produces an inequivalent matrix whose ideal
column set contains the original one, by fixing that attribute
at one. It remains to consider a column \(k\) with its only one
in row \(j\). For \(a\in\{0,1\}\) and
\(\mathbf w\in\{0,1\}^{J-1}\), let
\[
\mu_{a,\mathbf w}
=\sum_{\substack{\aaa:\,
\PPhi_{j,\aaa}(\QQ)=a,\\
\PPhi_{\Cdot,\aaa}(\QQ_{[J]\setminus\{j\}})=\mathbf w}}
p_{\aaa}.
\]
If item \(j\) is capable, changing \(\alpha_k\) from one to
zero makes it incapable without changing any other ideal
response. Consequently,
\[
(1,\mathbf w)\in\operatorname{Cols}\{\PPhi(\QQ)\}
\quad\Longrightarrow\quad
(0,\mathbf w)\in\operatorname{Cols}\{\PPhi(\QQ)\},
\]
where the first coordinate denotes item \(j\).
Full grouped positivity gives
\(\mu_{1,\mathbf w}>0\Rightarrow\mu_{0,\mathbf w}>0\).

Keep \(\bar{\QQ}=\QQ\) and \(\bar s_h=s_h\) for
\(h\ne j\). Choose \(t>1\) such that
\[
0<t-1<\min_{\mathbf w:\,\mu_{1,\mathbf w}>0}
\frac{\mu_{0,\mathbf w}}{\mu_{1,\mathbf w}},
\]
and set
\[
1-\bar s_j-g_j=\frac{1-s_j-g_j}{t},
\qquad
\bar\mu_{1,\mathbf w}=t\mu_{1,\mathbf w},
\qquad
\bar\mu_{0,\mathbf w}
=\mu_{0,\mathbf w}-(t-1)\mu_{1,\mathbf w}.
\]
Since \(g_j<1-\bar s_j<1-s_j\leq1\), the perturbed item
parameter satisfies the model constraints. The probabilities
remain positive on exactly the same ideal patterns and sum to
one, so they define grouped proportions for \(\QQ\).
With the channel matrices from the proof of
Lemma~\ref{lem:multi-block-identifiability},
\[
M_j
=\bar M_j\begin{pmatrix}1&1-t\\0&t\end{pmatrix},
\qquad
\begin{pmatrix}\bar\mu_{0,\mathbf w}\\\bar\mu_{1,\mathbf w}\end{pmatrix}
=\begin{pmatrix}1&1-t\\0&t\end{pmatrix}
\begin{pmatrix}\mu_{0,\mathbf w}\\\mu_{1,\mathbf w}\end{pmatrix}.
\]
All other item channels are unchanged. Hence the response
distributions coincide, although \(\bar s_j\ne s_j\).
This proves necessity for joint identifiability.

It remains to prove necessity for identifiability of \(\QQ\)
when \(K\geq2\). Failure of the algebraic condition has already
been considered. Suppose that \(\QQ\) satisfies that condition,
but column \(k\) has its only one in row \(j\).
By Lemma~\ref{lem:multi-conj-two-column}, \(\QQ\) satisfies
the two-column condition. For every \(\ell\ne k\), the row
\((1,0)\) in columns \(k,\ell\) must therefore be row \(j\),
so \(q_{j\ell}=0\). Hence \(\qq_j=\ee_k\).
The two-column condition also gives a row \(h\ne j\), with
\(q_{hk}=0\).

Choose the true and alternative latent proportions as
\[
p_{\aaa}=2^{-K},
\qquad
\bar p_{\aaa}=
\begin{cases}
2^{-(K-1)},&\alpha_k=1,\\
0,&\alpha_k=0.
\end{cases}
\]
Thus the true attributes are mutually independent
\(\operatorname{Bernoulli}(1/2)\), whereas the alternative
fixes attribute \(k\) at one and preserves the distribution of
the remaining attributes. In particular, the true grouped
proportions are positive for every equivalence class.
Let \(\bar{\QQ}\) be obtained from \(\QQ\) by changing
\(q_{hk}\) from zero to one. Keep \(\mathbf g\) fixed and set
\[
1-\bar s_j-g_j=\frac{1-s_j-g_j}{2},
\qquad
\bar s_\ell=s_\ell,\quad \ell\ne j.
\]
These item parameters satisfy
\(g_j<1-\bar s_j<1-s_j\leq1\).

Write \(\P\) and \(\bar{\P}\) for probabilities under the
true and alternative specifications, respectively.
Since \(\qq_j=\bar{\qq}_j=\ee_k\),
\[
\P(R_j=1)
=g_j+\frac{1-s_j-g_j}{2}
=1-\bar s_j
=\bar{\P}(R_j=1).
\]
No other true item depends on attribute \(k\). Under the
alternative, its added requirement in row \(h\) is always
satisfied. The distribution of the remaining responses is
therefore unchanged:
\[
\P(\RR_{-j}=\rr_{-j})
=\bar{\P}(\RR_{-j}=\rr_{-j}),
\qquad \rr_{-j}\in\{0,1\}^{J-1}.
\]
Moreover, \(R_j\) is independent of \(\RR_{-j}\) under
both specifications. Consequently, for every
\(\rr\in\{0,1\}^J\),
\[
\begin{aligned}
\P(\RR=\rr)
&=\P(R_j=r_j)\P(\RR_{-j}=\rr_{-j})\\
&=\bar{\P}(R_j=r_j)\bar{\P}(\RR_{-j}=\rr_{-j})
=\bar{\P}(\RR=\rr).
\end{aligned}
\]
Since \(\bar{\QQ}\) has exactly one more entry equal to one
than \(\QQ\), we have \(\bar{\QQ}\nsim\QQ\).
Thus \(\QQ\) is not identifiable.

Finally, suppose that \(K\geq2\), \(\QQ\) satisfies the
algebraic condition, and \(\QQ\) has no pure rows.
By Lemma~\ref{lem:multi-conj-two-column}, \(\QQ\) satisfies
the two-column condition. If column \(k\) had its only one
in row \(j\), that condition would require \(q_{j\ell}=0\)
for every \(\ell\ne k\). This would give
\(\qq_j=\ee_k\), a pure row. Thus every column contains at
least two ones, as claimed in the main text.
\end{proof}

}

\end{document}